\theoremstyle{plain}
\newtheorem{theorem}{Theorem}[section]
\newtheorem{corollary}[theorem]{Corollary}
\newtheorem{lemma}[theorem]{Lemma}
\newtheorem{proposition}[theorem]{Proposition}
\theoremstyle{definition}
\newtheorem{definition}[theorem]{Definition}
\newtheorem{example}[theorem]{Example}
\newtheorem{remark}[theorem]{Remark}
\def\e{\varepsilon}
\def\rr{{\mathbb R}}
\def\dx{\,dx}
\def\NN{{\mathbb N}}
\def\ZZ{{\mathbb Z}}
\def\ie{{; i.e.,}}
\def\dist{{\rm dist}}
\def\div{{\rm div}}
\def\divv{{\rm Div}}
\def\supp{{\rm supp}}
\DeclareMathOperator*{\argmin}{arg\,min}
\numberwithin{equation}{section}
\title{A variational theory of convolution-type functionals}
\author{
{\sc Roberto Alicandro}
\\
\small DIEI, Universit\`a di Cassino e del Lazio Meridionale,\\
\small via Di Biasio 43, 03043 Cassino (FR), Italy\\
\\
{\sc Nadia Ansini}
\\ \small Dipartimento di Matematica, Sapienza Universit\`a di Roma,\\
\small P.le Aldo Moro 2, 00185 Rome, Italy\\
\\
{\sc Andrea Braides}
\\ \small Dipartimento di Matematica,
 Universit\`a di Roma `Tor Vergata',\\
 \small
via della Ricerca Scientifica, 00133 Rome, Italy\\
\\
{\sc Andrey Piatnitski}
\\ \small
The Arctic University of Norway, UiT,  Campus
Narvik, P.O. Box 385, Narvik 8505, Norway \\ \small and  Institute for Information Transmission Problems
of RAS, 127051 Moscow, Russia 
\\
\\
{\sc Antonio Tribuzio}
\\ \small Dipartimento di Matematica,
  Universit\`a di Roma `Tor Vergata',\\
 \small
via della Ricerca Scientifica, 00133 Rome, Italy
}
\date{
}                                      
\begin{document}

\maketitle

\noindent
{\bf Abstract.}
We provide a general treatment of perturbations of a class of functionals modeled on convolution energies with integrable kernel which approximate the $p$-th norm of the gradient as the kernel is scaled by letting a small parameter $\e$ tend to $0$. We first provide the necessary functional-analytic tools to show coerciveness in $L^p$. The main result is a compactness and integral-representation theorem which shows that limits of convolution-type energies is a standard local integral functional with $p$-growth defined on a Sobolev space. This result is applied to obtain periodic homogenization results, to study applications to functionals defined on point-clouds, to stochastic homogenization and to the study of limits of the related gradient flows.

\bigskip
\noindent
{\bf Keywords.} Convolution functionals, non-local energies, compactness theorems, integral-represen\-ta\-tion theorems,  periodic homogenization, stochastic homogenization, point clouds, gradient flows

\smallskip
\noindent
{\bf AMS Classifications.} 49J45, 49J55, 74Q05, 35B27, 35B40, 45E10

\section{Introduction}
In this paper we consider a general class of non-local energies whose
prototype are {\em convolution functionals}; i.e., functionals of the form
\begin{equation}\label{intro1}
{1\over\e^{d+p}}\int_{\Omega\times\Omega} a\Bigl({x-y\over\e}\Bigr)|u(x)-u(y)|^pdx\,dy,
\end{equation}
where  $\Omega$ is a Lipschitz domain in $\rr^d$ and $a$ is a sufficiently integrable positive kernel; namely, $a$ satisfies
\begin{equation}\label{ipoa-p}
\int_{\rr^d} a(\xi)(1+|\xi|^p)d\xi<+\infty.
\end{equation}
Functionals as in \eqref{intro1} can be seen as an approximation and a generalization of an $L^p$-norm of the gradient of $u$, and as such have been used e.g.~in non-local approaches to phase-transition problem (see Alberti and Bellettini \cite{albbel}). Indeed, if $u$ is of class $C^1(\Omega)$, we may approximate $u(x)-u(y)$ with $\langle \nabla u(x),x-y\rangle$,
so that, up to an error which can be neglected as $\e\to 0$, energies \eqref{intro1} can be rewritten as
\begin{equation}
{1\over\e^{d}}\int_{\Omega\times\Omega} a\Bigl({x-y\over\e}\Bigr)\Bigl|\Bigl\langle \nabla u(x),{x-y\over\e}\Bigr\rangle\Bigr|^pdx\,dy.
\end{equation}
After the change of variables $y=x-\e\xi$ and letting $\e\to 0$, we obtain
\begin{equation}\label{intro2}
\int_\Omega\|\nabla u\|_a^p\,dx,
\hbox{ where } \|z\|_a^p= \int_{\rr^d}a(\xi)|\langle z,\xi\rangle|^pd\xi.
\end{equation}
This argument will be made rigorous as a very particular case of the results in the following.

Limits of energies similar to \eqref{intro1}, of the form
\begin{eqnarray}\label{def-Fe-solid-Brezis}
{1\over \e^{d}}\int_{\Omega\times \Omega} a\Bigl({y-x\over\e}\Bigr)
\Bigl|{u(y)-u(x)\over y-x}\Bigr|^p dy \dx,
\end{eqnarray}
have also been studied by Bourgain {et al.}~\cite{boubremir} as an alternative definition of the $L^p$-norm of the gradient of a Sobolev function (see e.g.~also \cite{pon}), as part of a number of works stemming from a general interest towards nonlocal variational problems arisen in the last twenty years (see e.g.~the survey paper \cite{dinpalval} or the book \cite{bucval}); a higher-order development of this limit process has been recently studied by Chambolle et al.~\cite{chanovpag}.
It is worth recalling that a non-linear version of functionals \eqref{intro1} with truncated quadratic potentials had been previously proposed as an approximation of the Mumford-Shah energy by De Giorgi and subsequently studied by Gobbino \cite{gob} (see also \cite{gobmor}).
Furthermore, discrete energies of the form
\begin{eqnarray}\label{def-Fe-lattice}
{1\over \e^{d+p}}\sum_{i,j\in \cal L} a_{ij}|u_i-u_j|^p,
\end{eqnarray}
where $\cal L$ is a $d$-dimensional lattice, $u:\e{\cal L}\to \rr^m$ and $u_i=u(\e i)$,
have been widely investigated as a discrete approximation of integral functionals of $p$-growth  (see e.g.~\cite{piarem,alicic,bra3,brakre}). Such energies can be seen as a discrete version of functionals \eqref{intro1}.

In the case $p=2$, some energies of the form \eqref{intro1} derive from models in population dynamics where macroscopic properties can be reduced to studying the evolution of the first-correlation functions describing the population density $u$ in the system \cite{KKP2008, FKK2012}. With that interpretation in mind, in the simplest formulation one can consider their perturbations
 \begin{equation}\label{intro1-bis}
{1\over\e^{d+2}}\int_{\Omega\times\Omega} b_\e(x,y)\,a\Bigl({x-y\over\e}\Bigr)|u(x)-u(y)|^2dx\,dy,
\end{equation}
that may take into account inhomogeneities of the environment encoded in the (non-negative) coefficient $b_\e$.
Functionals modelled on these energies, with $b_\e$ obtained by scaling a given periodic function $b$ or with  a random coefficient $b_\e=b_\e^\omega$ depending on the realization of a random variable, have been considered in the context of homogenization for $u$ scalar in \cite{2018BP,2019BP}, producing a limit elliptic homogeneous functional
\begin{equation}\label{intro1-ter}
\int_{\Omega} \langle A_{\rm hom}\nabla u,\nabla u\rangle\dx\,dy.
\end{equation}
This limit can be expressed in terms of $\Gamma$-convergence and implies the convergence of related minimum problems.
Another type of perturbations of functionals \eqref{intro1} is encountered in a different application to Data Science, studied by Garc\'ia Trillos and Slepcev \cite{garsle}, who examine energies approximating the total variation of $u$ of the form
\eqref{intro1} \begin{equation}\label{intro1-quater}
{1\over\e^{d+p}}\int_{\Omega\times\Omega} a\Bigl({T_\e( x)-T_\e( y)\over\e}\Bigr)|u(x)-u(y)|^p dx\,dy,
\end{equation}
when $p=1$, with $T_\e:\Omega\to\Omega$ and discuss its stability in terms of the convergence of $T_\e$ to the identity.
This result has been extended to the case of energies with $p$-growth for $p>1$ in \cite{thorpe-et-al} (see also the recent paper \cite{carchasle} in the context of free-discontinuity problems).

\medskip
In this paper, we provide an ample treatment of `convolution-type' energies modelled on  \eqref{intro1} and \eqref{intro1-bis} (and including also the case \eqref{intro1-quater} with $p>1$) and allowing for a general non-linear dependence on $u(x)-u(y)$ and inhomogeneity on $x$ and $y$. More precisely, the functionals that we will consider are of the form
\begin{equation}\label{intro1-gen}
{1\over\e^{d+p}}\int_{\Omega\times\Omega} f_\e(x,y,u(x)-u(y))dx\,dy,
\end{equation}
with $p>1$.
The functions $f_\e:\Omega\times \Omega\times \rr^m\to \rr$ are quite general.
In order to compare functionals \eqref{intro1-gen} with the energies defined in  \eqref{intro1} we
assume that $\Omega$ is a bounded domain and that
there exist two kernels $a_1$ and $a_2$  such that
\begin{equation}\label{intro2-gen}
a_1\Bigl({x-y\over\e}\Bigr)(|z|^p-1) \le f_\e(x,y,z)\le a_2\Bigl({x-y\over\e}\Bigr)(|z|^p+1).
\end{equation}
A non-degeneracy condition for the limit is ensured e.g.~by assuming that
$$
a_1(\xi)\ge c\quad \hbox{ if } |\xi|\le r_0
$$
for some $c,r_0>0$, while a decay condition in $a_2$ provides that the limit be finite exactly on $W^{1,p}(\Omega;\rr^m)$.
Note that for a wider applicability of this analysis considering a dependence on $\e$ of the kernel $a_2=a_2^\e$ is also necessary; since this may cause the limit energy to be non-local, some uniform conditions on the decay of the $a_2^\e$ must be required to ensure that the limit be a local integral energy.
These assumptions will be stated precisely in Section \ref{setting}. The central result of the paper is that, up to subsequences, the energies above converge to an energy of the form
\begin{equation}\label{intro-limform}
\int_\Omega f_0(x,\nabla u)\dx,
\end{equation}
with domain $W^{1,p}(\Omega;\rr^m)$. This convergence is expressed as a $\Gamma$-limit with respect to the $L^p$-topology in $\Omega$.
This is justified by a Compactness Theorem, which states that, if $a$ is the characteristic function of a ball, then any sequence $\{u_\e\}$ bounded in $L^p(\Omega)$ with equibounded energies \eqref{intro1} admits a subsequence converging to some $u\in W^{1,p}(\Omega;\rr^m)$ with respect to the $L^p(\Omega;\rr^m)$-topology.
This result is complemented by the validity of suitable Poincar\'e inequalities, which allow to prove the equi-coerciveness of the functionals subjected to boundary data and the application of the direct methods of $\Gamma$-convergence to the asymptotic description of minimum problems.

We also include in the paper various applications. First, to the homogenization of non-local functionals of the form
\begin{equation}\label{intro1-hom}
{1\over\e^{d+p}}\int_{\Omega\times\Omega}f\Bigl({x\over\e},{y\over\e},u(x)-u(y)\Bigr)dx\,dy,
\end{equation}
with $f$ periodic in the first  variable. In this case the limit integrand $f_0$ in \eqref{intro-limform} is independent of $x$ and can be characterized by a non-local asymptotic formula, which can be further simplified to a non-local cell-problem formula if $f$ is convex in the last variable. A second application is to a class of non-local functionals of the form
\begin{equation}\label{intro1-hom-quater}
{1\over\e^{d+p}}\int_{\Omega\times\Omega}f_\e (T_\e(x),T_\e(y),u(x)-u(y) )\rho(x)\rho(y)\,dx\,dy,
\end{equation}
which generalize \eqref{intro1-quater}. If the image of $T_\e$ is discrete these energies can be interpreted as a continuum interpolation of discrete energies. In particular, following \cite{garsle} we can use these functionals to describe the behavior of energies defined on point clouds. A third application is a stochastic homogenization theorem; i.e.,  the characterization of the limit of functionals in \eqref{intro1-hom} when the integrand $f=f(\omega)$ is a statistically homogeneous in the first variable  random function defined through a measure-preserving ergodic dynamical system. We characterize the limit using an asymptotic non-local homogenization formula, and prove that the limit is deterministic under ergodicity assumptions. Related results in a discrete setting can be found e.g.~in \cite{ACG,BLL1,BLL2,2014BP}.
Finally, we also treat some evolutionary problems using the methods of minimizing movements if the functions $f_\e$ are convex in the last variable. In particular, we consider the homogenization case \eqref{intro1-hom}, and show that the solutions of gradient flows for those energies, which take the form
$$
\partial_t u_\e(t,x)= -
\frac{1}{\e^{d+1}} \int_\Omega \Bigl(\partial_z f\Big(\frac{y}{\e},\frac{x}{\e},\frac{u_\e(t,x)-u_\e(t,y)}{\e}\Big)-\partial_z f\Big(\frac{x}{\e},\frac{y}{\e},\frac{u_\e(t,y)-u_\e(t,x)}{\e}\Big)\Bigr) dy,
$$
converge to the solution of the corresponding gradient flow for the limit homogenized energy. Previously, homogenization problems for parabolic equations with linear periodic not necessary symmetric convolution type operators have been studied in \cite{PiZh_par}, where the homogenization results were obtained by means of two-scale expansions technique.

\smallskip
The plan of the paper is as follows. In Section \ref{setting} we introduce the necessary notation (in particular we change the formal appearance of our energies so as to highlight the range of the interactions) and introduce the class of convolution-type energies under examination, comparing the hypotheses with the corresponding ones for integral functionals and highlighting some differences. We state a weaker version of the coerciveness assumption that may be of use when dealing e.g.~with perforated domains (see \cite{BCD}). We also introduce the special convolution energies $G_\e[a]$ of type \eqref{intro1} and in particular $G^r_\e$ when $a=\chi_{B_r}$, which are used as comparison energies throughout the paper (in particular, since they are a lower bound for the energies we consider, it is sufficient to state compactness results for families  of functions $\{u_\e\}$ with $G^r_\e(u_\e)$ bounded). Section \ref{priers} contains some general results, that mirror the analog results in Sobolev spaces, of extension from Lipschitz sets, compactness with respect the $L^p$ convergence and Poincar\'e inequalities, where the role of the $p$-th norm of the gradient is played by $G^r_\e$ as $\e\to 0$. The fundamental Lemma \ref{boundlemma} allows to control long-range interactions with short-range interactions, while the Compactness Theorem \ref{kolcom} guarantees both a compact embedding in $L^p$ for sequences with equibounded $G^r_\e$-energies, and that their limits belong to $W^{1,p}$. In Section \ref{particular} we consider the particular case of the limit of energies $G_\e[a_\e]$ with varying $a_\e$, characterizing their limits. In particular,  when $a_\e=a$ we obtain the $\Gamma$-convergence to energy \eqref{intro2}. This limit is used to provide lower and upper bounds for the general case. The main result of the paper is contained in Section \ref{integralrep}, where the general compactness and integral-representation Theorem \ref{reprthm} is proved using a variation of the localization method of $\Gamma$-convergence, which is possible since, even though convolution-type functionals are non-local, their non-locality `vanishes' as $\e\to0$. An important technical result formalizing this observation is Lemma \ref{lim-truncated-lemma}, which states, in terms of functionals \eqref{intro1}, that it is not restrictive to deal with kernels $a$ with bounded support, up to a truncation argument. Section \ref{co-mi-pro} deals with the convergence of minimum problems with Dirichlet boundary conditions.
Note that for convolution-type functionals such conditions must be imposed on a neighbourhood of size of order $\e$ of the boundary. Section \ref{homogenization} specializes the description of the $\Gamma$-limit in the case of periodically oscillating energies, using the integral-representation result, the truncation argument and the convergence of minimum problems to obtain homogenization formulas for the energy function of the $\Gamma$-limit, both of asymptotic type (Theorem \ref{hom-thm} and formula \eqref{homform}) taking into account interactions within cubes of diverging size and on periodic functions in the convex case (Theorem \ref{cell-form-thm} and formula \eqref{cellform}). Note that in the latter we take into account interactions $u(x)-u(y)$ with $x$ in the periodicity set and $y$ in the whole space.  In Section \ref{pe-co-fu} we consider functionals of the form \eqref{intro1-hom-quater} and prove their equivalence to the corresponding functionals \eqref{intro1-gen} when $T_\e$ approaches the identity up to an error of order $o(\e)$, under some technical conditions on $f_\e$. This result is then applied to the analysis of energies defined on point clouds in Section \ref{pointclouds}. Section \ref{stocks} contains a homogenization theorem in the stochastic setting (Theorem \ref{erg-the}), whose proof generalizes the arguments utilized for the deterministic homogenization result, using a subadditive ergodic theorem by Krengel and Pyke (Theorem \ref{KP-thm}) to characterize an asymptotic homogenization formula. Finally, in Section \ref{dynamic} we study the convergence of the gradient flows associated to our energies in the convex case. A general approach by minimizing movements allows to deduce in particular the convergence of gradient flows in the case of the homogenizationto the gradient flow of the homogenized limit, which is a standard parabolic equation (Theorem \ref{stability-hom-N}).

\section{Notation, setting of the problem and comments}\label{setting}
We let $\lfloor t\rfloor$ denote the integer part of $t\in\rr$. Given  $x\in\mathbb{R}^d$, we let $\lfloor x\rfloor$ denote  the vector in $\mathbb{Z}^d$ whose components are the integer parts of the components of $x$; that is,
$\lfloor x\rfloor=(\lfloor x_1\rfloor ,\dots, \lfloor x_d\rfloor)$.
If $z\in\rr^d$ then $|z|$ denotes the norm of $z$ and $\langle x,w\rangle$ the scalar product between $z$ and $w$.
Moreover,  we will use the notation
$B_r(z)$ (if $z=0$, simply $B_r$) for the open ball of centre $z$ and radius $r$.
If $A$ and $B$ are subsets of $\rr^d$ then
dist$(z,A)=\inf \{|z-x|: x\in A\}$ and dist$(A, B)=\inf \{|z-x|: x\in A, z\in B\}$ denote the distance of $z$ from $A$ and from $A$ to $B$, respectively.
$\mathcal{A}(\Omega)$ will be the family of all open subsets of $\Omega$ and $\mathcal{A}^{\rm reg}(\Omega)$ the subfamily of open subsets with Lipschitz boundary. By
 $A\Subset B$ we mean that the closure of $A$ is a compact subset of $B$. $\mathbb{R}^{m\times d}$ denotes the space of $m\times d$ matrices with real entries; if $M\in\mathbb{R}^{m\times d}$ and $z\in \rr^d$ then $Mx\in \rr^m$ is defined by the usual row-by-column multiplication.
Given $g:\rr^{m\times d}\to\rr$ we use the notation
$$
Dg(M)=\Big(\frac{\partial g(M)}{\partial M_{i,j}}\Big)_{i\in\{1,\ldots,m\}  j\in\{1,\ldots,d\}}\in\rr^{m\times d}
$$
for the gradient of $g$.
Given a matrix-valued map $\Sigma:\Omega\to\rr^{m\times d}$ we define $\divv(\Sigma):\Omega\to\rr^m$ as
$$
\big(\divv(\Sigma)\big)_i = \div(\Sigma_i) \quad \text{for every } i=1,\dots,m.
$$

 {We use standard notation for Lebesgue and Sobolev spaces, and their local versions. If $u$ is an integrable function on a measurable set $E\subset\Omega$,
$$u_E:=\frac{1}{|E|}\int_E u(x)dx$$
denotes the average of $u$ on $E$.
If $A\Subset B$, a {\em cut-off function} between $A$ and $B$ is a (smooth) function $\varphi$ with $0\le\varphi\le 1$, $\varphi=0$ on $\partial B$ and $\varphi=1$ on $A$.

We use standard notation for $\Gamma$-convergence \cite{bra2}, indicating the topology with respect to which it is performed. 

Unless otherwise stated, the letter $C$ denotes a generic strictly positive constant independent of the parameters of the problem taken into account. }

\subsection{Convolution-type energies}
In the following we fix $d,m\in\mathbb{N}$ natural numbers, $p>1$ and we let $\Omega\subset\mathbb{R}^d$ be a bounded open set with Lipschitz boundary.
Note that for many results this regularity assumption on $\Omega$ may be removed up to considering local arguments.

 Given  $\e>0$ and $f_\e:\Omega\times\mathbb{R}^d\times\mathbb{R}^m\to[0,+\infty)$ a positive Borel functions, we introduce the non-local functional $F_\e:L^p(\Omega;\mathbb{R}^m)\to[0,+\infty]$ defined as
\begin{equation}\label{functionals}
F_\e(u):=\int_{\mathbb{R}^d}\int_{\Omega_\e(\xi)} f_\e\Bigl(x,\xi,\frac{u(x+\e\xi)-u(x)}{\e}\Bigr) dx \, d\xi
\end{equation}
where $\Omega_\e(\xi):=\{x\in\Omega\,|\,x+\e\xi\in\Omega\}$.
 Note that definition \eqref{functionals} is equivalent to \eqref{intro1-gen} up to the change of variable $y=x+\e\xi$. The reason to rewrite our energies in this apparently more complicated way is in order to state more clearly the hypotheses in the following, highlighting the `microscopic interaction length' $\xi$.
A particular class of functionals of the form \eqref{functionals}, which will be used in the following as comparison energies, is introduced below.

\begin{definition}[the {\em convolution functionals} {$G_\e[a]$} and $G_\e^r$]\label{convfun}
Given a measurable function $a:\mathbb{R}^d\to[0,+\infty)$, we set
\begin{equation}\label{conv-functionals}
G_\e[a](u) := \int_{\mathbb{R}^d}a(\xi)\int_{\Omega_\e(\xi)}\Bigl|\frac{u(x+\e\xi)-u(x)}{\e}\Bigr|^p dx\, d\xi.
\end{equation}
Moreover, we define the local versions  $G_\e[a](u,A)$ as for $F_\e$ in \eqref{loc-functionals}.

In the case in which $a=\chi_{B_r}$ we will simply write $G_\e^r$ instead of $G_\e[\chi_{B_r}]$.
\end{definition}

 We will study the behaviour of energies $F_\e$ as $\e\to0$, under proper assumptions on $f_\e$, by proving a compactness result with respect to $\Gamma$-convergence in the $L^p$-topology. In this context the role of convolution functionals $G_\e[a]$ will be the analog of that of the integral of the $p$-th power of the gradient as a comparison energy for local integral functional with $p$-growth.

For any $u\in L^p(\Omega;\rr^m)$ and $A\in\mathcal{A}(\Omega)$ we also introduce a local version of the functionals in \eqref{functionals} by setting\begin{equation}\label{loc-functionals}
F_\e(u,A):=\int_{\mathbb{R}^d}\int_{A_\e(\xi)} f_\e\Bigl(x,\xi,\frac{u(x+\e\xi)-u(x)}{\e}\Bigr) dx \, d\xi,
\end{equation}
where $A_\e(\xi):=\{x\in A\,|\, x+\e\xi\in A\}$.


\medskip

In what follows we use the standard notation
$$F'(u,A):=\Gamma\text{-}\liminf_{\e\to0} F_\e(u,A),\quad F''(u,A):=\Gamma\text{-}\limsup_{\e\to0} F_\e(u,A)$$
for the upper and lower $\Gamma$-limits (cf.~\cite{bra2})
performed with respect to the strong topology of  $L^p(\Omega;\mathbb{R}^m)$.

\begin{remark}\label{locality}
Note that in this setting the upper and lower $\Gamma$-limits of  $F_\e(\cdot,A)$ performed with respect to the strong $L^p(A;\rr^m)$  topology or with respect to the strong $L^p(\Omega;\rr^m)$ topology are the same. Indeed, for any sequence $u_\e$ converging to $u\in L^p(\Omega;\mathbb{R}^m)$ strongly in $L^p(A;\rr^m)$, we can take
$$\tilde u_\e(x)=\begin{cases}u_\e(x) & \text{if } x\in A \\
u(x) & \text{if } x\in\Omega\backslash A,\end{cases}$$
which converges to $u$ strongly in $L^p(\Omega;\rr^m)$ and leaves the energy unchanged; that is, $F_\e(u_\e,A)=F_\e(\tilde u_\e,A)$.
\end{remark}


%

\subsection{Assumptions}\label{mainass}

We consider the following hypotheses on the functions $f_\e$ introduced above: there exist two strictly positive constants $r_0,c_0$  and two families of non-negative functions $\rho_\e:B_{r_0}\to [0,+\infty)$ and $\psi_\e:\mathbb{R}^d\to[0,+\infty)$  satisfying
\begin{equation}\limsup_{\e\to0}\int_{B_{r_0}}\rho_\e(\xi)d\xi<+\infty,\end{equation}
\begin{equation}\label{ass-bound}
C_1:=\limsup_{\e\to0}\int_{\mathbb{R}^d}\psi_\e(\xi)(|\xi|^p+1)d\xi<+\infty,
\end{equation} such that
\begin{equation}
 c_0 (|z|^p-\rho_\e(\xi))\le f_\e(x,\xi,z) \quad \text{ if }|\xi|\le r_0; \tag{H0}
\end{equation}
\begin{equation}
f_\e(x,\xi,z) \le \psi_\e(\xi)(|z|^p+1)\,.
\tag{H1}
\end{equation}
Moreover for any $\delta>0$ there exists $r_\delta>0$ such that
\begin{equation}\label{ass-local}
\limsup_{\e\to0} \int_{B_{r_\delta}^c}\psi_\e(\xi)|\xi|^p d\xi<\delta. \tag{H2}
\end{equation}

By hypotheses (H0) and (H1), the localized functionals satisfy
\begin{equation}\label{growth-cond}
c\big(G_\e^{r_0}(u,A)-|A| \big) \le F_\e(u,A) \le G_\e[\psi_\e](u,A)+C|A|
\end{equation}
for any $A\in\mathcal{A}(\Omega)$, for all sufficiently small  $\e$, and for some $0<c<C$.

\begin{remark}[convolution functionals with kernels of polynomial decay] A simple class of kernels $\psi_\e$ satisfying \eqref{ass-bound} and (H2) are those satisfying
$$
\psi_\e(\xi)\le {C_\alpha\over 1+ |\xi|^\alpha}
$$
for some given $\alpha>p+d$ and $C_\alpha>0$.
\end{remark}

\begin{remark}\label{conf-cond}
The hypotheses above can be compared with the standard $p$-growth assumptions for a family of integral functionals $\int_\Omega f_\e(x,\nabla u)\dx$, which read
\begin{equation}\label{growth-cond_int_fun}
a_1(|\xi|^p-a_0)\le f_\e(x,\xi) \le a_2(1+|\xi|^p)
\end{equation}
for some positive constants $a_0,a_1$, and $a_2$. The polynomial lower-bound in \eqref{growth-cond_int_fun} implies the boundedness of the $L^p$ norm of the gradients of functions with equibounded energies and hence provides weak compactness in $W^{1,p}$ spaces. Analogously, we will show that condition (H0) provides strong compactness in $L^p$  of functions with equibounded energies and yields that any limit function is in $W^{1,p}(\Omega;\rr^m)$. Condition (H1) and \eqref{ass-bound} are the analog of the polynomial upper-bound in \eqref{growth-cond_int_fun} and ensure that the $\Gamma$-limits are finite on $W^{1,p}$-functions. Condition (H2) is crucial to deduce the locality of the $\Gamma$-limits, in that it forbids relevant long-range interactions, and has no direct analog in terms of condition  \eqref{growth-cond_int_fun}.
\end{remark}



For the validity of the integral representation result in Theorem \ref{representationthm} condition (H0)
can be weakened requiring only  that:
\begin{description}[font=\normalfont]
\item[(H0$'$)] the computation of $F'(u,A)$ and $F''(u,A)$ can be restricted to families $\{u_\e\}\subset L^p(\Omega;\mathbb{R}^m)$ satisfying, for every open  $A'\Subset A$,
\begin{equation}\label{relax-growth-cond}
G_\e^{r_0}(u_\e,A') \le C(F_\e(u_\e,A)+|A|)
\end{equation}
when $\e$ is small enough, where $C$ is a constant depending on $A\in\mathcal{A}(\Omega)$.
\end{description}
Such condition is clearly implied by the lower bound in \eqref{growth-cond}.
Assuming (H0$'$) in place of (H0)  allows to include in our analysis a wide varieties of cases which are not covered by (H0), such as convolution energies on perforated domains, where \eqref{relax-growth-cond} is obtained by an extension theorem (see \cite{2018BP}).
In the two following examples we exhibit two further classes of convolution functionals satisfying \eqref{relax-growth-cond}.

\begin{remark}[control from below provided by a {\em translated kernel}]\rm
We now show that for the validity of \eqref{relax-growth-cond}  it suffices that $f_\e(x,\cdot,z)$ be controlled from below by an interaction kernel not necessarily centered in the origin. More precisely, assume that there exist $r_0,c_0>0$ and $\xi_0\in\mathbb{R}^d$ such that
\begin{equation}\label{remk-h0'}
c_0 (|z|^p-\rho_\e(\xi))\le f_\e(x,\xi,z) \quad \text{ if }|\xi-\xi_0|\le r_0.
\end{equation}
Take $r<r_0/2$ and an open $A'\Subset A$.
For every $\eta\in B_r(\xi_0)$, by Jensen's inequality we have
\begin{eqnarray*}
G_\e^r(u,A') &\le& 2^{p-1} \Big( \int_{B_r} \int_{A'} \Big|\frac{u(x+\e(\xi+\eta))-u(x)}{\e}\Big|^p dx\, d\xi \\
&&\qquad\qquad+ \int_{B_r} \int_{A'} \Big|\frac{u(x+\e(\xi+\eta))-u(x+\e\xi)}{\e}\Big|^p dx\, d\xi \Big).
\end{eqnarray*}
Note that, for $\e$ small enough, the previous expression is well defined since $A'+\e(\xi+\eta)\subset A$, for every $\xi$ and $\eta$ as above.
Averaging with respect to $\eta$ we get
\begin{eqnarray*}
G_\e^r(u,A') &\le& \frac{2^{p-1}}{|B_r|} \Big( \int_{B_r(\xi_0)} \int_{B_r} \int_{A'} \Big|\frac{u(x+\e(\xi+\eta))-u(x)}{\e}\Big|^p dx\, d\xi\, d\eta \\
&&\qquad\qquad+ \int_{B_r(\xi_0)} \int_{B_r} \int_{A'} \Big|\frac{u(x+\e(\xi+\eta))-u(x+\e\xi)}{\e}\Big|^p dx\, d\xi\, d\eta \Big).
\end{eqnarray*}
By the change of variable $\xi'=\xi+\eta$ and from the fact that $B_r(\eta)\subset B_{r_0}(\xi_0)$ we have
\begin{eqnarray*}
\frac{1}{|B_r|} \int_{B_r(\xi_0)} \int_{B_r} \int_{A'} \Big|\frac{u(x+\e(\xi+\eta))-u(x)}{\e}\Big|^p dx\, d\xi\, d\eta \\
\le \int_{B_{r_0}(\xi_0)} \int_{A'} \Big|\frac{u(x+\e\xi')-u(x)}{\e}\Big|^p dx\, d\xi'\, .
\end{eqnarray*}
Using the change of variable $x'=x+\e\xi$ and the fact that $A'+\e\xi\subset A_\e(\eta)$ for every $\eta\in B_r(\xi_0)$, $\xi\in B_r$ we also get
\begin{eqnarray*}
\frac{1}{|B_r|} \int_{B_r(\xi_0)} \int_{B_r} \int_{A'} \Big|\frac{u(x+\e(\xi+\eta))-u(x+\e\xi)}{\e}\Big|^p dx\, d\xi\, d\eta \\
\le  \int_{B_r(\xi_0)} \int_{A_\e(\eta)} \Big|\frac{u(x'+\e\eta)-u(x')}{\e}\Big|^p dx'\, d\eta\, .
\end{eqnarray*}
Gathering all the inequalities above, for any open $A'\Subset A$ we obtain that
$$G_\e^r(u,A') \le 2^p G_\e[\chi_{B_{r_0}(\xi_0)}](u,A)$$
for every $\e$ small enough and \eqref{remk-h0'} yields \eqref{relax-growth-cond}.
\end{remark}

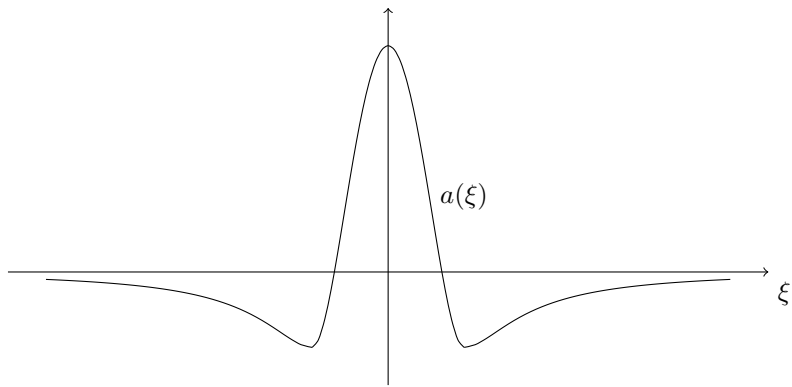
\begin{figure}[h!]\label{ex-growth}
\begin{center}
\begin{tikzpicture}
\draw [->] (-5,0) -- (5,0) node[below right] {$\xi$};
\draw [->] (0,-1.5) -- (0,3.5);
\draw [smooth, domain=-1:1] plot (\x, { 4*(\x*\x-1)^2-1 } ) node[yshift=2cm] {$a(\xi)$};
\draw [smooth, domain=1:4.5] plot (\x, { (\x*\x-1)^2/((\x*\x)^2+1)-1 } );
\draw [smooth, domain=-4.5:-1] plot (\x, { (\x*\x-1)^2/((\x*\x)^2+1)-1 } );
\end{tikzpicture}
\end{center}
\caption{Example of an interaction kernel $a$ with changing sign
}
\label{ex-growth-fig}
\end{figure}

\begin{remark}[sign-changing kernels]\rm\label{signch}
In case of convolution(-type) energies the assumption that the kernels be non-negative can be relaxed to some extent. Note that this has no direct counterpart in condition \eqref{growth-cond_int_fun} for integral functionals, since the negativeness of $f_\e$ on a set of $x$ of positive measure would give a $\Gamma$-limit identically equal to $-\infty$.

A simple example is obtained by taking $r$ and $r_0$ as in the previous example, and
$$
f(x,\xi,z)= \Bigl(\chi_{B_{r_0}(\xi_0)}(\xi)-\gamma\chi_{B_r(0)}(\xi)\Bigr)|z|^p,
$$
with $\gamma<2^{-p}$. This function is negative for $|\xi|<r_0$. Nevertheless, by the previous example, we obtain
$$
(2^{-p}-\gamma)\int_{\rr^d}\int_{\{|\xi|\le r\}}|u(x+\e\xi)-u(x)|^p\,d\xi\dx\le
\int_{\rr^d}\int_{\rr^d}f(\xi,u(x+\e\xi)-u(x))\,d\xi\dx,
$$
which gives a bound for the convolution energies on the left-hand side for families with equibounded energies on the whole $\rr^d$. From this bound it is possible to derive compactness properties. Note however that it does not immediately imply condition \eqref{relax-growth-cond} for the localized energies, so that arguments requiring local estimates, such as integral-representation theorems, must be reworked.

\bigskip
We may also treat the case when the domain is not the whole $\rr^d$ if we make some assumptions on the integration domain; e.g. convexity.

Let $A$ be an open convex set.
Applying Jensen's inequality and switching the roles of $x$ and $y$ we get
\begin{multline*}
\int\int_{\{(x,y)\in A\times A \,:\, \e<|x-y|<2\e\}} \Big|\frac{u(y)-u(x)}{\e}\Big|^p dx\, dy \\
\le 2^p \int\int_{\{(x,y)\in A\times A \,:\, \e<|x-y|<2\e\}} \Big|\frac{u\big(\frac{x+y}{2}\big)-u(x)}{\e}\Big|^p dx\, dy.
\end{multline*}
From the convexity of $A$, $(x+y)/2\in A$ and $|(x+y)/2-x|<\e$. Hence, by the change of variable $y'=(x+y)/2$ we obtain
\begin{multline*}
\int\int_{\{(x,y)\in A\times A \,:\, \e<|x-y|<2\e\}} \Big|\frac{u(y)-u(x)}{\e}\Big|^p dx\, dy \\
\le 2^{p+d} \int\int_{\{(x,y)\in A\times A \,:\, |x-y'|<\e\}} \Big|\frac{u(y')-u(x)}{\e}\Big|^p dx\, dy'.
\end{multline*}
Now if we consider
$$
f(x,\xi,z)= \Bigl(2\chi_{B_1}(\xi)-\gamma\chi_{B_2\setminus B_1}(\xi)\Bigr)|z|^p,
$$
with $\gamma<2^{-p-d}$, the previous computations yield
$$F_\e(u,A) \ge G_\e^1(u,A).$$
This argument can be extended to sign-changing $a$ such as the one pictured in Fig.~\ref{ex-growth-fig}; i.e., with $a(\xi)$ positive for small values of $|\xi|$.
The arguments above are no longer true in the non-convex case and their generalization to every open set $A\in\mathcal{A}(\Omega)$ is not immediate.
%
\end{remark}

\begin{remark}[a technical remark on localization techniques]\rm Alternatively to \eqref{loc-functionals}, given an open set $\Omega$, for any $u\in L^p(\Omega;\rr^m)$ and $A\in\mathcal{A}(\Omega)$ we may introduce another (partially-)local version of the functionals in
\eqref{functionals} by setting\begin{equation}\label{loc-functionals-2}
\widetilde F_\e(u,A):=\int_{\mathbb{R}^d}\int_{A_\e(\Omega;\xi)} f_\e\Bigl(x,\xi,\frac{u(x+\e\xi)-u(x)}{\e}\Bigr) dx \, d\xi,
\end{equation}
where $A_\e(\Omega;\xi):=\{x\in A\,|\, x+\e\xi\in \Omega\}$. This version has the advantage of being additive in the set $A$, but loses the locality property (i.e., $\widetilde F_\e(u,A)$ depends also on the values of $u$ on $\Omega\setminus A$), so that its $\Gamma$-limits must be computed with respect to the convergence in $L^p(\Omega;\rr^m)$.
\end{remark}

\section{Asymptotic embedding and compactness results}\label{priers}

In this section we include some results of independent interest that extend to the case of convolution energies corresponding results in Sobolev spaces (c.f. \cite{Leoni}).

In Theorem \ref{kolcom} we prove the compactness of sequences of functions for which both the $L^p$ norms and the energies are uniformly bounded, whose proof is a non-local counterpart of that of the classical Riesz-Fr\'echet-Kolmogorov Theorem. We will see that the role played by the $L^p$ norm of gradients in the standard compact immersion results of Sobolev spaces is played in our context by the energies $G_\e^r$.
A similar result in the case of convolution-type energies with truncated quadratic potentials has been proved in \cite[Theorem 5.4]{gob}. In Theorems \ref{poincare} and \ref{pwineq} we show the validity of Poincar\'e-type inequalities.
Before proving the compactness result, we provide some preliminary results of independent interest concerning extension operators and the possibility of controlling long-range interactions with short-range interactions.

\subsection{An extension result}

By mimicking a standard procedure of extensions of Sobolev functions, we provide the following result.

\begin{theorem}\label{ext}
Let $A$ be any open Lipschitz set of $\rr^d$ with $\partial A$ bounded and let $r>0$. Then there exist an open set $\tilde A\Supset A$, a linear continuous map
$$E:L^p(A;\mathbb{R}^m)\to L^p(\tilde A;\mathbb{R}^m)$$
and three positive constants $C=C(A), r_1=r_1(r,A)$, $\e_0=\e_0(r,A)$ such that
$$
\|E u\|^p_{L^p(\tilde A;\rr^m)} + G_\e^{r_1}(E u,\tilde A) \le C \big( \|u\|^p_{L^p(A;\rr^m)} + G_\e^{r}(u,A) \big)
$$
for all $\e<\e_0$.
\end{theorem}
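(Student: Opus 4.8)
The plan is to mimic the classical Sobolev extension by reflection across a Lipschitz boundary, localized via a partition of unity, and then to estimate the nonlocal energy $G_\e^{r_1}$ of the reflected function by that of the original, exploiting that for small $\e$ the increments $u(x+\e\xi)-u(x)$ with $|\xi|\le r_1$ only probe points at distance $O(\e)$, hence stay within the charts where the reflection is bi-Lipschitz. First I would recall that, since $\partial A$ is bounded and Lipschitz, there is a finite cover of a neighborhood of $\partial A$ by open sets $U_1,\dots,U_N$ in each of which, after a rigid change of coordinates, $A\cap U_k$ is the subgraph $\{x_d<\gamma_k(x')\}$ of a Lipschitz function $\gamma_k$ with constant $L$. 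On each such chart define the bi-Lipschitz reflection $R_k(x',x_d)=(x',2\gamma_k(x')-x_d)$, which maps the `outside slab' onto the `inside slab' and has Lipschitz constant controlled by $L$; set $\tilde A = A \cup \bigcup_k (U_k')$ for slightly shrunken $U_k'$, and let $\{\varphi_0,\varphi_1,\dots,\varphi_N\}$ be a partition of unity subordinate to $\{A\}\cup\{U_k'\}$. Then define
$$
Eu(y) := \varphi_0(y)\,u(y) + \sum_{k=1}^N \varphi_k(y)\, u\bigl(R_k(y)\bigr)
$$
for $y\in\tilde A$, interpreting $u(R_k(y))=u(y)$ when $y\in A$. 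Continuity of $E$ on $L^p$ and the bound $\|Eu\|_{L^p(\tilde A)}^p\le C\|u\|_{L^p(A)}^p$ follow from the change-of-variables formula, the bounded Jacobian of $R_k$, and $0\le\varphi_k\le 1$.

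Next I would estimate $G_\e^{r_1}(Eu,\tilde A)$. Fix $r_1$ small enough (depending on $r$ and on $L$ through the charts) so that $2(1+L)r_1<r$ and so that, for $\e<\e_0$, whenever $y\in U_k'$ and $|\xi|\le r_1$ one has both $y$ and $y+\e\xi$ in $U_k$, and $R_k(y),R_k(y+\e\xi)\in A$ with $|R_k(y+\e\xi)-R_k(y)|\le (1+2L)\e|\xi|\le r\e$. Writing the increment $Eu(y+\e\xi)-Eu(y)$ and inserting/subtracting terms, one splits it into (i) terms where the partition-of-unity coefficients are differentiated — these contribute $\e$-times a bounded factor times local $L^p$ averages of $u$ (and of $u\circ R_k$), hence are absorbed into $\|u\|_{L^p(A)}^p$ after summing — and (ii) terms of the form $\varphi_k(y)\bigl(u(R_k(y+\e\xi))-u(R_k(y))\bigr)$. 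For the latter, the change of variables $x=R_k(y)$ (Jacobian bounded above and below) turns the integral over $y\in U_k'$, $|\xi|\le r_1$ of $|u(R_k(y+\e\xi))-u(R_k(y))|^p/\e^p$ into an integral controlled by $\int_{B_r}\int_{A_\e(\eta)}|u(x+\e\eta)-u(x)|^p/\e^p\,dx\,d\eta = G_\e^r(u,A)$, using that the displacement $R_k(y+\e\xi)-R_k(y)$ lies in $\e B_r$. Summing over the finitely many charts and adding the contribution of $\varphi_0$ (which is trivially bounded by $G_\e^{r_1}(u,A)\le G_\e^r(u,A)$ for $r_1\le r$) yields the claimed inequality.

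The main obstacle is step (ii): the reflection $R_k$ is only Lipschitz (its Jacobian matrix is not constant and $\gamma_k$ is not differentiable), so the change of variables $x=R_k(y)$ must be justified for the nonlocal (two-point) integral, and one must ensure the pushed-forward increments genuinely correspond to admissible increments of $u$ on $A$ — i.e. that the map $(y,\xi)\mapsto (R_k(y),\,\e^{-1}(R_k(y+\e\xi)-R_k(y)))$ has a controlled Jacobian and image contained in $\{(x,\eta): x\in A_\e(\eta),\ |\eta|\le r\}$. This requires carefully fixing the relation between $r_1$, $r$, the Lipschitz constant $L$, and $\e_0$ (so that nothing near the boundary escapes a single chart), and using a coarea- or slicing-type argument — or Fubini together with the a.e. differentiability of $\gamma_k$ — to handle the non-smoothness. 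The handling of the cross terms in (i) is routine once one notes $|\nabla\varphi_k|\le C$ and uses Jensen's inequality on balls of radius $\e r_1$. Everything else (finiteness of the cover, linearity and $L^p$-continuity of $E$, the choice of $\tilde A$) is standard.
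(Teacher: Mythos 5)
Your proposal is correct and follows essentially the same route as the paper's proof: a finite cover of the boundary by Lipschitz charts, a bi-Lipschitz reflection fixing $\partial A$, a partition of unity defining $Eu=\sum_i\varphi_i\,u\circ R_i$, the key geometric bound $|R_i(y)-x|\le \e r$ for $|y-x|\le\e r_1$ obtained through a boundary point (the paper takes $r_1=r/(1+2L^2)$ where $L$ bounds the chart maps), a change of variables with bounded Jacobian to compare with $G_\e^r(u,A)$, and absorption of the cut-off cross terms into $\|u\|^p_{L^p(A;\rr^m)}$. The technical obstacle you flag is handled in the paper without any coarea or slicing argument: one simply rewrites $G_\e^{r_1}$ as $\e^{-d}\int\int_{\{|y-x|\le\e r_1\}}|Eu(y)-Eu(x)|^p\e^{-p}\,dy\,dx$, splits according to whether $x,y$ lie in $A$ or not, and changes variables $y'=R_i(y)$ (and $x'=R_i(x)$ in the outside--outside term), each being a standard bi-Lipschitz change of variables in a single point variable.
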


\begin{proof}
We follow the construction of the extension of functions in fractional Sobolev spaces (see~\cite{dinpalval}).
From the Lipschitz regularity of the boundary we can find a finite open covering $\{U_i\}_{i=1}^n$ of $\partial A$ and Lipschitz invertible maps $H_i:Q\to U_i$ with $\|DH_i\|_{L^\infty(A)}\le L$ and $\|DH_i^{-1}\|_{L^\infty(U_i)}\le L$, such that
$$H_i(Q^+)=A\cap U_i,\quad H_i(Q^-)=A^c\cap U_i,\quad H_i(Q_0)=\partial A\cap U_i,$$
where $Q=(-1,1)^d$, $Q^{\pm}=\{x\in Q\,|\,\pm x_1>0\}$ and $Q^0=\{x\in Q\,|\, x_1=0\}$.
Let $\{\varphi_i\}_{i=0}^n\subset C_0^\infty(\rr^d)$ be a partition of the unity, that is $V_i:=\supp(\varphi_i)\Subset U_i$ and
$$
\sum_{i=0}^n \varphi_i(x)=1, \text{ for every } x\in A,
$$
where $U_0\supset A\backslash \bigcup_{i=1}^n U_i$ is an open set.
Define the maps $R_i:A^c\cap U_i\to A\cap U_i$ as follows
$$R_i(x)=H_i(-y_1,y_2,\dots,y_d),$$
where $H_i^{-1}(x)=y=(y_1,\dots,y_d)\in Q^-$.
Note that $R_i$ are invertible Lipschitz maps of Lipschitz constant less than $L^2$.
Given $u\in L^p(A;\mathbb{R}^m)$, define
$$
u_i(x) :=
\begin{cases}
u(x) & x\in A\cap U_i \\
u(R_i(x)) & x\in A^c\cap U_i,
\end{cases}
\quad
\tilde u_i(x) := \varphi_i(x) u_i(x).
$$
Then $\tilde u:=\sum_{i=0}^n \tilde u_i$ extends $u$ on $\tilde A=\bigcup_{i=0}^n U_i$.
For every $0\le i\le n$ we have
\begin{equation}\label{norm-control}
\int_{U_i}|u_i(x)|^p dx \le C\int_{U_i\cap A}|u(x)|^p dx,
\end{equation}
where $C>0$ depends only on $A$, which yields $\|\tilde u\|_{L^p(\tilde A;\rr^m)}^p\le C\|u\|^p_{L^p(A;\rr^m)}$.
Now, we show that for every $1\le i\le n$
\begin{equation}\label{reflections-control}
G_\e^{r_1}(u_i,U_i) \le C G_\e^r(u,U_i\cap A)
\end{equation}
with $r_1=r/(1+2L^2)$, which is trivial when $i=0$.
Using the change of variable $y=x+\e\xi$, it is convenient to rewrite the energy functionals as
\begin{align}\nonumber
G_\e^{r_1}(u_i,U_i) &= \frac{1}{\e^d} \int_{U_i}\int_{U_i\cap B_{\e r_1}(x)} \Big|\frac{u_i(y)-u_i(x)}{\e} \Big|^p dy\, dx \\ \nonumber
&\le G_\e^r(u,U_i\cap A) \\ \label{ext-ineq1}
&\quad\quad + \frac{1}{\e^d}\int_{U_i\cap A}\int_{(U_i\backslash A)\cap B_{\e r_1}(x)}\Big|\frac{u_i(y)-u_i(x)}{\e}\Big|^p dy\,dx
\\ \label{ext-ineq2}
&\quad\quad +\frac{1}{\e^d}\int_{U_i\backslash A}\int_{U_i\cap A\cap B_{\e r_1}(x)}\Big|\frac{u_i(y)-u_i(x)}{\e}\Big|^p dy\,dx
\\ \label{ext-ineq3}
&\quad\quad + \frac{1}{\e^d}\int_{U_i\backslash A}\int_{(U_i\backslash A)\cap B_{\e r_1}(x)}\Big|\frac{u_i(y)-u_i(x)}{\e}\Big|^p dy dx.
\end{align}
For every $1\le i\le n$ and $x\in U_i$, we claim that $|R_i(y)-x| \le \e r$ for any $y\in(U_i\backslash A)\cap B_{\e r_1}(x)$.
Indeed, if $z\in U_i\cap B_{\e r_1}(x)\cap\partial A$ then $R_i(z)=z$ and therefore
$$
|R_i(y)-x| \le |R_i(y)-R_i(z)| + |z-x| \le 2L^2 |y-z| + |z-x|.
$$
Thus, $R_i((U_i\backslash A)\cap B_{\e r_1}(x))\subset U_i\cap A\cap B_{\e r}(x)$, and, after the change of variable $y'=R_i(y)$, we obtain
$$\int_{U_i\cap A} \int_{(U_i\backslash A)\cap B_{\e r_1}(x)} \Big|\frac{u_i(y)-u_i(x)}{\e}\Big|^p dy\,dx
\le L^{2d}\int_{U_i\cap A}\int_{U_i\cap A\cap B_{\e r}(x)}\Big|\frac{u(y')-u(x)}{\e}\Big|^p dy' dx,$$
that controls the term in \eqref{ext-ineq1}.
Applying the same argument, we obtain an analogous estimate for the integral in \eqref{ext-ineq2}.
By the changes of variable $y'=R_i(y)$ and $x'=R_i(x)$ we also get
$$
\int_{U_i\backslash A}\int_{(U_i \backslash A)\cap B_{\e r_1}(x)}\Big|\frac{u_i(y)-u_i(x)}{\e}\Big|^p dy\,dx \le L^{4d^2}\int_{U_j\cap A}\int_{U_i\cap A\cap B_{\e r}(x)}\Big|\frac{u(y)-u(x)}{\e}\Big|^p dy\,dx
$$
and the integral in \eqref{ext-ineq3} is controlled as well.
Hence \eqref{reflections-control} holds.
Set
$$
\e_0 = \e_0(r,A) = \frac{1}{r_1}\min_{0\le i\le n}\dist(V_i,U_i^c)>0.
$$
For every $\e<\e_0$ and $0\le i\le n$, by \eqref{reflections-control} and \eqref{norm-control}, summing and subtracting $\varphi_i(x+\e\xi)\tilde u_i(x)$ we get
\begin{equation}\label{cutoff-control}
\begin{aligned}
G_\e^{r_1}(\tilde u_i, \tilde A) &\le 2^{p-1}\int_{B_{r_1}}\int_{(U_i)_\e(\xi)} \Big|\frac{u_i(x+\e\xi)-u_i(x)}{\e}\Big|^p dx\, d\xi \\
& \quad \quad + 2^{p-1}\int_{B_{r_1}}\int_{(U_i)_\e(\xi)}|u_i(x)|^p \Big|\frac{\varphi_i(x+\e\xi)-\varphi_i(x)}{\e}\Big|^p dx\, d\xi \\
&\le C \big( G_\e^r(u,U_i\cap A) + \|u\|_{L^p(U_i\cap A;\rr^m)}^p \big).
\end{aligned}
\end{equation}
Eventually for every $\e<\e_0$ from \eqref{cutoff-control} we obtain
$$
G_\e^{r_1}(\tilde u,\tilde A) \le \sum_{i=0}^n G_\e^{r_1}(\tilde u_i,\tilde A) \le C \big( G_\e^r(u,A)+\|u\|_{L^p(A;\rr^m)}^p \big)
$$
and conclude the proof.
\end{proof}
\subsection{Control of long-range interactions with short-range interactions}

With the following key result we show that interactions of any admissible range can be suitably controlled by the short-range energy $G_\e^{r}$. This is an analogue of Lemma 3.6 in \cite{alicic}, which deals with lattice interactions.

\begin{lemma}\label{boundlemma}
For every $r>0$ there exists a positive constant $C$ such that, for any open set $E\subset\Omega$
and for every $\xi\in\mathbb{R}^d$, $\e>0$ such that
\begin{equation}\label{eps-small}
\e \,r < \dist(E+(0,\e)\xi,\Omega^c)
\end{equation}
and $u\in L^p(\Omega;\mathbb{R}^m)$,
there holds
$$\int_E\Big|\frac{u(x+\e\xi)-u(x)}{\e}\Big|^p dx \le C(|\xi|^p+1) G_\e^r(u,E_{\e,\xi})$$
with $E_{\e,\xi}=E+(0,\e)\xi+B_{\e r}$ and $(0,\e)\xi=\cup\{\epsilon \xi\,:\, \epsilon\in(0,\e)\}$.
\end{lemma}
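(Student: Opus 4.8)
The basic idea is the telescoping-path argument familiar from the lattice setting (cf.\ Lemma 3.6 in \cite{alicic}): one connects the points $x$ and $x+\e\xi$ by a chain of roughly $|\xi|/r$ intermediate points, each consecutive pair being at distance at most $\e r$, so that each increment along the chain is controlled by the short-range energy $G_\e^r$, and then one sums up using the discrete H\"older (or Jensen) inequality, which produces the factor $(|\xi|^p+1)$ coming from the number of steps.

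More concretely, I would first reduce to the case $|\xi|\ge r$ (for $|\xi|<r$ the estimate is essentially immediate, up to adjusting $C$, since $B_{\e|\xi|}\subset B_{\e r}$ and one integrates directly). Set $N=N(\xi):=\lceil 2|\xi|/r\rceil$, write $v=\xi/N$ so that $|v|<r/2\le r$, and decompose
$$
u(x+\e\xi)-u(x)=\sum_{k=0}^{N-1}\bigl(u(x+\e(k+1)v)-u(x+\e kv)\bigr).
$$
By H\"older's inequality in the sum (with $N$ terms),
$$
\Bigl|\frac{u(x+\e\xi)-u(x)}{\e}\Bigr|^p\le N^{p-1}\sum_{k=0}^{N-1}\Bigl|\frac{u(x+\e(k+1)v)-u(x+\e kv)}{\e}\Bigr|^p.
$$
Integrating over $E$ and performing, for each $k$, the change of variables $x'=x+\e kv$, the $k$-th term becomes $\int_{E+\e kv}\bigl|\tfrac{u(x'+\e v)-u(x')}{\e}\bigr|^p\,dx'$. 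Since $|v|<r$, the inner difference is of the form appearing in $G_\e^r$ (integrated in $\xi$ over $B_r$), and because $E+\e kv\subset E+(0,\e)\xi+B_{\e r}=E_{\e,\xi}$ for every $0\le k\le N-1$ (here one uses that $\e kv\in(0,\e)\xi$, up to the endpoint which is absorbed by the $B_{\e r}$ — this is exactly why $E_{\e,\xi}$ is defined with that fattening), each such term is bounded, after averaging the translation $\e v$ over a small ball, by $C\,G_\e^r(u,E_{\e,\xi})$. Summing over the $N$ values of $k$ and recalling $N\le C(1+|\xi|)$, one gets $N^{p-1}\cdot N\cdot C\,G_\e^r(u,E_{\e,\xi})\le C(1+|\xi|)^p G_\e^r(u,E_{\e,\xi})\le C(|\xi|^p+1)G_\e^r(u,E_{\e,\xi})$.

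The technical point that needs care — and what I expect to be the main obstacle — is the passage from a single fixed small translation $\e v$ to the integral $G_\e^r(u,E_{\e,\xi})=\int_{B_r}\int_{(E_{\e,\xi})_\e(\eta)}|\tfrac{u(x+\e\eta)-u(x)}{\e}|^p\,dx\,d\eta$, which integrates over all directions $\eta\in B_r$ rather than just $v$. The standard device (used also in the ``translated kernel'' remark above) is to average over $\eta$ in a ball of radius $\rho<r$ around $v$: replacing $u(x'+\e v)-u(x')$ by the telescoping $u(x'+\e v)-u(x'+\e(v-\eta')) + u(x'+\e(v-\eta'))-\cdots$ is not what one wants; instead one writes, for $\eta'\in B_\rho$, $u(x'+\e v)-u(x') = \bigl(u(x'+\e v)-u(x'+\e\eta')\bigr)+\bigl(u(x'+\e\eta')-u(x')\bigr)$, applies convexity, integrates both pieces in $\eta'$ over $B_\rho$, and changes variables so that both resulting integrands appear as $|u(\cdot+\e\cdot)-u(\cdot)|^p$ with the inner translation ranging over a ball contained in $B_r$ and the base point ranging within $E_{\e,\xi}$. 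Choosing $\rho$ so that $|v|+\rho<r$ (e.g.\ $\rho=r/4$, consistent with $|v|<r/2$) makes this work, and the hypothesis \eqref{eps-small} guarantees that all the translated domains stay inside $\Omega$, so every quantity written is well defined. Tracking the geometric inclusions of the domains carefully throughout — in particular verifying $E+\e kv+\e(\text{ball})\subset E_{\e,\xi}$ and that $x'+\e\eta'\in\Omega$ — is the only delicate bookkeeping; everything else is H\"older, Jensen, and changes of variables with unit Jacobian.
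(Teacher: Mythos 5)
Your argument is correct in substance and reaches the stated bound with the right dependence $C(|\xi|^p+1)$, but it implements the key step differently from the paper. The paper also runs a chain argument with $k\approx|\xi|/r$ links, but it does so by placing a rotated lattice of cubes of side $\e r/\sqrt{d+3}$ along the direction $\xi$, choosing the intermediate points freely inside consecutive cubes and integrating over all of them: since each cube $Q_\e^{j_l}$ is contained in $B_{\e r}(x_{l-1})$, every link automatically becomes a genuine $G_\e^r$-contribution, and the final summation over starting cubes uses that the chains $Q_\e(j_0)$ overlap at most $k-1$ times. You instead keep deterministic intermediate points $x+\e kv$ with a fixed step $v=\xi/N$, $|v|\le r/2$, and convert each single-direction increment into the direction-averaged energy by the two-term splitting $u(x'+\e v)-u(x')=(u(x'+\e v)-u(x'+\e\eta'))+(u(x'+\e\eta')-u(x'))$ averaged over $\eta'\in B_{r/4}$ — exactly the device the paper uses in its remark on translated kernels. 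Your route is more modular (telescoping plus that averaging trick, no lattice geometry and no overlap count), each link being bounded by the full energy $G_\e^r(u,E_{\e,\xi})$ so that the sum costs a factor $N$, absorbed together with H\"older's $N^{p-1}$ into $(|\xi|^p+1)$; the paper's construction packages the positional and directional averaging in one stroke. One inaccuracy to fix: the case $|\xi|<r$ is \emph{not} ``immediate by direct integration'' — a difference quotient in a single direction is never dominated pointwise by the energy $G_\e^r$, which integrates over directions $\eta\in B_r$ (a single direction has measure zero there). However this is not a real gap in your scheme: simply drop the case distinction and take, say, $N=\max\{2,\lceil 2|\xi|/r\rceil\}$, so that the same averaging argument covers small $|\xi|$ as well. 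With that adjustment, and the domain bookkeeping you already indicate ($E+\e kv+B_{\e r/4}\subset E_{\e,\xi}$ for $0\le k\le N$, with the endpoints $k=0$ and $k=N$ absorbed by the open fattening $B_{\e r}$, and \eqref{eps-small} guaranteeing $E_{\e,\xi}\subset\Omega$), the proof is complete.
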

\begin{proof}
First, note that condition \eqref{eps-small} implies that $E_{\e,\xi}\subset\Omega$ and thus the terms in the inequality above are well defined.
For notational reasons we set $\e'=\e r/\sqrt{d+3}$.
Let $R_\xi\in\mathbb{R}^{d\times d}$ be a rotation matrix such that $R_\xi e_1=\xi/|\xi|$.
We introduce the lattice $\mathcal{L}_\e:=\{R_\xi i\,|\, i\in\e'\mathbb{Z}^d\}$ and, for any $j\in\mathcal{L}_\e$ define $Q_\e^j:=j+R_\xi (-\e'/2,\e'/2)^d$ and set
$$\tilde E_{\e,\xi}:=\bigcup_{j\in\mathcal{L}_\e}\big\{Q_\e^j\,|\,Q_\e^j\cap (E+(0,\e)\xi)\not=\emptyset\big\}.$$
Let  $k=\lceil \e|\xi|/\e'\rceil+1$, and, for any $j_0\in\mathcal{L}_\e$ and $0\le l\le k-1$, define
$j_l=j_0+l\e'{\xi\over|\xi|}$. Denote by $x_l$ any point in $Q_\e^{j_l}$ and, for the sake of simplicity of notation, $x_k=x_0+\e\xi$ and $Q_\e^{j_k}=Q_\e^{j_0}+\e\xi$. Note that $x_0+\e\xi\in Q_\e^{j_{k-2}}\cup Q_\e^{j_{k-1}}$  for every $x_0\in Q_\e^{j_0}$,.

Using the inequality
$$\Bigl|\frac{u(x_k)-u(x_0)}{\e}\Bigr|^p\le k^{p-1} \sum_{l=1}^k\Bigl|\frac{u(x_l)-u(x_{l-1})}{\e}\Bigr|^p$$
and integrating in every variable we get
\begin{align*}
\int_{Q_\e^{j_0}}\Bigl|\frac{u(x_0+\e\xi)-u(x_0)}{\e}\Bigr|^p dx_0 &\le \frac{k^{p-1}}{(\e')^d} \sum_{l=1}^k \int_{Q_\e^{j_{l-1}}}\int_{Q_\e^{j_l}}\Bigl|\frac{u(x_l)-u(x_{l-1})}{\e}\Bigr|^p dx_l\, dx_{l-1} \\
&\le \frac{k^{p-1}}{(\e')^d} \sum_{l=1}^k \int_{Q_\e^{j_{l-1}}}\int_{B_{\e r}(x_{l-1})}\Bigl|\frac{u(y)-u(x_{l-1})}{\e}\Bigr|^p dy\, dx_{l-1}\, .
\end{align*}
Then, by the change of variable $y=x_{l-1}+\e\xi'$
\begin{align*}
\int_{Q_\e^{j_0}}\Big|\frac{u(x_0+\e\xi)-u(x_0)}{\e}\Big|^p dx &\le \Big(\frac{\e}{\e'}\Big)^d k^{p-1} \sum_{l=1}^k \int_{B_r}\int_{Q_\e^{j_{l-1}}}\Big|\frac{u(x_{l-1}+\e\xi')-u(x_{l-1})}{\e}\Big|^p dx_{l-1}\, d\xi' \\
&\le C k^{p-1}\int_{B_r}\int_{Q_\e(j_0)}\left|\frac{u(x+\e\xi')-u(x)}{\e}\right|^p dx\, d\xi'\, ,
\end{align*}
with $Q_\e(j_0)=\bigcup_{l=1}^{k-1} Q_\e^{j_l}$ and $C$ a positive constant depending on $r$ and $d$.
Since the sets $\{Q_\e(j_0) \,|\, j_0\in\mathcal{L}_\e\}$ overlap at most $k-1$ times, by summing over $j_0\in\mathcal{L}_\e$ such that $Q_\e^{j_0}\cap E\not=\emptyset$ we get that
$$\int_E\left|\frac{u(x+\e\xi)-u(x)}{\e}\right|^pdx \le C (|\xi|^p+1) G_\e^r(u,\tilde E_{\e,\xi}).$$
Since dist$(E,\tilde E_{\e,\xi}^c)<\e r$ then $\tilde E_{\e,\xi}\subset E_{\e,\xi}$ and the result follows.
\end{proof}

As a consequence of Lemma \ref{boundlemma} and Theorem \ref{ext} we infer the following result.

\begin{corollary}\label{boundlemma-lip}
For any open set $A\in\mathcal{A}^{\rm reg}(\Omega)$ and $r>0$ there exist two positive constants $C=C(A)$ and $\e_0=\e_0(r,A)$ such that for every $\xi\in\mathbb{R}^d$ and $u\in L^p(A;\mathbb{R}^m)$ there holds
$$\int_{A_\e(\xi)}\left|\frac{u(x+\e\xi)-u(x)}{\e}\right|^p dx \le C(|\xi|^p+1) \big( G_\e^r(u,A)+\|u\|_{L^p(A;\rr^m)}^p \big),$$
for every $\e<\e_0$.
\end{corollary}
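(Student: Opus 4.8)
The plan is to combine the extension result (Theorem \ref{ext}) with the long-range control estimate (Lemma \ref{boundlemma}), which together reduce the claim to a routine bookkeeping of constants. First I would apply Theorem \ref{ext} to $A$ with the given radius $r$: since $A\in\mathcal A^{\rm reg}(\Omega)$ is an open Lipschitz set, there exist an open set $\tilde A\Supset A$, a continuous extension operator $E\colon L^p(A;\rr^m)\to L^p(\tilde A;\rr^m)$, and constants $C=C(A)$, $r_1=r_1(r,A)$, $\e_0'=\e_0'(r,A)$ with
$$
\|Eu\|^p_{L^p(\tilde A;\rr^m)}+G_\e^{r_1}(Eu,\tilde A)\le C\big(\|u\|^p_{L^p(A;\rr^m)}+G_\e^r(u,A)\big)
$$
for all $\e<\e_0'$. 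Since $E$ is an extension, $Eu=u$ on $A$, hence for every $\xi$ the quantity $\int_{A_\e(\xi)}\big|\tfrac{u(x+\e\xi)-u(x)}{\e}\big|^p dx$ is bounded by $\int_{E'}\big|\tfrac{Eu(x+\e\xi)-Eu(x)}{\e}\big|^p dx$ for a suitable set $E'$ on which $Eu$ is defined; the point of passing to $\tilde A$ is precisely that $Eu$ is globally available so that the translations $x+\e\xi$ do not exit the domain.

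The second step is to apply Lemma \ref{boundlemma} with $\Omega$ replaced by $\tilde A$, with radius $r_1$, and with $E'$ chosen so that $A\subset E'$ and $E'_{\e,\xi}=E'+(0,\e)\xi+B_{\e r_1}\subset\tilde A$. Because $\tilde A\Supset A$, one has $\dist(A,\tilde A^c)=:2\eta>0$; choosing $E'$ to be an $\eta$-neighbourhood of $A$ contained in $\tilde A$ and restricting to $\e<\e_1(r,A):=\eta/(r_1(|\xi|+1))$ — or, more carefully, handling small $|\xi|$ and large $|\xi|$ separately so that the $\e$-threshold does not depend on $\xi$ — the hypothesis \eqref{eps-small} of Lemma \ref{boundlemma} is met. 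Actually the cleaner route, and the one I would adopt, is: for $|\xi|\le r_1$ the estimate is immediate from the definition of $G_\e^{r_1}(Eu,\tilde A)$ up to constants; for $|\xi|>r_1$ one still only needs $\e\,r_1<\dist(A+(0,\e)\xi,\tilde A^c)$, and since $A+(0,\e)\xi\subset A+B_{\e|\xi|}$, this holds as soon as $\e(|\xi|+r_1)<2\eta$. The difficulty here is that this bound on $\e$ degrades as $|\xi|\to\infty$; to obtain a single $\e_0=\e_0(r,A)$ independent of $\xi$ one observes that for $\e|\xi|\ge\eta$ (i.e.\ when the translation is "large") the left-hand side is controlled directly by $\tfrac{C}{\eta^p}\|u\|_{L^p}^p\cdot\tfrac{|\xi|^p}{|\xi|^p}$-type estimates combined with a trivial bound, since $|u(x+\e\xi)-u(x)|^p\le 2^{p-1}(|u(x+\e\xi)|^p+|u(x)|^p)$ and $\e^{-p}\le(|\xi|/\eta)^p\le C(|\xi|^p+1)$; this absorbs the problematic regime into the $\|u\|_{L^p(A;\rr^m)}^p$ term. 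So the final $\e_0$ can be taken to be $\min\{\e_0',\eta/(2r_1)\}$ up to the splitting just described.

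Combining: for $\e<\e_0$ and $|\xi|\le\eta/\e$, Lemma \ref{boundlemma} gives
$$
\int_{A_\e(\xi)}\Big|\frac{u(x+\e\xi)-u(x)}{\e}\Big|^p dx\le \int_{E'}\Big|\frac{Eu(x+\e\xi)-Eu(x)}{\e}\Big|^p dx\le C(|\xi|^p+1)\,G_\e^{r_1}(Eu,\tilde A),
$$
and then the extension estimate bounds $G_\e^{r_1}(Eu,\tilde A)$ by $C(\|u\|^p_{L^p(A;\rr^m)}+G_\e^r(u,A))$; for the complementary regime $|\xi|>\eta/\e$ the trivial bound above yields the same right-hand side. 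Relabelling the constant $C=C(A)$ (it absorbs $L^{2d}$-type factors from Theorem \ref{ext}, the constant from Lemma \ref{boundlemma} depending on $r_1$ hence on $r$ and $A$, and $\eta^{-p}$) completes the proof. The only genuinely delicate point, as flagged, is making the $\e$-threshold uniform in $\xi$; everything else is a direct composition of the two preceding results.
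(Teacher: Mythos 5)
Your proof is correct and is essentially the paper's own (unwritten) argument: the corollary is presented there as a direct consequence of Theorem \ref{ext} and Lemma \ref{boundlemma}, and your composition of the extension estimate with Lemma \ref{boundlemma} on $\tilde A$, supplemented by the trivial bound $|u(x+\e\xi)-u(x)|^p\le 2^{p-1}(|u(x+\e\xi)|^p+|u(x)|^p)$ together with $\e^{-p}\le |\xi|^p/\eta^p$ in the regime $\e|\xi|\ge\eta$, correctly supplies the uniformity of $\e_0$ in $\xi$ that the paper leaves implicit. One small caveat: the parenthetical claim that for $|\xi|\le r_1$ the estimate is ``immediate from the definition of $G_\e^{r_1}(Eu,\tilde A)$'' is not right as stated, since a fixed-$\xi$ slice is not controlled by the $\xi$-integrated energy, but this is harmless because your final combination applies Lemma \ref{boundlemma} to every $|\xi|\le\eta/\e$.
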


\begin{remark}[short-range control]\label{boundremark}
From assumption (H1) and Corollary \ref{boundlemma-lip} we deduce that
for every $A\in\mathcal{A}^{\rm reg}(\Omega)$ there exists a positive constant $C=C(A)$ such that
for every $u\in L^p(\Omega;\mathbb{R}^m)$ there exist $\e_0,r'$ depending on $\|u\|_{L^p}$ such that
$$F_\e(u,A) \le C (G_\e^{r'}(u,A)+1)$$
for every $\e<\e_0$.
\end{remark}

\subsection{Compactness}\label{compactness}

We now discuss the compactness in the strong $L^p$ topology of sequences of functions with uniformly bounded energy.


\begin{theorem}\label{kolcom}
Let $A$ be any open Lipschitz set of $\rr^d$ with $\partial A$ bounded. Let $\{u_\e\}_\e\subset L^p(A;\mathbb{R}^m)$ be such that for some $r>0$
$$\sup_{\e>0} \left\{\| u_\e\|_{L^p(A;\rr^m)}+G_\e^{r}(u_\e,A)\right\}<+\infty.$$
If $A$ is unbounded, assume in addition that for any $\eta>0$ there exists $r_\eta>0$ such that
\begin{equation}\label{boundedtails}
\sup_{\e>0}\|u_\e\|_{L^p(A\backslash B_{r_\eta})}<\eta.
\end{equation}
Then, given $\e_j\to 0$, $\{u_{\e_j}\}_j$ is relatively compact in $L^p(A;\mathbb{R}^m)$ and every limit of a converging subsequence is in $W^{1,p}(A;\mathbb{R}^m)$.
\end{theorem}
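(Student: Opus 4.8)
The plan is to transplant the proof of the Riesz--Fr\'echet--Kolmogorov compactness criterion to this setting, the convolution energies $G_\e^r$ playing the role that the $L^p$-norm of the gradient plays classically. Concretely, I would: (i) reduce to a family of functions defined on all of $\rr^d$ and supported in a fixed compact set; (ii) control $L^p$-translations of these functions by $G_\e^r$; (iii) conclude precompactness by a mollification argument; and (iv) obtain the $W^{1,p}$-regularity of the limit by passing to the limit in a localized version of the translation estimate.

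\emph{Reduction to $\rr^d$.} Assume first $A$ bounded, so $\overline A$ is compact and, by Theorem \ref{ext}, $\overline A\Subset\tilde A$. With $v_\e:=Eu_\e$ and $\varphi\in C_c^\infty(\tilde A)$ equal to $1$ near $\overline A$, the functions $w_\e:=\varphi v_\e$ (extended by $0$) coincide with $u_\e$ on $A$, are supported in the fixed compact set $\supp\varphi$, and, expanding $w_\e(x+\e\xi)-w_\e(x)$ by the Leibniz rule and using that $\varphi$ is Lipschitz, satisfy $G_\e^{r_1}(w_\e,\rr^d)\le C\bigl(G_\e^{r_1}(v_\e,\tilde A)+\|v_\e\|_{L^p(\tilde A;\rr^m)}^p\bigr)$, which Theorem \ref{ext} in turn bounds by $C\bigl(G_\e^{r}(u_\e,A)+\|u_\e\|_{L^p(A;\rr^m)}^p\bigr)$. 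Hence $\{w_{\e_j}\}$ is bounded in $L^p(\rr^d;\rr^m)$, equi-supported in a fixed compact, and $\sup_j G_{\e_j}^{r_1}(w_{\e_j},\rr^d)<+\infty$; it then suffices to prove $\{w_{\e_j}\}$ relatively compact in $L^p(\rr^d;\rr^m)$, and restriction gives the claim for $\{u_{\e_j}\}$ on $A$. If $A$ is unbounded, then for radii $R_k\uparrow+\infty$ with $\partial A\subset B_{R_k}$ the sets $A\cap B_{R_k}$ are bounded Lipschitz domains; applying the bounded case to each (note $G_\e^r(u_\e,A\cap B_{R_k})\le G_\e^r(u_\e,A)$) and a diagonal extraction give a subsequence converging in every $L^p(A\cap B_{R_k};\rr^m)$, and \eqref{boundedtails} upgrades this to $L^p(A;\rr^m)$-convergence (Step (iv), applied on each $A\cap B_{R_k}$, gives the Sobolev regularity). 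So from here I concentrate on the bounded case and the family $\{w_\e\}$.

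\emph{Translation estimate and precompactness.} On $\rr^d$ condition \eqref{eps-small} is vacuous, so Lemma \ref{boundlemma} with $E=\rr^d$ and $\xi=h/\e$ gives, for every $h\in\rr^d$,
$$\int_{\rr^d}|w_\e(x+h)-w_\e(x)|^p\,dx\le C\,(|h|^p+\e^p)\,G_\e^{r_1}(w_\e,\rr^d)\le C'\,(|h|^p+\e^p).$$
This is the heart of the matter: a macroscopic shift $h$ corresponds to the microscopic interaction range $\xi=h/\e$, which blows up as $\e\to0$, and Lemma \ref{boundlemma} is exactly the device that bounds such long-range interactions by the short-range energy $G_\e^{r_1}$, up to a factor $|\xi|^p+1$ that the $\e^p$ scaling reabsorbs. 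With standard mollifiers $\rho_\sigma$ this yields $\|w_\e-w_\e*\rho_\sigma\|_{L^p}\le C^{1/p}(\sigma+\e)$, while for fixed $\sigma$ the family $\{w_{\e_j}*\rho_\sigma\}_j$ is equibounded and equi-Lipschitz on the fixed compact $\supp\varphi+\overline{B_\sigma}$, hence precompact in $C^0$, and so in $L^p$, by Arzel\`a--Ascoli. A diagonal argument over $\sigma=1/k$ extracts, from any subsequence of $\{w_{\e_j}\}$, a further subsequence $\{w_{\e_{j_l}}\}$ along which $w_{\e_{j_l}}*\rho_{1/k}$ converges for every $k$; the splitting
$$\|w_{\e_{j_l}}-w_{\e_{j_m}}\|_{L^p}\le\|w_{\e_{j_l}}-w_{\e_{j_l}}*\rho_{1/k}\|_{L^p}+\|w_{\e_{j_l}}*\rho_{1/k}-w_{\e_{j_m}}*\rho_{1/k}\|_{L^p}+\|w_{\e_{j_m}}*\rho_{1/k}-w_{\e_{j_m}}\|_{L^p},$$
choosing first $k$ large and then $l,m$ large (using $\e_{j_l}\to0$), shows $\{w_{\e_{j_l}}\}$ is Cauchy in $L^p(\rr^d;\rr^m)$. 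Hence $\{w_{\e_j}\}$, and therefore $\{u_{\e_j}\}$ on $A$, is relatively compact in $L^p$.

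\emph{The limit is Sobolev; the main obstacle.} Let $u_{\e_j}\to u$ in $L^p(A;\rr^m)$ and fix any $A'\Subset A$. For $|h|<\dist(A',A^c)$ and $\e_j$ small, condition \eqref{eps-small} holds with $E=A'$ and $\xi=h/\e_j$, so Lemma \ref{boundlemma} gives $\int_{A'}|u_{\e_j}(x+h)-u_{\e_j}(x)|^p\,dx\le C(|h|^p+\e_j^p)\,G_{\e_j}^{r}(u_{\e_j},A)\le CM(|h|^p+\e_j^p)$, $M$ being the finite supremum in the hypothesis; letting $j\to+\infty$ (using $u_{\e_j}\to u$ in $L^p$) yields $\|u(\cdot+h)-u\|_{L^p(A';\rr^m)}^p\le CM|h|^p$. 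Since $p>1$, the classical difference-quotient characterisation of Sobolev functions (see e.g.~\cite{Leoni}) then gives $u\in W^{1,p}(A';\rr^m)$ with $\|\nabla u\|_{L^p(A';\rr^m)}^p\le CM$; as this bound is independent of $A'$ and $A$ is exhausted by such sets, $u\in W^{1,p}(A;\rr^m)$. The genuine difficulty I anticipate is the interplay of the two vanishing parameters $h$ and $\e$: the bound $\|u_\e(\cdot+h)-u_\e\|_{L^p}^p\le C(|h|^p+\e^p)$ is \emph{not} the honest Kolmogorov equicontinuity condition, since it is not small uniformly in $\e$, so turning it into precompactness needs the mollification-plus-finite-part bookkeeping above; the conceptual key that makes everything go through is Lemma \ref{boundlemma}, which tames the divergence of the interaction range $\xi=h/\e$.
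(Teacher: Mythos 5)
Your proof is correct, and while its skeleton --- extension, mollification, Ascoli--Arzel\`a, i.e.\ a nonlocal Riesz--Fr\'echet--Kolmogorov argument --- coincides with the paper's, you implement the two key steps differently. For the closeness of $u_\e$ to its mollification the paper does not pass through a translation estimate at macroscopic shifts: it takes $\eta=m\tilde r\e$, telescopes the $\eta$-shift inside the mollification integral into $m$ shifts of size $\tilde r\e$, and bounds the error directly by $(m\e)^p\tilde r^{-d}G_\e^{\tilde r}(\tilde u_\e,\tilde A)$; you instead invoke Lemma \ref{boundlemma} with $\xi=h/\e$, which packages the same chaining and gives $\|w_\e(\cdot+h)-w_\e\|^p_{L^p}\le C(|h|^p+\e^p)$. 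For the $W^{1,p}$ membership of the limit the paper simply cites Proposition \ref{G-liminf-lem} (a forward reference to the slicing lower bound of Section \ref{particular}), whereas you re-use Lemma \ref{boundlemma} on $A'\Subset A$, pass to the limit, and apply the difference-quotient characterization (valid since $p>1$); this keeps the argument self-contained within Section \ref{priers} and yields the explicit bound $\|Du\|^p_{L^p(A)}\le C\sup_\e G^r_\e(u_\e,A)$ as a by-product, at the price of introducing the extra cut-off step that reduces to compactly supported functions on $\rr^d$, which the paper avoids by working directly on $A$. You also treat the unbounded case by exhaustion with $A\cap B_{R_k}$ plus a diagonal extraction, while the paper handles it in one pass via \eqref{boundedtails} and a finite $\eta$-net (total boundedness) on a single $A_\eta\Subset A$; both are sound. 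The one point you should make explicit is that Lemma \ref{boundlemma} is stated for $E\subset\Omega$ with $\Omega$ the fixed bounded domain, so when you apply it with $E=\rr^d$ (or with unbounded ambient set) you should either remark that its chaining proof is insensitive to this, or apply it on a large ball containing $\supp w_\e\cup(\supp w_\e-h)$, which suffices because the integrand vanishes elsewhere and the constant in the lemma depends only on $r$, $d$ and $p$; with that remark added, your argument is complete.
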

\begin{proof}
By Theorem \ref{ext}, there exists $\tilde A\Supset A$, $\tilde r>0$ and  $\tilde u_\e \in L^p(\tilde A;\rr^m)$ such that $\tilde u_\e=u_\e$ on $A$ and
 $$
 \|\tilde u_\e\|^p_{L^p(\tilde A;\rr^m)}+G_\e^{\tilde r}(\tilde u_\e,\tilde A)\le C\|u_\e\|^p_{L^p(A;\rr^m)} + C G_\e^{r}(u_\e,A).
 $$
Set $R=$dist$(A,\tilde A^c)$ and let $\{\phi_\eta\}_{\eta<R}$ be a family of mollifiers; i.e., $\phi_\eta(x)=\phi_1(x/\eta)/\eta^d$, where $\phi_1\in C^\infty_c(\mathbb{R}^d)$, $\phi_1\ge 0$, supp$(\phi_1)\subset B_1$ and $\|\phi_1\|_{L^1(B_1)}=1$.
Note that the convolution product $\tilde u_\e*\phi_\eta(x)$ is well defined for every $x\in A$ and by the standard  properties of the convolution $\tilde u_\e*\phi_\eta\in C^\infty(A;\mathbb{R}^m)$ and
\begin{equation}\label{AAest}
\begin{aligned}
& \| \tilde u_\e*\phi_\eta\|_{L^\infty(A;\rr^m)}\le \| \tilde u_\e\|_{L^p(\tilde A;\rr^m)}\|\phi_\eta\|_{L^{p'}(B_\eta)}=\| \tilde u_\e\|_{L^p(\tilde A;\rr^m)}\|\phi_1\|_{L^{p'}(B_1)}\eta^\frac{d(1-p')}{p'} \\
& \| \nabla(\tilde u_\e*\phi_\eta)\|_{L^\infty(A;\rr^m)}\le \|\tilde  u_\e\|_{L^p(\tilde A;\rr^m)}\|\nabla\phi_\eta\|_{L^{p'}(B_\eta)}=\| \tilde u_\e\|_{L^p(\tilde A;\rr^m)}\|\nabla\phi_1\|_{L^{p'}(B_1)}\eta^{\frac{d(1-p')}{p'}-1}.
\end{aligned}
\end{equation}
Moreover, by Jensen's inequality we have
\begin{equation}\label{modcont1}
\begin{aligned}
\| u_\e-\tilde u_\e*\phi_\eta\|_{L^p(A;\rr^m)}^p &= \int_{A}\Big |u_\e(x)-\int_{B_\eta}\tilde u_\e(x-y)\phi_\eta(y)dy\Big|^p dx \\
&\le \int_{A}\int_{B_\eta}|\tilde u_\e(x)-\tilde u_\e(x-y)|^p\phi_\eta(y)dy\,dx
 \\
&= \int_{B_1}\int_{A}|\tilde u_\e(x)-\tilde u_\e(x+\eta\xi)|^p\phi_1(\xi) dx \, d\xi.
\end{aligned}
\end{equation}
Taking $\eta=m\tilde r\e$ with $m\in\mathbb{N}$, by \eqref{modcont1}  and Jensen's inequality we get
\begin{align*}
\| u_\e-\tilde u_\e*\phi_{m\tilde r\e}\|_{L^p(A)}^p &\le \int_{B_1}\int_{A}(m\e)^p\left|\sum_{l=0}^{m-1}\frac{\tilde u_\e(x+(l+1)\e \tilde r\xi)-\tilde u_\e(x+l\e \tilde r\xi)}{m\e}\right|^p dx\,d\xi \\
&\le m^{p-1}\e^p\sum_{l=0}^{m-1}\int_{B_1}\int_{A}\left|\frac{\tilde u_\e(x+(l+1)\e \tilde r\xi)-\tilde u_\e(x+l\e \tilde r\xi)}{\e}\right|^p dx\,d\xi \\
&= m^{p-1}\e^p\sum_{l=0}^{m-1}\int_{B_1}\int_{A-l\e\tilde r\xi}\left|\frac{\tilde u_\e(x'+\e \tilde r\xi)-\tilde u_\e(x')}{\e}\right|^p dx' d\xi,
\end{align*}
where in the last equality we have used the change of variable $x'=x+l\e\tilde r\xi$.
Since $A-l\e \tilde r\xi\subset\tilde A_\e(\tilde r\xi)$ for every $0\le l\le m-1$, we get
$$\| u_\e-\tilde u_\e*\phi_{m\tilde r\e}\|_{L^p(A;\rr^m)}^p \le (m\e)^p \int_{B_1}\int_{\tilde A_\e(\tilde r\xi)}\left|\frac{\tilde u_\e(x'+\e \tilde r\xi)-\tilde u_\e(x')}{\e}\right|^p dx' d\xi,$$
and, through the change of variable $\xi'=\tilde r\xi$, we obtain
\begin{equation}\label{modcont2}
\| u_\e-\tilde u_\e*\phi_{m\tilde r\e}\|_{L^p(A;\rr^m)}^p \le \frac{(m\e)^p}{\tilde r^d}G_\e^{\tilde r}(\tilde u_\e, \tilde A).
\end{equation}
By \eqref{AAest} and \eqref{boundedtails} if $A$ is unbounded, we can find $A_\eta\Subset A$ such that
\begin{equation}\label{localAA}
\sup_{\e>0}\|\tilde u_\e*\phi_\eta\|_{L^p(A\setminus A_\eta;\rr^m)}<\eta.
\end{equation}
Given  $\e_j<\tilde r^{-1}\eta$ and setting $\eta_j=\lceil\eta/(\tilde r\e_j)\rceil \tilde r\e_j\ge \eta$, from \eqref{AAest} we get in particular
$$\|\tilde u_{\e_j}*\phi_{\eta_j}\|_{L^\infty(A_\eta;\rr^m)}\le C\eta^{\frac{d(1-p')}{p'}},\quad\|\nabla(\tilde u_{\e_j}*\phi_{\eta_j})\|_{L^\infty(A_\eta;\rr^m)}\le C\eta^{\frac{d(1-p')}{p'}-1}.$$
Hence by Ascoli-Arzel\'a's Theorem, for every fixed $\eta$ the sequence $\{\tilde u_{\e_j}*\phi_{\eta_j}\}_j$ is precompact in $C(A_\eta;\mathbb{R}^m)$ and therefore it is totally bounded, that is there exists a finite set of functions $\{g_k\}_{k=1}^L\subset C(A_\eta;\mathbb{R}^m)$ such that for every $j\in\NN$
\begin{equation}\label{totlim}
\| \tilde u_{\e_j}*\phi_{\eta_j}-g_{k}\|_{L^p(A_\eta;\rr^m)}\le |A_\eta|^{1\over p}\| \tilde u_{\e_j}*\phi_{\eta_j}-g_{k}\|_{L^\infty(A_\eta;\rr^m)}<\eta
\end{equation}
for some $k\in\{1,\dots, L\}$.
So, by \eqref{modcont2}, \eqref{localAA} and \eqref{totlim}, denoting by  $\tilde g_k$ the extension of $g_k$ which equals zero outside $A_\eta$, we have
\begin{align*}
\| u_{\e_j}-\tilde g_{k}\|_{L^p(A;\rr^m)} &\le \|u_{\e_j}-\tilde u_{\e_j}*\phi_{\eta_j}\|_{L^p(A;\rr^m)}+\| \tilde u_{\e_j}*\phi_{\eta_j}-\tilde g_{k}\|_{L^p(A;\rr^m)} \\
&= \|u_{\e_j}-\tilde u_{\e_j}*\phi_{\eta_j}\|_{L^p(A;\rr^m)}+\| \tilde u_{\e_j}*\phi_{\eta_j}-g_{k}\|_{L^p(A_\eta;\rr^m)}\\
&\quad  +\|\tilde u_{\e_j}*\phi_{\eta_j}\|_{L^p(A\setminus A_\eta)}\\
& \le \eta_j\Big(\frac{1}{\tilde r^{d/p-1}} G_{\e_j}^{\tilde r }(\tilde u_{\e_j}, A)^{1\over p}+2\Big) \le C \eta;
\end{align*}
i.e., $\{u_{\e_j}\}_j$ is totally bounded and hence relatively compact in $L^p(A;\mathbb{R}^m)$. Finally Proposition \ref{G-liminf-lem} yields that every limit function is in $W^{1,p}(A;\mathbb{R}^m)$.
\end{proof}

\begin{remark}\label{compactness-remk}
From the previous theorem we deduce a compactness result on any open set $A$ (without regularity assumptions on the boundary) with respect to the local $L^p$-topology.
Namely, that every bounded sequence $\{u_\e\}$ with bounded $G_\e^r$-energy on $A$ is precompact in $L^p(A';\rr^m)$ for every $A'\Subset A$. Indeed, it suffices to apply the previous theorem with a set $A''$ with Lipschitz boundary in the place of $A$, with $A'\Subset A''\Subset A$.
\end{remark}

\subsection{Poincar\'e inequalities}

We complete this section with the analog of Poincar\'e inequalities.

\begin{proposition}[Poincar\'e inequality]\label{poincare}
Let $r>0$ and let  $A$ be an open set in $\rr^d$ such that $A\subseteq (a,b)\times\rr^{d-1}$ for some $a,b\in\rr$.
Then there exists a positive constant $C=C(A)$ such that
$$\int_A |u(x)|^p dx\le C G_\e^{r}(u,A)$$
for every $\e>0$ and for any $u\in L^p(A;\mathbb{R}^d)$ with $u(x)=0$ for almost every $x\in A$ with $\dist(x,A)\leq  \e r$.
\end{proposition}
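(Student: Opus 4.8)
The plan is to transpose to the nonlocal setting the one--dimensional observation that a function vanishing at an endpoint of an interval is controlled in $L^p$ by the $L^p$ norm of its derivative. Here the role of the derivative will be played by short--range difference quotients along a discrete path that exits the slab $(a,b)\times\rr^{d-1}$ through the face $\{x_1\simeq a\}$, and the vanishing at the endpoint is replaced by the assumption that $u=0$ within distance $\e r$ of $A^c$.

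First I would dispose of the degenerate regime. Since $A\subseteq(a,b)\times\rr^{d-1}$, every $x\in A$ has $\dist(x,A^c)\le(b-a)/2$; hence, if $\e r\ge(b-a)/2$, the hypothesis forces $u\equiv 0$ a.e.\ in $A$ and there is nothing to prove. So assume $\e r<(b-a)/2$ and extend $u$ by $0$ outside $A$, keeping the same symbol. Fix $\xi_0:=-\tfrac{r}{2}e_1$ and $\rho:=\tfrac{r}{4}$; then $B_\rho(\xi_0)\subset B_r$ and every $\zeta\in B_\rho(\xi_0)$ satisfies $\zeta_1\le-\tfrac{r}{4}$. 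For $x\in A$ and $\eta\in B_\rho$ put $x^\eta_l:=x+\e\,l(\xi_0+\eta)$, so that $(x^\eta_l)_1\le x_1-\e l\,\tfrac{r}{4}$ and therefore $x^\eta_l\notin A$ once $l\ge K:=\lceil 4(b-a)/(\e r)\rceil$. Because $u$ has been extended by zero, telescoping gives $u(x)=\sum_{l=1}^{K}\big(u(x^\eta_{l-1})-u(x^\eta_l)\big)$.

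The crucial point --- and the only place where the possible non-convexity of $A$ enters --- is that every term of this sum whose segment $[x^\eta_{l-1},x^\eta_l]$ meets $\partial A$ vanishes: if exactly one of the endpoints lies in $A$, then that endpoint is at distance at most $\e|\xi_0+\eta|\le\tfrac{3}{4}\e r\le\e r$ from $A^c$, so $u$ vanishes there by hypothesis; and if both endpoints lie outside $A$, the term is zero by the zero extension. Hence only the (at most $K$) terms with $x^\eta_{l-1},x^\eta_l\in A$ survive, and Jensen's inequality yields
\[
|u(x)|^p\le K^{p-1}\sum_{l=1}^{K}|u(x^\eta_{l-1})-u(x^\eta_l)|^p\,\chi_A(x^\eta_{l-1})\,\chi_A(x^\eta_l).
\]

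Finally I would average this inequality over $\eta\in B_\rho$, integrate in $x$ over $A$, and treat each summand separately by the change of variables $y=x^\eta_{l-1}$ (at fixed $\eta$, $l$) followed by $\zeta=\xi_0+\eta$; after these substitutions each summand is bounded, uniformly in $l$, by $\int_{A_\e(\zeta)}|u(y+\e\zeta)-u(y)|^p\,dy$, so summing over the $K$ values of $l$ produces
\[
\int_A|u|^p\,dx\le\frac{(K\e)^p}{|B_\rho|}\int_{B_\rho(\xi_0)}\int_{A_\e(\zeta)}\Big|\frac{u(y+\e\zeta)-u(y)}{\e}\Big|^p\,dy\,d\zeta\le\frac{(K\e)^p}{|B_\rho|}\,G_\e^{r}(u,A),
\]
the last inequality because $B_\rho(\xi_0)\subset B_r$ and the integrand is nonnegative. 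In the range $\e r<(b-a)/2$ one has $K\e\le 4(b-a)/r+\e<\tfrac{9}{2}(b-a)/r$, so $(K\e)^p/|B_\rho|$ is bounded by a constant depending only on $A$ (and on the fixed data $r$, $d$), which is the assertion. I expect the delicate steps to be the verification that the crossing terms vanish and the bookkeeping in the change of variables; everything else is routine.
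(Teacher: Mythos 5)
Your proof is correct, and it rests on the same underlying mechanism as the paper's — a chain of $O(1/\e)$ short-range steps in the $-e_1$ direction that exits the slab, the vanishing of $u$ on the layer $\{\dist(\cdot,A^c)\le \e r\}$ together with the zero extension to start the telescope, and Jensen's inequality producing the factor $K^{p-1}$ — but the implementation is genuinely different. The paper discretizes space: it partitions the slab into cubes of side $r\e/\sqrt{d+3}$, telescopes across consecutive cubes of a column using an independent integration variable in each cube, and converts consecutive-cube differences into $G_\e^r$-interactions through the inclusion $Q_\e^{j,l}\subset B_{r\e}(x_{j-1}^l)$, arriving at the constant $r'^{-(d+p)}$. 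You instead keep a fixed step vector $\e(\xi_0+\eta)$ with $\xi_0=-\frac r2 e_1$, average over the perturbation $\eta\in B_{r/4}$, and identify each summand with the energy restricted to interaction vectors in $B_{r/4}(\xi_0)\subset B_r$ after a translation change of variables; this is the same averaging device as in the ``translated kernel'' remark following (H0$'$) in Section 2. A merit of your version is that the terms whose endpoints leave $A$ are handled explicitly through the indicators $\chi_A(x^\eta_{l-1})\chi_A(x^\eta_l)$, which the paper's final identification with $G_\e^r(u,A)$ leaves implicit, and which makes transparent that no convexity of $A$ is used since $A_\e(\zeta)$ only constrains the two endpoints. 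The one point you leave tacit is routine: the hypothesis gives $u=0$ only almost everywhere on the boundary layer, so your pointwise inequality holds for a.e.\ $(x,\eta)$; this is justified because, for each fixed $l$, $(x,\eta)\mapsto x+\e l(\xi_0+\eta)$ pulls null sets back to null sets (translation in $x$ at fixed $\eta$, then Fubini), and the paper's proof glosses over the same issue. Your constant, like the paper's, depends on $r$, $p$, $d$ and $b-a$, consistent with the statement.
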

\begin{proof}
We identify $u$ with its extension to the whole $\rr^d$ that equals zero outside $A$. Up to a change of variables we may assume $a=0$ and $b=1$.  Set $r':=\frac{r}{\sqrt{d+3}}$ and for every $j\in \{0,\dots,N:=\lfloor1/r'\e\rfloor\}$ and $l\in\mathbb{Z}^{d-1}$ denote by $x_j^l$ an independent variable lying in
$$Q_\e^{j,l}:=(jr'\e,(j+1)r'\e)\times Q_\e^l,\text{ with }Q_\e^l:=l+(0,r'\e)^{d-1}.$$
By the boundary assumption, for any $0\le k\le N$ we have $u(x_k^l)=\sum\limits_{j=1}^k (u(x_j^l)-u(x_{j-1}^l))$. Thus,  by Jensen's inequality we get
$$|u(x_k^l)|^p\le k^{p-1}\sum_{j=1}^k |u(x_j^l)-u(x_{j-1}^l)|^p,$$
and, integrating in $x_0^l, x_1^l,\dots, x_N^l$,
$$(r'\e)^{d(N-1)}\int_{Q_\e^{k,l}}|u(x_k^l)|^p dx_k^l\le (r'\e)^{d(N-2)}k^{p-1}\sum_{j=1}^k\int_{Q_\e^{j-1,l}}\int_{Q_\e^{j,l}}|u(x_j^l)-u(x_{j-1}^l)|^p dx_j^l dx_{j-1}^l.$$
Now, since  $Q_\e^{j,l}\subset B_{r\e}(x_{j-1}^l)$, we have
\begin{align*}
\int_{Q_\e^{k,l}}|u(x_k^l)|^p dx_k^l &\le \frac{k^{p-1}}{(r'\e)^d}\sum_{j=1}^k\int_{Q_\e^{j-1,l}}\int_{B_{r\e}(x_{j-1}^l)}|u(y)-u(x_{j-1}^l)|^p dy dx_{j-1}^l \\
&= \frac{k^{p-1}}{(r'\e)^d}\int_{(0,kr'\e)\times Q_\e^l}\int_{B_{r\e}(x)}|u(y)-u(x)|^p dy\,dx
 \\
&\le \frac{N^{p-1}}{r'^d}\int_{(0,1)\times Q_\e^l}\int_{B_{r}}|u(x+\e\xi)-u(x)|^p d\xi dx.
\end{align*}
By summing over $0\le k\le N$ and $l\in\mathbb{Z}^{d-1}$ both the left- and the right-hand sides  we get
$$\int_{A}|u(x)|^p dx\le \frac{1}{r'^{d+p}}\int_{A}\int_{B_{r}}\left|\frac{u(x+\e\xi)-u(x)}{\e}\right|^p d\xi dx=\frac{1}{r'^{d+p}}G_\e^r(u,A),$$
which proves the claim.
\end{proof}

\begin{proposition}[Poincar\'e-Wirtinger inequality]\label{pwineq}
Let $r>0$ and let $A$ be a connected open set of $\rr^d$ with Lipschitz boundary. Then for every measurable set $E\subset A$ with $|E|>0$ there exists a positive constant $C=C(A,E)$ such that for any $u\in L^p(A;\mathbb{R}^m)$ and  $\e\in (0,1)$
$$\int_A |u(x)-u_E|^p dx\le C G^{r}_\e(u,A).$$
\end{proposition}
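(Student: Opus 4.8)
The plan is to argue by contradiction, combining the compactness Theorem~\ref{kolcom} with the control of long-range increments by short-range energies provided by Lemma~\ref{boundlemma}. Throughout I assume $A$ bounded, as is the case in the applications ($A\in\mathcal A^{\rm reg}(\Omega)$); for unbounded $A$ the inequality fails in general. I first reduce to two sub-statements. Writing $y=x+\e\xi$ one sees that $G_\e^r(u,A)=\e^{-d-p}J(\e r)$, where $J(\rho):=\iint_{\{(x,y)\in A\times A\,:\,|x-y|<\rho\}}|u(x)-u(y)|^p\,dx\,dy$ is non-decreasing in $\rho$; hence, for $0<\e_1\le\e<1$, $G_\e^r(u,A)\ge(\e_1/\e)^{d+p}G_{\e_1}^r(u,A)\ge\e_1^{d+p}G_{\e_1}^r(u,A)$. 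It is therefore enough to prove: (i) there exist $\e_1>0$ and $C_1=C_1(A,E)$ such that the inequality holds for all $\e\in(0,\e_1)$; and (ii) for each fixed $\e_1>0$ there is $C_0=C_0(A,E,\e_1)$ with $\int_A|u-u_E|^p\le C_0\,G_{\e_1}^r(u,A)$ for all $u$. Then $C:=\max\{C_1,\,\e_1^{-d-p}C_0\}$ works for every $\e\in(0,1)$.

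Statement (ii) is the classical non-local Poincar\'e--Wirtinger inequality at the fixed scale $\e_1 r$ (note $\e_1^{d+p}G_{\e_1}^r(u,A)=J(\e_1 r)$): it follows from the chain condition of a bounded connected Lipschitz set -- cover $\overline A$ by finitely many overlapping open pieces each comparable to a ball of radius of order $\e_1 r$, obtain on each piece $P$ the one-hop estimate $\int_P|u-u_P|^p\le C\,J(\e_1 r)$ via Jensen's inequality (every pair of points of $P$ interacts at scale $\e_1 r$), and telescope the averages $u_P$ along a chain joining the pieces. I take this step as known.

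For (i), suppose it failed; taking $\e_1=1/n$ and $C_1=n$ produces $\e_n\to0$ and $u_n\in L^p(A;\rr^m)$ with $\int_A|u_n-(u_n)_E|^p>n\,G_{\e_n}^r(u_n,A)$. Set $v_n:=(u_n-(u_n)_E)/\|u_n-(u_n)_E\|_{L^p(A;\rr^m)}$, so that $\|v_n\|_{L^p(A;\rr^m)}=1$, $(v_n)_E=0$ and $G_{\e_n}^r(v_n,A)<1/n$. By Theorem~\ref{kolcom}, along a subsequence $v_n\to v$ in $L^p(A;\rr^m)$ with $v\in W^{1,p}(A;\rr^m)$, $\|v\|_{L^p(A;\rr^m)}=1$ and $v_E=0$. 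To conclude I show $v$ is constant. Fix a small $h\in\rr^d$ and an open $A'\Subset A$; Lemma~\ref{boundlemma}, applied with $\Omega=A$, $\e=\e_n$ and $\xi=h/\e_n$ (so that $(0,\e_n)\xi=(0,1)h$ and condition \eqref{eps-small} holds for $n$ large, since $\e_n r\to0$ and $A'+(0,1)h\Subset A$), gives $\int_{A'}|v_n(x+h)-v_n(x)|^p\,dx\le C(|h|^p+\e_n^p)\,G_{\e_n}^r(v_n,A)\to0$. Letting $n\to\infty$ yields $v(\cdot+h)=v(\cdot)$ a.e.\ on $A'$ for every small $h$ and $A'\Subset A$, i.e.\ $\nabla v=0$ in $A$. (Alternatively, one invokes the lower bound of Proposition~\ref{G-liminf-lem}, which controls $\int_A|\nabla v|^p$ by $\liminf_n G_{\e_n}^r(v_n,A)=0$.) Since $A$ is connected, $v$ is a.e.\ a constant, which is $0$ because $v_E=0$; this contradicts $\|v\|_{L^p(A;\rr^m)}=1$, proving (i) and hence the proposition.

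I expect the main obstacle to be the identification of the $L^p$-limit $v$ as a constant -- that is, converting the vanishing of the \emph{averaged, short-range} energies $G_{\e_n}^r(v_n,A)$ into the vanishing of $\nabla v$; this is precisely where the non-locality of the functionals is tamed, through Lemma~\ref{boundlemma} (which upgrades the averaged short-range control into control of a single, possibly long, increment) or Proposition~\ref{G-liminf-lem}. A secondary, more technical point is that the regime of $\e$ bounded away from $0$ genuinely cannot be folded into the compactness argument (no $L^p$-compactness is available there from a merely bounded $G_\e^r$-energy), so it must be dealt with by the elementary chaining argument of (ii), whose only real cost is the bookkeeping on a general connected Lipschitz domain.
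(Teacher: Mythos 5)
Your proof is correct, and its core coincides with the paper's: the contradiction--normalization scheme, compactness via Theorem \ref{kolcom}, and identification of the limit as a constant through Proposition \ref{G-liminf-lem} (or, as in your first variant, through the translation estimate of Lemma \ref{boundlemma}) is exactly how the paper handles the case $\e_j\to 0$. Where you genuinely differ is the regime where $\e$ stays away from $0$: the paper keeps it inside the same contradiction argument, letting $\e_j\to\e_0>0$ and passing to the limit by weak $L^p$ compactness, Fatou and lower semicontinuity to get $G^{r}_{\e_0}(u,A)=0$; you instead treat it directly, noting $G^r_\e(u,A)=\e^{-d-p}J(\e r)$ with $J$ monotone in the scale, so the whole range $[\e_1,1)$ reduces to the single scale $\e_1$, where a classical covering-and-chaining Poincar\'e inequality applies. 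Your route is more elementary and in principle quantitative there, and it sidesteps the delicate weak-limit step of the paper's $\e_0>0$ branch (where the constraint $\|\tilde u_j\|_{L^p}=1$ does not pass to the weak limit); its cost is the deferred fixed-scale chaining lemma, which is indeed standard for bounded connected open sets and correctly sketched, though the bookkeeping (overlapping cover, chain constant, and the passage from the chained average to $u_E$) should be written out. Your added hypothesis that $A$ be bounded is consistent with the paper's use of the result (subsets of the bounded $\Omega$) and, as you note, is needed for the statement as written.
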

\begin{proof}
We argue by contradiction. Suppose that for any positive integer $j$ there exists $\e_j>0$ and $u_j\in L^p(A;\mathbb{R}^m)$ such that
$$\int_{A}|u_j(x)-(u_j)_E|^p dx > j G_{\e_j}^{r}(u_j,A).$$
Thus, letting
$$
\tilde u_j:=\frac{u_j-(u_j)_E\quad\quad }{\|u_j-(u_j)_E\|_{L^p(A;\rr^m)}},
$$
we have  $\|\tilde u_j\|_{L^p(A;\rr^m)}\equiv1$, $(\tilde u_j)_E\equiv0$ and
\begin{equation}\label{contradiction}
G_{\e_j}^{r}(\tilde u_j,A)<\frac{1}{j}.
\end{equation}
We may assume, up to passing to a subsequence, that $\e_j\to \e_0\in [0,1]$. If $\e_0=0$, Theorem \ref{kolcom} yields that up to subsequences $\tilde u_j\to u$ in $L^p(A;\rr^m)$ with $u\in W^{1,p}(A;\mathbb{R}^m)$,  $\|u\|_{L^p(A;\rr^m)}=1$ and $u_E=0$.
By \eqref{contradiction} and Proposition \ref{G-liminf-lem}, we get
$$\int_{B_{r}}\int_A |Du(x)\xi|^p dx \, d\xi=0.$$
Hence, $u$ is constant almost everywhere and we reach a contradiction.
If otherwise $\e_0>0$, up to passing to a further subsequence, $\tilde u_j\rightharpoonup u$ weakly in $L^p(A;\rr^m)$ and so for any $\xi\in B_r$
$$\frac{\tilde u_j(x+\e_j\xi)-\tilde u_j(x)}{\e_j}\rightharpoonup\frac{u(x+\e_0\xi)-u(x)}{\e_0}\ \text{weakly in } L^p(A;\rr^m).$$
Fatou's Lemma yields
$$0=\liminf_{j\to\infty} G_{\e_j}^{r}(\tilde u_j,A) \ge \int_{B_{r}}\liminf_{j\to\infty}\int_{A_{\e_j}(\xi)}\left|\frac{\tilde u_j(x+\e_j\xi)-\tilde u_j(x)}{\e_j}\right|^pdx \, d\xi \ge G_{\e_0}^{r}(u,A).$$
Hence,  $u$ is constant almost everywhere on $A$, which again leads to a contradiction.
\end{proof}

\section{The  $\Gamma$-limit of  $G_\e[a_\e]$}\label{particular}

In this section we prove the $\Gamma$-convergence of the family of functionals $G_\e[a_\e]$ (in the notation introduced in Definition \ref{convfun}), under suitable assumptions on the convolution kernels $a_\e$. Such a result on the one hand shows a non-trivial example of $\Gamma$-converging functionals, on the other hand it allows, by comparison, to deduce that the $\Gamma$-limits of the family of functionals $F_\e$ satisfy standard $p$-growth assumptions in a Sobolev-space setting.

Throughout this section, if $a\in L^1(\rr^d)$ we use the notation $\mu_a$ for the measure
\begin{equation}\label{measures-a}
\mu_a(A)=\int_A a(\xi)\,d\xi.
\end{equation}

\begin{theorem}\label{G-conv-conv}
Let $A\in\mathcal{A}^{\rm reg}(\Omega)$ and  let $\{a_\e\}\subset L^1({\mathbb{R}^d})$ be a family of non-negative functions  such that
\begin{equation}\label{gh1}
\limsup_{\e\to0}\int_{\mathbb{R}^d}a_\e(\xi)(|\xi|^p+1)d\xi < +\infty
\end{equation}
and such that for every $\delta>0$ there exists $r_\delta$ satisfying
\begin{equation}\label{gh2}
\limsup_{\e\to0}\int_{B_{r_\delta}^c}a_\e(\xi)|\xi|^pd\xi < \delta.
\end{equation}
Suppose that the measures $\mu_{a_\e}$  weakly* converge to a measure $\mu_a$ for some non-trivial $a\in L^1(\mathbb{R}^d)$.
Then
$$\Gamma(L^p)\text{-}\lim_{\e\to0} G_\e[a_\e](u,A)=\begin{cases}\displaystyle
\int_{\mathbb{R}^d}a(\xi)\int_A |Du(x)\xi|^p dx\,d\xi &\text{ if }u\in W^{1,p}(A;\mathbb{R}^m)\\ \displaystyle
+\infty &\text{otherwise.}\end{cases}$$
\end{theorem}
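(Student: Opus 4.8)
The plan is to prove the two $\Gamma$-convergence inequalities separately, exploiting the explicit quadratic-like structure of $G_\e[a_\e]$. The target functional is
$$
G[a](u,A)=\int_{\mathbb{R}^d}a(\xi)\int_A |Du(x)\xi|^p\,dx\,d\xi \quad\text{on } W^{1,p}(A;\mathbb{R}^m),
$$
and $+\infty$ otherwise. For the \emph{liminf inequality}, let $u_\e\to u$ in $L^p(A;\mathbb{R}^m)$ with $\liminf_\e G_\e[a_\e](u_\e,A)<+\infty$; by \eqref{gh1} and the lower bound $G_\e[a_\e]\ge c\, G_\e^{r_0}$ on some small ball where $\mu_{a_\e}\to\mu_a$ is non-trivial (using that $a\not\equiv0$, hence $\mu_a(B_{r_0})>0$ for some $r_0$, so $\liminf_\e\int_{B_{r_0}}a_\e\,d\xi>0$), Theorem \ref{kolcom} gives $u\in W^{1,p}(A;\mathbb{R}^m)$. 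Then for fixed $\delta>0$ restrict the $\xi$-integral to $B_{r_\delta}$; on this compact set, after the change of variables $y=x+\e\xi$ one has the difference quotient $\frac{u_\e(x+\e\xi)-u_\e(x)}{\e}$ converging (in a suitable weak sense, e.g. weakly in $L^p(A'\times B_{r_\delta})$ for $A'\Subset A$) to $Du(x)\xi$; this is the standard fact that difference quotients of a converging sequence with bounded convolution energy converge weakly to the gradient tested against $\xi$, which is exactly the content invoked in Proposition \ref{G-liminf-lem}. Lower semicontinuity of $v\mapsto\int a_\e(\xi)\int|v|^p$ under weak $L^p$ convergence combined with the weak* convergence $a_\e\,d\xi\rightharpoonup a\,d\xi$ on the \emph{bounded} region $B_{r_\delta}$ (this requires care: one wants $\int_{B_{r_\delta}}a_\e(\xi)g(\xi)\,d\xi\to\int_{B_{r_\delta}}a(\xi)g(\xi)\,d\xi$ for the continuous test function $g(\xi)=\int_A|Du(x)\xi|^p\,dx$, which follows from weak* convergence of measures if $\mu_a(\partial B_{r_\delta})=0$, and we may choose $r_\delta$ with this property) yields
$$
\liminf_\e G_\e[a_\e](u_\e,A)\ \ge\ \int_{B_{r_\delta}}a(\xi)\int_A|Du(x)\xi|^p\,dx\,d\xi;
$$
letting $\delta\to0$ (so $r_\delta\to\infty$, or at least $r_\delta$ exhausting $\mathbb{R}^d$) and using monotone convergence gives the full liminf bound.

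For the \emph{limsup inequality}, it suffices to build a recovery sequence for $u\in W^{1,p}(A;\mathbb{R}^m)$, and by density of $C^\infty(\bar A;\mathbb{R}^m)\cap W^{1,p}$ together with the continuity of both $G_\e[a_\e]$-energies and $G[a]$ with respect to strong $W^{1,p}$ convergence (uniform in $\e$, thanks to \eqref{gh1} and Corollary \ref{boundlemma-lip}/Remark \ref{boundremark}-type estimates, plus a standard diagonal argument on the $\Gamma$-limsup), we may assume $u\in C^1(\bar A;\mathbb{R}^m)$ with bounded derivatives. For such $u$ I would simply take $u_\e=u$ and show $G_\e[a_\e](u,A)\to G[a](u,A)$ directly: write
$$
\frac{u(x+\e\xi)-u(x)}{\e}=Du(x)\xi+\omega_\e(x,\xi),
$$
where $|\omega_\e(x,\xi)|\le |\xi|\,\sup\{|Du(x+t\e\xi)-Du(x)|:t\in[0,1]\}\to0$ uniformly on $A\times B_R$ for each fixed $R$, by uniform continuity of $Du$. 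Split the $\xi$-integral at $r_\delta$: on $B_{r_\delta}^c$ the contribution is $\le C\int_{B_{r_\delta}^c}a_\e(\xi)(|\xi|^p+1)\,d\xi<C\delta$ by \eqref{gh2} (and the analogous tail bound for $a$), while on $B_{r_\delta}$ we use the pointwise expansion plus $\||Du(\cdot)\xi|^p-|\,\cdot\,|^p\|$ Lipschitz-type estimates and the weak* convergence $\mu_{a_\e}\rightharpoonup\mu_a$ tested against the continuous function $\xi\mapsto\int_A|Du(x)\xi|^p\,dx$ to pass to the limit; the error terms from $\omega_\e$ vanish because they are $o(1)$ uniformly times $\int_{B_{r_\delta}}a_\e(\xi)(1+|\xi|^p)\,d\xi=O(1)$. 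Letting $\delta\to0$ concludes.

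\textbf{Main obstacle.} The delicate point is the interplay between the weak* convergence of the measures $\mu_{a_\e}$ — which is a statement on \emph{bounded} sets and does not control mass escaping to infinity — and the fact that the energies involve the unbounded weight $|\xi|^p$. The hypotheses \eqref{gh1} and \eqref{gh2} are precisely designed to tame this: \eqref{gh2} says the tails carry uniformly small energy, so the genuinely analytic work happens on a large but fixed ball $B_{r_\delta}$ where weak* convergence applies. I expect the bookkeeping of the two limits ($\e\to0$ then $\delta\to0$), the choice of $r_\delta$ avoiding the $\mu_a$-boundary, and justifying the weak-$L^p$ convergence of difference quotients to $Du(x)\xi$ on the slab $A'\times B_{r_\delta}$ (which is where one really needs $u\in W^{1,p}$, supplied by Theorem \ref{kolcom}) to be the steps requiring the most care; everything else is a routine adaptation of the classical argument for $\int a(\xi)\int|Du\,\xi|^p$.
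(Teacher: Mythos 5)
Your limsup half is essentially the paper's argument (Proposition \ref{G-limsup-lem}): reduce to smooth $u$ by density and $L^p$-lower semicontinuity of the $\Gamma$-limsup, take $u_\e=u$, expand the difference quotient, kill the tail with \eqref{gh2} and pass to the limit on $B_{r_\delta}$ by testing $\mu_{a_\e}\rightharpoonup^*\mu_a$ against $\xi\mapsto\int_A|Du(x)\xi|^p dx$. The liminf half, however, has two genuine gaps. First, the bound $G_\e[a_\e]\ge c\,G_\e^{r_0}$ does not follow from non-triviality of $a$: weak* convergence only gives $\liminf_\e\int_{B_{r_0}}a_\e\,d\xi>0$, i.e.\ a mass bound, not a pointwise lower bound on the kernels. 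The kernels $a_\e$ may vanish on most of $B_{r_0}$ (say, concentrate on a thin cone of directions), in which case $G_\e[a_\e]$ gives no control on difference quotients in the unseen directions; a highly oscillatory perturbation transverse to the cone makes $G_\e[a_\e](u_\e)$ small while $G_\e^{r_0}(u_\e)$ stays of order one. Consequently neither the appeal to Theorem \ref{kolcom} to conclude $u\in W^{1,p}$ nor the claimed $L^p(A'\times B_{r_\delta})$-boundedness (hence weak convergence) of the difference quotients is justified. (Using Theorem \ref{kolcom} here is also delicate because its ``limits lie in $W^{1,p}$'' part is itself proved in the paper via Proposition \ref{G-liminf-lem}.) The paper instead gets $u\in W^{1,p}$ without any compactness: finiteness of the liminf forces, for a.e.\ $\xi$ in the support of $a$, the distributional directional derivative $\partial u/\partial\xi$ to lie in $L^p(A)$, and since a set of positive measure contains $d$ linearly independent directions this yields $u\in W^{1,p}(A;\rr^m)$.

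Second, the key limit passage is not a consequence of ``lower semicontinuity of $v\mapsto\int a_\e(\xi)\int_{A'}|v|^p$ under weak $L^p$ convergence combined with weak* convergence of $a_\e\,d\xi$'': you must bound from below $\int a_\e(\xi)\,g_\e(\xi)\,d\xi$ where \emph{both} the weight and the inner integral $g_\e(\xi)=\int|D^\e_\xi u_\e|^p dx$ depend on $\e$, and products of a weakly* converging weight with a merely weakly converging (or pointwise-liminf-controlled) integrand are not lower semicontinuous in general — correlated oscillations can place the small values of $a_\e$ exactly where $g_\e$ is large. Your precaution about choosing $r_\delta$ with $\mu_a(\partial B_{r_\delta})=0$ addresses only the convergence of $\int_{B_{r_\delta}}a_\e g\,d\xi$ for the fixed limit integrand $g$, not this coupling. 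This is precisely where the paper works: it fixes $\xi$, slices $A$ into one-dimensional sections, controls the sections through piecewise-affine interpolants (inequality \eqref{1D-est} and \eqref{slicing-G-liminf}), obtains a pointwise-in-$\xi$ Fatou-type lower bound \eqref{semi-cont-inf} for $\varphi_j(\xi)$, and then upgrades this to the measure convergence via Egorov and Lusin applied to $\tilde\varphi_j=\inf_{k\ge j}\varphi_k$ on a large compact set. Without an argument of this type (or some substitute exploiting the specific structure of the difference quotients), your liminf inequality does not close.
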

\begin{proof}
It is a straightforward consequence of Propositions \ref{G-liminf-lem} and \ref{G-limsup-lem} below.
 \end{proof}
\begin{remark}
Note that under the the assumptions of Theorem \ref{G-conv-conv} $a$ satisfies \eqref{ipoa-p},
from which we have that $\Gamma(L^p)\text{-}\lim_{\e\to0} G_\e[a_\e](u,A)$ is finite on $W^{1,p}(A;\mathbb{R}^m)$. Moreover, observe that \eqref{gh2} corresponds to assumption (H2), which is crucial  to ensure the locality of the  $\Gamma$-limit, as shown by the example below.
\end{remark}

\begin{example}
Given $\Omega=(0,1)$, consider the functions
$$a_\e(\xi)=\begin{cases}
1 & |\xi|\leq1 \\
\e^p & {1\over2\e}-1<\xi<{1\over2\e}+1 \\
0 &\text{otherwise.}
\end{cases}$$
The densities $f_\e(x,\xi,z)=a_\e(\xi)|z|^p$ satisfy hypotheses (H0), (H1) and conditions \eqref{ass-bound} but not the locality assumption \eqref{ass-local}.
Now, for any given $u\in W^{1,p}(0,1)$, and any $u_\e$ converging to $u$ in $L^p(0,1)$ as $\e\to0$, we have
\begin{align*}
G_\e[a_\e](u_\e) &= \int_{-\infty}^{+\infty}a_\e(\xi)\int_{0\vee(-\e\xi)}^{1\wedge(1-\e\xi)}\left|\frac{u_\e(x+\e\xi)-u_\e(x)}{\e}\right|^p dx \, d\xi \\
&= G_\e^1(u_\e)+\int_{\frac{1}{2\e}-1}^{\frac{1}{2\e}+1}\int_0^{1-\e\xi}|u_\e(x+\e\xi)-u_\e(x)|^p dx \, d\xi.
\end{align*}
Taking the limit as $\e\to0$, by Proposition \ref{G-conv-conv} and Lebesgue's Dominated Convergence Theorem we get
$$\Gamma\text{-}\lim_{\e\to0}G_\e[a_\e](u)=\frac{2}{p+1}\int_0^1|u'(x)|^p dx+2\int_0^{1\over2}\Big|u\Big(x+{{1\over2}}\Big)-u(x)\Big|^p dx.$$
The limit functional is still defined on $W^{1,p}(0,1)$ but does not have a local representation.
\end{example}

In the following two lemmas we deal with the $\Gamma$-$\liminf$ and the $\Gamma$-$\limsup$ of the family of functionals $G_\e[a_\e]$ separately.

\begin{proposition}\label{G-liminf-lem}
Let $\{a_\e\}\subset L^1(\mathbb{R}^d)$ be a family of non-negative functions such that the measures $\mu_{a_\e}$, defined in \eqref{measures-a}, weakly* converge to a measure $\mu_a$ for some non-trivial $a\in L^1(\mathbb{R}^d)$.
Then for every $u\in L^p(\Omega;\mathbb{R}^m)$ and $A\in\mathcal{A}(\Omega)$
\begin{equation}\label{Gliminf}
\Gamma\text{-}\liminf_{\e\to0} G_\e[a_\e](u,A) \ge
\begin{cases} \displaystyle
\int_{\mathbb{R}^d}a(\xi)\int_A |Du(x)\xi|^p dx\,d\xi &\text{if }u\in W^{1,p}(A;\mathbb{R}^m) \\
+\infty &\text{otherwise.}
\end{cases}
\end{equation}
\end{proposition}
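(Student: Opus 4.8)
The plan is to prove the $\Gamma$-$\liminf$ inequality \eqref{Gliminf} by a standard blow-up/lower-semicontinuity argument, exploiting the weak* convergence of the measures $\mu_{a_\e}$ to reduce the nonlocal difference quotients to genuine gradients in the limit. First I would fix $u\in L^p(\Omega;\mathbb R^m)$, $A\in\mathcal A(\Omega)$, and a sequence $u_\e\to u$ in $L^p(A;\mathbb R^m)$ with $\liminf_\e G_\e[a_\e](u_\e,A)=:L<+\infty$ (otherwise there is nothing to prove); passing to a subsequence, we may assume the $\liminf$ is a limit and that $G_\e[a_\e](u_\e,A)\le L+1$ for $\e$ small. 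The first task is to show $u\in W^{1,p}(A;\mathbb R^m)$: since $a$ is non-trivial and $\mu_{a_\e}\overset{*}{\rightharpoonup}\mu_a$, there is a ball $B_\rho(\xi_0)$ on which $\liminf_\e\mu_{a_\e}(B_\rho(\xi_0))>0$, so a suitable translated-kernel/Jensen argument (as in the "translated kernel" remark in Section~\ref{mainass}) bounds a short-range energy $G_\e^{r}(u_\e,A')$ from above by $C\,G_\e[a_\e](u_\e,A)$ on each $A'\Subset A$, and then Theorem~\ref{kolcom} (applied on Lipschitz subdomains, cf.\ Remark~\ref{compactness-remk}) forces every $L^p_{\rm loc}$-limit to lie in $W^{1,p}_{\rm loc}(A;\mathbb R^m)$, with a uniform bound on $\|Du\|_{L^p(A')}$; exhausting $A$ by such $A'$ gives $u\in W^{1,p}(A;\mathbb R^m)$.

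Next, for the quantitative lower bound, I would work on a fixed $A'\Subset A$ and a fixed "interaction direction" variable, using Fubini to write
\[
G_\e[a_\e](u_\e,A)\ge \int_{\mathbb R^d} a_\e(\xi)\,\phi_\e(\xi)\,d\xi,\qquad
\phi_\e(\xi):=\int_{A'_\e(\xi)}\Bigl|\frac{u_\e(x+\e\xi)-u_\e(x)}{\e}\Bigr|^p dx,
\]
and the goal is to pass to the limit inside. The key point is a lower-semicontinuity estimate for the fixed-$\xi$ functional: for each fixed $\xi$, since $u_\e\to u$ in $L^p$ and $u\in W^{1,p}(A;\mathbb R^m)$, one has $\frac{u_\e(\cdot+\e\xi)-u_\e(\cdot)}{\e}\rightharpoonup Du(\cdot)\xi$ weakly in $L^p(A';\mathbb R^m)$ (test against smooth functions, using $\frac{v(\cdot+\e\xi)-v(\cdot)}{\e}\to Dv(\cdot)\xi$ strongly for smooth $v$ and a density/diagonal argument together with the uniform $L^p$-bound on the difference quotients coming from $L\le L+1$ via Lemma~\ref{boundlemma}/Corollary~\ref{boundlemma-lip}), hence by weak lower semicontinuity of the $L^p$-norm, $\liminf_\e\phi_\e(\xi)\ge\int_{A'}|Du(x)\xi|^p dx=:\phi(\xi)$. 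Combining this pointwise-in-$\xi$ semicontinuity with the weak* convergence $\mu_{a_\e}\overset{*}{\rightharpoonup}\mu_a$ is done via a Fatou-type lemma for weakly converging measures (approximate $\phi$ from below by continuous compactly supported functions, or truncate $a_\e,\phi_\e$ and use \eqref{gh2} to control the tail): this yields
\[
L=\lim_\e G_\e[a_\e](u_\e,A)\ \ge\ \int_{\mathbb R^d} a(\xi)\Bigl(\int_{A'}|Du(x)\xi|^p dx\Bigr)d\xi.
\]

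Finally I would let $A'\uparrow A$ and use monotone convergence to upgrade $A'$ to $A$ on the right-hand side, obtaining \eqref{Gliminf}. The main obstacle I expect is the interchange of the $\liminf_\e$ with the $\xi$-integration against the weakly* converging measures $\mu_{a_\e}$: one needs the pointwise-in-$\xi$ lower bound $\liminf_\e\phi_\e(\xi)\ge\phi(\xi)$ to hold \emph{simultaneously} for (almost) every $\xi$, which requires extracting a single subsequence that works for a countable dense set of directions and then using lower semicontinuity and the continuity of $\xi\mapsto Du(\cdot)\xi$ in $L^p$; together with the tail control \eqref{gh2} (needed so that no mass of $\mu_{a_\e}$ escapes to infinity and so that the large-$|\xi|$ contributions, where $\phi_\e(\xi)$ need not be close to $\phi(\xi)$ because the domains $A'_\e(\xi)$ shrink, are negligible), this is the technical heart of the argument. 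A clean way to package the measure-convergence step is: for any continuous $g$ with compact support and $0\le g\le \phi$ (such $g$ exist because $\phi$ is lower semicontinuous, being a limit of an increasing net — or directly continuous in $\xi$), $\liminf_\e\int a_\e g\,d\xi=\int a\,g\,d\xi$, while $\int a_\e(\phi_\e-g)\,d\xi\ge -\!\int a_\e(g-\phi_\e)^+d\xi$ which, after splitting into $|\xi|\le r_\delta$ and $|\xi|>r_\delta$ and using uniform equiintegrability plus \eqref{gh2}, is bounded below by $-C\delta$; letting $\delta\to0$ and then taking the supremum over such $g$ gives the claim.
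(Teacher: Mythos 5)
Your overall skeleton (Fubini in $\xi$, a pointwise-in-$\xi$ lower-semicontinuity estimate for the inner integral, and a Fatou/Egorov-type device to combine it with the weak* convergence of $\mu_{a_\e}$) is close in spirit to the paper's slicing proof, but two of your key steps do not follow from the hypotheses of this proposition. The main gap is the claimed bound $G_\e^{r}(u_\e,A')\le C\,G_\e[a_\e](u_\e,A)$, on which you rely both to get $u\in W^{1,p}$ via Theorem \ref{kolcom} and to get the uniform $L^p$ bound on the difference quotients (via Lemma \ref{boundlemma}/Corollary \ref{boundlemma-lip}) needed for your fixed-$\xi$ weak convergence. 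Weak* convergence of $\mu_{a_\e}$ to a nontrivial absolutely continuous measure gives $\liminf_\e\mu_{a_\e}(B_\rho(\xi_0))>0$ for some ball, but \emph{not} a pointwise lower bound $a_\e\ge c>0$ on a ball, which is exactly what the translated-kernel remark uses (the averaging over translations $\eta$ requires every $\eta$ in a full ball to carry kernel mass). Kernels that concentrate, e.g.\ $a_\e=\lambda_\e^{-1}\chi_{E_\e}$ with $E_\e$ a union of many tiny, equidistributed intervals of total measure $\lambda_\e\to0$, satisfy the hypotheses, and for such kernels one can build oscillating sequences $u_\e\to0$ in $L^p$ (profiles $\e v_\e(x/\e)$ with $v_\e$ periodic with period matching the spacing of $E_\e$) with $G_\e[a_\e](u_\e)\to0$ while $G_\e^{r}(u_\e)\to\infty$; so the short-range control is simply false in this generality, and with it the compactness route and the Corollary \ref{boundlemma-lip} bound. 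The membership $u\in W^{1,p}(A;\mathbb{R}^m)$ has to be extracted from the directional information itself, as in the paper: whenever $\liminf_\e\phi_\e(\xi)<+\infty$ the difference quotients are bounded in $L^p_{\rm loc}$ along a subsequence and converge in the sense of distributions to $\partial_\xi u$, so $\partial_\xi u\in L^p$ and the fixed-$\xi$ lower bound holds with no a priori Sobolev regularity; this happens for a.e.\ $\xi$ in the support of $a$ (otherwise the liminf is $+\infty$ on a set of positive $a$-mass), and picking $d$ linearly independent such directions gives $u\in W^{1,p}$. Your fixed-$\xi$ step is thus repairable, but not by the argument you give.

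The second problem is the final measure-theoretic step: you invoke the tail condition \eqref{gh2} and ``uniform equiintegrability'' of $a_\e$, but neither is among the hypotheses of this proposition (only the weak* convergence of $\mu_{a_\e}$ is assumed; \eqref{gh2} enters only in Theorem \ref{G-conv-conv} and in the limsup bound), and equiintegrability is not implied by weak* convergence to an absolutely continuous limit (mass of size $\ge\delta$ can live on sets of vanishing measure spread over space and still produce an a.c.\ weak* limit). For a lower bound both are unnecessary: since the integrand is non-negative you may discard $|\xi|>R$ outright, prove the inequality with $\int_{B_R}a(\xi)\,d\xi$-weight on a bounded set, and let $R\to\infty$ at the very end by monotone convergence; the correct way to pass to the limit against the weak* converging measures is then the Egorov/Lusin argument applied to $\tilde\varphi_j=\inf_{k\ge j}\varphi_j$ on a large compact subset of $B_R\cap\supp a$, which is exactly how the paper handles the ``single subsequence for almost every $\xi$'' obstacle you correctly identified.
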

\begin{proof}
When needed, we will identify the functions with their extensions equal to $0$ outside $\Omega$. We will use a slicing procedure, so we first deal with the one-dimensional case; i.e., when $\Omega\subset\mathbb{R}$ and, for the sake of simplicity, $A=(0,1)$. In this case the energy reads as
$$G_\e[a_\e](u,A) = \int_{-\infty}^{+\infty}a_\e(\xi)\int_{0\vee(-\e\xi)}^{(1-\e\xi)\wedge1}\Bigl|\frac{u(x+\e\xi)-u(x)}{\e}\Bigr|^p dx\,d\xi.$$
For every fixed $\xi>0$, setting $N=\lfloor1/(\e\xi)\rfloor-1$ we have
\begin{align*}
\int_0^{1-\e\xi}\Bigl|\frac{u(x+\e\xi)-u(x)}{\e}\Bigr|^p dx &\ge \sum_{k=1}^{N-1} \int_{k\e\xi}^{(k+1)\e\xi} \Bigl|\frac{u(x+\e\xi)-u(x)}{\e}\Bigr|^p dx \\
&= \sum_{k=1}^{N-1} \e\xi \int_0^1 \Bigl|\frac{u(t\e\xi+(k+1)\e\xi)-u(t\e\xi+k\e\xi)}{\e}\Bigr|^p dt \\
&= \xi^p\int_0^1 \e\xi\sum_{k=1}^{N-1} \Bigl|\frac{u(t\e\xi+(k+1)\e\xi)-u(t\e\xi+k\e\xi)}{\e\xi}\Bigr|^p dt,
\end{align*}
where we have used the change of variable $x=\e\xi(t+k)$.
Let $u_{\e,\xi,t}:\rr\to\mathbb{R}^m$ be the piecewise-affine function that interpolates the values $\{u((k+t)\e\xi)\}_k$. We then have
$$\int_0^{1-\e\xi}\Bigl|\frac{u(x+\e\xi)-u(x)}{\e}\Bigr|^p dx \ge \xi^p \int_0^1 \int_0^{N\e\xi}|u_{\e,\xi,t}'(s)|^p ds\,dt.$$
The same analysis when $\xi<0$ implies
$$\int_{0\vee(-\e\xi)}^{(1-\e\xi)\wedge1} \Bigl|\frac{u(x+\e\xi)-u(x)}{\e}\Bigr|^p dx \ge |\xi|^p \int_0^1\int_{2\e\xi}^{1-2\e\xi} |u'_{\e,\xi,t}(s)|^p ds\,dt,$$
for every $\xi\in\mathbb{R}$.
We can generalize the previous argument to any $A\subset\Omega$ open subset and the inequality above reads
\begin{equation}\label{1D-est}
\int_{A_\e(\xi)}\Bigl|\frac{u(x+\e\xi)-u(x)}{\e}\Bigr|^p dx \ge |\xi|^p \int_0^1\int_{\tilde A_\e(\xi)}|u'_{\e,\xi,t}(s)|^p ds\,dt,
\end{equation}
for every $\xi\in\mathbb{R}$, where $\tilde A_\e(\xi)=\{s\in A\,|\text{ dist}(s,A^c)>2\e|\xi|\}$.

We now extend \eqref{1D-est} to any dimension by a slicing method.
For any $\xi\in\mathbb{R}^d\backslash\{0\}$ define $\Pi_{\xi}=\{y\in\mathbb{R}^d\,|\,y\cdot\xi=0\}$, and for any $y\in\Pi_{\xi}$ set
\begin{align}
& \Omega_{y,\xi} = \Bigl\{s\in\mathbb{R}\,|\, y+s{\xi\over|\xi|}\in \Omega\Bigl\}, & A_{y,\xi} = \Bigl\{s\in\mathbb{R}\,|\, y+s{\xi\over|\xi|}\in A\Bigl\}, \nonumber\\
& A_{y,\xi}^\e = \Bigl\{s\in\mathbb{R}\,|\, y+s{\xi\over|\xi|}\in A_\e(\xi)\Bigl\}, & \tilde A_{y,\xi}^\e = \Bigl\{s\in\mathbb{R}\,|\text{ dist}(s,A_{y,\xi}^c)>2\e|\xi|\Bigl\},\nonumber\\
&u_{y,\xi}(s):=u\Big(y+s{\xi\over|\xi|}\Big) \quad s\in \rr.&\label{slice}
\end{align}
Fubini's Theorem yields
$$\int_{A_\e(\xi)}\Bigl|\frac{u(x+\e\xi)-u(x)}{\e}\Bigr|^p dx=\int_{\Pi_\xi}\int_{A_{y,\xi}^\e}\Bigl|\frac{u_{y,\xi}(s+\e|\xi|)-u_{y,\xi}(s)}{\e}\Bigr|^p ds\, dy.$$
By \eqref{1D-est} we get
\begin{equation}\label{slicing-G-liminf}
\int_{A_\e(\xi)}\Bigl|\frac{u(x+\e\xi)-u(x)}{\e}\Bigr|^p dx \ge |\xi|^p \int_{\Pi_\xi} \int_0^1 \int_{\tilde A_{y,\xi}^\e} |u_{y,\e,\xi,t}'(s)|^p ds\,dt\,dy,
\end{equation}
where $u_{y,\e,\xi,t}:\e|\xi|\mathbb{Z}\cap \Omega_{y,\xi}\to\mathbb{R}^m$ denotes the piecewise-affine function that interpolates the values $\{u_{y,\xi}((k+t)\e|\xi|)\}_k$.




Let $\{\e_j\}$ be a sequence converging to $0$ and  let $u_j\to u$ in $L^p(\Omega;\mathbb{R}^m)$. Without loss of generality we may assume that $G_{\e_j}[a_{\e_j}](u_j,A)$ is uniformly bounded. Let $(u_j)_{y,\xi}$ denote the corresponding slicing functions defined by \eqref{slice}. Then we have that $(u_j)_{y,\xi}\to u_{y,\xi}$ in $L^p(\Omega_{y,\xi};\mathbb{R}^m)$. Moreover,  by \cite{bra1} Lemma 3.36  and \cite{bra1}  Remark 3.37, $(u_j)_{y,\e_j,\xi,t}\to u_{y,\xi}$ in $L^p(\Omega_{y,\xi};\mathbb{R}^m)$ for almost every $y\in\Pi_\xi$ and for almost every $t\in(0,1)$.
Now, we set
$$\varphi_j(\xi) := \int_{\Pi_\xi} \int_0^1 \int_{\tilde A_{y,\xi}^{\e_j}}|(u_j)'_{y,\e_j,\xi,t}(s)|^p ds\,dt\,dy,$$
and claim that
$$
\liminf_{j\to+\infty}\varphi_j(\xi)<+\infty \quad \text{for almost every } \xi\in E,
$$
where $E$ is the support of $a$.
Indeed, reasoning by contradiction let $E'\subset E$ with $|E'|>0$ be such that $\varphi_j(\xi)\to+\infty$ for every $\xi\in E'$.
Then, we can find a set $E''\subset E'$ with $|E''|>0$ such that for every $m>0$ there exist $j_m\in\mathbb{N}$ such that $\varphi_j(\xi)\ge m$ for every $j>j_m$ and $\xi\in E''$.
Thus, from \eqref{slicing-G-liminf} and the boundedness of $G_{\e_j}[a_{\e_j}](u_j,A)$, for every $R>0$ we get
$$
\int_{B_R\cap E''} a_{\e_j}(\xi)|\xi|^p\,d\xi \le \frac{C}{m}.
$$
Taking the limit as $j\to+\infty$, the arbitrariness of $m$ and $R$ lead to a contradiction.
Now, for every $\xi\in E$
we have that
\begin{equation}\label{semi-cont-inf}
\liminf_{j\to\infty} \varphi_j(\xi) \ge \int_{\Pi_\xi} \int_{A_{y,\xi}}|u_{y,\xi}'(s)|^p ds\, dy\, .
\end{equation}
Indeed,
for almost every $y\in\Pi_\xi$ and for almost every $t\in(0,1)$ up to subsequences $(u_j)_{y,\e_j,\xi,t}'$ is bounded in $L^p$ and so $(u_j)_{y,\e_j,\xi,t}$ weakly converges to $u_{y,\xi}$ in $W^{1,p}(A_{y,\xi};\mathbb{R}^m)$.
Then, Fatou's Lemma and the lower semicontinuity of the $L^p$-norm with respect to the weak convergence yields \eqref{semi-cont-inf}.
Now set $\tilde\varphi_j(\xi)=\inf_{k\ge j}\varphi_k(\xi)$, which converges almost everywhere to $\liminf_j \varphi_j(\xi)$, and let $R>0$ be fixed.
By Egorov's and Lusin's theorems, for any $\delta>0$ there exists $E_\delta\subset E\cap B_R$ with $|E\cap B_R\backslash E_\delta|<\delta$ such that the functions $\tilde\varphi_j$ are continuous and converge uniformly to $\liminf_j \varphi_j$ on $E_\delta$.
From \eqref{semi-cont-inf} and by the weak* convergence of $\mu_{a_\e}$, we get
$$\lim_{j\to\infty} \int_{E_\delta} a_{\e_j}(\xi)|\xi|^p \tilde\varphi_j(\xi) \,d\xi \ge \int_{E_\delta} a(\xi)|\xi|^p \int_{\Pi_\xi}\int_{A_{y,\xi}}|u'_{y,\xi}(s)|^p \,ds\,dy\,d\xi\, .$$
Multiplying both members of \eqref{slicing-G-liminf} by $a_{\e_j}(\xi)$, integrating in $\xi$ and then taking the limit as $j\to\infty$, we obtain
$$\liminf_{j\to\infty} G_{\e_j}[a_{\e_j}](u_{\e_j},A) \ge \int_{\mathbb{R}^d} a(\xi)|\xi|^p \int_{\Pi_\xi}\int_{A_{y,\xi}}|u'_{y,\xi}(s)|^p \,ds\,dy\,d\xi$$
by the arbitrariness of $\delta$ and $R$.
If $u\in W^{1,p}(A;\mathbb{R}^m)$
$$u'_{y,\xi}(s)=\frac{1}{|\xi|}Du(x)\xi,\quad x=y+s\frac{\xi}{|\xi|}$$
then Fubini's Theorem leads to \eqref{Gliminf}.
Thus it remains to prove that  $u\in W^{1,p}(A;\mathbb{R}^m)$. 
If $\liminf_{\e\to0}G_\e[a_\e](u,A)<+\infty$ then
$$\int_E a(\xi)|\xi|^p \int_A \Bigl|\frac{\partial u}{\partial \xi}(x)\Bigr|^p  dx \, d\xi<+\infty$$
and therefore,
\begin{equation}\label{domain-est}
\int_A\left|\frac{\partial u}{\partial \xi}(x)\right|^p dx<+\infty, \quad\text{for almost every }\xi\in E.
\end{equation}
In particular, we can find linearly independent points $\xi_1,\dots,\xi_d\in E$ for which \eqref{domain-est} is satisfied, since otherwise $E$ would be contained in some hyperplane and it would be negligible.
Hence $u\in W^{1,p}(A;\mathbb{R}^m)$, and the thesis follows.
\end{proof}

\begin{proposition}\label{G-limsup-lem}
Let $\{a_\e\}\subset L^1(\mathbb{R}^d)$ be a family of non-negative functions satisfying \eqref{gh1}.
Then, for every $A\in\mathcal{A}^{\rm reg}(\Omega)$ and $u\in W^{1,p}(A;\mathbb{R}^m)$
\begin{equation}\label{upp-growth-cond}
\Gamma\text{-}\limsup_{\e\to0}G_\e[a_\e](u,A)\le C\int_A|Du(x)|^p dx\, ,
\end{equation}
for some constant $C>0$.
Suppose in addition that, for every $\delta>0$ there exists $r_\delta$ such that \eqref{gh2} holds
and the measures $\mu_{a_\e}$ as in \eqref{measures-a}  weakly* converge to a measure $\mu_a$ for some non-trivial $a\in L^1(\mathbb{R}^d)$.
Then, for every $A\in\mathcal{A}^{\rm reg}(\Omega)$ and $u\in W^{1,p}(A;\mathbb{R}^m)$
\begin{equation}\label{Glimsup}
\Gamma\text{-}\limsup_{\e\to0} G_\e[a_\e](u,A) \le \int_{\mathbb{R}^d}a(\xi)\int_A |Du(x)\xi|^p dx \, d\xi.
\end{equation}
\end{proposition}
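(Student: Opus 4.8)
The plan is to prove the two upper-bound inequalities \eqref{upp-growth-cond} and \eqref{Glimsup} by exhibiting explicit recovery sequences, treating the smooth case first and then using density. For the first inequality \eqref{upp-growth-cond}, I would take $u_\e = u$ as the constant-in-$\e$ recovery sequence (which trivially converges to $u$ in $L^p$) and estimate $G_\e[a_\e](u,A)$ directly. The key difficulty is that $u \in W^{1,p}(A;\rr^m)$ need not be smooth, so the pointwise estimate $|u(x+\e\xi)-u(x)| \le \e|\xi|\,\|\nabla u\|$ is not available. Instead, I would write, using the fundamental theorem of calculus along segments (valid for $W^{1,p}$ functions by a Fubini/slicing argument after noting $A \in \mathcal{A}^{\rm reg}(\Omega)$ admits a $W^{1,p}$-extension to $\rr^d$),
$$\Bigl|\frac{u(x+\e\xi)-u(x)}{\e}\Bigr|^p = \Bigl|\int_0^1 Du(x+t\e\xi)\,\xi\,dt\Bigr|^p \le |\xi|^p \int_0^1 |Du(x+t\e\xi)|^p\,dt,$$
by Jensen's inequality. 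Integrating in $x$ over $A_\e(\xi)$ and using the change of variables $x\mapsto x+t\e\xi$ together with the extension, the inner integral is bounded by $|\xi|^p \int_{\tilde A}|Du(x)|^p\,dx$ for a slightly enlarged set $\tilde A$; then multiplying by $a_\e(\xi)$ and integrating, \eqref{gh1} gives the bound $C\int_A |Du|^p\,dx$ after a standard argument to replace $\tilde A$ by $A$ (localize, or observe the extension constant is controlled). The $\Gamma\text{-}\limsup$ is then $\le \limsup_\e G_\e[a_\e](u,A)$, yielding \eqref{upp-growth-cond}.

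For the sharp inequality \eqref{Glimsup}, I would first establish it for $u \in C^2(\bar A;\rr^m)$ (or $C^\infty$). Again taking $u_\e = u$, I use the second-order Taylor expansion $u(x+\e\xi) - u(x) = \e\, Du(x)\xi + O(\e^2|\xi|^2)$, so that
$$\Bigl|\frac{u(x+\e\xi)-u(x)}{\e}\Bigr|^p = |Du(x)\xi + O(\e|\xi|^2)|^p.$$
Splitting the $\xi$-integral at radius $r_\delta$: on $B_{r_\delta}$ the error term is uniformly small as $\e \to 0$, and on $B_{r_\delta}^c$ the contribution is controlled using \eqref{gh1} together with the first inequality \eqref{upp-growth-cond} applied with the truncated kernels $a_\e \chi_{B_{r_\delta}^c}$, which by \eqref{gh2} is $\le C\delta \int_A|Du|^p\,dx$. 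On $B_{r_\delta}$, after absorbing the error I am left with $\int_{B_{r_\delta}} \bigl(\int_{A_\e(\xi)} |Du(x)\xi|^p\,dx\bigr)\,a_\e(\xi)\,d\xi$ up to a term vanishing with $\e$; here the inner integral, as a function of $\xi$, is continuous and bounded, so the weak* convergence $\mu_{a_\e} \rightharpoonup \mu_a$ passes the limit through to give $\int_{B_{r_\delta}} a(\xi)\int_A |Du(x)\xi|^p\,dx\,d\xi$. Letting $\delta \to 0$ recovers \eqref{Glimsup} for smooth $u$. I should be slightly careful that weak* convergence applies to a fixed continuous test function while $A_\e(\xi)$ depends on $\e$; I would handle this by dominating $|A_\e(\xi)\triangle A|\to 0$ and using uniform integrability, or by first replacing $A_\e(\xi)$ with $A$ at the cost of a boundary term bounded by $\|Du\|_\infty^p |\xi|^p |\{x : \dist(x,\partial A) < \e|\xi|\}|$, which vanishes.

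Finally I would remove the smoothness assumption by density: given $u \in W^{1,p}(A;\rr^m)$, pick $u_k \in C^\infty(\bar A;\rr^m)$ with $u_k \to u$ in $W^{1,p}(A;\rr^m)$ (possible since $A \in \mathcal{A}^{\rm reg}(\Omega)$). The functional $v \mapsto \int_{\rr^d} a(\xi)\int_A |Dv(x)\xi|^p\,dx\,d\xi$ is continuous along this sequence because it is dominated by $\bigl(\int a(\xi)|\xi|^p\,d\xi\bigr)\|Dv\|_{L^p(A)}^p$ and the map $Dv \mapsto |Dv\,\xi|^p$ is $p$-homogeneous with the triangle inequality available in $L^p$. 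Since the $\Gamma\text{-}\limsup$ is $L^p$-lower semicontinuous (as always), we get $F''(u,A) = \Gamma\text{-}\limsup_\e G_\e[a_\e](u,A) \le \liminf_k F''(u_k,A) \le \liminf_k \int_{\rr^d} a(\xi)\int_A |Du_k(x)\xi|^p\,dx\,d\xi = \int_{\rr^d} a(\xi)\int_A |Du(x)\xi|^p\,dx\,d\xi$, which is \eqref{Glimsup}; an identical density argument gives \eqref{upp-growth-cond} for general $u$. The main obstacle I anticipate is the bookkeeping around the $\e$-dependent integration domains $A_\e(\xi)$ and the enlargement of $A$ under translation — making the estimates genuinely about $A$ rather than a neighborhood requires the extension operator and some care, but is routine; the conceptual heart is just Taylor expansion plus weak* convergence on the bounded range of $\xi$, with \eqref{gh2} killing the tail.
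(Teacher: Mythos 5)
Your proposal is correct and follows essentially the same route as the paper: the crude bound via the fundamental theorem of calculus plus Jensen's inequality and \eqref{gh1}, the sharp bound by splitting the kernel at $r_\delta$, first-order expansion for smooth $u$, tail control through \eqref{gh2}, passage to the limit by weak* convergence of $\mu_{a_\e}$, and a final density argument. The only cosmetic difference is ordering: the paper reduces to $u\in C^\infty_c(\rr^d;\rr^m)$ at the outset and controls the long-range part of the crude bound through the short-range comparison of Remark \ref{boundremark}, whereas you work with an extension of a general $W^{1,p}$ function and invoke density only at the end.
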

\begin{proof}
By a density argument, we may restrict to the case $u\in C^\infty_c(\mathbb{R}^d;\mathbb{R}^m)$.
By Remark \ref{boundremark}, in order to prove \eqref{upp-growth-cond} it is sufficient to estimate the upper limit of $G_\e^{r'}(u,A)$, for some $r'>0$.
For every $x\in\mathbb{R}^d$ we have
$$\frac{u(x+\e\xi)-u(x)}{\e}=\int_0^1 Du(x+s\e\xi)\xi ds,$$
and by Jensen's inequality and Fubini's Theorem we get
\begin{align*}
G_\e^{r'}(u,A) &\le \int_{B_{r'}}|\xi|^p\int_{A_\e(\xi)}\int_0^1|Du(x+s\e\xi)|^p ds dx \, d\xi \\
&= \int_{B_{r'}}|\xi|^p\int_0^1 \int_{A+B_{\e r_0}}|Du(x)|^p dx\, ds\, d\xi \\
&\le \int_{B_{r'}}|\xi|^p \int_A|Du(x)|^p dx\, d\xi + o(1).
\end{align*}
Taking the $\limsup$ as $\e\to0$ we obtain \eqref{upp-growth-cond}.

We now prove \eqref{Glimsup} under the additional assumption 
\eqref{gh2} and the weak* convergence of $\mu_{a_\e}$ to $\mu_a$.
We split $G_\e[a_\e](u,A)$ as follows
\begin{multline*}
G_\e[a_\e](u,A) = \int_{B_{r_\delta}}a_\e(\xi)\int_{A_\e(\xi)}\Bigl|\frac{u(x+\e\xi)-u(x)}{\e}\Bigr|^p dx \, d\xi \\
+ \int_{B_{r_\delta}^c}a_\e(\xi)\int_{A_\e(\xi)} \Bigl|\frac{u(x+\e\xi)-u(x)}{\e}\Bigr|^p dx \, d\xi
\end{multline*}
for any $\delta>0$, where $r_\delta$ is defined as in assumption \eqref{ass-local}.
Expanding $u(x)$ at the first order when $|\xi|<r_\delta$ we get
$$\int_{B_{r_\delta}}a_\e(\xi)\int_{A_\e(\xi)}\Bigl|\frac{u(x+\e\xi)-u(x)}{\e}\Bigr|^p dx \, d\xi=\int_{B_{r_\delta}}a_\e(\xi) \Bigl(\int_{A_\e(\xi)}|D u(x)\xi|^pdx + o(1)\Bigr) \, d\xi$$
and for $|\xi|>r_\delta$ by assumption \eqref{ass-local}
\begin{multline*}
\int_{B_{r_\delta}^c}a_\e(\xi)\int_{A_\e(\xi)} \Bigl|\frac{u(x+\e\xi)-u(x)}{\e}\Bigr|^p dx \, d\xi \\
\le \int_{B_{r_\delta}^c}a_\e(\xi)\int_A (\| Du\|_{L^\infty(\rr^d)} |\xi|)^p dx \, d\xi \le |A| \| Du\|_{L^\infty(\mathbb{R}^d)}^p\delta.
\end{multline*}
Hence, gathering the inequalities above we obtain
\begin{equation}\label{G-limsup-est}
G_\e[a_\e](u,A) \le \int_{B_{r_\delta}}a_\e(\xi)\int_{A}|Du(x)\xi|^p dx \, d\xi+C\delta
\end{equation}
for some $C>0$.
Letting $\e\to0$  in \eqref{G-limsup-est}, we get
$$\limsup_{\e\to0}G_\e[a_\e](u,A) \le \int_{B_{r_\delta}}a(\xi)\int_{\Omega}|Du(x)\xi|^p dx \, d\xi + C\delta$$
and the arbitrariness of $\delta$ implies \eqref{Glimsup}.
\end{proof}
From the right-hand-side inequality in \eqref{growth-cond}, \eqref{relax-growth-cond}, Propositions \ref{G-liminf-lem} and \ref{G-limsup-lem} the following estimates hold.

\begin{proposition}\label{growthcondremk}
Given $A\in\mathcal{A}^{\rm reg}(\Omega)$, let $\{F_\e(\cdot,A)\}$ be the family of functionals defined by \eqref{loc-functionals} and assume that {\rm(H0$'$), (H1)} and {\rm(H2)} hold. If $F'(u,A)$ is finite, then  $u\in W^{1,p}(A;\mathbb{R}^m)$. Moreover for every $u\in W^{1,p}(A;\mathbb{R}^m)$ we have
\begin{align}\label{growth-cond-below}
F'(u,A) &\ge c\,(\|Du\|_{L^p(A)}^p-|A|), \\
\label{growth-cond-above}
F''(u,A) &\le C(\|Du\|_{L^p(A)}^p+|A|),
\end{align}
for some positive constants $c,C$.
\end{proposition}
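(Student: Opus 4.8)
The plan is to derive all three assertions by comparing $F_\e(\cdot,A)$ with the convolution functionals $G_\e^{r_0}=G_\e[\chi_{B_{r_0}}]$ and $G_\e[\psi_\e]$, and then applying Propositions~\ref{G-liminf-lem} and~\ref{G-limsup-lem}. The hypotheses of those propositions are fulfilled here: with $a_\e\equiv\chi_{B_{r_0}}$ the constant sequence of measures $\mu_{\chi_{B_{r_0}}}$ weakly* converges to the non-trivial measure $\mu_{\chi_{B_{r_0}}}$, and the family $a_\e=\psi_\e$ lies in $L^1(\rr^d)$ for $\e$ small and satisfies \eqref{gh1} thanks to \eqref{ass-bound}.

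For the upper bound \eqref{growth-cond-above} I would start from the pointwise inequality $F_\e(v,A)\le G_\e[\psi_\e](v,A)+C|A|$, valid for every $v\in L^p(\Omega;\rr^m)$ and every sufficiently small $\e$ — this is the right-hand part of \eqref{growth-cond}, a direct consequence of (H1) and \eqref{ass-bound}. Passing to the $\Gamma\text{-}\limsup$, which is monotone with respect to pointwise inequalities of functionals and is merely shifted by the additive constant $C|A|$, and then invoking the first part of Proposition~\ref{G-limsup-lem} (legitimate since $A\in\mathcal{A}^{\rm reg}(\Omega)$), one obtains, for every $u\in W^{1,p}(A;\rr^m)$,
\[
F''(u,A)\le \Gamma\text{-}\limsup_{\e\to0}G_\e[\psi_\e](u,A)+C|A|\le C\|Du\|_{L^p(A)}^p+C|A|.
\]

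For the lower bound and the domain statement I would argue as follows. Let $u$ satisfy $F'(u,A)<+\infty$. By (H0$'$) one may pick a sequence $u_\e\to u$ in $L^p$ with $\lim_\e F_\e(u_\e,A)=F'(u,A)$ (along a subsequence) for which \eqref{relax-growth-cond} holds: $G_\e^{r_0}(u_\e,A')\le C(F_\e(u_\e,A)+|A|)$ for every open $A'\Subset A$ and $\e$ small. Fixing such an $A'$, the right-hand side $C(F_\e(u_\e,A)+|A|)$ stays bounded as $\e\to0$ (since $F_\e(u_\e,A)\to F'(u,A)<+\infty$), so $\liminf_\e G_\e^{r_0}(u_\e,A')<+\infty$; since $\Gamma\text{-}\liminf_\e G_\e^{r_0}(u,A')\le\liminf_\e G_\e^{r_0}(u_\e,A')$, Proposition~\ref{G-liminf-lem} applied with $a_\e\equiv\chi_{B_{r_0}}$ forces $u\in W^{1,p}(A';\rr^m)$ and
\[
\int_{B_{r_0}}\int_{A'}|Du(x)\xi|^p\,dx\,d\xi\le\liminf_{\e\to0}G_\e^{r_0}(u_\e,A')\le C\big(F'(u,A)+|A|\big).
\]
Since $M\mapsto\int_{B_{r_0}}|M\xi|^p\,d\xi$ is positive, continuous and positively $p$-homogeneous on $\rr^{m\times d}$, it is bounded below by $c|M|^p$, whence $c\|Du\|_{L^p(A')}^p\le C(F'(u,A)+|A|)$. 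Letting $A'\uparrow A$ and using monotone convergence gives $u\in W^{1,p}(A;\rr^m)$ together with \eqref{growth-cond-below}, after renaming constants.

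The argument is essentially bookkeeping, so I do not expect a serious obstacle; the two points that need a little care are the passage from the localized estimates on $A'\Subset A$ — which a priori only yield $u\in W^{1,p}_{\rm loc}(A)$ — to the global membership $u\in W^{1,p}(A;\rr^m)$ and the global estimate (immediate by monotone convergence once the bound on $\|Du\|_{L^p(A')}$ is seen to be uniform in $A'$), and the elementary coercivity $\int_{B_{r_0}}|M\xi|^p\,d\xi\ge c|M|^p$. Under the stronger hypothesis (H0) the same conclusions follow even more directly, since then $F_\e(u_\e,A)\ge c_0\,G_\e^{r_0}(u_\e,A)-C|A|$ holds for the full energy, with no need to restrict the class of admissible sequences.
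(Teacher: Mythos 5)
Your argument is correct and is essentially the paper's own: the paper proves this proposition simply by invoking the right-hand inequality in \eqref{growth-cond} together with Proposition \ref{G-limsup-lem} for the upper bound, and \eqref{relax-growth-cond} (i.e.\ (H0$'$)) together with Proposition \ref{G-liminf-lem} for the lower bound and the $W^{1,p}$ membership, which is exactly what you carry out in detail. The points you single out (exhaustion of $A$ by $A'\Subset A$ and the coercivity $\int_{B_{r_0}}|M\xi|^p\,d\xi\ge c|M|^p$) are handled correctly, so there is no gap beyond the constant bookkeeping the paper itself leaves implicit.
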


\section{An integral-representation result}\label{integralrep}

The main result of this section is the following compactness and  integral-representation result for the $\Gamma$-limits of the families $\{F_\e(\cdot,A)\}_\e$.

\begin{theorem}\label{reprthm}
Given $\Omega$ a bounded open set with Lipschitz boundary, let $F_\e$, $F_\e(\cdot,A)$ be defined by \eqref{functionals} and \eqref{loc-functionals}, respectively, and let assumptions  {\rm(H0$'$), (H1)} and {\rm(H2)} be satisfied.
Then, for every $\e_j\to 0$ there exists a subsequence $\{\e_{j_k}\}\subset\{\e_j\}$ and a Carath\'edory function $f_0:\Omega\times\mathbb{R}^{m\times d}\to[0,+\infty)$ which is quasiconvex in the second variable and satisfies the growth condition
\begin{equation}\label{growthf0}
C_0(|M|^p-1)\le f_0(x,M)\le C_1(|M|^p+1)
\end{equation}
for almost every $x\in\Omega$ and every $M\in\mathbb{R}^{m\times d}$, such that
\begin{equation}\label{limres}
\Gamma(L^p)\text{-}\lim_{k\to +\infty}F_{\e_{j_k}}(u,A)=\begin{cases} \displaystyle \int_A f_0(x,Du(x))dx & \text{if } u\in W^{1,p}(A;\rr^m)\cr
+\infty & \text{otherwise,}
\end{cases}
\end{equation}
for every $A\in\mathcal{A}^{\rm reg}(\Omega)$.
\end{theorem}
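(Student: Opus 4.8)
The plan is to follow the \emph{localization method} for $\Gamma$-convergence, together with a standard integral-representation theorem on Sobolev spaces (in the spirit of Buttazzo--Dal Maso). Since $L^p(\Omega;\rr^m)$ is separable, by the compactness of $\Gamma$-convergence and a diagonal argument one can extract from any $\e_j\to0$ a subsequence (not relabelled) along which $\Gamma(L^p)\text{-}\lim_j F_{\e_j}(\cdot,A)=:F(\cdot,A)$ exists for every $A$ in a fixed countable family $\mathcal D$ of open sets which is rich enough that for all $A'\Subset A''$ there is $D\in\mathcal D$ with $A'\Subset D\Subset A''$ (e.g.\ finite unions of open rectangles with rational vertices). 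One then introduces the inner-regular envelope $\underline F(u,A):=\sup\{F(u,A'):A'\in\mathcal D,\ A'\Subset A\}$, and the goal becomes to prove that $F'(u,A)=F''(u,A)=\underline F(u,A)$ for every $A\in\mathcal A^{\rm reg}(\Omega)$ and that $\underline F$ has the asserted integral form.

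The cornerstone is a \emph{fundamental estimate} (joining lemma): given $A'\Subset A''\Subset A$ and an open set $B$, and sequences $u_\e\to u$, $v_\e\to v$ in $L^p$ with equibounded energies, there exist cut-off functions $\varphi_\e$ between $A'$ and $A''$ such that, setting $w_\e:=\varphi_\e u_\e+(1-\varphi_\e)v_\e$,
$$
\limsup_{\e\to0}F_\e(w_\e,A'\cup B)\le\limsup_{\e\to0}F_\e(u_\e,A'')+\limsup_{\e\to0}F_\e(v_\e,B)+R_\e,
$$
where the remainder $R_\e$ vanishes as $\e\to0$ whenever $u_\e-v_\e\to0$ in $L^p$. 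For these non-local energies the obstruction is that the mismatch produced by the cut-off lives on a layer of width of order $\e|\xi|$ for the interaction of range $\xi$, so unbounded ranges would spoil the estimate. This is overcome by first invoking the truncation Lemma~\ref{lim-truncated-lemma} together with (H1)--(H2) and Remark~\ref{boundremark}, which allow one to reduce, up to an arbitrarily small additive error, to energies with kernels supported in a fixed ball; after that the construction is the by-now-standard one for finite-range (e.g.\ lattice) interactions, letting $\varphi_\e$ interpolate across $\sim1/\e$ layers of width $\sim\e$ and selecting the best layer (cf.\ the joining arguments in \cite{alicic}).

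With the fundamental estimate at hand I would show that $A\mapsto\underline F(u,A)$ is the restriction to $\mathcal A(\Omega)$ of a Borel measure via the De Giorgi--Letta criterion: monotonicity is trivial; superadditivity on disjoint open sets passes to the $\Gamma$-$\liminf$ from the elementary inclusion $(A\cup B)_\e(\xi)\supseteq A_\e(\xi)\cup B_\e(\xi)$ (a disjoint union when $A\cap B=\emptyset$) together with $f_\e\ge0$; inner regularity is built into the definition of $\underline F$; and subadditivity is precisely the fundamental estimate. Standard measure-property arguments then yield $F'=F''=\underline F$ on all of $\mathcal A^{\rm reg}(\Omega)$, using also the upper bound $F''(u,A)\le C(\|Du\|_{L^p(A)}^p+|A|)$ from Proposition~\ref{growthcondremk} to handle the behaviour near $\partial A$; by the same proposition $\underline F(u,A)=+\infty$ whenever $u\notin W^{1,p}(A;\rr^m)$, so the limit functional is finite exactly on Sobolev functions.

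It remains to represent $\underline F$ as an integral. The functional $\underline F(\cdot,A)$ is $L^p$-lower semicontinuous (being a $\Gamma$-limit), it is local (each $F_\e(\cdot,A)$ depends only on $u|_A$), it is a measure in $A$, it satisfies the $p$-growth bounds \eqref{growth-cond-below}--\eqref{growth-cond-above}, and it is invariant under addition of constants because $F_\e(u+z,A)=F_\e(u,A)$ for every $z\in\rr^m$. These are exactly the hypotheses of the integral-representation theorem on $W^{1,p}(\Omega;\rr^m)$, which produces a Carath\'eodory integrand $f_0:\Omega\times\rr^{m\times d}\to[0,+\infty)$, quasiconvex in the second variable (as dictated by the $L^p$-lower semicontinuity) and satisfying \eqref{growthf0}, with $\underline F(u,A)=\int_A f_0(x,Du)\,dx$; the translation invariance is what removes the a priori dependence of the integrand on $u$. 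Combining this with the previous step gives \eqref{limres}. The principal difficulty throughout is the fundamental estimate for non-local energies: once the truncation reduces matters to (essentially) finite-range kernels, the remaining steps are routine, so Lemma~\ref{lim-truncated-lemma} and hypothesis (H2) carry the real weight of the argument; a secondary subtlety is passing from the countable family $\mathcal D$ to all $A\in\mathcal A^{\rm reg}(\Omega)$, which rests on the measure property, inner regularity, and the upper growth bound.
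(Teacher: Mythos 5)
Your outline reproduces, almost step for step, the paper's proof in the case where the full coercivity assumption (H0) holds: compactness of $\Gamma$-convergence plus an inner-regular envelope, the fundamental estimate of Proposition \ref{funestthm} (with the truncation Lemma \ref{lim-truncated-lemma} taking care of long-range interactions), the De Giorgi--Letta criterion, and then the Buttazzo--Dal Maso-type representation theorem (Theorem \ref{representationthm}), with translation invariance, locality, the growth bounds of Proposition \ref{growthcondremk} and quasiconvexity from weak lower semicontinuity. Up to that point the argument is correct and is essentially the paper's Step~1.

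However, the theorem is stated under the weaker hypothesis (H0$'$), not (H0), and this is where your proposal has a genuine gap. The ingredients you lean on are established in the paper under (H0): the fundamental estimate needs the equiboundedness of $G_\e^{r_0}(u_\e,A)$ and $G_\e^{r_0}(v_\e,B)$ for the recovery sequences (obtained from the lower bound in \eqref{growth-cond}) in order to control, via Lemma \ref{boundlemma}, the cut-off error terms concentrated on the layers $S^i_{\e,\xi}$, and the same kind of bound enters the proof of Lemma \ref{lim-truncated-lemma} through Corollary \ref{boundlemma-lip}. Under (H0$'$) alone one only knows $G_\e^{r_0}(u_\e,A')\le C(F_\e(u_\e,A)+|A|)$ for $A'\Subset A$, and only along the special families to which the computation of $F'$, $F''$ can be restricted, so these estimates do not come for free and your sketch never says how to recover them. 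The paper resolves this not by redoing the localization under (H0$'$) but by a separate perturbation argument: it applies the (H0)-case result to $F_{\e,n}:=F_\e+\frac1n G_\e^{r_0}$, obtains a non-increasing sequence of quasiconvex integrands $f_n$, sets $f_0:=\inf_n f_n$, and then uses (H0$'$) (to bound $G_{\e_{j_k}}^{r_0}(u_k,A')$ for recovery sequences on $A'\Subset A$) to show that the extra term contributes at most $C/n$ in the liminf inequality, so that $F'=F''=\int_A f_0(x,Du)\,dx$. Without this step (or a careful reworking of Propositions \ref{funestthm}, \ref{innerreg} and Lemma \ref{lim-truncated-lemma} using only the localized control granted by (H0$'$)), your proof only covers the stronger hypothesis (H0) and does not prove the theorem as stated.
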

For the proof of Theorem \ref{reprthm} we will follow the standard strategy described in \cite[Chapter 9]{bradef}.
%
%
In particular we will apply the following  result.

\begin{theorem}[Theorem 9.1 \cite{bradef}]\label{representationthm}
Let $\Omega$ be a bounded open set, and let  $F:W^{1,p}(\Omega;\mathbb{R}^m)\times\mathcal{A}(\Omega)\to[0,+\infty)$ satisfy:

{\rm(i)} for any $A\in\mathcal{A}(\Omega)$ $F(u,A)=F(v,A)$ if $u=v$ almost everywhere on $A$;

{\rm(ii)}  for any $u\in W^{1,p}(\Omega;\mathbb{R}^m)$ the set function $F(u,\cdot)$ is a restriction of a Borel measure on $\mathcal{A}(\Omega)$;

{\rm(iii)} there exists a constant $C>0$ and $a\in L^1(\Omega)$ such that
$$F(u,A)\le C\int_A (a(x)+|Du(x)|^p)\,dx;$$

{\rm(iv)} $F(u+z,A)=F(u,A)$ for any $u\in W^{1,p}(\Omega;\rr^m)$, $z\in\mathbb{R}^m$ and $A\in\mathcal{A}(\Omega)$;

{\rm(v)} $F(\cdot,A)$ is weakly lower semicontinuous for any $A\in\mathcal{A}(\Omega)$.

\noindent
Then there exists a Carath\'eodory function $f:\Omega\times\mathbb{R}^{m\times d}\to[0,+\infty)$, quasiconvex in the second variable, with $0\le f(x,M)\le c(a(x)+|M|^p)$ for almost every $x\in A$ and every $M\in\mathbb{R}^{m\times d}$, such that
$$F(u,A)=\int_A f(x,Du(x))dx$$
for every $A\in\mathcal{A}(\Omega)$ and $u\in W^{1,p}(\Omega;\mathbb{R}^d)$.
\end{theorem}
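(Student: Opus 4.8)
The plan is to prove this by the classical localization/blow-up scheme for integral representation (the strategy of \cite[Chapter~9]{bradef}), constructing the integrand $f$ by differentiation of the set functions attached to affine maps. Throughout write $\ell_M$ for the linear map $x\mapsto Mx$, $M\in\rr^{m\times d}$. By (ii) and (iii), for each fixed $M$ the set function $A\mapsto F(\ell_M,A)$ is the restriction of a Borel measure which is absolutely continuous with respect to $\mathcal L^d$ and bounded by $C\int_A(a(x)+|M|^p)\,dx$; its Radon--Nikodym density, defined first on a countable dense set of matrices and extended by continuity (see the next paragraph), yields a function $f$ with $f(\cdot,M)$ measurable and $0\le f(x,M)\le c\,(a(x)+|M|^p)$ a.e. If $u$ is continuous and piecewise affine, decompose $\Omega$ up to an $\mathcal L^d$-null set into open sets $\Omega_i$ on which $u=\ell_{M_i}+z_i$; by (i) and (iv), $F(u,A\cap\Omega_i)=F(\ell_{M_i},A\cap\Omega_i)=\int_{A\cap\Omega_i}f(x,M_i)\,dx$, and summing over $i$ — legitimate since $F(u,\cdot)$ is a measure charging no Lebesgue-null set, by (iii) — gives $F(u,A)=\int_Af(x,Du)\,dx$ for all piecewise affine $u$ and all $A\in\mathcal A(\Omega)$. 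This step uses only (i)--(iv).

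To obtain quasiconvexity, fix $M$ and a piecewise affine $\varphi\in W^{1,\infty}_0(Q;\rr^m)$, $Q=(0,1)^d$, extended $Q$-periodically to $\rr^d$, and for $Q_\rho(x_0)\Subset\Omega$ consider $w_n(x):=\ell_M(x)+\tfrac1n\varphi\big(n\tfrac{x-x_0}{\rho}\big)$: it is piecewise affine, agrees with $\ell_M$ on $\partial Q_\rho(x_0)$, and converges to $\ell_M$ weakly in $W^{1,p}(Q_\rho(x_0);\rr^m)$. Hypothesis (v) gives $F(\ell_M,Q_\rho(x_0))\le\liminf_n F(w_n,Q_\rho(x_0))$; by the piecewise-affine representation, $F(w_n,Q_\rho(x_0))=\int_{Q_\rho(x_0)}f\big(x,M+D\varphi(n\tfrac{x-x_0}{\rho})\big)\,dx$, whose limit, by the Riemann--Lebesgue lemma, equals $\int_{Q_\rho(x_0)}\!\big(\int_Q f(x,M+D\varphi(y))\,dy\big)\,dx$. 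Dividing by $|Q_\rho(x_0)|$ and letting $\rho\to0$ at a Lebesgue point yields $f(x_0,M)\le\int_Q f(x_0,M+D\varphi(y))\,dy$, first for $(M,\varphi)$ ranging in a countable dense family, and then in general once one observes that quasiconvexity together with the bound $0\le f\le c\,(a+|\cdot|^p)$ forces $f(x_0,\cdot)$ to be locally Lipschitz uniformly on bounded sets. This local Lipschitz estimate is exactly what allows the selection of a Carath\'eodory representative of $f$, quasiconvex in the matrix variable and with the stated growth.

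For general $u\in W^{1,p}(\Omega;\rr^m)$ the upper inequality follows by approximating $u$ by piecewise affine $u_n\to u$ strongly in $W^{1,p}$: by (v) and the piecewise-affine representation $F(u,A)\le\liminf_n\int_Af(x,Du_n)\,dx$, and along a subsequence with $Du_n\to Du$ a.e. and dominated in $L^p$, continuity of $f$ in the matrix variable and dominated convergence give $\int_Af(x,Du_n)\,dx\to\int_Af(x,Du)\,dx$, hence $F(u,A)\le\int_Af(x,Du)\,dx$. For the matching lower inequality, introduce $m_F(\ell_M,A):=\inf\{F(w,A):w-\ell_M\in W_0^{1,p}(A;\rr^m)\}$ and blow up: at a.e. $x_0$ — taken a Lebesgue point of $Du$ and of $a$, and a differentiability point of the (absolutely continuous, by (iii)) measure $F(u,\cdot)$, with density $g(x_0)$ — set $M=Du(x_0)$ and $\ell_\rho(x)=u_{B_\rho(x_0)}+M(x-x_0)$, and for $\lambda\in(0,1)$ let $\tilde v_\rho=\varphi_\rho u+(1-\varphi_\rho)\ell_\rho$ with $\varphi_\rho$ a cut-off between $B_{(1-\lambda)\rho}(x_0)$ and $B_\rho(x_0)$. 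Since $\tilde v_\rho=\ell_\rho$ near $\partial B_\rho(x_0)$, locality (i) and additivity (ii) yield
$$
F(u,B_\rho(x_0))\ge F(u,B_{(1-\lambda)\rho}(x_0))=F(\tilde v_\rho,B_\rho(x_0))-E_\rho\ge m_F(\ell_\rho,B_\rho(x_0))-E_\rho=m_F(\ell_M,B_\rho(x_0))-E_\rho,
$$
where $E_\rho:=F(\tilde v_\rho,B_\rho(x_0)\setminus\overline{B_{(1-\lambda)\rho}(x_0)})$ and the last equality uses (iv); by (iii), the Poincar\'e inequality on $B_\rho(x_0)$ — with $\ell_\rho$ chosen so that $u-\ell_\rho$ has zero average there — and the Lebesgue-point properties, $\limsup_{\rho\to0}E_\rho/|B_\rho(x_0)|\le C\lambda\,(a(x_0)+|M|^p)$. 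Dividing by $|B_\rho(x_0)|$ and letting $\rho\to0$, then $\lambda\to0$, gives $g(x_0)\ge\hat f(x_0,M)$ with $\hat f(x,M):=\limsup_{\rho\to0}m_F(\ell_M,B_\rho(x))/|B_\rho(x)|$; since $m_F(\ell_M,\cdot)\le F(\ell_M,\cdot)$ one has $\hat f\le f$, and applying the displayed inequality to $u=\ell_M$ gives $\int_Af(x,M)\,dx=F(\ell_M,A)\ge\int_A\hat f(x,M)\,dx$ for all $A$, hence $\hat f(\cdot,M)=f(\cdot,M)$ a.e. for every $M$. A standard argument (using that $f$ is Carath\'eodory) upgrades this to $g(x)\ge f(x,Du(x))$ a.e., so $F(u,A)=\int_Ag\,dx\ge\int_Af(x,Du)\,dx$, which together with the upper inequality completes the representation.

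The technical heart is the lower inequality: since no lower $p$-growth estimate on $F$ is assumed, the blow-up must be channelled through the Dirichlet problem $m_F$ rather than through $F(\ell_M,\cdot)$ directly, and the interface correction $E_\rho$ must be absorbed using only the one-sided growth (iii), the measure property (ii), and a Poincar\'e inequality calibrated to $B_\rho(x_0)$ — translation invariance (iv) being what makes the passage from $\ell_\rho$ back to $\ell_M$ free of charge. A second, more measure-theoretic, delicate point is on one hand the upgrade in the quasiconvexity step from inequalities valid a priori only on a countable set of data to a genuinely Carath\'eodory integrand, and on the other hand the identification of $\hat f$ with $f$ that underpins the lower bound; both are standard but require care precisely because of the absence of a lower $p$-growth bound on $F$.
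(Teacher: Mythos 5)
You should first note that the paper itself does not prove this statement: it is imported verbatim as Theorem 9.1 of \cite{bradef} and used as a black box, so your reconstruction can only be compared with the textbook argument. Your overall architecture is the standard one (integrand defined as the Lebesgue density of $F(\ell_M,\cdot)$; representation on piecewise affine functions from (i), (ii), (iv); quasiconvexity via (v) with fast oscillations and Riemann--Lebesgue; Carath\'eodory property from the Lipschitz estimate for quasiconvex functions with $p$-growth; upper inequality for general $u$ by strong density of piecewise affine functions plus (v)). The genuine gap is in the lower inequality. Your blow-up correctly gives $g(x_0)\ge \hat f(x_0,M)$, where $\hat f$ is the density of the Dirichlet minimum values $m_F(\ell_M,B_\rho(x))$, and the handling of the transition layer $E_\rho$ via (ii), (iii), Poincar\'e and (iv) is fine. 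But the concluding identification $\hat f=f$ is not proved: both justifications you offer --- that $m_F(\ell_M,\cdot)\le F(\ell_M,\cdot)$, and the application of the blow-up inequality to $u=\ell_M$ --- yield the \emph{same} inequality $\hat f\le f$, so deducing ``$\hat f(\cdot,M)=f(\cdot,M)$ a.e.'' is a non sequitur. The missing inequality $f\le\hat f$ (i.e.\ that asymptotically the affine function minimizes $F$ within its own Dirichlet class) is exactly the nontrivial content here; without it your argument only shows $F(u,A)\ge\int_A\hat f(x,Du)\,dx$ with $\hat f$ possibly strictly smaller than $f$, which is weaker than the representation.

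This gap cannot be closed with the tools you have set up. Lower-bounding $m_F$ means lower-bounding $F$ at arbitrary competitors $w\in \ell_M+W^{1,p}_0(B_\rho)$, but (iii) is one-sided, the already-established inequality $F\le\int f(\cdot,Du)\,dx$ goes the wrong way, and (v) only bounds $F$ at a weak limit from above by values along a sequence; the natural periodization trick (replicating the perturbation $w-\ell_M$ at fine scales so that the resulting sequence converges weakly to $\ell_M$ and (v) applies) fails because $F$ has arbitrary $x$-dependence and is invariant only under addition of constants (iv), not under spatial rescaling; moreover, absent any coercivity, near-minimizers of $m_F$ carry no gradient bounds, so they need not generate weakly converging sequences at all. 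Channelling the lower bound through $m_F$ is the global-method route, which ordinarily assumes a lower $p$-growth bound that this statement does not provide; whatever route \cite{bradef} takes, your sketch needs an independent argument for $f\le\hat f$ at precisely this point, and none is given. A secondary, fixable issue: the upgrade of the quasiconvexity inequality from a countable dense family of $(M,\varphi)$ to a Carath\'eodory quasiconvex representative is stated circularly (local Lipschitz continuity is deduced from quasiconvexity, which at that stage is known only on the dense set); the standard remedy is to derive the Lipschitz estimate on the dense set first (separate/rank-one convexity along rational directions plus the growth bound) and then extend by continuity.
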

We postpone the proof of Theorem \ref{reprthm} to the end of the section as it will be a direct consequence of some propositions that show that the limit functionals satisfy the hypotheses of Theorem \ref{representationthm}.

\subsection{Truncated functionals}
In this subsection we introduce the truncated functionals obtained by limiting the range of interaction in \eqref{functionals} to a fixed threshold $T>0$. We will show that, to some extent, the limit as $T\to +\infty$ and the $\Gamma$-limit as $\e\to 0$ commute. This result will allow to limit our analysis to truncated functionals in the proofs of the results of the following sections, in particular in that of Theorem \ref{reprthm}, leading to significant simplifications.

\begin{definition}[the \emph{truncated functionals} $F_\e^T$]\label{truncated-functionals-def}
For any $A\in\mathcal{A}(\Omega)$ and  $T>0$ the functional $F_\e^T(\cdot, A):L^p(A;\mathbb{R}^m)\to[0,+\infty]$ is defined by
\begin{equation}\label{truncated-functionals}
F_\e^T(u) := \int_{B_T}\int_{A_\e(\xi)} f_\e\Big(x,\xi,\frac{u(x+\e\xi)-u(x)}{\e}\Big) dx\, d\xi\,.
\end{equation}
\end{definition}
Note that Proposition \ref{growthcondremk} clearly applies also to the truncated functionals $F^T(\cdot,A)$, since they comply with all the hypotheses of Subsection \ref{mainass}. In what follows we use the notation
$$F'^{ ,T}(u,A):=\Gamma\text{-}\liminf_{\e\to0} F^T_\e(u,A),\quad F''^{ ,T}(u,A):=\Gamma\text{-}\limsup_{\e\to0} F^T_\e(u,A).$$
\begin{lemma}\label{lim-truncated-lemma}
Let $F_\e(\cdot,A)$ and $F_\e^T(\cdot, A)$ be defined by \eqref{loc-functionals}  and \eqref{truncated-functionals}, respectively, and let assumptions {\rm(H0)--(H2)} be satisfied. Then for every $A\in\mathcal{A}^{\rm reg}(\Omega)$ and $u\in L^p(\Omega;\mathbb{R}^m)$
\begin{align*}
F'(u,A) = \lim_{T\to+\infty} F'^{ ,T}(u,A) \quad\hbox{ and }\quad
F''(u,A) = \lim_{T\to+\infty} F''^{ ,T}(u,A). 
\end{align*}
In particular, given $T_j\to+\infty$ such that $F_\e^{T_j}(\cdot,A)$ $\Gamma$-converge to $F^{T_j}(\cdot,A)$ for every $j\in\NN$,
$$\Gamma\text{-}\lim_{\e\to0}F_\e(u,A) = \lim_{j\to+\infty} F^{T_j}(u,A)$$
for every $u\in L^p(\Omega;\mathbb{R}^m)$.
\end{lemma}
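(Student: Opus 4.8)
The plan is to exploit the monotonicity in $T$ together with the uniform ``tail control'' provided by (H2). First I would observe that for $T' \le T$ we have the trivial inequality $F_\e^{T'}(u,A) \le F_\e^T(u,A) \le F_\e(u,A)$, since the integrand $f_\e$ is non-negative; passing to $\Gamma$-liminf and $\Gamma$-limsup, it follows that $T \mapsto F'^{,T}(u,A)$ and $T \mapsto F''^{,T}(u,A)$ are non-decreasing and bounded above by $F'(u,A)$ and $F''(u,A)$, respectively. Hence the limits as $T \to +\infty$ exist (possibly $+\infty$) and satisfy the inequalities $\lim_{T} F'^{,T}(u,A) \le F'(u,A)$ and $\lim_T F''^{,T}(u,A) \le F''(u,A)$. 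The content of the lemma is therefore the two reverse inequalities.

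For the reverse inequality I would estimate the ``discarded'' part $F_\e(u,A) - F_\e^T(u,A)$, which by definition equals $\int_{B_T^c}\int_{A_\e(\xi)} f_\e(x,\xi,(u(x+\e\xi)-u(x))/\e)\,dx\,d\xi$. By (H1) this is bounded by $\int_{B_T^c}\psi_\e(\xi)\bigl(\int_{A_\e(\xi)}|(u(x+\e\xi)-u(x))/\e|^p\,dx + |A|\bigr)\,d\xi$. The term $|A|\int_{B_T^c}\psi_\e(\xi)\,d\xi$ is $o(1)$ as $T \to \infty$ uniformly in $\e$ (for $\e$ small) by (H2) together with \eqref{ass-bound}. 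For the first term, the key tool is Corollary \ref{boundlemma-lip} (applied on a fixed $A' \in \mathcal{A}^{\rm reg}(\Omega)$ with $A \Subset A'$, or directly on $A$ if convenient), which gives $\int_{A_\e(\xi)}|(u_\e(x+\e\xi)-u_\e(x))/\e|^p\,dx \le C(|\xi|^p+1)\bigl(G_\e^{r}(u_\e,A')+\|u_\e\|_{L^p(A')}^p\bigr)$ for $\e$ small. Thus, if $\{u_\e\}$ is a recovery sequence for $F''^{,T}$ (which by Proposition \ref{growthcondremk} and the growth bound \eqref{growth-cond-above} we may assume has equibounded $G_\e^{r_0}$-energy and $L^p$ norm, hence equibounded $G_\e^r$-energy by Lemma \ref{boundlemma}), we obtain
$$
F_\e(u_\e,A) - F_\e^T(u_\e,A) \le C\Bigl(\int_{B_T^c}\psi_\e(\xi)(|\xi|^p+1)\,d\xi\Bigr) + o(1) \le C\omega(T) + o(1),
$$
where $\omega(T) \to 0$ as $T \to +\infty$ by (H2) and \eqref{ass-bound}. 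Taking $\Gamma$-limsup in $\e$ along a recovery sequence for $F''^{,T}(u,A)$ and then $T \to \infty$ gives $F''(u,A) \le \lim_T F''^{,T}(u,A) + \limsup_T C\omega(T) = \lim_T F''^{,T}(u,A)$. The argument for $F'$ is symmetric: given a recovery sequence $\{u_\e\}$ for $F'(u,A)$ — which we may again take with equibounded energies — we have $F_\e^T(u_\e,A) \le F_\e(u_\e,A)$, so $F'^{,T}(u,A) \le F'(u,A)$ already; but we actually need $F'(u,A) \le \lim_T F'^{,T}(u,A)$, which follows because for \emph{any} sequence realizing $F'^{,T}$ the same tail estimate shows $F_\e(u_\e,A) \le F_\e^T(u_\e,A) + C\omega(T) + o(1)$, hence $F'(u,A) \le F'^{,T}(u,A) + C\omega(T)$, and letting $T \to \infty$ concludes.

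The main obstacle I anticipate is the bookkeeping around recovery sequences and their energy bounds: to apply Corollary \ref{boundlemma-lip} uniformly I must know that recovery sequences for $F^T_\e$ and for $F_\e$ can be taken with $\sup_\e\bigl(\|u_\e\|_{L^p}+G_\e^{r_0}(u_\e,A')\bigr) < +\infty$ on a slightly larger Lipschitz set $A'$, which uses (H0)--(H2), the left inequality in \eqref{growth-cond}, and Remark \ref{locality} to modify the sequence outside $A$ without changing the energy; one also has to be a little careful that Corollary \ref{boundlemma-lip} is stated on sets in $\mathcal{A}^{\rm reg}(\Omega)$ and requires $\e < \e_0(r,A')$, so all estimates are only ``for $\e$ small''. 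Finally, the last assertion of the lemma is immediate from the first two: taking $T_j \to +\infty$ along which $F_\e^{T_j}(\cdot,A)$ $\Gamma$-converges, we have $F^{T_j} = F'^{,T_j} = F''^{,T_j}$, and since both $F'^{,T_j}(u,A)$ and $F''^{,T_j}(u,A)$ converge to $F'(u,A)$ and $F''(u,A)$ respectively, these two coincide and equal $\lim_j F^{T_j}(u,A)$, which in particular shows $F_\e(\cdot,A)$ $\Gamma$-converges.
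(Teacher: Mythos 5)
Your proposal is correct and follows essentially the same route as the paper's proof: the trivial inequality comes from monotonicity in $T$, and the reverse one from estimating the tail $F_\e(u_\e,A)-F_\e^T(u_\e,A)$ via (H1), Corollary \ref{boundlemma-lip}, the equiboundedness of $G_\e^{r_0}(u_\e,A)$ guaranteed by the lower bound in \eqref{growth-cond}, and (H2), letting first $\e\to0$, then $T\to+\infty$ (and $\delta\to 0$). The concluding statement about $T_j\to+\infty$ is obtained exactly as you indicate.
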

\begin{proof}
Note first that, since $F_\e^T(u,A)\le F_\e(u,A)$ for every $u\in L^p(\Omega;\mathbb{R}^m)$ and $A\in\mathcal{A}(\Omega)$, one inequality in the statement is trivial. Thanks to Proposition \ref{growthcondremk}, it is sufficient to prove the opposite inequality for every $u\in W^{1,p}(A;\mathbb{R}^m)$. Hence, let $u_\e\to u$ in $L^p(\Omega;\mathbb{R}^m)$. Without loss of generality   we may assume that $F_\e(u_\e,A)$ is uniformly bounded.
We have that
\begin{align}\label{est1}
F_\e(u_\e,A) &= F_\e^T(u_\e,A) + \int_{B_T^c}\int_{A_\e(\xi)} f_\e\Big(x,\xi,\frac{u_\e(x+\e\xi)-u_\e(x)}{\e}\Big) dx\, d\xi ,\nonumber\\
\intertext{and from assumption (H1) we get}
F_\e(u_\e,A) &\le F_\e^T(u_\e,A) + \int_{B_T^c} \int_{A_\e(\xi)} \psi_\e(\xi)\Big(\Big|\frac{u_\e(x+\e\xi)-u_\e(x)}{\e}\Big|^p+1\Big) dx\, d\xi.
\end{align}
Thanks to Corollary \ref{boundlemma-lip}, for $\e$ small enough
\begin{multline}\label{est2}
\int_{B_T^c} \int_{A_\e(\xi)} \psi_\e(\xi)\Big(\Big|\frac{u_\e(x+\e\xi)-u_\e(x)}{\e}\Big|^p+1\Big) dx\, d\xi \\
\le \int_{B_T^c}\psi_\e(\xi) \big(C(|\xi|^p+1) \big(G_\e^{r_0}(u_\e,A)+\|u_\e\|_{L^p(A;\rr^m)}^p\big)+|A|\big) d\xi\, .
\end{multline}
Since by \eqref{growth-cond} $G_\e^{r_0}(u_\e,A)$ is bounded, by \eqref{est1}, \eqref{est2} and (H2) we have
$$F_\e(u_\e,A) \le F^T_\e(u_\e,A)+C \delta+o(1)$$
for every $T>r_\delta$, where $r_\delta$ is chosen as in (H2), and the thesis follows letting first $\e$ and then $\delta$ tend to $0$.
\end{proof}

\subsection{Fundamental estimates}

 In what follows, with a slight  abuse of notation, $F''(\cdot,\cdot)$ will denote the $\Gamma$-$\limsup$ of both the family of functionals
 $\{F_{\e}\}_\e$ and the sequence $\{F_{\e_j}\}_j$  for any $\e_j\to 0$.

A crucial step in order to apply Theorem \ref{representationthm} is provided by the following two propositions.

\begin{proposition}\label{funestthm}
Let $F_\e(\cdot,\cdot)$ be defined by  \eqref{loc-functionals} and assume that {\rm(H0)--(H2)} hold.
Let $A,B\in\mathcal{A}(\Omega)$ and let  $A',B'\in\mathcal{A}(\Omega)$ with  $A'\Subset A$ and $B'\Subset B$. Then
\begin{equation}\label{fundamentalest}
F''(u,A'\cup B')\le F''(u,A)+F''(u,B)
\end{equation}
for every $u\in L^p(\Omega;\mathbb{R}^m)$.
\end{proposition}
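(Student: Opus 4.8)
The plan is to prove the fundamental estimate \eqref{fundamentalest} by the standard De Giorgi-type cut-off/slicing argument adapted to the convolution setting, with the crucial point being that the ``overlap'' region where the cut-off interpolates between two recovery sequences contributes a negligible energy thanks to hypotheses (H1) and (H2). First I would fix $u\in L^p(\Omega;\mathbb{R}^m)$, assume without loss of generality that $F''(u,A)$ and $F''(u,B)$ are both finite (otherwise there is nothing to prove), and by Lemma \ref{lim-truncated-lemma} reduce to the truncated functionals $F_\e^T$: it suffices to prove $F''^{,T}(u,A'\cup B')\le F''^{,T}(u,A)+F''^{,T}(u,B)$ for every fixed $T>0$ and then let $T\to+\infty$. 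This truncation is what makes the argument work, since it confines all interactions to the range $|\xi|\le T$, so that ``$\e T$-neighbourhoods'' are the only thing that matters.

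Next I would take recovery sequences: $u_\e\to u$ in $L^p$ with $F_\e^T(u_\e,A)\to F''^{,T}(u,A)$ and $v_\e\to u$ in $L^p$ with $F_\e^T(v_\e,B)\to F''^{,T}(v,B)$. Pick an integer $N$ and, for $0\le i\le N$, nested open sets $A'\Subset A_0\Subset A_1\Subset\cdots\Subset A_N\Subset A$ with $\mathrm{dist}(A_{i-1},\partial A_i)>0$, together with cut-off functions $\varphi_i$ between $A_{i-1}$ and $A_i$, with $\|\nabla\varphi_i\|_\infty\le c N$. Define the interpolated sequence $w_\e^i:=\varphi_i u_\e+(1-\varphi_i)v_\e$. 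On the set $A_{i-1}$ (where $\varphi_i\equiv 1$) one has $w_\e^i=u_\e$ and on $\Omega\setminus A_i$ (where $\varphi_i\equiv 0$) one has $w_\e^i=v_\e$; in the ``transition layer'' $S^i:=A_i\setminus A_{i-1}$ the two sequences get mixed. Since $w_\e^i\to u$ in $L^p(\Omega)$ and, for $\e$ small, $(A'\cup B')_\e(\xi)$ with $|\xi|\le T$ is covered by $A_{i-1}\cup((S^i)+B_{\e T})\cup(B\setminus A_i)$, I would split $F_\e^T(w_\e^i,A'\cup B')$ into three pieces: a piece controlled by $F_\e^T(u_\e,A)$, a piece controlled by $F_\e^T(v_\e,B)$, and a ``layer'' piece supported on points $x$ with $x$ or $x+\e\xi$ in $S^i$.

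The main work is estimating the layer piece. On it, using (H1), $f_\e(x,\xi,\tfrac{w_\e^i(x+\e\xi)-w_\e^i(x)}{\e})\le\psi_\e(\xi)(|\tfrac{w_\e^i(x+\e\xi)-w_\e^i(x)}{\e}|^p+1)$, and one expands $w_\e^i(x+\e\xi)-w_\e^i(x)$ using the product structure: adding and subtracting $\varphi_i(x)u_\e(x+\e\xi)+(1-\varphi_i(x))v_\e(x+\e\xi)$ gives a bound (via the convexity inequality $|a+b|^p\le 2^{p-1}(|a|^p+|b|^p)$) by $C(|\tfrac{u_\e(x+\e\xi)-u_\e(x)}{\e}|^p+|\tfrac{v_\e(x+\e\xi)-v_\e(x)}{\e}|^p+|\tfrac{\varphi_i(x+\e\xi)-\varphi_i(x)}{\e}|^p(|u_\e(x+\e\xi)-v_\e(x+\e\xi)|^p))$, and $|\tfrac{\varphi_i(x+\e\xi)-\varphi_i(x)}{\e}|\le\|\nabla\varphi_i\|_\infty|\xi|\le cN|\xi|$. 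Integrating over the layer and using Corollary \ref{boundlemma-lip} (to bound the difference-quotient terms by $G_\e^{r_0}(u_\e,A)+\|u_\e\|_{L^p}^p$, which are equibounded by \eqref{growth-cond} since $F_\e(u_\e,A)$ is bounded, and similarly for $v_\e$) together with (H1) and \eqref{ass-bound}, one gets that the layer contribution is bounded by $C N^p\|u_\e-v_\e\|_{L^p(S^i+B_{\e T})}^p + C|S^i+B_{\e T}| + o(1)$, where $C$ is independent of $i$ and $N$. Here I would invoke the ``averaging over $i$'' trick: among the $N$ disjoint layers $S^1,\dots,S^N$, there must be at least one index $i$ for which $\|u_\e-v_\e\|_{L^p(S^i+B_{\e T})}^p\le\tfrac{C}{N}\|u_\e-v_\e\|_{L^p(\Omega)}^p$; choosing that $i$ (depending on $\e$, but the estimates are uniform) and recalling that $u_\e-v_\e\to 0$ in $L^p$, the layer term is $\le\tfrac{C}{N}\cdot o(1)+C|S^i+B_{\e T}|$, which tends to $0$ first as $\e\to0$, then the measure term is controlled by $|A\setminus A'|$ which can be made small. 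Putting the three pieces together, $F''^{,T}(u,A'\cup B')\le\liminf_\e F_\e^T(w_\e^i,A'\cup B')\le F''^{,T}(u,A)+F''^{,T}(u,B)+$(an error that vanishes after letting $\e\to0$, then $N\to\infty$), and finally letting $T\to+\infty$ via Lemma \ref{lim-truncated-lemma} gives \eqref{fundamentalest}. The chief obstacle, and the place requiring care, is ensuring that \emph{all} constants appearing in the layer estimate are genuinely independent of $N$ and of the layer index $i$ so that the averaging argument closes; this is exactly where the truncation to $F_\e^T$ and the uniform bounds from (H1), \eqref{ass-bound}, Corollary \ref{boundlemma-lip} and \eqref{growth-cond} are used in an essential way.
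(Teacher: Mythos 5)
Your overall architecture coincides with the paper's: reduce to the truncated functionals via Lemma \ref{lim-truncated-lemma}, take recovery sequences for $A$ and $B$, build $N$ nested transition layers with cut-off functions of gradient $O(N)$, interpolate, and estimate the mixed terms on the layer through (H1). However, there is a genuine gap in your layer estimate. Besides the term $CN^p\int_{B_T}\psi_\e(\xi)|\xi|^p\int|u_\e(x+\e\xi)-v_\e(x+\e\xi)|^p\,dx\,d\xi$, the layer contribution contains the terms $\int_{B_T}\psi_\e(\xi)\int_{S^i_{\e,\xi}}\bigl(\bigl|\tfrac{u_\e(x+\e\xi)-u_\e(x)}{\e}\bigr|^p+\bigl|\tfrac{v_\e(x+\e\xi)-v_\e(x)}{\e}\bigr|^p+1\bigr)\,dx\,d\xi$, and your claim that these are bounded by $C|S^i+B_{\e T}|+o(1)$ is not justified: Corollary \ref{boundlemma-lip} together with the equiboundedness of $G^{r_0}_\e(u_\e,A)$ and $G^{r_0}_\e(v_\e,B)$ only yields a bound by a constant independent of $\e$, and the recovery sequences may concentrate all of their energy on $A\setminus A'$, so for a single fixed layer this contribution need not be small. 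With your estimate the conclusion would only be $F''(u,A'\cup B')\le F''(u,A)+F''(u,B)+C$ with a non-vanishing $C$. This is precisely where the averaging over the layer index is needed, and where the paper uses it: summing these contributions over $i=1,\dots,N-4$, the enlarged layers overlap boundedly, so by Lemma \ref{boundlemma} the sum is bounded by a constant uniformly in $\e$ and $T$; hence for each $\e$ one can select an index $k_\e$ whose layer contributes at most $C/(N-4)$, and this error disappears as $N\to\infty$.

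In your proposal the averaging is instead spent on $\|u_\e-v_\e\|^p_{L^p(S^i+B_{\e T})}$, which does not need it: for fixed $N$ one already has $N^p\|u_\e-v_\e\|^p_{L^p(\Omega)}\to0$ as $\e\to0$, exactly as in the paper. Meanwhile the problematic energy term is left unhandled. A related slip: you say the residual measure term is ``controlled by $|A\setminus A'|$ which can be made small''; but $A$ and $A'$ are fixed data of the proposition, so $|A\setminus A'|$ cannot be shrunk. The smallness must come from the factor $1/N$ obtained by selecting a good layer (the disjointness of the layers gives at least one of measure at most $|A\setminus A'|/N$), i.e.\ from the same selection argument, followed by $N\to\infty$ and then $T\to\infty$. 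With the averaging redirected to the energy-plus-measure terms, your argument closes and becomes the paper's proof.
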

\begin{proof}
Without loss of generality we may suppose  that $F''(u,A)$ and $F''(u,B)$ are finite. Moreover, since $F''(u,\cdot)$ is an increasing set function, we may assume that $A', B'\in\mathcal{A}^{\rm reg}(\Omega)$. Let $u_\e,v_\e$ both converge to $u$ in $L^p(\Omega;\mathbb{R}^d)$ and be such that
$$
\lim_{\e\to 0} F_\e(u_\e,A)=F''(u,A),\quad \lim_{\e\to 0}F_\e(v_\e,B)= F''(u,B).
$$
Note that, by \eqref{growth-cond}, $G_\e^{r_0}(u_\e,A)$ and $G_\e^{r_0}(v_\e,B)$ are uniformly bounded.
Let $R:=\dist(A',\mathbb{R}^d\backslash A)$ and, fixed $N\in\NN$, set
$$A_i=\{x\in A\,|\text{ dist}(x,A')<iR/N\},\,1\le i\le N.$$
Let $\varphi^i$ be a cut-off function between $A_i$ and $A_{i+1}$, with $|\nabla\varphi^i|\le 2N/R$,
 and set $w_\e^i:=u_\e\varphi^i+v_\e(1-\varphi^i)$. Note that $w_\e^i\to u$ in $L^p(\Omega;\rr^m)$ for any $i\in\{1,\dots,N\}$.
By adding and subtracting $\varphi^i(x)u_\e(x+\e\xi)+(1-\varphi^i(x))v_\e(x+\e\xi)$ we have
\begin{equation}\label{cut-off-est1}
\begin{aligned}
w_\e^i(x+\e\xi)-w_\e^i(x) &= \varphi^i(x)(u_\e(x+\e\xi)-u_\e(x))+(1-\varphi^i(x))(v_\e(x+\e\xi)-v_\e(x)) \\
& \quad\quad +(\varphi^i(x+\e\xi)-\varphi^i(x))(u_\e(x+\e\xi)-v_\e(x+\e\xi)).
\end{aligned}
\end{equation}
Note that
\begin{equation}\label{cut-off-est2}
w_\e^i(x+\e\xi)-w_\e^i(x) = \begin{cases}
u_\e(x+\e\xi)-u_\e(x),&\text{if }x\in(A_i)_\e(\xi) \\
v_\e(x+\e\xi)-v_\e(x),&\text{if }x\in(\Omega\backslash \overline{A}_{i+1})_\e(\xi),
\end{cases}
\end{equation}
while,  having set
$$S_{\e,\xi}^i:=((A_i)_\e(\xi)\cup(\Omega\backslash \overline{A}_{i+1})_\e(\xi))^c\cap(A'\cup B')_\e(\xi),$$
by \eqref{cut-off-est1} and Jensen's inequality, using the notation $D_\xi^\e g(x)=(g(x+\e\xi)-g(x))/\e$ for the sake of brevity, we get
\begin{align}\nonumber
|D_\xi^\e w_\e^i(x)|^p &\le 2^{p-1}\varphi^i(x)|D_\xi^\e u_\e(x)|^p+2^{p-1}(1-\varphi^i(x))|D_\xi^\e v_\e(x)|^p \\ \nonumber
& \quad\quad +2^{p-1}|D_\xi^\e\varphi^i(x)|^p|u_\e(x+\e\xi)-v_\e(x+\e\xi)|^p \\ \label{cut-off-est3}
&\le 2^{p-1}\left(|D_\xi^\e u_\e(x)|^p+|D_\xi^\e v_\e(x)|^p+\left(\frac{2N}{R}\right)^p|\xi|^p|u_\e(x+\e\xi)-v_\e(x+\e\xi)|^p\right)
\end{align}
for every $x\in S_{\e,\xi}^i$.

Now, we consider the truncated functionals $F^T_\e$ introduced in Definition \ref{truncated-functionals-def}.
By \eqref{cut-off-est2}, \eqref{cut-off-est3} and assumption (H1) we have
\begin{align}
F^T_\e(w_\e^i,A'\cup B') &= F^T_\e(u_\e,A_i)+F^T_\e(v_\e,(\Omega\backslash \overline{A}_{i+1})\cap B')+\int_{B_T}\int_{S_{\e,\xi}^i}f_\e(x,\xi,D_\xi^\e w_\e^i(x))dx \, d\xi \nonumber \\
&\le F_\e(u_\e,A)+F_\e(v_\e,B) \nonumber \\
& \quad\quad +C \int_{B_T}\psi_\e(\xi)\int_{S_{\e,\xi}^i}(|D_\xi^\e u_\e(x)|^p+|D_\xi^\e v_\e(x)|^p+1)dx \, d\xi \label{funest1} \\
& \quad\quad +C N^p\int_{B_T}\psi_\e(\xi)\int_{S_{\e,\xi}^i} |\xi|^p |u_\e(x+\e\xi)-v_\e(x+\e\xi)|^p dx \, d\xi. \label{funest2}
\end{align}
By (H1), the integral in \eqref{funest2} can be estimated from above uniformly in $T$ as follows
\begin{align*}
\int_{B_T}\psi_\e(\xi)\int_{S_{\e,\xi}^i} |\xi|^p |u_\e(x+\e\xi)-v_\e(x+\e\xi)|^p dx \, d\xi &\le \int_{\mathbb{R}^d}\psi_\e(\xi)|\xi|^p\|u_\e-v_\e\|_{L^p(\Omega;\rr^m)}^pd\xi \\
&\le C_1 \|u_\e-v_\e\|_{L^p(\Omega;\rr^m)}^p;
\end{align*}
hence it tends to zero as $\e\to0$, since  $u_\e$ and $v_\e$  both converge to $u$ in $L^p(\Omega;\rr^m)$.

Note that, if $|\xi|<T$ 
$S_{\e,\xi}^i\subset(A_{i+1}\backslash\overline{A_i}+(-\e,\e)\xi)\cap B'$; hence,
\begin{equation}\label{boundaries}
\bigcup_{i=1}^{N-4}S_{\e,\xi}^i\subset (A_{N-2}\backslash \overline{A'})\cap B'.
\end{equation}
Moreover, the sets $\{S_{\e,\xi}^i\}_i$ intersect at most pairwise for $\e$ small enough. Thus, from \eqref{boundaries}
for $\e$ small enough we get
\begin{multline*}
\sum_{i=1}^{N-4}\int_{B_T}\psi_\e(\xi)\int_{S_{\e,\xi}^i}(|D_\xi^\e u_\e(x)|^p+|D_\xi^\e v_\e(x)|^p+1)dx \, d\xi \\
\le 2 \int_{B_T}\psi_\e(\xi)\int_{A_{N-2}\cap B'}(|D_\xi^\e u_\e(x)|^p+|D_\xi^\e v_\e(x)|^p+1)dx \, d\xi.
\end{multline*}
Hence, we apply Lemma \ref{boundlemma} on $u_\e$ and $v_\e$ with $E=A_{N-2}\cap B'$ for any $|\xi|<T$, so that $E_{\e,\xi}\subset A_{N-1}\cap(B'+B_{\e(r_0+T)})$.
Thus for $\e$ small enough $E_{\e,\xi}\subset A\cap B$ and we get
\begin{multline}\label{uppbound0.5}
\sum_{i=1}^{N-4}\int_{B_T}\psi_\e(\xi)\int_{S_{\e,\xi}^i}(|D_\xi^\e u_\e(x)|^p+|D_\xi^\e v_\e(x)|^p+1)dx \, d\xi \\
\le  (G_\e^{r_0}(u_\e,A)+G_\e^{r_0}(v_\e,B)) \int_{B_T}C(|\xi|^p+1) \psi_\e(\xi) d\xi \le C.
\end{multline}
We can choose an index $1\le k_\e\le N-4$ such that
\begin{equation}\label{uppbound1}
\int_{B_T}\psi_\e(\xi)\int_{S_{\e,\xi}^{k_\e}}(|D_\xi^\e u_\e(x)|^p+|D_\xi^\e v_\e(x)|^p+1)dx \, d\xi \le \frac{C}{N-4},
\end{equation}
uniformly in $T$,
and
$$\Gamma\text{-}\limsup_{\e\to 0}F^T_{\e}(u,A'\cup B')\le \limsup_{\e\to 0} F_\e(w_\e^{k_\e}, A'\cup B')\leq F''(u,A)+F''(u,B)+\frac{C}{N-4}.$$
Letting first $N\to\infty$ and then $T\to+\infty$, by Lemma \ref{lim-truncated-lemma} we get the conclusion.
\end{proof}


\begin{proposition}\label{innerreg}
Let  $F_\e(\cdot,\cdot)$ be defined by  \eqref{loc-functionals} and assume that {\rm(H0)--(H2)} hold.
Then $$\sup_{A'\Subset A}F''(u,A') = F''(u,A)$$
for every $A\in\mathcal{A}^{\rm reg}(\Omega)$ and $u\in L^p(\Omega;\mathbb{R}^d)\cap W^{1,p}(A;\mathbb{R}^m)$.
\end{proposition}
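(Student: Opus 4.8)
The inequality $\sup_{A'\Subset A}F''(u,A')\le F''(u,A)$ is immediate: $A'\subseteq A$ forces $F_\e(\cdot,A')\le F_\e(\cdot,A)$ pointwise on $L^p(\Omega;\rr^m)$, and this monotonicity is inherited by the $\Gamma$-$\limsup$. So the plan is to prove the opposite inequality $F''(u,A)\le S:=\sup_{A'\Subset A}F''(u,A')$, and we may assume $S<+\infty$. Fix $\delta>0$. Since $u\in W^{1,p}(A;\rr^m)$, by absolute continuity of the integral I would first choose $A'\in\mathcal{A}^{\rm reg}(\Omega)$ with $A'\Subset A$ and $\int_{A\setminus\overline{A'}}(1+|Du|^p)\,dx<\delta$, and then fix $A''\in\mathcal{A}^{\rm reg}(\Omega)$ with $A'\Subset A''\Subset A$. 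As $A\setminus\overline{A'}\in\mathcal{A}^{\rm reg}(\Omega)$, the upper bound \eqref{growth-cond-above} of Proposition \ref{growthcondremk} yields $F''(u,A\setminus\overline{A'})\le C\delta$. I would then take recovery sequences $v_\e\to u$ and $w_\e\to u$ in $L^p(\Omega;\rr^m)$ for $F''(u,A'')$ and for $F''(u,A\setminus\overline{A'})$, respectively, chosen via Remark \ref{locality} so that $v_\e=u$ outside $A''$ and $w_\e=u$ on $\overline{A'}$; thus $\limsup_\e F_\e(v_\e,A'')=F''(u,A'')\le S$ and $\limsup_\e F_\e(w_\e,A\setminus\overline{A'})\le C\delta$, while by \eqref{growth-cond} both $G_\e^{r_0}(v_\e,A'')$ and $G_\e^{r_0}(w_\e,A\setminus\overline{A'})$ are equibounded.

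Next I would run the cut-off/layer scheme of the proof of Proposition \ref{funestthm}, now interpolating the inner sequence $v_\e$ with the outer sequence $w_\e$ rather than two arbitrary recovery sequences. Introduce $N$ nested open sets $A'\Subset A_1\Subset\cdots\Subset A_N\Subset A''$ whose consecutive ones are at distance of order $1/N$, and cut-offs $\varphi^i$ between $A_i$ and $A_{i+1}$ with $|\nabla\varphi^i|\le CN$, and put $z_\e^i:=\varphi^i v_\e+(1-\varphi^i)w_\e$, which tends to $u$ in $L^p(\Omega;\rr^m)$ uniformly in $i$. Passing to the truncated functionals $F_\e^T$ and expanding $z_\e^i(x+\e\xi)-z_\e^i(x)$ as in \eqref{cut-off-est1}, one gets, for $\e$ small,
\begin{equation*}
F_\e^T(z_\e^i,A)\le F_\e(v_\e,A'')+F_\e(w_\e,A\setminus\overline{A'})+\int_{B_T}\int_{S_{\e,\xi}^i}f_\e\Bigl(x,\xi,\frac{z_\e^i(x+\e\xi)-z_\e^i(x)}{\e}\Bigr)\,dx\,d\xi,
\end{equation*}
where $S_{\e,\xi}^i$ is a thin shell around $\partial A_i$ contained in $(A\setminus\overline{A'})\cap A''$ for $\e$ small. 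In the last term, (H1) and the pointwise estimate \eqref{cut-off-est3} leave a $|v_\e-w_\e|^p$-contribution which is $O(N^p\|v_\e-w_\e\|_{L^p(\Omega)}^p)\to0$ by \eqref{ass-bound}, plus a contribution which — the shells overlapping at most pairwise and lying in $(A\setminus\overline{A'})\cap A''$ — is controlled, after summing over $i$ and applying Lemma \ref{boundlemma}, by $C\bigl(G_\e^{r_0}(v_\e,A'')+G_\e^{r_0}(w_\e,A\setminus\overline{A'})+|A''|\bigr)\int_{B_T}(1+|\xi|^p)\psi_\e(\xi)\,d\xi\le C$, uniformly in $N$ and $T$. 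A pigeonhole choice of $k_\e$ then gives $F_\e^T(z_\e^{k_\e},A)\le F_\e(v_\e,A'')+F_\e(w_\e,A\setminus\overline{A'})+C/N+o(1)$; taking $\limsup_\e$ and using $z_\e^{k_\e}\to u$ in $L^p(\Omega;\rr^m)$ yields $\Gamma\text{-}\limsup_\e F_\e^T(u,A)\le S+C\delta+C/N$ for every $N$ and $T$. Letting $N\to+\infty$, then $T\to+\infty$ and invoking Lemma \ref{lim-truncated-lemma}, we obtain $F''(u,A)\le S+C\delta$, and $\delta\to0$ concludes.

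The main obstacle — and the reason Proposition \ref{funestthm} cannot simply be quoted — is the boundary layer near $\partial A$: it cannot be enclosed in any open set compactly contained in a region on which we control $F''$, so the mere finiteness of $F''$ there is of no use. The remedy is to replace it by the genuinely small estimate $F''(u,A\setminus\overline{A'})\le C\delta$ coming from the $p$-growth upper bound of Proposition \ref{growthcondremk}, and then to transport it to $F''(u,A)$ through the cut-off/pigeonhole construction. The accompanying technical point is to keep the control of the long-range interactions on the shells $S_{\e,\xi}^i$ uniform with respect to the truncation threshold $T$, which is exactly what Lemma \ref{boundlemma} together with \eqref{ass-bound} and (via Lemma \ref{lim-truncated-lemma}) condition (H2) provide.
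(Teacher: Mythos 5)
Your proposal is correct and follows essentially the same route as the paper's proof: pick an inner regular set whose complement in $A$ carries energy at most $C\delta$ via the $p$-growth upper bound of Proposition \ref{growthcondremk}, glue the recovery sequence on the inner set to one on the boundary layer with the cut-off/pigeonhole scheme of Proposition \ref{funestthm} applied to the truncated functionals $F_\e^T$, and then let $\e\to0$, $N\to\infty$, $T\to\infty$ (via Lemma \ref{lim-truncated-lemma}) and $\delta\to0$. The only cosmetic difference is your use of an intermediate set $A''$ and of Remark \ref{locality} to normalize the recovery sequences, which the paper handles implicitly.
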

\begin{proof}
Since $F''(u,\cdot)$ is an increasing set-function, it suffices to prove that
 $$
 \sup_{A'\Subset A}F''(u,A') \ge F''(u,A).
 $$
To this end we argue as in the proof of Proposition \ref{funestthm}. For any $\delta>0$ let $A_\delta\Subset A$ be an open set such that
$$|A\setminus\overline{A_\delta}|+\|D u\|_{L^p(A\setminus\overline{A_\delta})}^p<\delta$$
and let $A'\in\mathcal{A}(\Omega)$ be such that $A_\delta\Subset A'\Subset A$. Let $u_\e,v_\e \in L^p(\Omega;\mathbb{R}^m)$ both converge to $u$ in $L^p(\Omega;\rr^m)$ and be such that
$$
\lim_{\e\to 0} F_\e(u_\e,A')=F''(u,A'),\quad \lim_{\e\to 0}F_\e(v_\e,A\setminus\overline{A_\delta})= F''(u,A\setminus\overline{A_\delta}).
$$ Thus, by Proposition \ref{growthcondremk} we get
\begin{equation}\label{innreg1}
F_\e(v_\e,A\setminus \overline{A_\delta})\le F''(u,A\setminus \overline{A_\delta})+o_\e(1)\le C\delta+o_\e(1).
\end{equation}
Set $R:=\dist(A_\delta,\Omega\setminus A')$, $A_i=\{x\in A\,|\, \dist(x,A_\delta)<iR/N\}$, and let $\varphi_i$ and $w_\e^i$ be defined as in the proof of Proposition \ref{funestthm}.
Set
$$S_{\e,\xi}^i:=((A_i)_\e(\xi)\cup(A\setminus \overline{A_{i+1}})_\e(\xi))^c\cap A_\e(\xi)\subset (A_{i+1}\setminus\overline{A_i}+(-\e,\e)\xi)\cap A_\e(\xi).$$
Consider first the finite range interaction energies $F_\e^T(\cdot,\cdot)$. Reasoning as in the proof of Proposition \ref{funestthm}, we can find $2\le k_\e\le N-4$ such that
\begin{align*}
F_\e^T(w_\e^{k_\e},A) &= F_\e^T(u_\e, A_{k_\e})+F_\e^T(v_\e,A\setminus \overline{A_{k_\e+1}})+\int_{B_T}\int_{S_{\e,\xi}^{k_\e}} f_\e(x,\xi,D_\xi^\e w_\e^k(x))dx \, d\xi \\
&\le F_\e(u_\e,A')+C\delta+o_\e(1) \\
& \quad\quad +\frac{C}{N-4}\int_{B_T}\psi_\e(\xi)(|\xi|^p+1)(G_\e(u_\e,A')+G_\e(v_\e,A'\setminus \overline{A_\delta}))+|A|)d\xi \\
&\le F_\e(u_\e,A')+o_\e(1)+\frac{C}{N-4}+C\delta.
\end{align*}
Letting first $\e\to 0$ and then $N\to\infty$ we get
$$F''^{ ,T}(u,A) \le F''(u,A')+C\delta\le \sup_{A'\Subset A} F''(u,A')+C\delta.$$
The result follows from the arbitrariness of $\delta$ and $T$ and Lemma \ref{lim-truncated-lemma}.
\end{proof}

\subsection{Proof of Theorem \ref{reprthm}}
\begin{proof}[Proof of Theorem {\rm\ref{reprthm}}]
We divide the proof in two steps, dealing first with the  case  in which (H0) holds and with the general case.

 \emph{Step $1$.} Assume that (H0) holds.
The compactness properties of $\Gamma$-convergence, Proposition \ref{innerreg} and \cite[Theorem 10.3]{bradef} yields the existence of a subsequence $(\e_{j_k})$ such that the $\Gamma$-limit
 $$
 \Gamma(L^p)\text{-}\lim_{k\to\infty}F_{\e_{j_k}}(u,A)=: F(u,A)
 $$
 exists for any $(u,A)\in L^p(\Omega;\rr^m)\times \mathcal{A}^{\rm reg}(\Omega)$. By Proposition \ref{growthcondremk}, $F(u,A)=+\infty$ if and only if $u\not\in W^{1,p}(A;\rr^m)$. We now introduce the inner-regular extension of $F(u,\cdot)$ on the whole family $\mathcal{A}(\Omega)$
 defined by
 $$\tilde F(u,A) :=\sup\{ F(u,A') \,|\, A'\in \mathcal{A}^{\rm reg}(\Omega),\, A'\Subset A\}.$$
Since, by Proposiiton \ref{innerreg},  $\tilde F(u,A)= F(u,A)$ for any $u\in W^{1,p}(A;\rr^m)$ and $A\in\mathcal{A}^{\rm reg}(\Omega)$, it remains to check that $\tilde F$ satisfies all the hypotheses of Theorem \ref{representationthm}.
$\tilde F(u,\cdot)$ is clearly increasing. By Remark \ref{locality}, hypothesis (i) trivially holds.
Proposition \ref{growthcondremk} yields (iii).
Since $F_\e(\cdot,A)$ depends only on incremental ratios, it is translation invariant and so does $\tilde F(\cdot, A)$; thus, (iv) is satisfied.
By the properties of $\Gamma$-convergence $F(\cdot,A)$ is lower semicontinuous with respect to the $L^p(\Omega;\rr^m)$ topology (see for instance \cite{bra2} Proposition 1.28). Thus, Proposition \ref{growthcondremk} yields the weak lower semicontinuity of $F(\cdot, A)$ with respect to the  $W^{1,p}(\Omega;\rr^m)$ topology. By its definition, $\tilde F(\cdot, A)$ inherits the same property, thus (v) holds.
As a consequence of Propositions \ref{funestthm} and \ref{innerreg}, $\tilde F(u,\cdot)$ is subadditive, superadditive on disjoint sets and inner regular. Hence, by the De Giorgi-Letta measure criterion (see \cite{bradef}), $\tilde F(u, \cdot) $ is the restriction on $\mathcal{A}(\Omega$) of a Borel measure, thus (ii) is satisfied and the thesis follows.

\emph{Step $2$.} Assume that only  (H0$'$) holds. For any $n\in \NN$ let $F_{\e,n}:L^p(\Omega;\rr^m)\times \mathcal{A}(\Omega)\to [0,+\infty)$ be defined by
$$
F_{\e,n}(u,A):=F_\e(u,A)+\frac1n G_\e^{r_0}(u,A).
$$
Since the family of functionals $F_{\e,n}$ satisfies (H0)--(H2) for every $n\in\NN$, by Step 1 and a diagonalization argument there exist a subsequence
$(\e_{j_k})$  and a non increasing sequence of functions $f_n:\Omega\times\rr^{m\times d}\to [0,+\infty)$, $n\in\NN$, quasiconvex in the second variable and satisfying \eqref{growthf0}, such that
 $$
 \Gamma(L^p)\text{-}\lim_{k\to\infty}F_{\e_{j_k},n}(u,A)=\begin{cases}\displaystyle\int_A f_n(x,Du)\, dx& \text{if}\ u\in W^{1,p}(A,\rr^m),\\
 +\infty & \text{otherwise}.
 \end{cases}
 $$
 for any $A\in \mathcal{A}^{\rm reg}(\Omega)$. We claim that \eqref{limres} holds with $f_0(x,M):=\inf_{n\in\NN}f_n(x,M)$. Indeed, since $F_\e\leq F_{\e,n}$ for every $\e>0$ and  $n\in\NN$, we have
 $$
 F''(u,A)\leq F(u,A):=\begin{cases}\displaystyle\int_A f_0(x,Du)\, dx& \text{if}\ u\in W^{1,p}(A,\rr^m),\\
 +\infty & \text{otherwise}.
 \end{cases}
$$
It remains to prove that
\begin{equation}\label{lowerest}
F(u,A)\leq F'(u,A).
\end{equation}
By Proposition \ref{growthcondremk}, it suffices to prove \eqref{lowerest} for $u\in W^{1,p}(A;\rr^m)$. Let then $u_k\to u$ in $L^p(\Omega;\rr^m)$ and be such that
$$
\liminf_{k\to +\infty} F_{\e_{j_k}}(u_k,A)=F'(u,A).
$$
Given $A'\Subset A$, by (H0$'$) we may assume that $G_{\e_{j_k}}^{r_0}(u_k,A')$ is uniformly bounded. Hence
$$
\int_{A'}f_0(x,Du)\, dx\leq  \int_{A'}f_n(x,Du)\, dx\leq \liminf_{k\to +\infty} F_{\e_{j_k},n}(u_k,A')\leq F'(u,A) -\frac Cn.
$$
Thus \eqref{lowerest} follows from the arbitrariness of $n\in\NN$ and $A'\Subset A$.
\end{proof}


\begin{remark}\label{G-lim-truncated-remark}
Theorem \ref{reprthm} clearly applies also to the truncated energies $F_\e^T$.  Suppose that  for any $(u,A)\in W^{1,p}(\Omega;\rr^m)\times \mathcal{A}^{\rm reg}(\Omega)$ there exists
\begin{equation}\label{G-lim-truncated}
\Gamma\text{-}\lim_{\e\to0}F_\e^T(u,A) =\int_A f_0^T(x,Du(x))dx.
\end{equation}
Then, by Lemma \ref{lim-truncated-lemma} and monotone convergence, we infer that
$$\Gamma\text{-}\lim_{\e\to0}F_\e (u,A)= \int_A f_0(x,Du(x)) dx$$
where for almost every $x_0\in\Omega$ and every $M\in\mathbb{R}^{m\times d}$
\begin{equation}\label{convdens}
f_0(x_0,M):=\lim_{T\to+\infty}f_0^T(x_0,M).
\end{equation}
\end{remark}

\subsection{Convergence of minimum problems}\label{co-mi-pro}

In this section we prove the convergence of minimum problems under Dirichlet boundary conditions.

\begin{definition}\label{bo-se}
For any $g\in W_{\rm loc}^{1,p}(\rr^d;\mathbb{R}^m)$, $A\in\mathcal{A}^{\rm reg}(\Omega)$ and $r>0$ we set
\begin{equation}\label{set-boudary-data}
\mathcal{D}^{r,g}(A) := \big\{u\in L^p(\Omega;\mathbb{R}^m) \,|\, u(x)=g(x) \text{ for almost every } x\in\Omega \,:\, \dist(x,\Omega\backslash A)<r \big\}
\end{equation}
and define the functionals $F_\e^{r,g}:L^p(\Omega;\mathbb{R}^d)\times\mathcal{A}^{\rm reg}(\Omega)\to[0,+\infty)$
\begin{equation}\label{DBCseq}
F_\e^{r,g}(u,A):=\begin{cases}
F_\e(u,A) &\text{if }u\in\mathcal{D}^{\e r,g}(A) \\
+\infty &\text{otherwise}.
\end{cases}
\end{equation}
When dealing with the affine function $g(x)=Mx$ for some $M\in\mathbb{R}^{m\times d}$ we will use the notation $\mathcal{D}^{r,M}$ and $F_\e^{r,M}$.
\end{definition}

\begin{proposition}\label{DBC-Gamma-conv}
Let $A\in\mathcal{A}^{\rm reg}(\Omega)$, let $F_\e(\cdot,A)$ be defined by  \eqref{loc-functionals} and assume that {\rm(H0)--(H2)} hold.
Let $\e_j\to0$ and let $f_0:\Omega\times\mathbb{R}^{m\times d}\to[0,+\infty)$ be such that
$$
\Gamma(L^p)\text{-}\lim_{j\to +\infty}F_{\e_j}(u,A)=\begin{cases}\displaystyle\int_A f_0(x, Du(x)\, dx & \text{if } u\in W^{1,p}(A;\rr^m)\cr
+\infty & \text{otherwise}
\end{cases}
=:F(u,A).
$$
Then, given $g\in W_{\rm loc}^{1,p}(\rr^d;\mathbb{R}^m)$ and $r>0$, the corresponding sequence of functionals $F_{\e_j}^{r,g}$ defined in \eqref{DBCseq} $\Gamma$-converges with respect to the $L^p(\Omega;\rr^m)$ topology to the functional
\begin{equation}\label{DBC}
F^g(u,A):=\begin{cases}
F(u,A) &\text{if }u-g\in W^{1,p}_0(A;\mathbb{R}^m) \\
+\infty &\text{otherwise.}
\end{cases}
\end{equation}
\end{proposition}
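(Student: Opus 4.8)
The plan is to prove the two $\Gamma$-convergence inequalities separately. By Remark~\ref{locality} we may disregard the values of the competitors outside $A$, and by the truncation argument of Lemma~\ref{lim-truncated-lemma} (whose tail estimate is unaffected by the boundary constraint) it suffices to argue with the truncated functionals $F_{\e_j}^T$ and then let $T\to+\infty$. For the $\Gamma$-$\liminf$ inequality, let $u_j\to u$ in $L^p$ with $u_j\in\mathcal{D}^{\e_j r,g}(A)$ and $\liminf_j F_{\e_j}(u_j,A)<+\infty$; the lower bound in \eqref{growth-cond} makes $G_{\e_j}^{r_0}(u_j,A)$ equibounded, so Theorem~\ref{kolcom} gives $u\in W^{1,p}(A;\rr^m)$ and the $\Gamma$-$\liminf$ inequality for the unconstrained functionals yields $\liminf_j F_{\e_j}^{r,g}(u_j,A)\ge\int_A f_0(x,Du)\,dx$. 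To conclude I would check that $u-g\in W^{1,p}_0(A;\rr^m)$, so that $F^g(u,A)=F(u,A)$: fixing a Lipschitz set $\hat A\supsetneq\overline A$ with $\hat A\setminus\overline A\subset\Omega$ near $\partial A\cap\Omega$, setting $\hat u_j:=u_j$ on $A$ and $\hat u_j:=g$ on $\hat A\setminus\overline A$, and putting $r_1:=\min\{r_0,r\}$, one uses that $u_j=g$ on the $\e_j r$-neighbourhood of $\Omega\setminus A$ to see that every increment of $\hat u_j$ of range $\le\e_j r_1$ crossing $\partial A$ is an increment of $g$; together with the difference-quotient bound $\|D^{\e_j}_\xi g\|_{L^p}^p\le|\xi|^p\|Dg\|_{L^p}^p$ and $G_{\e_j}^{r_1}(u_j,A)\le G_{\e_j}^{r_0}(u_j,A)$ this makes $G_{\e_j}^{r_1}(\hat u_j,\hat A)$ equibounded, so $\hat u\in W^{1,p}(\hat A;\rr^m)$ by Theorem~\ref{kolcom}, whence the traces of $u$ and $g$ on $\partial A$ (on the part interior to $\Omega$) agree.

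For the $\Gamma$-$\limsup$ inequality we may assume $u-g\in W^{1,p}_0(A;\rr^m)$. Since $v\mapsto\int_A f_0(x,Dv)\,dx$ is continuous along $W^{1,p}$-convergent sequences (by \eqref{growthf0} and dominated convergence) while $\Gamma\text{-}\limsup_{\e_j\to0}F_{\e_j}^{r,g}(\cdot,A)$ is $L^p$-lower semicontinuous, approximating $u-g$ in $W^{1,p}$ by functions in $C^\infty_c(A;\rr^m)$ reduces the problem to the case $\supp(u-g)\Subset A$; put $2\delta_0:=\dist(\supp(u-g),\partial A)>0$. Given $\delta\in(0,\delta_0)$ and $N\in\NN$, take a recovery sequence $v_{\e_j}\to u$ for the unconstrained $\Gamma$-limit on $A$ (so $F_{\e_j}(v_{\e_j},A)\to\int_A f_0(x,Du)\,dx$ and, by \eqref{growth-cond}, $G_{\e_j}^{r_0}(v_{\e_j},A)$ is equibounded), split the collar $\{x\in A:\delta/2<\dist(x,\partial A)<\delta\}$ into $N$ concentric sublayers, let $\varphi^i$ be a cut-off equal to $1$ on the region inside the $i$-th sublayer and $0$ outside it (so $|\nabla\varphi^i|\le CN/\delta$), and set $w_{\e_j}^i:=\varphi^i v_{\e_j}+(1-\varphi^i)g$. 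Then $w_{\e_j}^i\to u$ in $L^p$ (because $u=g$ wherever $\varphi^i\ne1$), and $w_{\e_j}^i=g$ on $\{x:\dist(x,\partial A)<\delta/2\}$, hence $w_{\e_j}^i\in\mathcal{D}^{\e_j r,g}(A)$ for $j$ large.

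The energy estimate then follows the lines of the proof of Proposition~\ref{funestthm}: $F_{\e_j}^T(w_{\e_j}^i,A)$ decomposes into the contribution of increments lying entirely in $\{\varphi^i=1\}$ (an increment of $v_{\e_j}$, hence bounded by $F_{\e_j}(v_{\e_j},A)$); the contribution of increments lying entirely in $\{\varphi^i=0\}$ (an increment of $g$, hence, by (H1) and the difference-quotient bound, at most $C_1\|Dg\|^p_{L^p(\{\dist(\cdot,\partial A)<2\delta\})}+C|\{x:\dist(x,\partial A)<\delta\}|=:\omega(\delta)$, independent of $i$); and a transition term on a set $S^i_{\e_j,\xi}$ near the $i$-th sublayer which, by the increment decomposition \eqref{cut-off-est1}--\eqref{cut-off-est3}, is at most $C\int_{B_T}\psi_{\e_j}(\xi)\int_{S^i_{\e_j,\xi}}(|D^{\e_j}_\xi v_{\e_j}|^p+|D^{\e_j}_\xi g|^p+1)\,dx\,d\xi+C(N/\delta)^p\int_{B_T}\psi_{\e_j}(\xi)|\xi|^p\int_{S^i_{\e_j,\xi}}|v_{\e_j}(x+\e_j\xi)-g(x+\e_j\xi)|^p\,dx\,d\xi$. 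Summing over the boundedly overlapping $S^i_{\e_j,\xi}$ and invoking Lemma~\ref{boundlemma} together with the equiboundedness of $G_{\e_j}^{r_0}(v_{\e_j},A)$ and of $G_{\e_j}^{r_0}(g,\cdot)$, the first group of integrals contributes at most $C$, while the second is at most $CN^p\delta^{-p}\|v_{\e_j}-g\|^p_{L^p(\{\dist(\cdot,\partial A)<3\delta\})}\to0$ as $j\to\infty$ (since $u=g$ on $\{x:\dist(x,\partial A)<3\delta\}$ and $v_{\e_j}\to u$). Averaging to select an index $k_j\le N$ with transition term $\le C/N+o_j(1)$, controlling the range $|\xi|\ge T$ by Corollary~\ref{boundlemma-lip}, (H1) and (H2) exactly as in Lemma~\ref{lim-truncated-lemma}, and then letting $j\to\infty$, $N\to+\infty$, $\delta\to0$, $T\to+\infty$ and diagonalising, we obtain a sequence in $\mathcal{D}^{\e_j r,g}(A)$ converging to $u$ along which $\limsup_j F_{\e_j}(\cdot,A)\le\int_A f_0(x,Du)\,dx=F^g(u,A)$. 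The delicate point is precisely this transition estimate: the factor $N^p\delta^{-p}$ generated by the cut-off gradients would be fatal were it not multiplied by $\|v_{\e_j}-g\|^p_{L^p}$ over the transition collar, and this is small only because that collar sits inside the region where $u$ already coincides with $g$ — reconciling this geometry with the requirement that $w_{\e_j}^{k_j}=g$ on the $\e_j r$-neighbourhood of $\partial A$ is the heart of the argument, the remaining bookkeeping (the $O(\e)$-enlargements hidden in Lemma~\ref{boundlemma}, the passages $T\to+\infty$ and to the diagonal, and the behaviour near $\partial A\cap\partial\Omega$ where no datum is imposed) being routine.
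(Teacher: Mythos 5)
Your argument is correct and takes essentially the same route as the paper's proof: the $\Gamma$-$\liminf$ follows from $F_\e^{r,g}\ge F_\e$ together with the extension-by-$g$ device used to force $u-g\in W^{1,p}_0(A;\rr^m)$, and the $\Gamma$-$\limsup$ from the density reduction to $\supp(u-g)\Subset A$ followed by the cut-off/fundamental-estimate construction of Propositions \ref{funestthm}--\ref{innerreg}, with truncation handled via Lemma \ref{lim-truncated-lemma}. The only cosmetic deviation is that you blend the recovery sequence with $g$ rather than with $u$ as the paper does, which is immaterial because the transition collar lies where $u=g$; and, like the paper, you leave the behaviour at $\partial A\cap\partial\Omega$ essentially implicit.
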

\begin{proof}
Since $F_\e^{r,g}(u, A)\ge F_\e(u,A)$, in order to prove the $\Gamma$-$\liminf$ inequality it suffices to show that if $u_j\to u$ in $L^{p}(A;\rr^m)$ and $\sup_j F_{\e_j}^{r,g}(u_j, A)$ is finite, then $u-g\in W_0^{1,p}(A;\mathbb{R}^m)$. Denote by  $\tilde u_j$ and $\tilde u$ the extension of $u_j$ and $u$ on the whole $\rr^d$ obtained by setting  $\tilde u_j=g$, $j\in\NN$, and $\tilde u=g$ on $\rr^d\backslash A$. Let $\tilde A$ be an open set such that $\tilde A\Supset A$ and note that, by (H0), for every $r'\leq \min\{r_0,r/2\}$ we have
$$
G_{\e_j}^{r'}(\tilde u_j, \tilde A)\leq G_{\e_j}^{r'}(u_j,A_\e^{r/2} )+G_{\e_j}^{r'}(g, \tilde A\setminus \overline{A_{\e_j}^r})\leq C(F_{\e_j}(u_j,A)+|A|)+G_{\e_j}^{r'}(g, \tilde A\setminus \overline{A_{\e_j}^r})\leq C,
$$
where $A_\e^r:=\{x\in A:\ \dist(x,\Omega\backslash A)> \e r\}$. Since $\tilde u_j\to \tilde u$ in $L^p(\tilde A;\rr^m)$, by Proposition \ref{G-liminf-lem} we get that $\tilde u\in W^{1,p}(\tilde A;\rr^m)$ and thus $u-g\in W_0^{1,p}(A;\mathbb{R}^m)$.



By a density argument it suffices to prove the $\Gamma$-$\limsup$ inequality for $u\in W^{1,p}(\Omega;\rr^m)$ such that $\text{spt}(u-g)\Subset A$. Given such a $u$, let $u_j$ converge to $u$ in $L^p(\Omega;\mathbb{R}^m)$ such that
$$\lim_{j\to\infty}F_{\e_j}(u_j,A)=F(u,A).$$
With an argument analogous to the one used in the proof of Propositions \ref{funestthm} and \ref{innerreg}, given $\delta>0$, we can find a suitable cut-off function $\varphi_j$ with
$\text{spt}( \varphi_j)\Subset A$ such that, having set
$v_j:=\varphi_ju_j+(1-\varphi_j)u$,
we have that  $v_j$ still converge to $u$ in $L^p(\Omega;\mathbb{R}^m)$ and
$$
F_{\e_j}(v_j,A) \le F_{\e_j}(u_j,A) + \delta.
$$
Since $v_j\in \mathcal{D}^{\e_j r,g}(A)$ for $j$ large enough, we get
$$
\limsup_{j\to\infty}F_{\e_j}^{r,g}(v_j,A)\le F(u,A)+\delta
$$
and the arbitrariness of $\delta$ leads to the desired inequality.
\end{proof}

As a consequence of  Propositions \ref{DBC-Gamma-conv}, \ref{poincare} and  Theorem \ref{kolcom}, we derive the following result of convergence of minimum problems with Dirichlet boundary data.
\begin{proposition}\label{DBCminpbs}
Under the assumptions of Proposition {\rm\ref{DBC-Gamma-conv}} there holds
$$
\lim_{j\to\infty}\inf\{F_{\e_j}(u,A)\,|\,u\in \mathcal{D}^{\e_j r,g}(A)\}=\min\{F(u,A)\,|\,u-g\in W^{1,p}_0(\Omega;\mathbb{R}^m)\}.
$$
Moreover, if $u_j\in\mathcal{D}^{\e_j r,g}(A)$ is a converging sequence such that
$$
\lim_{j\to \infty} F_{\e_j}(u_j,A)=\lim_{j\to \infty} \inf\{F_{\e_j}(u,A)\,|\,u\in \mathcal{D}^{\e_j r,g}(A)\} ,
$$
then its limit is a minimizer for $\min\{F(u,A)\,|\,u-g\in W^{1,p}_0(\Omega;\mathbb{R}^m)\}$.
\end{proposition}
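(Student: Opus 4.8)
The plan is to obtain both assertions from the fundamental theorem of $\Gamma$-convergence (see \cite{bra2}). Indeed, by Proposition \ref{DBC-Gamma-conv} the constrained functionals $F_{\e_j}^{r,g}(\cdot,A)$ of \eqref{DBCseq} $\Gamma$-converge in the $L^p(\Omega;\mathbb{R}^m)$-topology to the functional $F^g(\cdot,A)$ of \eqref{DBC}; moreover $\inf\{F_{\e_j}(u,A)\,|\,u\in\mathcal{D}^{\e_j r,g}(A)\}=\inf_uF_{\e_j}^{r,g}(u,A)$ and, cf.\ \eqref{DBC}, the right-hand side of the statement is $\min_uF^g(u,A)$. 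Hence, once we know that the family $\{F_{\e_j}^{r,g}(\cdot,A)\}_j$ is equi-coercive in $L^p(\Omega;\mathbb{R}^m)$, the convergence of minima and of (asymptotic) minimizers follow at once. By Remark \ref{locality} I would further reduce to competitors equal to $g$ on $\Omega\setminus A$, so that along admissible sequences convergence in $L^p(\Omega;\mathbb{R}^m)$ and in $L^p(A;\mathbb{R}^m)$ coincide.

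The core of the argument is the equi-coerciveness. Fix $u_j\in\mathcal{D}^{\e_j r,g}(A)$ with $\sup_jF_{\e_j}(u_j,A)<+\infty$. The lower bound in \eqref{growth-cond}, a consequence of (H0), yields $\sup_jG_{\e_j}^{r_0}(u_j,A)<+\infty$. To bound the $L^p$ norms I would set $v_j:=u_j-g$ on $A$, extended by $0$; by the Dirichlet constraint in \eqref{set-boudary-data} this vanishes on the $\e_j r$-collar of $\partial A$, exactly as exploited in the proof of Proposition \ref{DBC-Gamma-conv}. Since $g\in W^{1,p}_{\rm loc}(\mathbb{R}^d;\mathbb{R}^m)$ and $\overline\Omega$ is compact, the elementary estimate used in Proposition \ref{G-limsup-lem} gives $\sup_jG_{\e_j}^{r'}(g,A)<+\infty$ for every $r'>0$, so, fixing $r'=\min\{r_0,r\}$ and using $G_{\e_j}^{r'}(u_j,A)\le G_{\e_j}^{r_0}(u_j,A)$, convexity gives $\sup_jG_{\e_j}^{r'}(v_j,A)\le 2^{p-1}\sup_j\big(G_{\e_j}^{r'}(u_j,A)+G_{\e_j}^{r'}(g,A)\big)<+\infty$. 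As $A$ is bounded it lies in a slab, so the Poincar\'e inequality of Proposition \ref{poincare} applies to $v_j$ and yields $\sup_j\|v_j\|_{L^p(A;\mathbb{R}^m)}<+\infty$, hence $\sup_j\|u_j\|_{L^p(A;\mathbb{R}^m)}<+\infty$. With uniform bounds on $\|u_j\|_{L^p(A;\mathbb{R}^m)}$ and $G_{\e_j}^{r_0}(u_j,A)$, Theorem \ref{kolcom} (applicable since $A\in\mathcal{A}^{\rm reg}(\Omega)$ is bounded and Lipschitz) gives that $\{u_j\}$ is relatively compact in $L^p(A;\mathbb{R}^m)$ with all cluster points in $W^{1,p}(A;\mathbb{R}^m)$; together with the reduction above this is the desired equi-coerciveness.

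Granted this, the fundamental theorem of $\Gamma$-convergence gives simultaneously that $F^g(\cdot,A)$ attains its minimum, that $\min_uF^g(u,A)=\lim_j\inf_uF_{\e_j}^{r,g}(u,A)=\lim_j\inf\{F_{\e_j}(u,A)\,|\,u\in\mathcal{D}^{\e_j r,g}(A)\}$, and that any cluster point of an asymptotically minimizing sequence for $F_{\e_j}^{r,g}(\cdot,A)$ is a minimizer of $F^g(\cdot,A)$; specializing to a convergent $u_j\in\mathcal{D}^{\e_j r,g}(A)$ with $F_{\e_j}(u_j,A)=\inf\{F_{\e_j}(u,A)\,|\,u\in\mathcal{D}^{\e_j r,g}(A)\}+o(1)$, the $\Gamma$-$\liminf$ inequality forces its limit to be a minimizer. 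The one genuinely non-formal step is the equi-coerciveness, and within it the point that the Dirichlet datum is effective on the whole $\e_j r$-collar of $\partial A$ so that Proposition \ref{poincare} can be invoked for $v_j$ — this is precisely the collar bookkeeping already carried out in the proof of Proposition \ref{DBC-Gamma-conv}; everything else is the standard direct-method machinery.
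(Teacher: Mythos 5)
Your proposal is correct and follows essentially the same route as the paper: reduce the statement to equi-coerciveness of the constrained functionals via Proposition \ref{DBC-Gamma-conv} and the fundamental theorem of $\Gamma$-convergence, bound $G_{\e_j}^{r'}(u_j,A)$ through (H0), apply the Poincar\'e inequality of Proposition \ref{poincare} to $u_j-g$ (which vanishes on the $\e_j r$-collar), and conclude compactness with Theorem \ref{kolcom}. The only differences are cosmetic bookkeeping (your $r'=\min\{r_0,r\}$ versus the paper's $r'\le\min\{r_0,r/2\}$, and your explicit convexity splitting of $G_{\e_j}^{r'}(u_j-g,A)$, which the paper writes as $G_{\e_j}^{r'}(u_j-g,A)\le C\bigl(G_{\e_j}^{r'}(u_j,A)+\|Dg\|_{L^p(\Omega)}^p\bigr)$).
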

\begin{proof}
Note that
$$
\inf\{F_{\e_j}(u,A)\,|\,u\in \mathcal{D}^{\e_j r,g}(A)\}\le F_{\e_j}(g, A)\leq C.
$$
Hence, by the properties of $\Gamma$-convergence (see for instance \cite{bra2} Theorem 1.21), we only need to prove the equi-coerciveness of the family $\{F_{\e_j}^{r,g}(\cdot ,A)\}_{\e_j}$ in the strong $L^p(A;\rr^m)$ topology. Let then $\{u_j\}_j\subset L^p(\Omega;\mathbb{R}^d)$ be such that $F_{\e_j}^{r,g}(u_j,A)\le C$.
Reasoning as in the proof of Proposition \ref{DBC-Gamma-conv},
from assumption (H0) we deduce that
$G_{\e_j}^{r'}( u_j,  A) \leq C$
for every $r'\leq \min\{r_0,r/2\}$. Proposition \ref{poincare} yields that
$$
\int_{\Omega}|u_j(x)-g(x)|^p dx \le C G_{\e_j}^{r'}(u_j-g, A) \le C \Big(G_{\e_j}^{r'}(u_j,A)+\|Dg\|_{L^p(\Omega)}^p\Big)\leq C.
$$
Hence, we may apply Theorem \ref{kolcom} and deduce that $\{u_j\}_j$ is precompact in the strong $L^p(\Omega;\rr^m)$ topology.
\end{proof}

\section{Homogenization}\label{homogenization}
In this section we study the homogenization of functionals as in \eqref{functionals} under a periodicity assumption in the first variable of the densities $f_\e$.
Specifically,  let $f:\mathbb{R}^d\times\mathbb{R}^d\times\mathbb{R}^m\to[0,+\infty)$ be a Borel function such that $f(\cdot,\xi,z)$ is $[0,1]^d$-periodic in the first variable for every $\xi\in\mathbb{R}^d$ and $z\in\mathbb{R}^m$. Throughout this section, we will assume that
\begin{equation}\label{homdef}
f_\e(x,\xi,z)=f\left(\frac{x}{\e},\xi,z\right)
\end{equation}
 in \eqref{functionals}. In this setting, assumptions (H0)--(H2) are straightforward consequence of the following growth conditions on $f$:
\begin{equation}\label{homgrocon1}
 c_0 (|z|^p-\rho(\xi))\le f(y,\xi,z) \quad \text{ if }|\xi|\le r_0
\end{equation}
\begin{align}
\label{homgrocon2}
f(y,\xi,z) &\le \psi(\xi)(|z|^p+1),
\end{align}
where $\rho:B_{r_0}\to[0,+\infty)$ and  $\psi:\mathbb{R}^d\to[0,+\infty)$ are such that
\begin{equation}\label{growthpsi}
\int_{B_{r_0}}\rho(\xi)d\xi<+\infty,\quad \int_{\mathbb{R}^d}\psi(\xi)(|\xi|^p+1)d\xi<+\infty.
\end{equation}

In the sequel we will use the notation $Q_R(x_0)=x_0+ (0,R)^d$ and the shorthand $Q_R$ if $x_0=0$
The main result of this section is stated in the following theorem.
\begin{theorem}\label{hom-thm}
Let $F_\e$ be defined by \eqref{functionals}, with $f_\e$ given by \eqref{homdef}, and let \eqref{homgrocon1}, \eqref{homgrocon2} and \eqref{growthpsi} be satisfied.
Then for every $M\in\mathbb{R}^{m\times d}$ the limit
\begin{equation}\label{homform}
f_{\rm hom}(M):=\lim_{R\to\infty}\frac{1}{R^d}\inf\Big\{\int_{Q_R}\int_{Q_R}f(x,y-x,v(y)-v(x))dx \, dy\,\Big|\, v\in\mathcal{D}^{1,M}(Q_R)\Big\},
\end{equation}
where $\mathcal{D}^{1,M}(Q_R)$ is defined by \eqref{set-boudary-data}, exists and defines a quasiconvex function $f_{\rm hom}:\mathbb{R}^{m\times d}\to[0,+\infty)$ satisfying
\begin{equation}\label{growthfhom}
c(|M|^p-1)\le f_{\rm hom}(M) \le C( |M|^p+1).
\end{equation}
Moreover,
\begin{equation*}
\Gamma(L^p)\text{-}\lim_{\e\to0}F_\e(u)=\begin{cases}\displaystyle\int_\Omega f_{\rm hom}(Du(x))dx& \text{if } u\in W^{1,p}(\Omega;\mathbb{R}^m)\\
+\infty & \text{otherwise}.
\end{cases}
\end{equation*}
\end{theorem}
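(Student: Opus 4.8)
\emph{Approach.} The plan is to derive Theorem~\ref{hom-thm} from the abstract compactness and integral-representation result, Theorem~\ref{reprthm}, by identifying the a priori $x$-dependent limit density with $f_{\rm hom}$. The link is the rescaling $v(y)=u(\e y)/\e$: it maps $\{u:\ u(x)=Mx\ \text{for}\ \dist(x,\partial Q_\rho(x_0))<\e\}$ onto $\{v\in\mathcal D^{1,M}(Q_{\rho/\e}(x_0/\e))\}$ and, after writing $y-x=\e\xi$ and using that $f(\cdot,\xi,z)$ is $[0,1]^d$-periodic, gives the \emph{exact} identity
\begin{equation*}
\inf\{F_\e(u,Q_\rho(x_0)):\ u=Mx\ \text{on the $\e$-collar of }\partial Q_\rho(x_0)\}=\e^d\,\mathbf m_{\rho/\e}(M;x_0/\e),
\end{equation*}
where for $R>0$ and $z\in\mathbb R^d$ we set $\mathbf m_R(M;z):=\inf\{\int_{Q_R(z)}\int_{Q_R(z)}f(x,y-x,v(y)-v(x))\,dx\,dy:\ v\in\mathcal D^{1,M}(Q_R(z))\}$, so that $\mathbf m_R(M;0)$ is the quantity under the infimum in \eqref{homform}. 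The heart of the proof is to show that $f_{\rm hom}(M):=\lim_{R\to\infty}R^{-d}\mathbf m_R(M;0)$ exists, that $R^{-d}\mathbf m_R(M;z)$ has the same limit for every $z$, and, more generally, that the width of the Dirichlet collar is immaterial in the limit.

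\emph{Step 1: compactness and identification of the density.} Given $\e_j\to0$, Theorem~\ref{reprthm} provides a subsequence (not relabelled) and a Carath\'eodory density $f_0$, quasiconvex in the second variable and satisfying \eqref{growthf0}, such that $F_{\e_j}(\cdot,A)$ $\Gamma$-converges in $L^p$ to $u\mapsto\int_Af_0(x,Du)\,dx$ on $W^{1,p}(A;\mathbb R^m)$, for every $A\in\mathcal A^{\rm reg}(\Omega)$. For $x_0\in\Omega$ and $\rho$ small enough that $Q_\rho(x_0)\Subset\Omega$, Proposition~\ref{DBCminpbs} with $g(x)=Mx$ gives
\begin{equation*}
\lim_j\inf\{F_{\e_j}(u,Q_\rho(x_0)):\ u=Mx\ \text{on the $\e_j$-collar}\}=\min\Big\{\int_{Q_\rho(x_0)}f_0(x,Du)\,dx:\ u-Mx\in W^{1,p}_0(Q_\rho(x_0))\Big\},
\end{equation*}
and by the standard characterization of a $\Gamma$-limit density as a blow-up of Dirichlet minima (see \cite[Chapter~9]{bradef}) the right-hand side divided by $\rho^d$ converges, as $\rho\to0$, to $f_0(x_0,M)$ for a.e.\ $x_0$. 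By the rescaling above and Step~2, the left-hand side equals $\e_j^d\,\mathbf m_{\rho/\e_j}(M;x_0/\e_j)$, so divided by $\rho^d$ it converges to $f_{\rm hom}(M)$, independently of the translated centre $x_0/\e_j$. Hence $f_0(x_0,M)=f_{\rm hom}(M)$ for a.e.\ $x_0$ and every $M$ (first for $M$ in a countable dense set, then for all $M$ by continuity of the quasiconvex function $f_0(x_0,\cdot)$); in particular $f_0$ does not depend on $x$, and $f_{\rm hom}$, being identified for a.e.\ $x_0$ with $f_0(x_0,\cdot)$, is quasiconvex and satisfies \eqref{growthfhom}. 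Since the $\Gamma$-limit is the same along every subsequence, the Urysohn property of $\Gamma$-convergence yields the $\Gamma$-convergence of the whole family $F_\e$ to $\int_\Omega f_{\rm hom}(Du)\,dx$ (with value $+\infty$ off $W^{1,p}(\Omega;\mathbb R^m)$), which is the assertion of the theorem.

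\emph{Step 2: existence of the limit defining $f_{\rm hom}$.} We first truncate: for $T>0$ let $\mathbf m_R^T(M;z)$ be defined as $\mathbf m_R(M;z)$ with $f$ replaced by $f\,\chi_{\{|y-x|\le T\}}$. Testing with $v_0(x)=Mx$ and using \eqref{homgrocon2}--\eqref{growthpsi} gives $\mathbf m_R^T(M;z)\le C_1(|M|^p+1)R^d$, which together with \eqref{homgrocon1} shows that for any near-optimal competitor $v$ and any $r_*\le r_0$ one has $G_1^{r_*}(v,Q_R(z))\le C(M)R^d$, uniformly in $R,T,z$; since $v=Mx$ on the Dirichlet collar, the extension of $v-Mx$ by $0$ satisfies $G_1^{r_*}(v-Mx,\mathbb R^d)\le C(M)R^d$. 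For fixed $T$, the existence of $f^T_{\rm hom}(M):=\lim_RR^{-d}\mathbf m^T_R(M;z)$ and its independence of $z$ and of the collar width follow from an almost-subadditivity argument of De Giorgi--Letta type: given $R\gg R'\gg T$ with $R'\in\mathbb N$, tile $Q_R(z)$, minus an outer collar of width $\ge2T$, by integer translates of $Q_{R'}$, place on each sub-cube a near-optimal competitor for $\mathbf m^T_{R'}$ with Dirichlet collar of width $2T$, and let the glued function be $Mx$ elsewhere; since the interaction range is $T$, all interactions crossing a sub-cube face join points where the function equals $Mx$, so by \eqref{homgrocon2} they contribute at most $C(M,T)R^d/R'$, while the transition regions have volume $O(R^d/R')$. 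Finally one removes the truncation: given $\delta>0$, let $T=r_\delta$ as in (H2); estimating, for a near-optimal competitor $v$ of $\mathbf m^T_R(M;z)$, the contribution of the interactions with $|y-x|>T$ by \eqref{homgrocon2}, by Lemma~\ref{boundlemma} applied on $\mathbb R^d$ (bounding $\int_{\mathbb R^d}|w(\cdot+\xi)-w|^p\le C(|\xi|^p+1)G_1^{r_*}(w,\mathbb R^d)$ with $w=v-Mx$ extended by $0$), and by \eqref{growthpsi}, one obtains $0\le\mathbf m_R(M;z)-\mathbf m^T_R(M;z)\le C(M)R^d\int_{\{|\xi|>T\}}\psi(\xi)(|\xi|^p+1)\,d\xi\le C(M)\,\delta\,R^d$, uniformly in $R$. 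Hence $R^{-d}\mathbf m_R(M;z)$ is, uniformly in $R$, within $C(M)\delta$ of the convergent sequence $R^{-d}\mathbf m^T_R(M;z)$ for every $\delta$, so it converges, to $\lim_Tf^T_{\rm hom}(M)=:f_{\rm hom}(M)$, independently of $z$ and of the collar width; this is exactly what is used in Step~1.

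\emph{Main obstacle.} The delicate part is Step~2. On the one hand, the gluing in the subadditivity estimate forces the Dirichlet collars of the sub-cubes to exceed the truncation range, so that cross-boundary interactions become harmless; this is the reason why one must also establish that the asymptotic formula is insensitive to the collar width (and to the centre), which is a boundary-layer estimate of lower order in $R$. On the other hand, the passage $T\to\infty$ needs the tail bound $\mathbf m_R-\mathbf m^T_R=O(\delta R^d)$ \emph{uniformly in $R$}, and its proof hinges on the precise matching between the linear-in-$R$ growth of the convolution energy $G_1^{r_*}(v-Mx,\mathbb R^d)$ of near-minimizers and the decay of $\psi$ encoded in \eqref{ass-bound} and (H2); absent (H2) the limit would fail to exist, or to be local, as in the example following Theorem~\ref{G-conv-conv}. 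The remaining ingredients---the integral-representation theorem, the convergence of Dirichlet problems, and the blow-up characterization of the limit density---enter only as black boxes.
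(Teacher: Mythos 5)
Your overall architecture coincides with the paper's: truncation of the interaction range, an almost-subadditivity/gluing argument for the truncated cell problems (with subcube collars exceeding the range $T$, so that cross-face interactions only see the affine datum), identification of the limit density through Theorem~\ref{reprthm}, the convergence of Dirichlet minimum problems (Proposition~\ref{DBCminpbs}) and the rescaling $v(y)=u(\e y)/\e$, removal of the truncation via Lemma~\ref{boundlemma} together with the decay of $\psi$, and a Urysohn argument for the whole family. The only structural difference (handling the $x$-independence through centre-independence of the rescaled minima rather than through the paper's translation argument, Proposition~\ref{transinv}) is harmless, since periodicity reduces the centre to a point of $[0,1)^d$ and your gluing absorbs it in a boundary layer.

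There is, however, one genuine gap, and it is load-bearing: the claim in Step~2 that the asymptotic formula is insensitive to the width of the Dirichlet collar, in particular that $\lim_R R^{-d}\mathbf m^T_R(M;z)$ exists \emph{with collar width $1$} when $T>1$, and that this is ``a boundary-layer estimate of lower order in $R$''. Your gluing only yields the one-sided bound $\limsup_R R^{-d}\mathbf m^{T}_R(M;z)\le f^T_{\rm hom}(M)$ for small collars (the glued competitor is admissible for any collar), while monotonicity in the collar width goes the same way; the missing inequality $\liminf_R R^{-d}\mathbf m^{T,\,\text{collar }1}_R\ge f^T_{\rm hom}(M)$ amounts to converting a collar-$1$ near-minimizer $v$ into a collar-$T$ competitor at cost $o(R^d)$, and this is \emph{not} a routine boundary-layer estimate: cutting off $v$ to $Mx$ across a layer of width $\lambda$ produces a cross term of order $\lambda^{-p}\|v-Mx\|^p_{L^p(\text{layer})}$, and the only a priori control, via the Poincar\'e inequality with zero datum on the unit collar, is $\|v-Mx\|^p_{L^p(\text{layer})}\le C\lambda^p\,G_1^{r_0}(v-Mx)\le C\lambda^p R^d$; the two factors cancel and one gets $O(R^d)$, not $o(R^d)$, and averaging over layers does not help. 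Since both your Step~1 identification (with $r=1$ after rescaling) and the statement \eqref{homform} itself use collar width $1$, the gap propagates to the conclusion. The paper avoids it by never shrinking collars variationally: the identification $f_0=f^T_{\rm hom}$ is first done with collars $\ge T$ (where Proposition~\ref{asy-form-lem} applies), and the collar-$1$ formula is then recovered through the $\Gamma$-convergence of the Dirichlet problems (Propositions~\ref{DBC-Gamma-conv} and~\ref{DBCminpbs}), where the boundary matching is against the fixed $L^p$-limit so the cross term vanishes, combined with the subsequence-independence of the limit density; finally the untruncated formula follows from the tail estimate you also use (Lemma~\ref{boundlemma} plus \eqref{growthpsi}), applied with the \emph{same} collar on both sides. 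Your proof can be repaired by routing the collar-$1$ lower bound through the already-established $\Gamma$-limit (rescale collar-$1$ near-minimizers, use compactness and quasiconvexity of $f^T_{\rm hom}$), but not by the direct boundary-layer estimate you invoke.
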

The proof of Theorem \ref{hom-thm} relies on the results stated in the following two propositions. The first one provides the independence of the limit energy densities on the space variable, the second one the existence of the limit in \eqref{homform} in the case of truncated energies.
\begin{proposition}\label{transinv}
Under the assumptions of Theorem \ref{hom-thm}, let $\e_j\to0$ and let $f_0:\Omega\times\mathbb{R}^{m\times d}\to[0,+\infty)$ be a Carath\'eodory function such that for every $A\in \mathcal{A}^{\rm reg}(\Omega)$ and $u\in W^{1,p}(A;\rr^m)$ there holds
$$
\Gamma(L^p)\text{-}\lim_{j\to +\infty}F_{\e_{j}}(u,A)= \int_A f_0(x,Du(x))dx.
$$
Then $f_0$ is independent on the first variable.
\end{proposition}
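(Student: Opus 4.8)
The plan is to exploit the $[0,1]^d$-periodicity of $f(\cdot,\xi,z)$, which makes $F_\e$ invariant under translations by $\e k$, $k\in\mathbb{Z}^d$, and to pass this invariance to the limit. Fix $M\in\mathbb{R}^{m\times d}$ and set $u_M(x):=Mx$; by hypothesis and the growth bound \eqref{growthf0} the function $\phi_M(x):=f_0(x,M)$ lies in $L^1(\Omega)$ and $\int_A f_0(x,M)\,dx=\Gamma(L^p)\text{-}\lim_j F_{\e_j}(u_M,A)$ for every $A\in\mathcal{A}^{\rm reg}(\Omega)$. The heart of the argument is the translation identity
$$\int_B f_0(x,M)\,dx=\int_{B+z}f_0(x,M)\,dx$$
for every open ball $B$ and every $z\in\rr^d$ with $\overline B\subset\Omega$ and $\overline{B+z}\subset\Omega$. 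Once this is proved, applying it with $z$ ranging over a countable dense subset of $\rr^d$ and invoking the Lebesgue differentiation theorem gives that $\phi_M$ is almost everywhere constant on $\Omega$; removing the dependence on $M$ of the exceptional null set is then routine, using the continuity of $M\mapsto f_0(x,M)$ for a.e.\ $x$ and a density argument, which yields that $f_0$ is independent of the first variable.

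To prove the identity I would first record the exact discrete invariance: if $k\in\mathbb{Z}^d$, $A$ is open with $A+\e k\subset\Omega$, and $w(x):=v(x+\e k)-M\e k$, then performing the change of variables $x'=x+\e k$ and using $f\bigl(\tfrac{x'}{\e}-k,\xi,\cdot\bigr)=f\bigl(\tfrac{x'}{\e},\xi,\cdot\bigr)$ gives $F_\e(w,A)=F_\e(v,A+\e k)$; the affine term $-M\e k$ disappears from all incremental ratios and is kept only so that $w$ converges to $u_M$ itself. Now fix $B,z$ as above, choose $\delta>0$ with $\overline{B+B_\delta+z}\subset\Omega$, write $B':=B+B_\delta$, and set $k_j:=\lfloor z/\e_j\rfloor$, so that $|\e_j k_j-z|\le\sqrt d\,\e_j\to0$ and hence $B+\e_j k_j\subset B'+z$ for $j$ large. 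Let $v_j\to u_M$ in $L^p(\Omega;\rr^m)$ be a recovery sequence for $F_{\e_j}(\cdot,B'+z)$ (these exist since the $\Gamma$-limit exists and $L^p$ is metrizable), and put $w_j(x):=v_j(x+\e_j k_j)-M\e_j k_j$. Then $w_j\to u_M$ in $L^p(B;\rr^m)$, and by the identity above together with monotonicity of the functionals with respect to set inclusion (the integrand being non-negative),
$$F_{\e_j}(w_j,B)=F_{\e_j}(v_j,B+\e_j k_j)\le F_{\e_j}(v_j,B'+z).$$
Passing to the limit, the $\Gamma$-$\liminf$ inequality — applicable thanks to Remark \ref{locality} — yields $\int_B f_0(x,M)\,dx\le\int_{B'+z}f_0(x,M)\,dx$; letting $\delta\to0$, using $|\partial(B+z)|=0$ and \eqref{growthf0} as a dominating bound, gives $\int_B f_0(x,M)\,dx\le\int_{B+z}f_0(x,M)\,dx$. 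Exchanging the roles of $B$ and $B+z$ (equivalently, replacing $z$ by $-z$) gives the reverse inequality, hence the identity.

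The only genuinely delicate point — and the one I would flag as the main (mild) obstacle — is the mismatch between the arbitrary continuous shift $z$ and the admissible discrete shifts $\e_j k_j$ along which $F_{\e_j}$ is exactly invariant. This is absorbed by translating along $\e_j\lfloor z/\e_j\rfloor$ and compensating the $O(\e_j)$ error with a fixed $\delta$-enlargement of the domain, which is then sent to zero at the very end; one must also check that the shifted functions are well defined on the enlarged sets for $j$ large and that the affine correction indeed drops out of the energy. Everything else (existence of recovery sequences, the $L^p$-convergence of $w_j$, and the concluding Lebesgue-point and density arguments) is standard.
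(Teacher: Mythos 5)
Your proof is correct and follows essentially the same route as the paper: both exploit the $[0,1]^d$-periodicity by shifting a recovery sequence along the lattice vector $\e_j\lfloor z/\e_j\rfloor$ with the compensating affine constant, absorb the $O(\e_j)$ mismatch by comparing slightly different balls, and conclude by symmetry that $\int f_0(\cdot,M)$ is translation invariant on balls. The only cosmetic differences are that the paper compares $B_{r'}(y)$ with $B_r(y')$ and invokes inner regularity (Proposition \ref{innerreg}) where you enlarge by $B_\delta$ and let $\delta\to0$ via absolute continuity of the integral, and that you spell out the concluding Lebesgue-differentiation and density-in-$M$ step that the paper leaves implicit.
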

\begin{proof}
It is sufficient to prove that
\begin{equation}\label{traslinv}
F(Mx,B_r(y))=F(Mx,B_r(y'))
\end{equation}
for every $M\in\mathbb{R}^{m\times d}$, $y,y'\in\Omega$ and $r>0$ such that $B_r(y),B_r(y')\subset\Omega$. We will prove that $F(Mx,B_{r'}(y))\le F(Mx,B_r(y'))$ for all $r'<r$.
By the inner regularity of $F(Mx,\cdot)$ provided by Proposition \ref{innerreg} we get \eqref{traslinv} by switching the roles of $y$ and $y'$.

Let $u_j\to Mx$ in $L^p(B_r(y');\mathbb{R}^m)$ be such that
$$
\lim_{j\to +\infty}F _{\e_j}(u_j,B_r(y'))=F(Mx,B_r(y'),
$$
and set
$\displaystyle v_j(x):=u_j\Big(x-\e_j\Big\lfloor\frac{y-y'}{\e_j}\Big\rfloor\Big)+\e_j M\Big\lfloor\frac{y-y'}{\e_j}\Big\rfloor$.
Since $B_{r'}(y)-\e_j\lfloor(y-y')/\e_j\rfloor\subset B_r(y')$ when $\e_j$ is small enough,  $v_j\to Mx$ in $L^p(B_{r'}(y);\rr^m)$.
Moreover, by the periodicity assumption on $f$, we have that
\begin{align*}
F_{\e_j}(v_j,B_{r'}(y)) &= \int_{\mathbb{R}^d}\int_{(B_{r'}(y))_{\e_j}(\xi)} f\Big(\frac{x}{\e_j},\xi,\frac{v_j(x+\e_j\xi)-v_j(x)}{\e_j}\Big)dx \, d\xi \\
&= \int_{\mathbb{R}^d}\int_{(B_{r'}(y))_{\e_j}(\xi)} f \Big(\frac{x}{\e_j}-\Big\lfloor\frac{y-y'}{\e_j}\Big\rfloor,\xi,\frac{v_j(x+\e_j\xi)-v_j(x)}{\e_j}\Big)dx \, d\xi.
\end{align*}
and, through the change of variable $x=x'+\e_j\lfloor(y-y')/\e_j\rfloor$, we get
$$F_{\e_j}(v_j,B_{r'}(y)) \le \int_{\mathbb{R}^d}\int_{(B_r(y'))_{\e_j}(\xi)} f \Big(\frac{x'}{\e_j},\xi,\frac{u_j(x'+\e_j\xi)-u_j(x')}{\e_j}\Big)dx \, d\xi=F_{\e_j}(u_j,B_r(y')).$$
Finally, letting $j\to +\infty$, we obtain
$$F(Mx,B_{r'}(y))\le\liminf_{j\to +\infty}F_{\e_j}(v_j,B_{r'}(y))\le\lim_{j\to +\infty}F_{\e_j}(u_j,B_r(y'))=F(Mx,B_r(y'))$$
and the claim.
\end{proof}

\begin{proposition}\label{asy-form-lem}
Let $f:\mathbb{R}^d\times\mathbb{R}^d\times\mathbb{R}^m\to[0,+\infty)$ be a Borel function $[0,1]^d$-periodic in the first variable such that assumptions \eqref{homgrocon2} and \eqref{growthpsi} hold. Let $T>0$ and set
\begin{equation}\label{ftrunc}
f^T(y,\xi,z)=\begin{cases}
f^T(y,\xi,z) & \text{if } |\xi|\le T,\cr
0 & \text{otherwise.}
\end{cases}
\end{equation}
Then, for every $r\geq T$ the limit
\begin{equation}\label{homformT}
f_{\rm hom}^T(M):=\lim_{R\to\infty}\frac{1}{R^d}\inf\Big\{\int_{Q_R}\int_{Q_R}f^T(x,y-x,v(y)-v(x))dx \, dy\,\Big|\, v\in\mathcal{D}^{r,M}(Q_R)\Big\},
\end{equation}
where we use the notation in Definition {\rm\ref{bo-se}}, exists and it is finite for every $M\in\mathbb{R}^{m\times d}$.

\end{proposition}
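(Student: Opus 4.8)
The plan is to prove that the quantity
$$
m^T(M,Q_R):=\inf\Big\{\int_{Q_R}\int_{Q_R}f^T(x,y-x,v(y)-v(x))\,dx\,dy\ \Big|\ v\in\mathcal{D}^{r,M}(Q_R)\Big\}
$$
satisfies an \emph{almost-subadditivity} estimate with respect to subdivisions of cubes into sub-cubes, from which the existence of $\lim_{R\to\infty}m^T(M,Q_R)/R^d=:f_{\rm hom}^T(M)$ follows by a standard argument. Finiteness is immediate: since $v(x)=Mx$ belongs to $\mathcal{D}^{r,M}(Q_R)$, by \eqref{homgrocon2} and \eqref{growthpsi}
$$
0\le m^T(M,Q_R)\le\int_{Q_R}\int_{Q_R}\psi(y-x)(|M|^p|y-x|^p+1)\,dx\,dy\le R^d\!\int_{\mathbb{R}^d}\!\psi(\xi)(|M|^p|\xi|^p+1)\,d\xi\le C(|M|^p+1)R^d.
$$

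Next I would establish the key pasting estimate. Fix an integer $R'\ge r$, let $N\in\mathbb{N}$ and $R=NR'$, and subdivide $Q_R$ into the $N^d$ sub-cubes $Q^i=z_i+Q_{R'}$ with $z_i\in R'\mathbb{Z}^d\subset\mathbb{Z}^d$. Given $\eta>0$, choose on each $Q_{R'}$ a competitor $v^i\in\mathcal{D}^{r,M}(Q_{R'})$ with energy $\le m^T(M,Q_{R'})+\eta(R')^d$, and set $v(x):=v^i(x-z_i)+Mz_i$ for $x\in Q^i$. Since each $v^i$ equals $Mx$ on the width-$r$ layer $\{\dist(\cdot,\partial Q_{R'})<r\}$ and the shifts lie in $\mathbb{Z}^d$ (so the $[0,1]^d$-periodicity of $f(\cdot,\xi,z)$ is preserved), $v$ is well defined a.e., belongs to $\mathcal{D}^{r,M}(Q_R)$ (here $R'\ge r$ is used), and
$$
\int_{Q_R}\!\!\int_{Q_R}\!f^T(x,y-x,v(y)-v(x))\,dx\,dy=\sum_i\int_{Q^i}\!\!\int_{Q^i}\!f^T+\sum_{i\ne j}\int_{Q^i}\!\!\int_{Q^j}\!f^T .
$$
By the change of variables $x\mapsto x-z_i$, $y\mapsto y-z_i$ and periodicity, each diagonal term equals the energy of $v^i$ on $Q_{R'}$, so $\sum_i(\cdots)\le N^d\big(m^T(M,Q_{R'})+\eta(R')^d\big)$. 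For the off-diagonal terms, $f^T$ vanishes when $|y-x|>T$ and $T\le r$; hence any contributing pair has $x\in Q^i$, $y\in Q^j$, $i\ne j$, $|x-y|\le T$, which forces $\dist(x,\partial Q^i)\le T\le r$ and $\dist(y,\partial Q^j)\le T\le r$, so $v(x)=Mx$, $v(y)=My$ and $f^T(x,y-x,v(y)-v(x))=f^T(x,y-x,M(y-x))\le\psi(y-x)(|M|^p|y-x|^p+1)$. Moreover such $x$ lie in the $T$-neighbourhood of the subdivision grid $\bigcup_i\partial Q^i$, whose measure is at most $C(d)\,T\,R^d/R'$; integrating first in $y$ over $\mathbb{R}^d$ and using \eqref{growthpsi} gives $\sum_{i\ne j}(\cdots)\le C(d)\,T\,(|M|^p+1)\,R^d/R'$. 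Dividing by $R^d=N^d(R')^d$ and letting $\eta\to0$,
$$
\frac{m^T(M,Q_{NR'})}{(NR')^d}\le\frac{m^T(M,Q_{R'})}{(R')^d}+\frac{C(d)\,T\,(|M|^p+1)}{R'}\qquad\text{for all }N\in\mathbb{N},\ R'\ge r.
$$

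Finally I would conclude: for arbitrary real $R$ and fixed integer $R'\ge r$, comparing $Q_R$ with $Q_{NR'}$, $N=\lfloor R/R'\rfloor$, and extending a near-optimal competitor on $Q_{NR'}$ by $Mx$ on $Q_R\setminus Q_{NR'}$ (the extra energy being $\le C(|M|^p+1)\big((NR')^{d-1}T+R^d-(NR')^d\big)=o(R^d)$, again by truncation and the layer argument), one obtains $\limsup_{R\to\infty}m^T(M,Q_R)/R^d\le m^T(M,Q_{R'})/(R')^d+C(d)T(|M|^p+1)/R'$ for every integer $R'\ge r$. Taking the infimum over $R'$, the last term vanishes and $\limsup_{R\to\infty}m^T(M,Q_R)/R^d\le\liminf_{R'\to\infty}m^T(M,Q_{R'})/(R')^d$, so the limit \eqref{homformT} exists and is finite by the a priori bound. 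The main obstacle is the pasting step: verifying that truncation at range $T\le r$ genuinely confines every cross-sub-cube interaction to the affine boundary layers (so those terms contribute only the integrable ``affine energy'' $f^T(x,\xi,M\xi)$, via \eqref{growthpsi}) and estimating the measure of the relevant grid neighbourhood. This is the cell-problem analogue of the fundamental estimate of Section~\ref{integralrep}, but simpler, since the common affine datum on the layers makes any cut-off construction unnecessary.
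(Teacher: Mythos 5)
Your proposal is correct and follows essentially the same route as the paper's proof: tile a large cube by integer translations of a (nearly optimal) competitor on a smaller cube, use the hypothesis $r\ge T$ to confine every cross-tile interaction to the affine boundary layers, and bound that contribution by a $T$-neighbourhood-of-the-grid volume estimate together with \eqref{growthpsi}, exactly as in the paper's estimates for the boundary set $\mathcal{S}_{R,S}^T$. The only point to make explicit is the real-versus-integer bookkeeping at the end: your almost-subadditivity inequality is proved for integer $R'$, so to deduce that the limit over real $R$ exists you should add the one-line comparison $m^T(M,Q_{\lceil R'\rceil})\le m^T(M,Q_{R'})+C(|M|^p+1)(T+1)(R')^{d-1}$ (extend a near-minimizer by $Mx$ on the thin shell), which shows the integer liminf does not exceed the real one; the paper sidesteps this by rounding $R$ up to $\lceil R\rceil$ inside the tiling construction itself.
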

\begin{proof}
For every $R>0$ we set
$$
F_1^T(u,Q_R):= \int_{Q_R}\int_{Q_R} f^T(x,y-x,v(y)-v(x)) dx \, dy,
$$
$$H_R(M):=\frac{1}{R^d}\inf\Big\{ F_1^T(u,Q_R) \,\Big|\,v\in\mathcal{D}^{r,M}(Q_R) \Big\}$$
and let $u_R\in\mathcal{D}^{r,M}(Q_R)$ be such that
$$\frac{1}{R^d}F_1^T(u_R,Q_R)\le H_R(M)+\frac{1}{R}.$$
Set $R':=\lceil R\rceil$ and, for any $S>R'$, define
$$u_S(x):=\begin{cases}
\displaystyle u_R\Big(x-R'\Big\lfloor \frac{x}{R'}\Big\rfloor\Big)+R'M\Big\lfloor \frac{x}{R'}\Big\rfloor&\text{if }x\in Q_{S_R} \\
Mx &\text{otherwise},
\end{cases}$$
where $S_R:=\lfloor S/R'\rfloor R'$. Set
$$\mathcal{L}_{R,S}:=R'\Big\{0,1,\dots, \Big\lfloor \frac{S}{R'}\Big\rfloor -1\Big\}^d,\quad \mathcal{S}_{R,S}:=\bigcup_{h\in \mathcal{L}_{R,S}}\partial(h+Q_{R'})
$$
and note that $f(x,y-x,u_S(y)-u_S(x))\not=0$ only if $x,y\in h+Q_{R'}$, for some $h\in \mathcal{L}_{R,S}$, or $x,y\in  \mathcal{S}_{R,S}^T$, where
$$
 \mathcal{S}_{R,S}^T:=\{x\in Q_S:\,\dist(x, \mathcal{S}_{R,S})\le T\}\cup (Q_S\setminus Q_{S_R}).
$$
In the latter case, since $r>T$, $u(x)=Mx$ and $u(y)=My$. Hence,
from the definition of $u_S$  we get
\begin{equation}\label{homest1}
\begin{aligned}
F_1^T(u_S,Q_S)& \le  \int_{\mathcal{S}_{R,S}^T}\int_{\mathcal{S}_{R,S}^T}f^T(x,y-x, My-Mx)dx \, dy\\
&+ \sum_{k\in \mathcal{L}_{R,S} }\int_{k+Q_{R'}}\int_{k+Q_{R'}}f^T(x,y-x,u_R(y-k)-u_R(x-k))dx \, dy.\\
\end{aligned}
\end{equation}
By the periodicity of $f(\cdot,\xi,z)$, assumptions \eqref{homgrocon2} and \eqref{growthpsi}, and since $u_R(x)=Mx$ on $Q_{R'}\setminus Q_R$,
\begin{equation}\label{homest2}
\begin{aligned}
\sum_{k\in \mathcal{L}_{R,S} }&\int_{k+Q_{R'}}\int_{k+Q_{R'}}  f^T(x,y-x,u_R(y-k)-u_R(x-k))dx \, dy\\
& \le \Big\lfloor \frac{S}{R'}\Big\rfloor^d\Big(\int_{Q_R}\int_{Q_R} f^T(x,y-x,u_R(y)-u_R(x)) dx \, dy +C(|M|^p +1)(R')^{d-1}\Big).
\end{aligned}
\end{equation}
Again by \eqref{homgrocon2} and \eqref{growthpsi}, we get
\begin{equation}\label{homest3}
\int_{\mathcal{S}_{R,S}^T}\int_{\mathcal{S}_{R,S}^T}f^T(x,y-x, My-Mx)dx \, dy\leq C(|M|^p +1) T S^{d-1}\Big(\frac{S}{R'}+1\Big).
\end{equation}
Gathering \eqref{homest1}--\eqref{homest3}, from the definition of $u_R$ we obtain
\begin{equation}\label{homest4}
\begin{aligned}
F_1^T(u_S,Q_S) &\le \Big\lfloor\frac{S}{R'}\Big\rfloor^d \big(R^d H_R(M)+R^{d-1}+C(|M|^p +1)(R')^{d-1}\big)\\
&+  C(|M|^p +1) T S^{d-1}\Big(\frac{S}{R'}+1\Big).
\end{aligned}
\end{equation}
Finally, by using $u_S$ as a test function in the definition of $H_S(M)$, \eqref{homest4} gives
$$H_S(M) \le \frac{R^d}{S^d}\Big\lfloor\frac{S}{R'}\Big\rfloor^d H_R(M) + \frac{(R')^{d-1}}{S^d}\Big\lfloor\frac{S}{R'}\Big\rfloor^dC(|M|^p +1) + C(|M|^p +1) \frac{T}{R'}\Big(1+\frac{R'}{S}\Big) .$$
By taking the limit first as $S\to+\infty$ and then as $R\to+\infty$ we get
$$\limsup_{S\to+ \infty}H_S(M)\le\liminf_{R\to+\infty}H_R(M)$$
that yields the existence of the limit.
Finally, $H_R(M)\le F_1^T(Mx,Q_R)/R^d\le C_1(|M|^p+1)$, hence the limit is finite.
\end{proof}

%
\begin{proof}[Proof of Theorem \ref{hom-thm}] We split the proof in two steps, dealing first with the case of truncated energies and then with the general case.

 \emph{Step 1.} For $T>0$ let $F_\e^T(\cdot,\cdot)$ be as in Definition \ref{truncated-functionals-def}. Given $\e_j\to 0$, by Theorem \ref{reprthm} and Proposition \ref{transinv} there exist a subsequence (not relabelled) and $f_0:\rr^{m\times d}\to [0,+\infty)$ such that
$$
\Gamma(L^p)\text{-}\lim_{j\to +\infty} F^T_{\e_j}(u, A)=\int_A f_0 (D u(x))\, dx =:F^T(u,A)
$$
for every $A\in\mathcal{A}^{\rm reg}(\Omega)$ and $u\in W^{1,p}(A;\rr^m)$. We now prove that $f_0= f^T_{\rm hom}$, where $f^T_{\rm hom}$ is defined in \eqref{homformT}.
Since $f_0$ is quasiconvex and satisfies \eqref{growthf0}, given $x_0\in\Omega$ and $r>0$ such that $Q_r(x_0)\subset\Omega$, we have
\begin{align*}
f_0(M) &= \frac{1}{r^d}\min\Big\{ \int_{Q_r(x_0)} f_0(Du(x)) dx \,\Big|\, u-Mx\in W_0^{1,p}(Q_r(x_0);\mathbb{R}^m)\Big\} \\
\intertext{for every $M\in\mathbb{R}^{m\times d}$ and then, by Proposition \ref{DBCminpbs}, we get}
f_0(M) &= \lim_{j\to\infty} \frac{1}{r^d}\inf\big\{F^T_{\e_j}(u,Q_r(x_0))\,|\, u\in\mathcal{D}^{s\e_j,M}(Q_r(x_0))\big\}
\end{align*}
 for any $s>0$. Without loss of generality we may assume $x_0=0$ and use the notation $Q_r=Q_r(0)$.
Setting $v(x)=u(\e_j x)/\e_j$ and using the changes of variable $x'=x/\e_j$ and $y=x'+\xi$, we rescale $F^T_{\e_j}$ as
\begin{align*}
F^T_{\e_j}(u,Q_r) &= \int_{B_T}\int_{(Q_r)_{\e_j}(\xi)}f \Big(\frac{x}{\e_j},\xi,\frac{u(x+\e_j\xi)-u(x)}{\e_j}\Big) dx \, d\xi \\
&= \int_{B_T}\int_{(Q_r)_{\e_j}(\xi)}f \Big(\frac{x}{\e_j},\xi,v\Big(\frac{x}{\e_j}+\xi\Big)-v\Big(\frac{x}{\e_j}\Big)\Big) dx \, d\xi \\
& =\e_j^d \int_{Q_{R_j}}\int_{Q_{R_j}}f^T(x',y-x',v(y)-v(x')) dx' dy,
\end{align*}
where $R_j:=r/\e_j$ and $f^T$ is defined in \eqref{ftrunc}.
Thus,
$$f_0(M) = \lim_{j\to\infty} \frac{1}{R_j^d} \inf\Big\{ \int_{Q_{R_j}}\int_{Q_{R_j}} f^T(x,y-x,v(y)-v(x)) dx \, dy \,\Big|\,v\in\mathcal{D}^{s,M}(Q_{R_j}) \Big\}.$$
By the arbitrariness of $s>0$ and Proposition \ref{asy-form-lem} we eventually get
\begin{equation*}
f_0(M) = f_{\rm hom}^T (M)=\lim_{R\to\infty} \frac{1}{R^d} \inf\Big\{ \int_{Q_{R}}\int_{Q_{R}} f^T(x,y-x,v(y)-v(x)) dx \, dy \,\Big|\,v\in\mathcal{D}^{1,M}(Q_{R_j}) \Big\},
\end{equation*}
which in particular proves the claim of Theorem \ref{hom-thm} when $f\equiv f^T$.

\emph{Step 2.} By the previous step and Remark \ref{G-lim-truncated-remark} we infer that
$$
\Gamma(L^p)\text{-}\lim_{\e\to0} F_\e(u,A)=\lim_{T\to+\infty}\Gamma(L^p)\text{-}\lim_{\e\to 0} F_\e^T(u,A)=\int_A f_{\rm hom}^\infty(Du(x))dx
$$
for every $A\in\mathcal{A}^{\rm reg}(\Omega)$ and $u\in W^{1,p}(A;\mathbb{R}^m)$, where for every $M\in\mathbb{R}^{m\times d}$
$$
f_{\rm hom}^\infty(M):=\lim_{T\to+\infty} f^T_{\rm hom}(M).
$$
Hence, the result will follow if we prove that $f^\infty_{\rm hom}(M)=f_{\rm hom}(M)$. Set
$$
f'_{\rm hom}(M)=\liminf_{R\to\infty}\frac{1}{R^d}\inf\Big\{\int_{Q_R}\int_{Q_R}f(x,y-x,v(y)-v(x))dx \, dy\,\Big|\, v\in\mathcal{D}^{1,M}(Q_R)\Big\},
$$
$$
f''_{\rm hom}(M)=\limsup_{R\to\infty}\frac{1}{R^d}\inf\Big\{\int_{Q_R}\int_{Q_R}f(x,y-x,v(y)-v(x))dx \, dy\,\Big|\, v\in\mathcal{D}^{1,M}(Q_R)\Big\}.
$$
Since $f_{\rm hom}^T(M)\le f'_{\rm hom}(M)$ for every $T>0$, it suffices to prove that $f''_{\rm hom}(M)\le f_{\rm hom}^\infty(M)$.
Now, define
$$H^T_R(M)=\frac{1}{R^d} \inf \Big\{ \int_{Q_R}\int_{Q_R} f^T(x,y-x,v(y)-v(x))dx \, dy \,\Big|\,v\in\mathcal{D}^{1,M}(Q_R) \Big\},$$
$$H_R(M)=\frac{1}{R^d} \inf \Big\{ \int_{Q_R}\int_{Q_R} f(x,y-x,v(y)-v(x))dx \, dy \,\Big|\,v\in\mathcal{D}^{1,M}(Q_R) \Big\},$$
and let $u_R\in\mathcal{D}^{1,M}(Q_R)$ be such that
$$\frac{1}{R^d} \int_{Q_R}\int_{Q_R} f^T(x,y-x,u_R(y)-u_R(x))dx \, dy \le H_R^T(M)+\frac{1}{T}.$$
Note that, by  \eqref{homgrocon1}--\eqref{growthpsi}, we get that
$$\frac{1}{R^d} G^{r_0}_1(u_R,Q_R) \le C\Big( H_R^T(M)+\frac1T \Big)\le C\Big(F_1^T(Mx, Q_R)+\frac1T\Big)\leq C(|M|^p+1),$$
where $C$ is a constant independent of $T$ and $R$, so that, by Lemma \ref{boundlemma}, we get that
$$\frac{1}{R^d} \int_{(Q_R)_1(\xi)} |u_R(x+\xi)-u_R(x)|^p dx \le C(|\xi|^p+1)(|M|^p+1).$$
By taking $u_R$ as a test function for the minimum problem defining $H_R(M)$, we then have
\begin{align*}
H_R(M) &\le H_R^T(M)+\frac{1}{T}+\int_{B_T^c}\int_{(Q_R)_1(\xi)} f(x,\xi,u_R(x+\xi)-u_R(x)) dx\, d\xi \\
&\le H_R^T(M)+\frac{1}{T}+\int_{B_T^c}\psi(\xi) \frac{1}{R^d} \int_{(Q_R)_1(\xi)} (|u_R(x+\xi)-u_R(x)|^p +1)dx\, d\xi \\
&\le H_R^T(M)+\frac{1}{T}+C(|M|^p+1)\int_{B_T^c} \psi(\xi)(|\xi|^p+1)\, dx.
\end{align*}
By assumption \eqref{homgrocon2} taking the limit first as $R\to+\infty$ and then as $T\to+\infty$ we get the conclusion.
\end{proof}
As a straightforward consequence of Theorem \ref{hom-thm}, Propositons \ref{DBC-Gamma-conv} and \ref{DBCminpbs}, we deduce  the following results about
$\Gamma$-convergence and convergence of minimum problems for periodic functionals subject to Dirichlet boundary conditions.

\begin{proposition}
Under the assumptions of Theorem \ref{hom-thm}, given any $g\in W_{\rm loc}^{1,p}(\rr^d;\mathbb{R}^m)$ and $r>0$, let $\{F_\e^{r,g}(\cdot,\Omega)\}$ be the family of functionals defined in \eqref{DBCseq}.
Then
\begin{equation}\label{DBC-hom}
\Gamma\text{-}\lim_{\e\to 0} F_\e^{r,g}(u,\Omega)=\begin{cases} \displaystyle
\int_{\Omega} f_{\rm hom}(Du(x))dx &\text{if }u-g\in W^{1,p}_0(\Omega;\mathbb{R}^m) \\
+\infty &\text{otherwise.}
\end{cases}
\end{equation}
\end{proposition}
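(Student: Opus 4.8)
The plan is to deduce this statement directly from the homogenization theorem and the general treatment of Dirichlet data, with essentially no new work. First I would fix an arbitrary sequence $\e_j\to0$. Since $\Omega$ is a bounded open set with Lipschitz boundary, $\Omega\in\mathcal{A}^{\rm reg}(\Omega)$, and the growth hypotheses \eqref{homgrocon1}--\eqref{growthpsi} imply that (H0)--(H2) hold for $f_\e$ as in \eqref{homdef} (as noted right after \eqref{homdef}). Hence Theorem \ref{hom-thm}, applied with $A=\Omega$, guarantees that $F_{\e_j}(\cdot,\Omega)$ $\Gamma(L^p)$-converges to
\begin{equation*}
F(u,\Omega)=
\begin{cases}
\displaystyle\int_\Omega f_{\rm hom}(Du(x))\,dx & \text{if } u\in W^{1,p}(\Omega;\rr^m),\\
+\infty & \text{otherwise,}
\end{cases}
\end{equation*}
where $f_{\rm hom}$ is the quasiconvex function defined by \eqref{homform}; crucially, the limit density $f_0=f_{\rm hom}$ is \emph{independent of $x$} and the same for every such sequence.

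Next I would invoke Proposition \ref{DBC-Gamma-conv} with this sequence $\e_j$, the set $A=\Omega$, and the prescribed $g\in W^{1,p}_{\rm loc}(\rr^d;\rr^m)$, $r>0$. All its hypotheses are met by the previous paragraph, and its conclusion is precisely that the constrained functionals $F_{\e_j}^{r,g}(\cdot,\Omega)$ defined in \eqref{DBCseq} $\Gamma$-converge in the strong $L^p(\Omega;\rr^m)$ topology to $F^g(\cdot,\Omega)$ as in \eqref{DBC}. Since $F(\cdot,\Omega)$ has the integral form displayed above on $W^{1,p}$ and equals $+\infty$ elsewhere, the functional $F^g(\cdot,\Omega)$ is exactly the right-hand side of \eqref{DBC-hom}.

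Finally, the $\Gamma$-limit just identified does not depend on the chosen sequence $\e_j\to0$, so by the Urysohn property of $\Gamma$-convergence (cf.~\cite{bra2}) the whole family $\{F_\e^{r,g}(\cdot,\Omega)\}_\e$ $\Gamma$-converges to the same limit, which establishes \eqref{DBC-hom}. I do not expect any genuine obstacle: the only points deserving a line of justification are that $A=\Omega$ is an admissible choice in Proposition \ref{DBC-Gamma-conv} and the routine passage from convergence along sequences to convergence of the continuous family; the rest is a verbatim application of Theorem \ref{hom-thm} and Proposition \ref{DBC-Gamma-conv}. (If in addition one wants the convergence of the associated constrained minimum problems and of minimizers, this is immediate from Proposition \ref{DBCminpbs}.)
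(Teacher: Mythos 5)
Your proof is correct and follows exactly the route the paper intends: the statement is presented there as a straightforward consequence of Theorem \ref{hom-thm} combined with Proposition \ref{DBC-Gamma-conv} (applied with $A=\Omega$, which is admissible since $\Omega$ has Lipschitz boundary), and your sequence-plus-Urysohn argument is the standard way to make that deduction precise.
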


\begin{proposition}
Under the assumptions of Theorem \ref{hom-thm}, for any $g\in W_{\rm loc}^{1,p}(\rr^d;\mathbb{R}^m)$ and $r>0$  there holds
\begin{equation}\label{DBC-hom-minpbs}
\lim_{\e\to0}\inf\{F_\e(u)\,|\, u\in\mathcal{D}^{r\e,g}(\Omega)\} = \min\Big\{\int_{\Omega} f_{\rm hom}(Du(x))dx \,|\, u-g\in W^{1,p}_0(\Omega;\mathbb{R}^m)\Big\}.
\end{equation}
Moreover, if $\e_j\to 0$ and $u_j\in \mathcal{D}^{r\e_j,g}(\Omega)$ is a converging sequence such that
$$
\lim_{j\to \infty} F_{\e_j}(u_j)=\lim_{j\to \infty} \inf\{F_{\e_j}(u)\,|\,u\in \mathcal{D}^{r\e_j,g}(\Omega)\},
$$
then its limit is a minimizer for $\displaystyle\min\Big\{\int_{\Omega} f_{\rm hom}(Du(x))dx \,|\, u-g\in W^{1,p}_0(\Omega;\mathbb{R}^m)\Big\}$.
\end{proposition}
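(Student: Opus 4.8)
The plan is to obtain this statement as the homogenization counterpart of Proposition~\ref{DBCminpbs}: it follows from the $\Gamma$-convergence result \eqref{DBC-hom} for the functionals $F_\e^{r,g}(\cdot,\Omega)$ together with their equicoerciveness, via the fundamental theorem of $\Gamma$-convergence for the convergence of minima and minimizers (see e.g.\ \cite[Theorem 1.21]{bra2}). Note that, by Definition~\ref{bo-se}, the infimum on the left-hand side of \eqref{DBC-hom-minpbs} coincides with $\inf\{F_\e^{r,g}(u,\Omega)\,|\,u\in L^p(\Omega;\rr^m)\}$, and the minimum on the right-hand side coincides with $\min\{F^g(u,\Omega)\,|\,u\in L^p(\Omega;\rr^m)\}$, the constrained homogenized functional being lower semicontinuous with respect to the strong $L^p$ topology and coercive on $g+W^{1,p}_0(\Omega;\rr^m)$ by \eqref{growthfhom} and quasiconvexity of $f_{\rm hom}$.

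First I would record the uniform upper bound $\inf\{F_\e^{r,g}(u,\Omega)\,|\,u\in L^p(\Omega;\rr^m)\}\le F_\e(g,\Omega)\le C$, valid for $\e$ small: indeed $g\in\mathcal{D}^{\e r,g}(\Omega)$ and, since $g\in W^{1,p}_{\rm loc}(\rr^d;\rr^m)$, hypothesis (H1) and \eqref{ass-bound} give $F_\e(g,\Omega)\le G_\e[\psi_\e](g,\Omega)+C|\Omega|\le C$. In particular the right-hand side of \eqref{DBC-hom-minpbs} is a genuine minimum over the nonempty affine space $g+W^{1,p}_0(\Omega;\rr^m)$.

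The key step is the equicoerciveness of $\{F_\e^{r,g}(\cdot,\Omega)\}$ in the strong $L^p(\Omega;\rr^m)$ topology, which I would establish exactly as in the proof of Proposition~\ref{DBCminpbs}. Given $u_\e\in\mathcal{D}^{\e r,g}(\Omega)$ with $F_\e(u_\e,\Omega)\le C$, hypothesis (H0), which holds here by \eqref{homgrocon1}, together with the splitting of $\Omega$ into the interior region $\{x\in\Omega\,:\,\dist(x,\Omega^c)>\e r/2\}$ and the boundary layer on which $u_\e=g$, yields $G_\e^{r'}(u_\e,\Omega)\le C(F_\e(u_\e,\Omega)+|\Omega|)+G_\e^{r'}(g,\Omega)\le C$ for every $r'\le\min\{r_0,r/2\}$. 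Since $u_\e-g$ vanishes on a boundary layer of width of order $\e$ and $\Omega$, being bounded, is contained in a slab, Proposition~\ref{poincare} applied to $u_\e-g$ gives $\|u_\e-g\|_{L^p(\Omega;\rr^m)}^p\le C\,G_\e^{r'}(u_\e-g,\Omega)\le C\big(G_\e^{r'}(u_\e,\Omega)+\|Dg\|_{L^p(\Omega)}^p\big)\le C$, hence $\|u_\e\|_{L^p(\Omega;\rr^m)}\le C$; Theorem~\ref{kolcom} then provides precompactness of $\{u_\e\}$ in $L^p(\Omega;\rr^m)$.

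Combining the $\Gamma$-convergence \eqref{DBC-hom}, the uniform upper bound, and the equicoerciveness just established, the fundamental theorem of $\Gamma$-convergence yields at once the convergence of the minimum values \eqref{DBC-hom-minpbs} and the fact that every cluster point in $L^p(\Omega;\rr^m)$ of a sequence $u_j\in\mathcal{D}^{\e_j r,g}(\Omega)$ with $F_{\e_j}(u_j)=\inf\{F_{\e_j}(u)\,|\,u\in\mathcal{D}^{\e_j r,g}(\Omega)\}+o(1)$ is a solution of the homogenized constrained problem. I do not expect a genuine obstacle here: the only mildly delicate point is the bookkeeping in the equicoerciveness argument (isolating the $\e$-boundary layer so as to apply (H0) and the Poincar\'e inequality to $u_\e-g$), which is entirely parallel to Proposition~\ref{DBCminpbs}.
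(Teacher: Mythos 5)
Your proposal is correct and follows essentially the same route as the paper, which deduces this proposition directly from the $\Gamma$-convergence with Dirichlet data \eqref{DBC-hom} together with the equicoerciveness argument of Proposition \ref{DBCminpbs} (Poincar\'e inequality plus Theorem \ref{kolcom}) and the standard theorem on convergence of minima. Your reconstruction of the equicoerciveness step is exactly the one used there, applied with $A=\Omega$.
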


\subsection{The convex case}

In this subsection we show that, analogously to the homogenization of integral functionals, in the convex case the asymptotic formula \eqref{homform} reduces to a cell formula.

\begin{theorem}\label{cell-form-thm}
Under the hypotheses of Theorem \ref{hom-thm}, assume in addition that $f(y,\xi,\cdot)$ is convex for every $y,\xi\in\mathbb{R}^d$. Then the function $f_{\rm hom}$ defined by \eqref{homform} satisfies for every $M\in\mathbb{R}^{m\times d}$
\begin{equation}\label{cellform}
f_{\rm hom}(M) = \inf\Big\{ \int_{\mathbb{R}^d}\int_{Q_1} f(x,y-x,v(y)-v(x)) dx \, dy \,\Big|\, v\in\mathcal{D}^{\#,M}(Q_1) \Big\},
\end{equation}
where
$$\mathcal{D}^{\#,M}(Q_1)=\{u\in L_{\rm loc}^p(\mathbb{R}^d;\mathbb{R}^m) \,|\, u-Mx \text{ is } Q_1\text{-periodic}\}.$$
\end{theorem}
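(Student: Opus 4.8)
The plan is to reduce to the truncated functionals and to exploit the convexity assumption through a periodization (averaging) argument. Fix $M\in\mathbb{R}^{m\times d}$. After the change of variables $\xi=y-x$, the right-hand side of \eqref{cellform} is
$$
g^\infty(M):=\inf\Big\{\int_{\mathbb{R}^d}\int_{Q_1}f(x,\xi,v(x+\xi)-v(x))\,dx\,d\xi\ \Big|\ v\in\mathcal{D}^{\#,M}(Q_1)\Big\},
$$
and for $T>0$ I let $g^T(M)$ denote the same infimum with $\int_{\mathbb{R}^d}$ replaced by $\int_{B_T}$, so that $g^T(M)\uparrow g^\infty(M)$ and, by \eqref{homgrocon2} and \eqref{growthpsi}, $g^T(M)\le C(|M|^p+1)$ uniformly in $T$. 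By Theorem~\ref{hom-thm} (Step~2) and Remark~\ref{G-lim-truncated-remark}, $f_{\rm hom}(M)=\lim_{T\to+\infty}f^T_{\rm hom}(M)$, with $f^T_{\rm hom}$ the truncated asymptotic density of Proposition~\ref{asy-form-lem}; write $\mathcal F^T_R(M)$ for the per-volume minimum on $Q_R$ appearing there, so that $\mathcal F^T_R(M)\to f^T_{\rm hom}(M)$. It then suffices to prove (i) $f^T_{\rm hom}(M)=g^T(M)$ for every fixed $T$, and (ii) $\lim_{T\to+\infty}g^T(M)=g^\infty(M)$; together with the above these give $f_{\rm hom}(M)=g^\infty(M)$, which is \eqref{cellform}.

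For (i), the inequality $f^T_{\rm hom}(M)\le g^T(M)$ is routine: given $v\in\mathcal{D}^{\#,M}(Q_1)$ and $R\in\NN$, cut $v$ off to the affine map $Mx$ in a boundary layer of $Q_R$ of width comparable to $T$ plus the cut-off width, with a cut-off of gradient of order one; since $v-Mx$ is bounded, by (H1), \eqref{homgrocon2} and \eqref{growthpsi} the layer contributes $O(R^{d-1})$ to the energy on $Q_R$, while the bulk contributes at most $R^d\int_{B_T}\int_{Q_1}f(x,\xi,v(x+\xi)-v(x))\,dx\,d\xi$, the integrand $(x,\xi)\mapsto f(x,\xi,v(x+\xi)-v(x))$ being $Q_1$-periodic in $x$ and $R\in\NN$; dividing by $R^d$, letting $R\to+\infty$ and taking the infimum over $v$ gives the claim. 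The reverse inequality is the heart of the matter. I would introduce the $Q_R$-periodic version
$$
H^{T,\#}_R(M):=\frac1{R^d}\inf\Big\{\int_{B_T}\int_{Q_R}f(x,\xi,u(x+\xi)-u(x))\,dx\,d\xi\ \Big|\ u-Mx\ \text{is}\ Q_R\text{-periodic}\Big\}
$$
and show $H^{T,\#}_R(M)=g^T(M)$ for every $R\in\NN$. The inequality ``$\le$'' holds because a $Q_1$-periodic competitor is also $Q_R$-periodic and, by periodicity of the integrand, its per-cell energy equals its value in the definition of $g^T$. For ``$\ge$'', given $u$ with $u-Mx$ $Q_R$-periodic, set $\bar v(x):=\frac1{R^d}\sum_{h\in\{0,\dots,R-1\}^d}\big(u(x+h)-Mh\big)$; then $\bar v\in\mathcal{D}^{\#,M}(Q_1)$ and $\bar v(x+\xi)-\bar v(x)=\frac1{R^d}\sum_h\big(u(x+h+\xi)-u(x+h)\big)$ is a convex average, so Jensen's inequality for the convex function $f(x,\xi,\cdot)$ together with the $\ZZ^d$-periodicity $f(x+h,\cdot,\cdot)=f(x,\cdot,\cdot)$ gives $f(x,\xi,\bar v(x+\xi)-\bar v(x))\le\frac1{R^d}\sum_h f(x+h,\xi,u(x+h+\xi)-u(x+h))$; integrating over $(x,\xi)\in Q_1\times B_T$ and changing variables $x\mapsto x+h$ yields $g^T(M)\le\frac1{R^d}\int_{B_T}\int_{Q_R}f(x,\xi,u(x+\xi)-u(x))\,dx\,d\xi$, whence $g^T(M)\le H^{T,\#}_R(M)$. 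Finally, $Q_R$-periodizing a near-optimal competitor for $\mathcal F^T_R(M)$ (which equals $Mx$ near $\partial Q_R$, so it glues to an admissible $Q_R$-periodic function that is affine near the $R\ZZ^d$-grid) and estimating the seam layer by (H1) gives $H^{T,\#}_R(M)\le\mathcal F^T_R(M)+C(T,M)/R$; letting $R\to+\infty$ yields $g^T(M)\le f^T_{\rm hom}(M)$, proving (i).

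For (ii), $\lim_T g^T(M)\le g^\infty(M)$ is immediate. For the converse, let $v_T$ be near-optimal in the definition of $g^T(M)$, normalised so that $(v_T-Mx)_{Q_1}=0$. By the lower bound \eqref{homgrocon1} and the uniform bound on $g^T(M)$, the quantity $\int_{B_{r_0}}\int_{Q_1}|v_T(x+\xi)-v_T(x)|^p\,dx\,d\xi$ is bounded uniformly in $T$, hence (by $Q_1$-periodicity) so is the unit-scale convolution energy of $v_T-Mx$ on a slightly larger cube $\tilde Q_1$; arguing as in the proof of Proposition~\ref{pwineq} (which applies with $\e\equiv1$) then bounds $v_T-Mx$ in $L^p(\tilde Q_1;\rr^m)$, hence in $L^p_{\rm loc}(\rr^d;\rr^m)$. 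Extract $v_{T_k}\rightharpoonup v$ weakly in $L^p_{\rm loc}$, with $v\in\mathcal{D}^{\#,M}(Q_1)$. For each fixed $\xi$, $v_{T_k}(\cdot+\xi)-v_{T_k}(\cdot)\rightharpoonup v(\cdot+\xi)-v(\cdot)$ weakly in $L^p(Q_1;\rr^m)$, and since $f(x,\xi,\cdot)$ is convex the functional $w\mapsto\int_{Q_1}f(x,\xi,w(x))\,dx$ is weakly lower semicontinuous; combining this with Fatou's lemma in $\xi$ and monotone convergence as $B_S\uparrow\rr^d$ (using $f\ge0$ and $T_k\ge S$ eventually) gives $\int_{\rr^d}\int_{Q_1}f(x,\xi,v(x+\xi)-v(x))\,dx\,d\xi\le\lim_T g^T(M)=f_{\rm hom}(M)$, so $g^\infty(M)\le f_{\rm hom}(M)$. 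Since also $f_{\rm hom}(M)=\lim_T f^T_{\rm hom}(M)=\lim_T g^T(M)\le g^\infty(M)$, we conclude $f_{\rm hom}(M)=g^\infty(M)$.

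I expect the main obstacle to be the identity $H^{T,\#}_R(M)=g^T(M)$, and specifically its ``$\ge$'' half: it is precisely there that convexity of $f$ in the last variable is used, in combination with its periodicity in the first variable, to run Jensen's inequality on the cell-averaged competitor. The remaining technical points—controlling the boundary and seam layers when passing between affine and periodic boundary conditions, and the fact that the only available lower bound \eqref{homgrocon1} yields compactness of the competitors merely in $L^p_{\rm loc}$ (which suffices, since cell-formula competitors need not belong to $W^{1,p}$)—are handled by (H1), \eqref{growthpsi} and the convolution-type Poincaré inequalities of Section~\ref{priers}.
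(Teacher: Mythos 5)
Your argument is correct, and its decisive step coincides with the paper's: the periodization-by-averaging map $\bar v(x)=R^{-d}\sum_h\big(u(x+h)-Mh\big)$ combined with Jensen's inequality and the $\ZZ^d$-periodicity of $f$ in its first variable is exactly the paper's estimate \eqref{cell-form-est3}, and both proofs truncate to interaction range $T$ and control boundary/seam layers of measure $O(TR^{d-1})$ using that admissible competitors equal $Mx$ near $\partial Q_R$. You deviate in two places. First, for the inequality $f_{\rm hom}(M)\le f^\#(M)$ (with $f^\#$ the right-hand side of \eqref{cellform}) the paper does not truncate at all: it takes a near-optimal $v\in\mathcal{D}^{\#,M}(Q_1)$, uses the rescaled functions $u_\e(x)=\e v(x/\e)$ and invokes the $\Gamma$-convergence statement of Theorem \ref{hom-thm}; you instead prove $f^T_{\rm hom}\le g^T$ at fixed $T$ by an explicit cut-off construction in $Q_R$. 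This works, but your justification ``since $v-Mx$ is bounded'' should read ``bounded in $L^p(Q_1)$ and $Q_1$-periodic'': that is what actually gives the $O_v(TR^{d-1})$ layer estimate, via $\int_{Q_1}|v(x+\xi)-v(x)|^p\,dx\le C\big(|M|^p|\xi|^p+\|v-Mx\|^p_{L^p(Q_1)}\big)$ per boundary cell together with (H1). Second, for the commutation $\lim_{T\to+\infty}f^{\#,T}(M)=f^\#(M)$ the paper argues as in Step 2 of the proof of Theorem \ref{hom-thm}, i.e.\ by a quantitative tail estimate for near-minimizers (Lemma \ref{boundlemma} plus the integrability of $\psi(\xi)(1+|\xi|^p)$, so the range-$>T$ energy is controlled by $\int_{B_T^c}\psi(\xi)(|\xi|^p+1)\,d\xi$), whereas you use weak compactness of near-minimizers (a unit-scale Poincar\'e--Wirtinger inequality for periodic functions, in the spirit of Proposition \ref{pwineq}) together with convexity-based weak lower semicontinuity, Fatou in $\xi$ and monotone convergence. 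Your route uses convexity also in this step (harmless here, since it is assumed) and requires an extraction argument, while the paper's tail bound is quantitative and does not need convexity at that point; your intermediate periodic-boundary-condition minimum $H^{T,\#}_R$ is an extra but harmless station, since the paper applies the Jensen/periodization step directly to near-minimizers of the asymptotic formula, whose $Mx$-boundary layer makes the seam estimate immediate. Both routes are complete and yield \eqref{cellform}.
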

\begin{proof}
For brevity of notation, we denote by $f^\#(M)$ the right-hand side of  \eqref{cellform}.
We first prove that $f_{\rm hom}(M)\le f^\#(M)$. Given $\delta>0$, let $v\in \mathcal{D}^{\#,M}(Q_1)$ be such that
$$
\int_{\mathbb{R}^d}\int_{Q_1} f(x,y-x,v(y)-v(x)) dx \, dy\leq f^\#(M)+\delta,
$$
and set
$u_\e(x):=\e v\big(\frac x\e\big)$.
Then $u_\e\to Mx$ in $L^p(\Omega;\rr^m)$ and, by Theorem \ref{hom-thm} and the periodicity of $f$, we have
$$
|\Omega| f_{\rm hom}(M)\leq \limsup_{\e\to 0} F_\e(u_\e)\leq |\Omega|( f^{\#}(M)+\delta).
$$
The conclusion follows by the arbitrariness of $\delta>0$.

It remains to prove that
\begin{equation}\label{cellineq}
f_{\rm hom}(M)\ge f^\#(M).
\end{equation}
Note that, reasoning as in Step 2 of the proof of Theorem \ref{hom-thm}, we get that
$$
\lim_{T\to +\infty}  f^{\#,T}(M)=f^{\#}(M),
$$
where $f^{\#,T}(M)$ is defined by the right-hand side of \eqref{cellform} with $f^T$ in place of $f$. Hence it suffices to prove \eqref{cellineq} in the case $f(y,\xi,z)=0$ for every $|\xi|>T, y\in\mathbb{R}^d, z\in\mathbb{R}^m$.
Let $R\in\NN$ and, for any function $v\in\mathcal{D}^{T,M}(Q_R)$, let $u\in\mathcal{D}^{\#,M}(Q_1)$ be the function defined by
$$u(x):=\frac{1}{R^d}\sum_{i\in [0,R)^d\cap\mathbb{Z}^d}\tilde v(x+i),$$
where $\tilde v$ denotes the periodic extension of $v$ outside $Q_R$.
From the convexity of $f(x,\xi,\cdot)$ we get
\begin{equation}\label{cell-form-est3}
\begin{aligned}
 f^{\#}(M) & \le \int_{B_T}\int_{Q_1} f(x,\xi,u(x+\xi)-u(x)) dx \, d\xi \\
&\leq \frac{1}{R^d} \sum_{i\in Q_R\cap\mathbb{Z}^d} \int_{B_T}\int_{i+Q_1} f(x,\xi,v(x+\xi)-v(x)) dx \, d\xi \\
&=\frac{1}{R^d} \int_{B_T}\int_{Q_R} f(x,\xi,v(x+\xi)-v(x)) dx \, d\xi.
\end{aligned}
\end{equation}
Since for every $v\in\mathcal{D}^{T,M}(Q_R)$ there holds
\begin{align*}
\int_{B_T}\int_{Q_R\backslash (Q_R)_1(\xi)} &f(x,\xi,v(x+\xi)-v(x)) dx \, d\xi\\
& \le \int_{B_T}\int_{Q_R\backslash (Q_R)_1(\xi)} f(x,\xi,M\xi) dx \, d\xi \le C(|M|^p +1)TR^{d-1},
\end{align*}
by taking the infimum in  \eqref{cell-form-est3} we get
$$ f^{\#}(M) \le \inf \Big\{ \frac{1}{R^d} \int_{B_T}\int_{(Q_R)_1(\xi)} f(x,\xi,v(x+\xi)-v(x)) dx \, d\xi \,\Big|\, v\in\mathcal{D}^{T,M}(Q_R) \Big\} + \frac{CT}{R}.$$
Then, passing to the limit as $R\to+\infty$ we obtain the desired inequality.
\end{proof}
\begin{remark}\label{homcase}
If in Theorem \ref{cell-form-thm} $f$ does not depend on the first variable, Jensen's inequality yields that
$$
f_{\rm hom}(M)=\int_{\rr^d} f(\xi,M\xi)\, d\xi.
$$
\end{remark}
\begin{example}[quadratic forms]\label{quadratic-form-ex}
A well-known property of $\Gamma$-convergence is the fact that the $\Gamma$-limit of non-negative quadratic forms is still a non-negative quadratic form (see \cite{dal} Theorem 11.10).
Hence, under the hypotheses of Theorem \ref{hom-thm}, if $f(x,\xi,z)$ is a non-negative quadratic form of the type
 $$
f(y,\xi,z )=\langle A(y,\xi)z,z \rangle
$$
 where $A:\rr^d\times\mathbb{R}^d\to\mathbb{R}^{m\times m}$ is $[0,1]^d$-periodic then
\begin{align*}
f_{\rm hom}(M)&=\langle A_{\rm hom} M, M\rangle\\
&=\inf \Big\{ \int_{\mathbb{R}^d} \int_{Q_1} \langle A(x,\xi) (v(x+\xi)-v(x)),v(x+\xi)-v(x)\rangle dx\, d\xi \, \Big| \, v\in\mathcal{D}^{\#,M}(Q_1) \Big\},
\end{align*}
with $A_{\rm hom}\in T_2(\rr^{m\times d})$. By Remark \ref{homcase}, if $A(y,\xi)=A(\xi)$, then
$$
f_{\rm hom}(M)=\int_{\rr^d}\langle A(y,\xi)M\xi,M\xi \rangle\, d\xi.
$$
In particular, if $A(\xi)= a(\xi) I$ we recover the result in Theorem \ref{G-conv-conv} with $a_\e(\xi)=a(\xi)$.
\end{example}

\section{Perturbed convolution-type functionals}\label{pe-co-fu}

In view of an application to a point clouds model that we will discuss in Subsection \ref{pointclouds}, we consider here a slight generalization of the class of functionals defined in \eqref{functionals}, obtained by replacing  the Lebesgue measure with a measure $\mu=\rho(x) \mathcal{L}^d$, with $\rho\in C^0(\Omega)$ and satisfying
\begin{equation}\label{boundrho}
0<c\leq \rho(x)\leq C\quad \text{for every } x\in\Omega.
\end{equation}
More precisely, given such a $\rho$, we set
\begin{equation}\label{funct-gen-rho}
F_\e[\rho](u):=\frac{1}{\e^d}\int_{\Omega} \int_{\Omega} f_\e\Big(x,{y-x\over\e},\frac{u(y)-u(x)}{\e}\Big) \rho(y) \rho(x)dx\, dy.
\end{equation}

In the periodic case, that is when $f_\e$ satisfies \eqref{homdef}, a generalization of Theorem \ref{hom-thm} is provided by the following result.
\begin{theorem}\label{hom-thm-rho}
Let $F_\e[\rho]$ be defined by \eqref{funct-gen-rho} with $f_\e$ satisfying \eqref{homdef}, $\rho\in C^0(\Omega)$ and such that \eqref{boundrho} holds. Then, under the assumptions of Theorem \ref{hom-thm},
\begin{equation*}
\Gamma(L^p)\text{-}\lim_{\e\to0}F_\e[\rho](u)=\begin{cases}\displaystyle\int_\Omega f_{\rm hom}(Du(x))\rho^2(x) dx& \text{if } u\in W^{1,p}(\Omega;\mathbb{R}^m),\\
+\infty & \text{otherwise},
\end{cases}
\end{equation*}
where $f_{\rm hom}$ is defined by \eqref{homform}.
\end{theorem}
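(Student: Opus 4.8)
The plan is to deduce the result from the periodic homogenization Theorem~\ref{hom-thm} by exploiting that $\rho$, being continuous and bounded between two positive constants, is essentially constant on the microscopic scale. Performing the change of variables $y=x+\e\xi$, one sees that $F_\e[\rho]$ is of the form \eqref{functionals} with density $g_\e(x,\xi,z):=f_\e(x,\xi,z)\,\rho(x+\e\xi)\rho(x)$, and since $c^2f_\e\le g_\e\le C^2f_\e$ pointwise, $g_\e$ satisfies (H0)--(H2) (with $c_0c^2$, $C^2\psi$ in place of $c_0$, $\psi$); the same holds for the localized and truncated functionals $F_\e[\rho](\cdot,A)$ and $F_\e^T[\rho](\cdot,A)$ defined as in \eqref{loc-functionals} and \eqref{truncated-functionals}. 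By Lemma~\ref{lim-truncated-lemma} and Remark~\ref{G-lim-truncated-remark} it therefore suffices to prove that, for every fixed $T>0$, every $A\in\mathcal{A}^{\rm reg}(\Omega)$ and every $u\in W^{1,p}(A;\rr^m)$,
\[
\Gamma(L^p)\text{-}\lim_{\e\to0}F_\e^T[\rho](u,A)=\int_A\rho^2(x)\,f_{\rm hom}^T(Du(x))\,dx ,
\]
with $f_{\rm hom}^T$ as in \eqref{homformT}; the thesis then follows letting $T\to+\infty$, since $f_{\rm hom}^T\uparrow f_{\rm hom}$ by the second step of the proof of Theorem~\ref{hom-thm}. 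Fix then $T>0$. By Theorem~\ref{reprthm} applied to $F_\e^T[\rho]$, every sequence $\e_j\to0$ has a subsequence (not relabelled) along which $F_{\e_j}^T[\rho](\cdot,A)$ $\Gamma$-converges, for all $A\in\mathcal{A}^{\rm reg}(\Omega)$, to $\int_A f_0^{T,\rho}(x,Du)\,dx$ for some Carath\'eodory $f_0^{T,\rho}$, quasiconvex and with $p$-growth in the second variable. Since the identification below will show this limit to be independent of the subsequence, it is enough to prove $f_0^{T,\rho}(x,M)=\rho^2(x)\,f_{\rm hom}^T(M)$ for a.e.~$x\in\Omega$ and every $M\in\rr^{m\times d}$.

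To identify $f_0^{T,\rho}$ we fix $M$ and pick $x_0\in\Omega$ in a set of full measure such that the blow-up formula
\[
f_0^{T,\rho}(x_0,M)=\lim_{r\to0}\frac{1}{r^d}\min\Big\{\int_{Q_r(x_0)}f_0^{T,\rho}(x,Dv)\,dx:\ v-Mx\in W_0^{1,p}(Q_r(x_0);\rr^m)\Big\}
\]
holds (cf.~\cite[Chapter~9]{bradef}), with $Q_r(x_0)\Subset\Omega$ for $r$ small. By Proposition~\ref{DBCminpbs} on $Q_r(x_0)$, the minimum equals $\lim_j m_{\e_j}(r)$, where $m_\e(r):=\inf\{F_\e^T[\rho](v,Q_r(x_0)):v\in\mathcal{D}^{s\e,M}(Q_r(x_0))\}$ for any $s>0$. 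In the integrals defining $F_\e^T[\rho](v,Q_r(x_0))$ both arguments of $\rho$ range in $Q_r(x_0)$, hence, $f_\e$ being non-negative,
\[
(\rho(x_0)^2-\omega(r))\,F_\e^T(v,Q_r(x_0))\le F_\e^T[\rho](v,Q_r(x_0))\le(\rho(x_0)^2+\omega(r))\,F_\e^T(v,Q_r(x_0))
\]
for every admissible $v$ and every $\e$, where $\omega(r)\to0$ is a modulus of continuity of $\rho^2$ near $x_0$ and $r$ is small enough that $\rho(x_0)^2-\omega(r)\ge0$. Taking the infimum over $\mathcal{D}^{s\e,M}(Q_r(x_0))$ (the same constraint set for both functionals), dividing by $r^d$, letting $\e=\e_j\to0$, and using that $F_\e^T$ $\Gamma$-converges to $\int f_{\rm hom}^T(Du)\,dx$ by Theorem~\ref{hom-thm} — so that Proposition~\ref{DBCminpbs} gives $r^{-d}\lim_j\inf\{F_{\e_j}^T(v,Q_r(x_0)):v\in\mathcal{D}^{s\e_j,M}(Q_r(x_0))\}=f_{\rm hom}^T(M)$, $f_{\rm hom}^T$ being constant in $x$ and quasiconvex — we obtain
\[
(\rho(x_0)^2-\omega(r))\,f_{\rm hom}^T(M)\ \le\ \frac{1}{r^d}\lim_j m_{\e_j}(r)\ \le\ (\rho(x_0)^2+\omega(r))\,f_{\rm hom}^T(M).
\]
Letting $r\to0$ and comparing with the blow-up formula yields $f_0^{T,\rho}(x_0,M)=\rho^2(x_0)\,f_{\rm hom}^T(M)$. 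This holds for a.e.~$x_0$ and every $M$ in a countable dense set, and both sides are continuous in $M$ (quasiconvex functions with $p$-growth are locally Lipschitz); hence it holds for a.e.~$x_0$ and every $M$, which, taking $A=\Omega$, completes the proof.

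The main obstacle is precisely this identification of the limit density: one must combine the blow-up characterization of Carath\'eodory quasiconvex integrands (the classical delicate input) with the convergence of Dirichlet minimum problems from Proposition~\ref{DBCminpbs} and the ``freezing'' of $\rho$ at scale~$r$. Truncating the interaction range beforehand via Lemma~\ref{lim-truncated-lemma} is what makes the freezing estimate uniform in $\e$ and entirely elementary, since the relevant values of $\rho$ are then confined to $Q_r(x_0)$; everything else is routine verification of the hypotheses (H0)--(H2) and an appeal to the Urysohn property of $\Gamma$-convergence.
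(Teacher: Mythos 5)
Your proposal is correct and follows essentially the same route as the paper: apply the compactness/integral-representation Theorem~\ref{reprthm} to the weighted functionals (whose densities satisfy (H0)--(H2) thanks to \eqref{boundrho}), identify the limit density at a.e.\ $x_0$ via the Dal Maso--Modica blow-up formula combined with Proposition~\ref{DBCminpbs}, freeze $\rho$ near $x_0$ by continuity to compare with the unweighted homogenized minima, and conclude by subsequence-independence. The only deviation is that you truncate the interaction range up front and invoke Theorem~\ref{hom-thm} for $f^T$, which is harmless (indeed not strictly needed, since the localized functionals on $Q_r(x_0)$ already confine $\rho$ to that cube) and merely makes explicit the machinery the paper compresses into ``proceeding as in the proof of Theorem~\ref{hom-thm}''.
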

\begin{proof}
We highlight only the main differences with respect to the proof of Theorem \ref{hom-thm}. Note that, by \eqref{boundrho}, $F_\e[\rho]$ satisfies all the assumptions of Theorem \ref{reprthm}. Hence, given $\e_j\to 0$, there exists a subsequence (not relabelled) such that
$$
\Gamma(L^p)\text{-}\lim_{j\to +\infty} F_{\e_j}[\rho](u, A)=\int_A f_0 (x, D u(x))\, dx.
$$
The characterization of non-homogeneous quasiconvex functions by their minima (see \cite[Theorem II]{DM1986}) yields that for every $M\in\rr^{m\times d}$ and for almost every
$x_0\in\Omega$
$$
f_0(x_0, M)=\lim_{r\to 0} \frac{1}{r^d}\min\Big\{ \int_{Q_r(x_0)} f_0(x, Du(x)) dx \,\Big|\, u-Mx\in W_0^{1,p}(Q_r(x_0);\mathbb{R}^m)\Big\}.
$$
Then, proceeding as in the proof of Theorem \ref{hom-thm} and using the continuity of $\rho$, we obtain that
$$
f_0(x_0, M)=\rho^2(x_0)f_{\rm hom}(M).
$$
Since $f_0$ does not depend on $(\e_j)_j$, we get the conclusion.
\end{proof}
We study now a perturbed version of the class of functionals defined in \eqref{funct-gen-rho}, obtained by composing the energy densities with a family of transportation maps.
Specifically, we consider  the family of perturbed functionals $E_\e:L^p(\Omega;\mathbb{R}^m)\to[0,+\infty]$ defined by
\begin{equation}\label{slepcev}
E_\e(u):=\frac{1}{\e^d}\int_{\Omega} \int_{\Omega} f_\e\Big(x,{T_\e(y)-T_\e(x)\over\e},\frac{u(y)-u(x)}{\e}\Big) \rho(y) \rho(x)dx\, dy.
\end{equation}
Here we assume that $T_\e:\Omega\to\Omega$ is a measurable map (representing  a transportation map in Subsection \ref{pointclouds}) and $f_\e$ fulfills assumptions (H0)--(H2),
with $\psi_\e$ satisfying the additional hypothesis
 \begin{equation}\label{psiradial}
\psi_\e(\xi):=\overline\psi_\e(|\xi|),\ \text{where}\ \overline\psi_\e:[0,+\infty)\to[0,+\infty)\ \text{is non increasing}.
\end{equation}


\begin{remark}\label{slepcev-assumption}
Set
\begin{equation}\label{geps}
g_\e(x,\xi,z)=f_\e\Big(x,\frac{T_\e(x+\e\xi)-T_\e(x)}{\e},z\Big)\rho(x)\rho(x+\e\xi)
\end{equation}
and note that, under the assumptions above on $f_\e$ and $\rho$, $g_\e$ satisfies (H0)-(H2) provided the following regularity conditions are fulfilled by $T_\e$:
\begin{align}\label{transport-1}
|T_\e(y)-T_\e(x)| &\le C'(|y-x|+\e), \quad \text{if } |y-x|\le\e r' \\
\label{transport-2}
|T_\e(y)-T_\e(x)| &\ge C''|y-x|, \quad \text{if } |y-x|\ge\e r'',
\end{align}
for some $C', C'', r', r''$ positive constants.
Indeed, if $r'_0$  is a positive constant such that $r'_0<r'$ and $C'(r'_0+1)<r_0$,  then by \eqref{transport-1}
$$\Big|\frac{T_\e(x+\e\xi)-T_\e(x)}{\e}\Big|\le r_0\   \text{for every }\xi\in B_{r_0'}
$$
and assumption (H0) on $f_\e$ yields that the same assumption is satisfied by  $g_\e$ with another choice of the constants.
Moreover, denoting
\begin{equation}\label{point-clouds-2.1}
\tilde{\psi}_\e(\xi):=\sup_{x\in\Omega}\overline{\psi}_\e\Big(\Big|\frac{T_\e(x+\e\xi)-T_\e(x)}{\e}\Big|\Big),
\end{equation}
from conditions \eqref{transport-2}  and the monotonicity of $\overline \psi_\e$ we get
$$\int_{\mathbb{R}^d} \tilde{\psi}_\e(\xi)(|\xi|^p+1) d\xi \le \int_{B_{r''}}\overline \psi_\e(0)(|\xi|^p+1) d\xi + \int_{B_{r''}^c} \overline\psi_\e(C''|\xi|)(|\xi|^p+1)d\xi$$
which yields that condition \eqref{ass-bound} is satisfied by $\tilde\psi_\e$, since it is satisfied by $\psi_\e$. Analogously it can be shown that $\tilde\psi_\e$ satisfies (H2). Hence, under the assumptions \eqref{transport-1} and \eqref{transport-2}, energies as in \eqref{slepcev} belong to the class of functionals satisfying the hypotheses of Theorem \ref{reprthm}. Notice that conditions \eqref{transport-1} and \eqref{transport-2} hold in particular if $\|T_\e-id\|_\infty\le C\e$.
\end{remark}

In the next one-dimensional example we show that the asymptotic behaviour of $
E_\e$ could be degenerate if  \eqref{psiradial} is not satisfied.


\begin{example}\label{pat-pb}
Assume that in \eqref{slepcev}  $\Omega=(0,1)$, $\rho\equiv 1$ and  $f_\e(x,\xi,z)=a(\xi)|z|^p$ with $a:\mathbb{R}\to[0,+\infty)$ defined as follows
$$a(\xi)=\begin{cases}
1 &\xi\in (-1,1)\cup\mathbb{Q} \\
0 &\text{otherwise.}
\end{cases}$$
Given $\lambda_\e\to 0$, let $T_\e:(0,1)\to (0,1)$ be such that $T_\e((0,1))\subset\e\mathbb{Q}$ and  $\|T_\e-id\|_\infty\le \lambda_\e$.
In particular  \eqref{transport-1} and \eqref{transport-2} are satisfied if $\lambda_\e= o(\e)$. We may construct such maps as follows: for any $k\in \{0,\dots, \lceil \lambda_\e^{-1}\rceil-1\}$, let
$q_{k,\e}\in \mathbb{Q}\cap\big( \e^{-1}\lambda_\e[k,k+1)\big)\cap (0,\e^{-1}) $ and  set
$$
T_\e(x):=\e q_{k,\e}\ \text{if } x\in \lambda_\e[k,k+1)\ \text{for some }k\in\{0,\dots, \lceil \lambda_\e^{-1}\rceil-1\}.
$$
Since $a=\chi_{(-1,1)}$ almost everywhere, $G_\e[a]=G_\e^1$; thus, by  Proposition \ref{G-conv-conv}, $\Gamma$-$\displaystyle\lim_{\e\to 0}G_\e[a](u)<+\infty$ for any $u\in W^{1,p}(0,1)$.
Whereas, since $(T_\e y-T_\e x)/\e\in\mathbb{Q}$, $E_\e$ reads
$$E_\e(u)=\int_{-\infty}^{+\infty}\int_{0\vee(-\e\xi)}^{1\wedge(1-\e\xi)} \Big|\frac{u(x+\e\xi)-u(x)}{\e}\Big|^p dx\, d\xi\, ,$$
From which we deduce that
$$
\Gamma\text{-}\lim_{\e\to 0}E_\e(u)=\begin{cases} 0&  \text{if } u'=0 \ \text{ in } (0,1),\\
+\infty & \text{otherwise}.
\end{cases}
$$
\end{example}
In the next proposition we show that if $\|T_\e-id\|_{\infty}=o(\e)$ and $f_\e(x,\cdot,z)$ satisfies a suitable continuity assumption uniformly with respect to $\e$ and $x$, then the functionals $E_\e$ defined by \eqref{slepcev} are asymptotically equivalent in the sense of the $\Gamma$-convergence to the functionals $F_\e[\rho]$ defined by \eqref{funct-gen-rho}.


\begin{proposition}\label{slepcev-prop}
Let $F_\e[\rho]$ and $E_\e$ be defined by \eqref{funct-gen-rho} and \eqref{slepcev}, respectively,  with $f_\e$ satisfying {\rm (H0)--(H2)} and \eqref{psiradial}.
Assume in addition that:
\begin{itemize}
\item[{\rm (i)}] There exists a family of positive functions $\{\omega_h\}_{h>0}\subset L^1_{\rm loc}(\rr^d)$ such that $\omega_h\to0$ in $L^1_{\rm loc}(\rr^d)$ and
\begin{equation}\label{point-clouds-cont-ass}
\sup_{x\in\Omega}\sup_{|v|\le h} |f_\e(x,\xi+v,z)-f_\e(x,\xi,z)| \le \omega_h(\xi)|z|^p
\end{equation}
for every $\e>0$ and $z\in\mathbb{R}^m$.
\item[{\rm (ii)}] $\displaystyle\Big\|\frac{T_\e-id}{\e}\Big\|_{L^\infty(\Omega;\rr^d)}\to0$.
\end{itemize}
Then
$$\Gamma(L^p)\text{-}\liminf_{\e\to 0} E_{\e}(u)=\Gamma(L^p)\text{-}\liminf_{\e\to 0} F_{\e}[\rho](u),$$
$$\Gamma(L^p)\text{-}\limsup_{\e\to 0} E_{\e}(u)=\Gamma(L^p)\text{-}\limsup_{\e\to 0} F_{\e}[\rho](u).$$
\end{proposition}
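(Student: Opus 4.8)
The plan is to show that the two families of functionals differ by a quantity that is negligible in the $\Gamma$-limit, by comparing $E_\e(u_\e)$ with $F_\e[\rho](u_\e)$ along an arbitrary recovery (or almost-minimizing) sequence. The symmetry of the statement — the identities for $\liminf$ and $\limsup$ follow by exchanging the roles of $E_\e$ and $F_\e[\rho]$ once the right two-sided estimate is available — means it suffices to establish a single inequality of the form
\begin{equation*}
E_\e(u_\e)\le F_\e[\rho](u_\e)+o(1)
\end{equation*}
along any sequence $u_\e\to u$ in $L^p$ with, say, $F_\e[\rho](u_\e)$ (equivalently $G_\e^{r_0}(u_\e,\Omega)$, by \eqref{growth-cond}) uniformly bounded, and symmetrically with the roles swapped. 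The key point is that replacing the argument $(T_\e(y)-T_\e(x))/\e$ by $(y-x)/\e$ in $f_\e$ is, by the continuity assumption (i) with $h=h_\e:=\|(T_\e-\mathrm{id})/\e\|_\infty\to0$, an error controlled by $\omega_{h_\e}\!\big((y-x)/\e\big)\,|D^\e_\xi u_\e|^p$, an expression whose $L^1$-mass over $\Omega\times\Omega$ (after the change of variables $y=x+\e\xi$) is bounded by $\int_{\rr^d}\omega_{h_\e}(\xi)\,\big(\text{local energy density at }\xi\big)\,d\xi$.

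\textbf{Key steps.} First I would write, for $u_\e\to u$ with $G_\e^{r_0}(u_\e,\Omega)\le C$, using \eqref{point-clouds-cont-ass} with $v=(T_\e(x+\e\xi)-T_\e(x))/\e-\xi$, whose modulus is at most $2h_\e$,
\begin{equation*}
\big|E_\e(u_\e)-F_\e[\rho](u_\e)\big|\le C\int_{\rr^d}\omega_{2h_\e}(\xi)\int_{\Omega_\e(\xi)}\Big|\frac{u_\e(x+\e\xi)-u_\e(x)}{\e}\Big|^p dx\,d\xi,
\end{equation*}
having absorbed the bounded factor $\rho(x)\rho(x+\e\xi)\le C^2$. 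Second, I would split the $\xi$-integral into $B_{r_\delta}$ and its complement, as in the proof of Lemma~\ref{lim-truncated-lemma}: on $B_{r_\delta}^c$ one invokes Corollary~\ref{boundlemma-lip} (after an extension via Theorem~\ref{ext} to a Lipschitz set containing $\Omega$, or more simply one works on a slightly larger set) to bound the inner integral by $C(|\xi|^p+1)(G_\e^{r_0}(u_\e,\Omega)+\|u_\e\|_{L^p}^p)\le C(|\xi|^p+1)$, so that this tail contributes at most $C\int_{B_{r_\delta}^c}\omega_{2h_\e}(\xi)(|\xi|^p+1)\,d\xi$; since $\omega_h\to0$ in $L^1_{\rm loc}$ this need not go to zero, so one should instead first integrate the decay of $\psi_\e$ through (H2) to make the tail small (uniformly in $\e$) and then use $L^1_{\rm loc}$-convergence of $\omega_{2h_\e}\to0$ on the ball $B_{r_\delta}$ for the main part, where the inner integral is again bounded by $C(|\xi|^p+1)\le C$. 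Third, assembling: given $\eta>0$ choose $r_\delta$ via (H2) to kill the tail up to $\eta$, then let $\e\to0$ so that $h_\e\to0$ and $\int_{B_{r_\delta}}\omega_{2h_\e}\to0$; this gives $\limsup_\e|E_\e(u_\e)-F_\e[\rho](u_\e)|\le C\eta$, and $\eta$ is arbitrary. Finally, a recovery sequence for $F_\e[\rho]$ at $u$ is thus an asymptotically-recovery sequence for $E_\e$ and conversely, which yields both the $\liminf$ and the $\limsup$ identities; one must also check that a sequence with bounded $E_\e$-energy has bounded $G_\e^{r_0}$-energy, which is exactly what Remark~\ref{slepcev-assumption} guarantees via \eqref{transport-1}.

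\textbf{Main obstacle.} The delicate point is the interplay between the two smallnesses: $\omega_h\to0$ only in $L^1_{\rm loc}$, not in $L^1$, so the large-$\xi$ region cannot be handled by the continuity estimate alone and must be controlled a priori by the decay hypothesis (H2) / \eqref{ass-bound} on $\psi_\e$, using that on $B_{r_\delta}^c$ the density $f_\e(x,\xi,z)$ — and hence also the perturbed density, thanks to \eqref{psiradial} and the computation in Remark~\ref{slepcev-assumption} bounding $\tilde\psi_\e$ — is dominated by $\psi_\e(\xi)(|z|^p+1)$ with $\int_{B_{r_\delta}^c}\psi_\e(\xi)|\xi|^p d\xi<\delta$. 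In other words, one does not estimate $E_\e-F_\e[\rho]$ directly on the tail but rather estimates $E_\e$ and $F_\e[\rho]$ separately there, each by $C\delta$ (using Corollary~\ref{boundlemma-lip} to convert the incremental-ratio integral into a bounded multiple of $G_\e^{r_0}$), and only on the bounded region $B_{r_\delta}$ does one exploit \eqref{point-clouds-cont-ass} together with $\omega_{2h_\e}\to0$ in $L^1(B_{r_\delta})$. Getting this two-regime bookkeeping right, and making sure the constants from the extension/Corollary~\ref{boundlemma-lip} step are uniform in $\xi$ up to the factor $(|\xi|^p+1)$ that is then integrated against the small weights, is the crux; the rest is routine.
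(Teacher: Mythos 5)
Your proposal is correct and follows essentially the same route as the paper: reduce to sequences with uniformly bounded $G_\e^{r}$-energy (using (H0) for both functionals via Remark \ref{slepcev-assumption}), bound the difference on a bounded set of $\xi$ by $\omega_{2h_\e}$ times the short-range energy through Lemma \ref{boundlemma}/Corollary \ref{boundlemma-lip}, and dispose of the tail using (H2) for $\psi_\e$ and $\tilde\psi_\e$. The only difference is presentational: the paper invokes Lemma \ref{lim-truncated-lemma} to truncate the kernels, whereas you re-derive that same tail estimate inline.
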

\begin{proof}
Notice that, since both $F_\e[\rho]$ and $E_\e$ satisfy the hypotheses of Proposition \ref{growthcondremk}, we have that
$$\Gamma(L^p)\text{-}\lim_{\e\to 0} E_{\e}(u)=\Gamma(L^p)\text{-}\lim_{\e\to 0} F_{\e}[\rho](u)=+\infty, \ \text{for every } u\in L^p(\Omega;\rr^m)\setminus W^{1,p}(\Omega;\rr^m).$$
Hence, by (H0)-(H1), it suffices to prove that
\begin{equation}\label{equiv}
E_\e(u_\e)=F_\e[\rho](u_\e)+o(1)
\end{equation}
for any sequence $(u_\e)_\e$ such that $G_\e^r(u_\e)$ is uniformly bounded for some $r\leq r_0\wedge r_0'$, where $r_0$ and $r_0'$ refer to assumption (H0) for $F_\e[\rho]$ and $E_\e$, respectively.

By Lemma \ref{lim-truncated-lemma}, we may reduce to prove \eqref{equiv} in the case $f_\e (x,\xi,z)=0$ for every $x\in\Omega$, $|\xi|>R$ and $z\in\mathbb{R}^m$, for some $R>0$.
Let then $u_\e$ be such that $\sup_{\e>0} G_\e^{r}(u_\e)<+\infty$.
We have
$$E_\e(u_\e)
= F_\e(u_\e) + R_\e(u_\e),$$
where
$$R_\e(v):=\int_{B_R} \int_{\Omega_\e(\xi)} \Big (g_\e\Big(x,\xi,\frac{v(x+\e\xi)-v(x)}{\e}\Big)-f_\e\Big(x,\xi,\frac{v(x+\e\xi)-v(x)}{\e}\Big)\Big)\rho(x)\rho(x+\e\xi) dx\, d\xi\,,$$
with $g_\e$ defined by \eqref{geps}. Set
$$
h_\e:=\Big\|\frac{T_\e-id}{\e}\Big\|_{L^\infty(\Omega;\rr^d)}.
$$
By (i) and Lemma \ref{boundlemma} we get
$$|R_\e(u_\e)| \le C \int_{B_R}\int_{\Omega_\e(\xi)} \omega_{2h_\e}(\xi)\Big|\frac{u_\e(x+\e\xi)-u_\e(x)}{\e}\Big|^p dx\, d\xi \le C (R^p+1) G_\e^{r}(u_\e) \int_{B_R}\omega_{2h_\e}(\xi) d\xi,$$
which goes to zero as $\e\to0$ by (ii).
\end{proof}
As a straightforward consequence of the previous proposition and Theorem \ref{hom-thm-rho}, we obtain the following result.
\begin{corollary}\label{hompert-cor}
Under the assumptions of Proposition \ref{slepcev-prop}, asssume in addition that $f_\e$ satisfies \eqref{homdef}. Then
\begin{equation*}
\Gamma(L^p)\text{-}\lim_{\e\to0}E_\e(u)=\begin{cases}\displaystyle\int_\Omega f_{\rm hom}(Du(x))\rho^2(x)dx& \text{if } u\in W^{1,p}(\Omega;\mathbb{R}^m),\\
+\infty & \text{otherwise},
\end{cases}
\end{equation*}
where $f_{\rm hom}$ is defined by \eqref{homform}.
\end{corollary}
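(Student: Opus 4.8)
The plan is to combine the two main results of this section: the asymptotic equivalence of $E_\e$ and $F_\e[\rho]$ in the sense of $\Gamma$-convergence established in Proposition \ref{slepcev-prop}, and the homogenization theorem for $F_\e[\rho]$ in the periodic case, Theorem \ref{hom-thm-rho}.

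First I would check that the standing hypotheses are exactly those needed to invoke Theorem \ref{hom-thm-rho}. By assumption $\rho\in C^0(\Omega)$ satisfies \eqref{boundrho}, and the additional hypothesis \eqref{homdef} gives $f_\e(x,\xi,z)=f(x/\e,\xi,z)$ with $f$ periodic in the first variable; moreover, for such $f_\e$ the bounds {\rm(H0)--(H2)} together with \eqref{psiradial} translate precisely into the growth conditions \eqref{homgrocon1}, \eqref{homgrocon2} and \eqref{growthpsi} on $f$ (with $\rho_\e\equiv\rho$ and $\psi_\e\equiv\psi$ independent of $\e$), as already observed at the beginning of Section \ref{homogenization}. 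Hence Theorem \ref{hom-thm-rho} applies and yields that $F_\e[\rho]$ $\Gamma(L^p)$-converges, as $\e\to0$, to the functional equal to $\int_\Omega f_{\rm hom}(Du(x))\rho^2(x)\,dx$ on $W^{1,p}(\Omega;\rr^m)$ and $+\infty$ elsewhere, with $f_{\rm hom}$ given by \eqref{homform}. In particular its $\Gamma$-$\liminf$ and $\Gamma$-$\limsup$ coincide and equal this functional.

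Next I would invoke Proposition \ref{slepcev-prop}: since {\rm(H0)--(H2)}, \eqref{psiradial}, and conditions (i) and (ii) are assumed, for every $u\in L^p(\Omega;\rr^m)$ one has
$$\Gamma(L^p)\text{-}\liminf_{\e\to0}E_\e(u)=\Gamma(L^p)\text{-}\liminf_{\e\to0}F_\e[\rho](u),\qquad \Gamma(L^p)\text{-}\limsup_{\e\to0}E_\e(u)=\Gamma(L^p)\text{-}\limsup_{\e\to0}F_\e[\rho](u).$$
By the previous step the two right-hand sides coincide and equal $\int_\Omega f_{\rm hom}(Du(x))\rho^2(x)\,dx$ for $u\in W^{1,p}(\Omega;\rr^m)$ and $+\infty$ otherwise; therefore the $\Gamma$-$\liminf$ and $\Gamma$-$\limsup$ of $E_\e$ agree, i.e.\ $E_\e$ $\Gamma(L^p)$-converges, and to the claimed limit.

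There is essentially no serious obstacle here beyond bookkeeping: the only point requiring a moment's care is verifying that the hypotheses of Proposition \ref{slepcev-prop} and of Theorem \ref{hom-thm-rho} are mutually compatible and jointly implied by the stated assumptions of the corollary — in particular that {\rm(H0)--(H2)} for the periodic densities $f_\e(x,\xi,z)=f(x/\e,\xi,z)$ coincide with the growth conditions \eqref{homgrocon1}--\eqref{growthpsi} used in Theorem \ref{hom-thm}. Once this is noted, the statement follows by concatenating the two cited results.
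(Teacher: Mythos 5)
Your proposal is correct and coincides with the paper's argument: the corollary is stated there precisely as a straightforward consequence of Proposition \ref{slepcev-prop} combined with Theorem \ref{hom-thm-rho}, exactly as you concatenate them. The hypothesis bookkeeping you carry out (matching (H0)--(H2) for $f_\e(x,\xi,z)=f(x/\e,\xi,z)$ with \eqref{homgrocon1}--\eqref{growthpsi}) is the only point needing care, and you handle it as the paper implicitly does.
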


\subsection{Application to functionals defined on point clouds}\label{pointclouds}
The case in which $T_\e(\Omega)=X_{n_\e}:=\{x_i\}_{i=1}^{n_\e}\subset\Omega$, has already been studied in the context of problems for Machine Learning, when dealing with discrete convolution-type energies of the form
\begin{equation}\label{slepcev-dis}
\frac{1}{\e^p}\frac{1}{n_\e^2}\sum_{i,j=1}^{n_\e} a^\e_{i,j} |u(x_i)-u(x_j)|^p,
\end{equation}
that are the discrete version of energies \eqref{slepcev} when $T_\e$ are transportation maps from $\Omega$ to $X_{n_\e}$, see \cite{garsle} when $p=1$ and \cite{thorpe-et-al} when $p>1$.
Therein, $X_{n_\e}$ denotes a point cloud obtained by refining random samples of a given probability measure $\mu\ll\mathcal{L}^d$, having continuous density bounded from above and below by two positive constants.

In this subsection, we will prove a $\Gamma$-convergence result for a generalized version of discrete energies as in \eqref{slepcev-dis} defined on point clouds.
In particular, we will recover the convergence result provided in \cite{thorpe-et-al}.
Before setting the problem, we recall, for the reader's convenience, some useful notions about point-cloud models.

Let $\mu=\rho(x)\mathcal{L}^d$ be a probability measure supported on $\Omega$, such that $\rho\in C^0(\Omega)$
 and satisfies \eqref{boundrho}.
Given $(X,\sigma,\mathbb{P})$ a probability space, we consider a sequence of random variables
$$
x_i: X\ni\omega \mapsto x_i(\omega)\in\Omega, \quad i\in\NN,
$$
that are \emph{i.i.d} according to the distribution $\mu$.
Then, given $n\in\NN$, we say that the set $X_{n}(\omega)=\{x_i(\omega)\}_{i=1}^{n}$ is a \emph{point cloud} obtained as samples from a given distribution $\mu$.
In the following, we will drop the dependence on $\omega$ for the sake of simplicity of notation, unless otherwise specified. To any point cloud $X_{n}$ we associate its \emph{empirical measure}
\begin{equation}\label{empirical}
\mu_n=\frac{1}{n}\sum_{i=1}^{n} \delta_{x_i}.
\end{equation}
It is well known that $\mu_n$ weak$^*$ converge to $\mu$ as $n\to +\infty$ $\mathbb{P}$-almost surely.

Let $n_\e\in\NN$ such that
$$
\lim_{\e\to 0} n_\e=+\infty
$$
and let $f:\mathbb{R}^d\times\mathbb{R}^m\to[0,+\infty)$ be a Borel function that is convex in the second variable.
We then consider the family of functionals (``dis'' stands for discrete)
\begin{equation*}
E_\e^{\rm dis}(u) = \frac{1}{\e^d n_\e^2}\sum_{i,j=1}^{n_\e} f\Big(\frac{x_i-x_j}{\e},\frac{u(x_i)-(x_j)}{\e}\Big),
\end{equation*}
defined on functions $u:X_{n_\e}\to\mathbb{R}^m$.
Note that $E^{\rm dis}_\e$ can be written in terms of $\mu_{n_\e}$ as
\begin{equation}\label{energmueps}
E^{\rm dis}_\e(u)=\frac{1}{\e^d}\int_\Omega\int_\Omega f\Big(\frac{y-x}{\e},\frac{u(y)-u(x)}{\e}\Big) d\mu_{n_\e}(x)\, d\mu_{n_\e}(y).
\end{equation}
Let $E_\e$ be defined by \eqref{slepcev} with
\begin{equation}\label{fepsf}
f_\e(x, \xi,z)=f(\xi,z).
\end{equation}
If $T_\e:\Omega\to X_{n_\e}$ is a transportation map between $\mu_{n_\e}$ and $\mu$; that is, $(T_\e)_\#\mu=\mu_{n_\e}$,
where $T_\#\mu$ denotes the push-forward of $\mu$ by $T$, then we may identify any $u:X_{n_\e}\to\mathbb{R}^m$ with its piecewise constant interpolation on $T_\e^{-1}(x_i)$, $i=1,\dots, n_\e$, and, by \eqref{energmueps},
\begin{equation}\label{energy-cont-disc}
E^{\rm dis}_\e(u) = E_\e(u)\quad \text{for every } u\in PC(X_{n_\e}),
\end{equation}
where
$$
PC(X_{n_\e}):=\big\{u:\Omega\to\mathbb{R}^m \,|\, u \text{ is constant on } T_\e^{-1}(x_i) \text{ for every } 1\le i\le n_\e \big\}.
$$
With a slight abuse of notation we assume that $E^{\rm dis}_\e$ is defined on the whole space $L^p(\Omega;\rr^m)$ by setting
\begin{equation}\label{energy-dis}
E^{\rm dis}_\e(u) = \begin{cases}
\displaystyle\frac{1}{\e^d n_\e^2}\sum_{i,j=1}^{n_\e} f\Big(\frac{x_i-x_j}{\e},\frac{u(x_i)-(x_j)}{\e}\Big),
 & u\in PC(X_{n_\e}) \\
+\infty & \text{otherwise.}
\end{cases}
\end{equation}

In the next theorem we show that  $E_\e^{\rm dis}$ and $E_\e$ are asymptotically equivalent in the sense of $\Gamma$-convergence under the assumptions of Proposition \ref{slepcev-prop}.

\begin{theorem}\label{point-cloud-prop}
Let $X_{n_\e}$ be a family of point clouds obtained as samples from $\mu$ and let $T_\e:\Omega\to X_{n_\e}$ be a transportation map  between $\mu_{n_\e}$ and $\mu$, where $\mu_{n_\e}$ is defined in \eqref{empirical}. Let $E_\e^{\rm dis}$  be defined by \eqref{energy-dis} and let $E_\e$ be defined by \eqref{slepcev} with $f_\e$ satisfying \eqref{fepsf} and $f(\xi, \cdot)$ convex for any $\xi\in\rr^d$. If $f_\e$ and $T_\e$ satisfy the assumptions of Proposition \ref{slepcev-prop}, then
\begin{equation}\label{limdisclimcont}
\begin{aligned}
\Gamma(L^p)\text{-}\lim_{\e\to0} E^{\rm dis}_\e(u) &=\Gamma(L^p)\text{-}\lim_{\e\to0} E_\e(u)\\
&= \begin{cases}\displaystyle\int_{\rr^d}\int_\Omega f(\xi, Du(x)\xi)\rho^2(x)\, dx\, d\xi & \text{if}\ u\in W^{1,p}(\Omega;\rr^m),\\
+\infty & \text{otherwise}.
\end{cases}
\end{aligned}
\end{equation}
\end{theorem}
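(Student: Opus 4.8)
The plan is to derive Theorem~\ref{point-cloud-prop} essentially as a corollary of the machinery already assembled, namely Proposition~\ref{slepcev-prop}, Corollary~\ref{hompert-cor}, Remark~\ref{homcase}, and the almost-sure weak$^*$ convergence $\mu_{n_\e}\overset{*}{\rightharpoonup}\mu$. The whole point is that $E_\e^{\rm dis}$ coincides with $E_\e$ on $PC(X_{n_\e})$ by \eqref{energy-cont-disc}, so the two $\Gamma$-limits will agree provided we can (i)~identify the $\Gamma$-limit of $E_\e$, and (ii)~show that the constraint $u\in PC(X_{n_\e})$ does not change the $\Gamma$-limit, i.e.\ that the recovery sequences for $E_\e$ can be taken piecewise constant on the fibres of $T_\e$ at no asymptotic energy cost.

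First I would deal with step~(i). Since $f$ is convex in the second variable and independent of $x$, with $f_\e(x,\xi,z)=f(\xi,z)$, the function $f_\e$ satisfies the periodicity hypothesis \eqref{homdef} trivially (it is constant, hence $[0,1]^d$-periodic in $x$), and by hypothesis it satisfies (H0)--(H2) and \eqref{psiradial}; the transportation map $T_\e$ satisfies assumption (ii) of Proposition~\ref{slepcev-prop} because $(T_\e)_\#\mu=\mu_{n_\e}$ forces $\|T_\e-\mathrm{id}\|_\infty=o(\e)$ (this is the standard optimal-transport estimate of \cite{garsle,thorpe-et-al}, which I would cite; it is the only probabilistic input and holds $\mathbb{P}$-almost surely), and assumption (i) is part of the hypotheses. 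Thus Corollary~\ref{hompert-cor} applies and gives
$$
\Gamma(L^p)\text{-}\lim_{\e\to0}E_\e(u)=\int_\Omega f_{\rm hom}(Du(x))\rho^2(x)\,dx
$$
on $W^{1,p}(\Omega;\rr^m)$ and $+\infty$ otherwise, where $f_{\rm hom}$ is given by \eqref{homform}. Because $f$ does not depend on the first variable and is convex in the last, Remark~\ref{homcase} (or equivalently Theorem~\ref{cell-form-thm} with a translation-invariant integrand and Jensen) identifies $f_{\rm hom}(M)=\int_{\rr^d}f(\xi,M\xi)\,d\xi$, which plugged in yields exactly the right-hand side of \eqref{limdisclimcont}.

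For step~(ii), the $\liminf$ inequality $\Gamma\text{-}\liminf E_\e^{\rm dis}\ge\Gamma\text{-}\liminf E_\e$ is immediate from $E_\e^{\rm dis}\ge E_\e$ (indeed $E_\e^{\rm dis}=E_\e$ on $PC(X_{n_\e})$ and $=+\infty$ off it), so it remains to produce, for each $u\in W^{1,p}(\Omega;\rr^m)$, a recovery sequence $u_\e\in PC(X_{n_\e})$ with $u_\e\to u$ in $L^p$ and $\limsup_\e E_\e^{\rm dis}(u_\e)\le\int_\Omega f_{\rm hom}(Du)\rho^2\,dx$. The natural choice is to take the recovery sequence $v_\e\to u$ for $E_\e$ and project it onto $PC(X_{n_\e})$ by setting $u_\e:=v_\e(T_\e(\cdot))$ — constant on each fibre $T_\e^{-1}(x_i)$ by construction — or, more robustly, to first approximate $u$ by a Lipschitz function and take $u_\e$ to be its fibrewise average. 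Then $\|u_\e-v_\e\|_{L^p}\le C\,\mathrm{osc}$-type bound controlled by the modulus of continuity of $v_\e$ over balls of radius $\|T_\e-\mathrm{id}\|_\infty=o(\e)$, so $u_\e\to u$ in $L^p$; and the energy difference $E_\e^{\rm dis}(u_\e)-E_\e(v_\e)=E_\e(u_\e)-E_\e(v_\e)$ is estimated exactly as the remainder term $R_\e$ in the proof of Proposition~\ref{slepcev-prop}, using assumption (i), Lemma~\ref{boundlemma}, and the uniform bound on $G_\e^{r}(v_\e)$ coming from \eqref{growth-cond}, hence is $o(1)$.

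The main obstacle I anticipate is precisely this last point: controlling the passage from $v_\e$ to a piecewise-constant competitor $u_\e$ without losing energy in the limit, because $v_\e$ itself need not have any equicontinuity — it is only bounded in $L^p$ with bounded $G_\e^r$-energy. The clean way around this is the standard density reduction: by Proposition~\ref{growthcondremk} and the lower-semicontinuity/upper-bound structure of $\Gamma$-limits it suffices to construct recovery sequences for $u$ in a dense class (e.g.\ $u\in C^1(\overline\Omega;\rr^m)$, or $u$ affine), for which the fibrewise projection is quantitatively harmless since $\mathrm{osc}_{B_\delta(x)}u\le \|\nabla u\|_\infty\delta$; then a diagonal argument recovers general $u\in W^{1,p}$. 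One must also verify that $u_\e$ genuinely lies in $PC(X_{n_\e})$ and that $E_\e^{\rm dis}(u_\e)$ is finite, which is automatic once $u_\e$ is constant on the fibres. With these pieces in place the two $\Gamma$-limits coincide and equal the displayed integral functional, completing the proof.
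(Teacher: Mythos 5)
Your overall architecture is the same as the paper's: the second equality via Corollary \ref{hompert-cor}, Theorem \ref{cell-form-thm} and Remark \ref{homcase}; the lower bound from $E^{\rm dis}_\e\ge E_\e$; and, for the upper bound, a density reduction to smooth $u$ followed by a fibrewise-averaged competitor $u_\e\in PC(X_{n_\e})$ with $\|u_\e-u\|_\infty\le C\|\nabla u\|_\infty\|T_\e-\mathrm{id}\|_\infty$. The gap is in the justification of the key estimate $E^{\rm dis}_\e(u_\e)\le E_\e(u)+o(1)$. You propose to bound the energy difference ``exactly as the remainder term $R_\e$'' in Proposition \ref{slepcev-prop}, using assumption (i) \eqref{point-clouds-cont-ass} and Lemma \ref{boundlemma}. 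But assumption (i) is a modulus of continuity of $f_\e$ in the \emph{kernel} variable $\xi$ at fixed $z$: it compares $f_\e(x,\xi+v,z)$ with $f_\e(x,\xi,z)$. Replacing $v_\e$ (or $u$) by its piecewise-constant projection perturbs the \emph{third} argument, the difference quotient $(u(y)-u(x))/\e$, and (i) gives no control on $z\mapsto f(\xi,z)$; knowing that the difference quotients move by $o(1)$ in $L^\infty$ is useless without some continuity or convexity of $f(\xi,\cdot)$. This is precisely where the paper uses the convexity hypothesis that your construction never invokes: since $u_\e(x_i)$ is the average of $u$ over $V^i_\e=T_\e^{-1}(x_i)$, Jensen's inequality gives
\[
f\Big(\frac{x_i-x_j}{\e},\frac{u_\e(x_i)-u_\e(x_j)}{\e}\Big)\le\frac{1}{|V^i_\e|\,|V^j_\e|}\int_{V^i_\e}\int_{V^j_\e}f\Big(\frac{x_i-x_j}{\e},\frac{u(y)-u(x)}{\e}\Big)\,dy\,dx,
\]
and summing over $i,j$, using $1/|V^i_\e|=(\rho(x_i)+o(1))\,n_\e$ and $T_\e(y)=x_i$ on $V^i_\e$, reconstructs $E_\e(u)$ up to $o(1)$, whose limit for smooth $u$ is computed directly (no arbitrary recovery sequence $v_\e$ for $E_\e$ is ever needed). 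You could alternatively repair your route by deriving from convexity and (H1) a local Lipschitz bound $|f(\xi,z')-f(\xi,z)|\le C\psi(\xi)(1+|z|+|z'|)^{p-1}|z'-z|$ and integrating in $\xi$, but as written the appeal to $R_\e$/(i) does not prove the estimate, so the recovery-sequence step is a genuine gap.

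A smaller inaccuracy: the push-forward property $(T_\e)_\#\mu=\mu_{n_\e}$ does not ``force'' $\|T_\e-\mathrm{id}\|_\infty=o(\e)$; this is an additional requirement on the maps (compare Example \ref{pat-pb} and the remark after the theorem, where it holds almost surely only for suitably chosen maps and under the scaling condition \eqref{rate} on $n_\e$). It does no harm to your logic, because assumption (ii) of Proposition \ref{slepcev-prop} is part of the theorem's hypotheses, but the implication you assert is false.
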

\begin{proof}
The second equality in \eqref{limdisclimcont} is a straightforward consequence of Corollary \ref{hompert-cor}, Theorem \ref{cell-form-thm} and Remark \ref{homcase}.
Since $E_\e^{\rm dis}\ge E_\e$, in order to prove the first equality in \eqref{limdisclimcont} it suffices  to prove that given $u\in W^{1,p}(\Omega)$
we can find $(u_\e)_\e\subset PC(X_{n_\e})$ such that $u_\e\to u$ in $L^p(\Omega;\rr^m)$ and
\begin{equation}\label{limsupdisc}
\limsup_{\e\to 0}E^{\rm dis}_\e(u_\e)\leq \int_{\rr^d}\int_\Omega f(\xi, Du(x)\xi)\rho^2(x)\, dx\, d\xi.
\end{equation}
By a density argument it suffices to prove \eqref{limsupdisc} for $u\in C^\infty(\rr^d;\rr^m)$. Fix such a function $u$ and note that, by the regularity assumptions on $f$ and assumption (ii) of Proposition \ref{slepcev-prop},   we have that
\begin{equation}\label{limsupcont}
\lim_{\e\to 0}E_\e(u)=\int_{\rr^d}\int_\Omega f(\xi, Du(x)\xi)\rho^2(x)\, dx\, d\xi.
\end{equation}
Let $u_\e\in PC(X_{n_\e})$ defined by
$$
u_\e(x_i) := \frac{1}{|V_\e^i|} \int_{V_\e^i}u(y)\, dy\quad i\in\{1,\dots, n_\e\},
$$
where we have set, for $i=1,\dots, n_\e$,
$$
V_\e^i=T_\e^{-1}(x_i).
$$
Note that $V_\e^i\subseteq B_{r_\e}(x_i) $, where
$$
r_\e:=\|T_\e-Id\|_\infty.
$$
Hence, by assumption (ii) of Proposition \ref{slepcev-prop},
$$
\|u_\e-u\|_{L^\infty(\Omega;\rr^m)}\leq C r_\e\to 0.
$$
By the convexity of $f(\xi,\cdot)$, we get for any $1\le i,j\le n_\e$
\begin{equation}\label{discineq}
\begin{aligned}
f\Big(\frac{x_i-x_j}{\e},\frac{ u_\e(x_i)- u_\e(x_j)}{\e}\Big) &= f\Big(\frac{x_i-x_j}{\e},\frac{1}{|V_\e^i|}\int_{V_\e^i}\frac{u(y)}{\e}\,dy-\frac{1}{|V_\e^j|}\int_{V_\e^j}\frac{u(x)}{\e}dx\Big) \\
&\le \frac{1}{|V_\e^i|}\int_{V_\e^i} f\Big(\frac{x_i-x_j}{\e},\frac{u(y)}{\e}-\frac{1}{|V_\e^j|}\int_{V_\e^j}\frac{u(x)}{\e}dx\Big)dy \\
&\le \frac{1}{|V_\e^i||V_\e^j|}\int_{V_\e^i}\int_{V_\e^j} f\Big(\frac{x_i-x_j}{\e},\frac{u(y)-u(x)}{\e}\Big)\, dy\, dx.
\end{aligned}
\end{equation}
Notice that, since $T_\e$ is a transportation map between $\mu$ and $\mu_\e$, by the continuity of $\rho$ we get
$$\frac{1}{|V_\e^i|}=(\rho(x_i) + o(1))\, n_\e,$$
uniformly in $1\le i\le n_\e$.
Hence, by \eqref{discineq} we get
\begin{align*}
E_\e^{\rm dis}(u_\e) &= \frac{1}{\e^d n_\e^2} \sum_{i,j=1}^{n_\e} f\Big(\frac{x_i-x_j}{\e},\frac{ u_\e(x_i)- u_\e(x_j)}{\e}\Big) \\
&\le \frac{1}{\e^d n_\e^2} \sum_{i,j=1}^{n_\e} \frac{1}{|V_\e^i||V_\e^j|}\int_{V_\e^i}\int_{V_\e^j} f\Big(\frac{x_i-x_j}{\e},\frac{u(y)-u(x)}{\e}\Big)\, dy\, dx \\
& = \frac{1}{\e^d} \sum_{i,j=1}^{n_\e}\int_{V_\e^i}\int_{V_\e^j} f\Big(\frac{x_i-x_j}{\e},\frac{u(y)-u(x)}{\e}\Big)\rho(x_i)\rho(x_j)\, dy\, dx + o(1) \\
&= E_\e(u_\e) + o(1),
\end{align*}
and the conclusion follows from \eqref{limsupcont}.
\end{proof}

\begin{remark}
In \cite{GS}, extending previous results, it has been proved that for $d\ge 2$ almost surely there exists a family of transportation maps $T_n(\omega):\Omega\to X_n(\omega)$ between $\mu$ and $\mu_n$ such that
$$
0<c\le \liminf_{n\to+\infty} \frac{ \|T_n(\omega)-id\|_{L^\infty}}{l_n} \le\limsup_{n\to+\infty} \frac{ \|T_n-id\|_{L^\infty}}{l_n} \le C,
$$
where
$$
l_n=\begin{cases}\displaystyle \frac{(\log n)^{3/4}}{n^{1/2}} & \text{if}\ d=2,\\
\displaystyle\Big(\frac{\log n}{n}\Big)^{1/d}& \text{if}\ d\geq 3.
\end{cases}
$$
Hence, if $n_\e\to+\infty$ and
\begin{equation}\label{rate}
\lim_{\e\to0}\frac{\log{n_\e}}{n_\e \e^d}=0
\end{equation}
the corresponding transportation maps $T_\e=T_{n_\e}$ are such that $\|T_\e-id\|_{L^\infty}=o(\e)$ and, in particular, assumptions (ii) of Proposition \ref{slepcev-prop} is satisfied. Hence if \eqref{rate} is satisfied the $\Gamma$-convergence result stated in Theorem \ref{point-cloud-prop} holds almost surely, thus extending the convergence result provided in \cite{thorpe-et-al}, where the analysis is limited to energy densities of the form $f(\xi,z)=a(|\xi|)|z|^p$,
with $a(\cdot)$ non increasing.
\end{remark}


\section{Stochastic homogenization}\label{stocks}
In this section we consider random  energies of convolution type and prove that, under stationarity and ergodicity assumptions, the $\Gamma$-limit of such energies is almost surely a deterministic integral functional whose integrand can be characterized through an asymptotic formula. This result extends \cite[Theorem 6.1]{2019BP}, where quadratic convolution energies with random coefficients are studied.

Let $(X,\sigma,\mathbb{P})$ be a standard probability space with a measure-preserving ergodic  dynamical system
$\tau_y$, $y\in\mathbb R^d$. We recall that $\{\tau_y\}$ is a collection of measurable invertible maps
$\tau_y:X\mapsto X$ such that
\begin{itemize}
  \item [-]  $\tau_{y+y'}=\tau_y\circ\tau_{y'}$ for all $y,y'\in\mathbb{R}^d$, $\tau_0=\mathrm{Id}$,
  \item [-] $\mathbb P(\tau_y(A))=\mathbb P(A)$ for all $A\in\sigma$ and
  $y\in\mathbb R^d$,
  \item [-] $\tau:X\times\mathbb R^d\mapsto X$ is measurable, where $\mathbb R^d$ is equipped
  with the Borel $\sigma$-algebra.
\end{itemize}
The ergodicity of $\tau$ means that for every $A\in\sigma$ such that $\tau_y(A)=A$ for all $y\in\mathbb{R}^d$ there holds either $\mathbb{P}(A)=0$ or $\mathbb{P}(A)=1$.


Consider now  a random function 
$\mathtt{f}$ defined as a measurable map
$$
\mathtt{f}:X\times\mathbb R^d\times\mathbb{R}^m\to[0,+\infty),
$$
where $\mathbb{R}^d$ and $\mathbb{R}^m$ are equipped with the Borel $\sigma$-algebra.
We assume that
\begin{equation}\label{growth-cond-stocha0}
c_0(|z|^p-\rho(\omega,\xi))\leq \mathtt{f}(\omega,\xi,z) \ \text{if}\ |\xi|\le r_0,
\end{equation}
\begin{equation}\label{growth-cond-stocha}
\mathtt{f}(\omega,\xi,z) \le \psi(\omega,\xi)(|z|^p+1)
\end{equation}
for some positive constants $c_0$ and $r_0$, and measurable random functions $\rho(\omega,\cdot): B_{r_0}\to [0,+\infty)$ and $\psi(\omega,\cdot):\rr^d\to [0,+\infty)$ satisfying
\begin{equation}\label{growth-cond-stocha-int}
C_0(\cdot):=\int_{B_{r_0}}\psi(\cdot,\xi)\,d\xi \in L^1(X,\mathbb{P}), \quad C_1(\omega):=\int_{\mathbb{R}^d}\psi(\omega)(\xi)(|\xi|^p+1)\,d\xi \in L^1(X,\mathbb{P}).
\end{equation}
Letting  $f(\omega)(y,\xi,z)=\mathtt{f}(\tau_y\omega,\xi,z)$, we introduce a family of stochastic non-local energy functionals $F_\e(\omega):L_{\rm loc}^p(\rr^d;\rr^m)\times \mathcal{A}(\rr^d)\to [0, +\infty)$, $\e>0$,
defined by
\begin{equation}\label{stocha}
F_\e(\omega)(u,A) := \int_{\mathbb{R}^d}\int_{A_\e(\xi)} f(\omega)\Big(\frac{x}{\e},\xi,\frac{u(x+\e\xi)-u(x)}{\e}\Big)dx\,d\xi.
\end{equation}



By construction the densities $f$ are \emph{statistically homogeneous} functions of $x$ that is
\begin{equation}\label{stationarity}
f(\omega)(x+y,\xi,z)=f(\tau_y\omega)(x,\xi,z), \quad \text{for every }\omega\in X,\, x,y,\xi\in\mathbb{R}^d,\, z\in\mathbb{R}^m.
\end{equation}

In Theorem \ref{erg-the} below we prove a homogenization theorem for the functionals $F_\e$. Our proof of a homogenization formula relies on a subadditive ergodic theorem, a result that we recall below preceded by the definition of multiparameter stationary stochastic processes.

\begin{definition}[subadditive process]\label{sub-process-def}
Let $\mathcal{V}$ be the family of all finite subset of the lattice $\ZZ^{d,+}:=\{0,1,\dots\}^d$. A real valued process $\Psi:\mathcal{V}\to L^1(X, \mathbb{P})$
 is called a \emph{subadditive process} if it satisfies the following conditions:
\begin{itemize}
\item[(i)] it is stationary, that is  for any $j\in\ZZ^{d,+}$ and any finite collection $\{V_1,\dots,V_N\}\subset\mathcal{V}$ the joint law of $\{\Psi(V_1+j),\dots, \Psi(V_N +j)\}$ is the same as the joint law of $\{\Psi(V_1),\dots, \Psi(V_N)\}$;
\item[(ii)] it is subadditive, that is  $\Psi({V_1\cup V_2})\le  \Psi(V_1)+\Psi(V_2)$  for any disjoint $V_1$ and $V_2$ in $\mathcal{V}$;
\item[(iii)] there holds
$$\inf_{n\in\NN} \int_X \frac{1}{n^d} \Psi(\{0,1,\dots n\}^d)(\omega)\, d\mathbb{P}(\omega)>-\infty.$$
\end{itemize}
\end{definition}

\begin{theorem}[Theorem 1 \cite{KP1987}]\label{KP-thm}
Let $\mathcal{B}_0$ be a family of Borel subsets of $[0,1]^d$ such that
$$\sup\{|\partial B+B_\delta| \,:\, B\in\mathcal{B}_0\}\to0, \quad \text{as }\delta\to0.$$
Then, for every subadditive process $\Psi:\mathcal{V}\to L^1(X, \mathbb{P})$  there exists a real random variable $\phi\in L^1(X,\mathbb{P})$ such that
\begin{equation}\label{stocha-asy-form}
\sup\Big\{\Big|\frac{1}{N^d}\Psi((NB)\cap \ZZ^{d,+})-|B|\phi\Big| \,:\, B\in\mathcal{B}_0\Big\}\to0 \quad \text{almost surely as } N\to+\infty.
\end{equation}
\end{theorem}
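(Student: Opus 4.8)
The plan is to derive the statement from the classical multiparameter subadditive ergodic theorem for box-indexed processes, upgraded by a geometric approximation to arbitrary Borel sets and, crucially, made uniform over $\mathcal B_0$ by exploiting the shrinking-boundary hypothesis. Throughout, $Q_n:=\{0,\dots,n-1\}^d\in\mathcal V$, and I would first record that, realising the stationary process through a measure-preserving $\ZZ^d$-action (so that $\Psi(V+j)=\Psi(V)\circ S_j$), the classical result yields a random variable $\phi\in L^1(X,\mathbb P)$, measurable with respect to the invariant $\sigma$-algebra of the process, such that $\frac1{|R|}\Psi(R)\to\phi$ almost surely for every sequence of discrete boxes $R$ all of whose side-lengths tend to infinity; here $\int_X\phi\,d\mathbb P=\inf_n\frac1{n^d}\int_X\Psi(Q_n)\,d\mathbb P$, which is finite by Definition~\ref{sub-process-def}(iii). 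In particular, tiling $Q_{kN}$ by the $N^d$ disjoint translates $Q_k+kz$ and applying the $\ZZ^d$-ergodic theorem to the fixed $L^1$ function $\Psi(Q_k)$, one obtains $\frac1{(kN)^d}\sum_z\Psi(Q_k+kz)\to\phi$ almost surely for each fixed $k$.

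For the upper bound I would fix $B\in\mathcal B_0$ and $k\in\NN$ and, for large $N$, cover $(NB)\cap\ZZ^{d,+}$ by the cubes $Q_k+kz$ that meet $NB$, so that subadditivity (Definition~\ref{sub-process-def}(ii)) gives
\[
\frac1{N^d}\Psi\big((NB)\cap\ZZ^{d,+}\big)\ \le\ \frac1{N^d}\sum_{z\,:\,(kz+[0,k)^d)\cap NB\neq\emptyset}\Psi(Q_k+kz).
\]
Splitting the index set into cubes contained in $NB$ (at most $|B|N^d/k^d$ of them) and cubes meeting $N\partial B$, whose total volume is bounded by $|N\partial B+B_{k\sqrt d}|=N^d\,|\partial B+B_{k\sqrt d/N}|$, I would estimate the first sum by the tiling limit above and the second by the $L^1$ convergence together with a maximal inequality; letting $N\to\infty$ and then $k\to\infty$ should produce $\limsup_N\frac1{N^d}\Psi((NB)\cap\ZZ^{d,+})\le|B|\phi$.

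For the matching lower bound I would apply this upper bound to the complement $[0,1]^d\setminus B$, which satisfies the same boundary hypothesis since $\partial([0,1]^d\setminus B)\subset\partial B\cup\partial[0,1]^d$, obtaining $\limsup_N\frac1{N^d}\Psi((Q_N\setminus NB)\cap\ZZ^{d,+})\le(1-|B|)\phi$. Combining this with the subadditive inequality $\Psi(Q_N)\le\Psi((NB)\cap\ZZ^{d,+})+\Psi((Q_N\setminus NB)\cap\ZZ^{d,+})$ and $\frac1{N^d}\Psi(Q_N)\to\phi$ forces $\liminf_N\frac1{N^d}\Psi((NB)\cap\ZZ^{d,+})\ge|B|\phi$. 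Together with the upper bound this gives $\frac1{N^d}\Psi((NB)\cap\ZZ^{d,+})\to|B|\phi$ almost surely for each individual $B$.

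The hard part will be promoting this to the \emph{uniform} statement \eqref{stocha-asy-form}. Here the hypothesis $\sup_{B\in\mathcal B_0}|\partial B+B_\delta|\to0$ as $\delta\to0$ is essential on two counts: it bounds all the boundary error terms above simultaneously, and it makes $\{(NB)\cap\ZZ^d:B\in\mathcal B_0\}$ a regular family of averaging sets for which the relevant Wiener-type ergodic averages converge uniformly. I would carry this out by a compactness/continuity argument: the shrinking-boundary condition lets one approximate every $B\in\mathcal B_0$, with an error controlled by the common boundary modulus, by finitely many cube-unions at a fixed scale, thereby reducing the supremum over $\mathcal B_0$ to finitely many of the limits already established — this is precisely the route of Krengel and Pyke~\cite{KP1987}, to which we refer for the full details.
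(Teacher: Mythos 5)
You should first be aware that the paper does not prove this statement at all: it is quoted verbatim as Theorem~1 of Krengel and Pyke \cite{KP1987} and used as a black box in the proof of Theorem~\ref{erg-the}, so there is no internal proof to compare against and your attempt has to stand as a proof of the Krengel--Pyke theorem itself. Judged that way, it has a genuine gap precisely where the theorem has its content: the uniformity over $\mathcal{B}_0$ in \eqref{stocha-asy-form}. Everything you do in detail only yields, for each fixed $B$, the a.s.\ convergence of $N^{-d}\Psi((NB)\cap\ZZ^{d,+})$ to $|B|\phi$, which is essentially the classical (Akcoglu--Krengel type) pointwise statement; the passage from this to the supremum over $\mathcal{B}_0$ is exactly the step you dismiss with ``this is precisely the route of Krengel and Pyke, to which we refer for the full details''. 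Referring the key step back to the very theorem being proved is circular, so the proof is incomplete: you would need to actually carry out the finite cube-union approximation at scale $k$ with an error controlled, \emph{uniformly in $B$}, by $\sup_{B\in\mathcal{B}_0}|\partial B+B_{k\sqrt d/N}|$, together with a uniform (maximal-inequality or $L^1$-envelope) bound on the contribution of the boundary cubes.

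There are also two technical missteps inside the fixed-$B$ argument. First, your displayed covering inequality does not follow from Definition~\ref{sub-process-def}(ii): subadditivity bounds $\Psi((NB)\cap\ZZ^{d,+})$ by the sum of $\Psi$ over the \emph{disjoint pieces} $(NB)\cap(Q_k+kz)$, and near $N\partial B$ these pieces are proper subsets of the lattice cubes; since no monotonicity of $\Psi$ with respect to inclusion is assumed, you cannot replace $\Psi$ of a partial piece by $\Psi(Q_k+kz)$. This is repairable (dominate all pieces contained in a given cube by the stationary $L^1$ envelope $\sup\{|\Psi(V)|\,:\,V\subset Q_k\}$, whose averages over the sparse family of boundary cubes vanish thanks to the hypothesis on $\mathcal{B}_0$), but that argument is not in your sketch. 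Second, the fixed-$k$ tiling averages do not converge to $\phi$ as you assert: the ergodic theorem for the $k\ZZ^d$-action gives convergence to $k^{-d}\,\mathbb{E}[\Psi(Q_k)\mid\mathcal{I}_k]$, the conditional expectation on the invariant $\sigma$-algebra of that coarser action, and identifying the $k\to\infty$ limit of these quantities with the a.s.\ limit $\phi$ of $N^{-d}\Psi(Q_N)$ in the non-ergodic setting of the theorem is an additional step you have not supplied. The complement trick for the lower bound is fine once a correct upper bound is available.
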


\begin{theorem}\label{erg-the}
Let $X\ni\omega\mapsto f(\omega)$ be a statistically homogeneous random function, according to \eqref{stationarity}, satisfying \eqref{growth-cond-stocha0}--\eqref{growth-cond-stocha-int}.
Let $F_\e$ be defined as in \eqref{stocha} and assume that $F_\e(\omega)$ satisfies assumption {\rm(H0$'$)} almost surely.
Then for $\mathbb{P}$-almost every $\omega\in X$ and for every $M\in\mathbb{R}^{m\times d}$ the limit
\begin{equation}\label{stocha-form}
f_{\rm hom}(\omega)(M) = \lim_{R\to+\infty} \frac{1}{R^d} \inf\big\{ F_1(\omega)(u,Q_R) \,\big|\, u\in\mathcal{D}^{1,M}(Q_R)\big\},
\end{equation}
where  $\mathcal{D}^{1,M}(Q_R)$ is defined by \eqref{set-boudary-data},
exists and defines a quasiconvex function $f_{\rm hom}(\omega):\mathbb{R}^{m\times d}\to[0,+\infty)$ satisfying
\begin{equation}\label{growthfhom}
c(\omega)(|M|^p-1)\le f_{\rm hom}(M) \le C(\omega)( |M|^p+1),
\end{equation}
where $c(\cdot), C(\cdot)\in L^1(X,\mathbb{P})$. Moreover, for $\mathbb{P}$-almost every $\omega\in X$ and for every $A\in\mathcal{A}^{\rm reg}(\Omega)$ there holds
$$\Gamma\text{-}\lim_{\e\to0}F_\e(\omega)(u,A) = F(\omega)(u,A) :=\begin{cases}\displaystyle
\int_A f_{\rm hom}(\omega)(Du(x)) dx & u\in W^{1,p}(A;\mathbb{R}^m) \\
+\infty & \text{otherwise}.
\end{cases}$$
If, in addition, the dynamical system $\tau$ is ergodic, then $f_{\rm hom}(\omega)(\cdot)$ is constant almost surely and satisfies
\begin{equation}\label{ergodic}
f_{\rm hom}(\omega)(M) \equiv f_{\rm hom}(M) := \lim_{R\to+\infty} \frac{1}{R^d} \int_X \inf\big\{ F_1(\omega)(u,Q_R) \,\big|\, u\in\mathcal{D}^{1,M}(Q_R)\big\} d\mathbb{P}(\omega).
\end{equation}
\end{theorem}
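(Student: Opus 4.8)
The approach is to run the scheme of the periodic homogenization Theorem~\ref{hom-thm} in the random setting, substituting statistical homogeneity for periodicity and the Krengel--Pyke subadditive ergodic Theorem~\ref{KP-thm} for the explicit tiling argument of Proposition~\ref{asy-form-lem}. I would first fix $\omega$ in a set of full probability and check that $F_\e(\omega)$ enters the framework of the abstract compactness and integral-representation Theorem~\ref{reprthm}: (H0$'$) is assumed; (H1) and (H2) hold pathwise, the $\xi$-integrals $\int_{\rr^d}\psi(\tau_{x/\e}\omega,\xi)(1+|\xi|^p)\,d\xi$ and their tails being controlled uniformly in $\e$ by \eqref{growth-cond-stocha}--\eqref{growth-cond-stocha-int} and Birkhoff's ergodic theorem (which in fact also recovers (H0$'$) from \eqref{growth-cond-stocha0}, since $\e^{-d}\int_{A_\e(\xi)}\rho(\tau_{x/\e}\omega,\xi)\,dx\to|A|\int_X\rho(\cdot,\xi)\,d\mathbb P$). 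Hence, along every $\e_j\to0$, a subsequence and a Carath\'eodory $f_0^\omega$, quasiconvex in the second variable and satisfying \eqref{growthf0}, exist with $F_{\e_j}(\omega)(\cdot,A)$ $\Gamma$-converging to $\int_A f_0^\omega(x,Du)\,dx$ on every $A\in\mathcal{A}^{\rm reg}(\rr^d)$. By Lemma~\ref{lim-truncated-lemma} and Remark~\ref{G-lim-truncated-remark} it then suffices to work, for fixed $T>0$, with the truncated functionals $F_\e^T(\omega)$ and pass to the limit $T\to\infty$ at the end.

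For fixed $T>0$ and $M\in\rr^{m\times d}$ I would introduce the subadditive process. For $V$ in the family $\mathcal V$ of finite subsets of $\ZZ^{d,+}$ let $Q^V$ be the interior of $\bigcup_{i\in V}(i+[0,1]^d)$ and set, with $\mathcal D^{r,M}$ as in \eqref{set-boudary-data} and $r\ge T$,
$$
\Psi^T_M(V)(\omega):=\inf\bigl\{F_1^T(\omega)(u,Q^V)\,:\,u\in\mathcal D^{r,M}(Q^V)\bigr\}-F_1^T(\omega)(Mx,Q^V).
$$
Stationarity, $\Psi^T_M(V+j)(\omega)=\Psi^T_M(V)(\tau_j\omega)$, follows from the cocycle property $f(\omega)(\,\cdot+j,\xi,z)=f(\tau_j\omega)(\,\cdot\,,\xi,z)$ together with the translation $u\mapsto u(\,\cdot-j)+Mj$; condition (iii) of Definition~\ref{sub-process-def} is immediate since $\Psi^T_M(V)\ge-F_1^T(\omega)(Mx,Q^V)\ge-C(|M|^p+1)|Q^V|$ by \eqref{growth-cond-stocha}. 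Subadditivity is the delicate point: gluing two near-optimal competitors along a shared face produces genuine cross-interactions (the functionals have nonzero energy on the affine map $Mx$), but since the competitors coincide with $Mx$ on the width-$r\ge T$ layer and $F_1^T$ has interaction range $\le T$, those cross terms equal the corresponding cross terms for $Mx$ itself; subtracting $F_1^T(\omega)(Mx,\cdot)$ — which is additive up to a boundary error of order $|\partial Q^V|$ — exactly cancels them, so $\Psi^T_M(V_1\cup V_2)\le\Psi^T_M(V_1)+\Psi^T_M(V_2)$ for disjoint $V_1,V_2$. Theorem~\ref{KP-thm} then yields $\widetilde\phi^T_M\in L^1(X,\mathbb P)$, $\tau$-invariant, with $R^{-d}\Psi^T_M(Q_R)\to\widetilde\phi^T_M(\omega)$ a.s.; adding the Birkhoff limit of $R^{-d}F_1^T(\omega)(Mx,Q_R)$ gives the a.s.\ existence of
$$
\phi^T_M(\omega):=\lim_{R\to\infty}\frac1{R^d}\inf\bigl\{F_1^T(\omega)(u,Q_R)\,:\,u\in\mathcal D^{r,M}(Q_R)\bigr\},
$$
with $\phi^T_M$ measurable and, when $\tau$ is ergodic, a.s.\ constant and equal to $\lim_R R^{-d}\int_X\inf\{F_1^T(\omega)(u,Q_R):u\in\mathcal D^{r,M}(Q_R)\}\,d\mathbb P$ by the $L^1$-convergence in Theorem~\ref{KP-thm}.

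To identify $f_0^\omega$ I would argue as in Step~1 of the proof of Theorem~\ref{hom-thm}: for the truncated functionals the quasiconvex density $f_0^{\omega,T}$ is recovered from its cube minima, and Proposition~\ref{DBCminpbs} applied to $F_{\e_j}^T(\omega)$, together with the rescaling $v(x)=u(\e_j x)/\e_j$, gives $f_0^{\omega,T}(x_0,M)=\phi^T_M(\omega)$ for a.e.\ $x_0$, the value being independent of $x_0$ (this is where statistical homogeneity replaces periodicity) and of $r\ge T$ (as in Proposition~\ref{asy-form-lem}); in particular the whole family $F_\e^T(\omega)$ $\Gamma$-converges. Letting $T\to\infty$ via Lemma~\ref{lim-truncated-lemma}/Remark~\ref{G-lim-truncated-remark} gives $\Gamma$-convergence of $F_\e(\omega)(\cdot,A)$ to $\int_A f_{\rm hom}(\omega)(Du)\,dx$, and the argument of Step~2 of the proof of Theorem~\ref{hom-thm} (bounding the tail $|\xi|>T$ through \eqref{growth-cond-stocha}, \eqref{growth-cond-stocha-int} and Lemma~\ref{boundlemma}) shows that $f_{\rm hom}(\omega)(M)=\lim_T\phi^T_M(\omega)$ coincides with the non-truncated asymptotic formula \eqref{stocha-form}. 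Quasiconvexity and \eqref{growthfhom} — with $c(\cdot),C(\cdot)\in L^1(X,\mathbb P)$ coming from $\int_{B_{r_0}}\rho(\cdot,\xi)\,d\xi$ and $\int_{\rr^d}\psi(\cdot,\xi)(1+|\xi|^p)\,d\xi$ — pass to the limit; measurability of $\omega\mapsto f_{\rm hom}(\omega)(M)$ follows from that of the $\Psi^T_M$ and a countable-dense approximation in $(M,T)$ combined with the continuity of $f_{\rm hom}(\omega)(\cdot)$, and the ergodic case together with \eqref{ergodic} follow from the corresponding statements for $\phi^T_M$ by letting $T\to\infty$.

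The step I expect to be the main obstacle is the rigorous verification of the subadditivity (and, to a lesser extent, the stationarity) of $\Psi^T_M$: one must organize the boundary layers so that gluing competitors across faces of adjacent blocks produces no interactions other than those already accounted for by the affine function $Mx$ — which is precisely what forces the reduction to truncated energies with boundary data of width $\ge T$ and the subtraction of the affine energy — and one must make all the almost-sure statements simultaneous over a countable dense set of parameters $(M,T)$ before extending them by continuity to all $M$.
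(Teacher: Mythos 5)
Your proposal follows essentially the same route as the paper's proof: truncation to $F_\e^T$, the compactness and integral-representation Theorem \ref{reprthm}, identification of the limit density through cube minima, Proposition \ref{DBCminpbs} and rescaling, the Krengel--Pyke Theorem \ref{KP-thm} applied to a stationary subadditive process to get the asymptotic formula (and its independence of the base point and of the subsequence), the passage $T\to+\infty$ as in Step 2 of Theorem \ref{hom-thm}, and the ergodic case at the end. The only, harmless, difference is the choice of subadditive process: you keep the localized infima and subtract the affine energy $F_1^T(\omega)(Mx,\cdot)$ (adding back its Birkhoff limit afterwards), whereas the paper uses the modified infima $\tilde H^T$ in which interactions are allowed to exit the cube with the competitor equal to $Mx$ outside, which is exactly subadditive and differs from $H^T$ only by a surface term of order $TR^{d-1}$; both devices accomplish the same cancellation of the cross-interaction terms.
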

\begin{proof}
For any given $T>0$ let $F^T_\e(\omega)$ be the truncation functional of $F_\e(\omega)$ as defined in \eqref{truncated-functionals}.
By Theorem \ref{reprthm}, for $\mathbb{P}$-almost every $\omega\in X$  and for every sequence  $\e_j\to0$ there exists a subsequence (not relabelled) and a Carath\'eodory function $f^T_0(\omega):\Omega\times\rr^{m\times d}\to [0,+\infty)$, which is quasiconvex in the second variable, such that for every $A\in\mathcal{A}^{\rm reg}(\Omega)$ and $u\in W^{1,p}(A;\mathbb{R}^m)$
$$\Gamma(L^p)\text{-}\lim_{j\to+\infty}F^T_{\e_j}(\omega)(u,A)=F^T(\omega)(u,A):=\int_A f^T_0(\omega)(x,D(u(x)) dx.
$$

Arguing as in the proof of Theorem \ref{hom-thm-rho}, for every $M\in\rr^{m\times d}$ and for almost every $x_0\in\Omega$ we have
\begin{align*}
f_0^T(\omega)(x_0, M) &= \lim_{r\to 0} \frac{1}{r^d}\min\big\{ F^T(\omega)(u,Q_r(x_0)) \,\big|\, u-Mx\in W^{1,p}_0(Q_r(x_0);\mathbb{R}^m)\big\} \\
&= \lim_{r\to0} \lim_{j\to+\infty} \frac{1}{R^d_j}\inf\big\{ F^T_1(\omega)(v,R_jQ_{1}(r^{-1}x_0)) \,\big|\, v\in\mathcal{D}^{1,M}(R_jQ_{1}(r^{-1}x_0))\big\}\\
&= \lim_{r\to0} \lim_{j\to+\infty} \frac{1}{R^d_j}\inf\big\{ F^T_1(\omega)(v,R_jQ_{1}(r^{-1}x_0)) \,\big|\, v\in\mathcal{D}^{T,M}(R_jQ_{1}(r^{-1}x_0))\big\},
\end{align*}
with $R_j=r/\e_j$.
Set for any $A\in\mathcal{A}(\rr^d)$
$$H^T(\omega)(M,A):=\inf\big\{ F^T_1(\omega)(v,Q) \,\big|\, v\in\mathcal{D}^{T,M}(A)\big\}.
$$
We now show that there exists $\phi_M^T: X\to [0,+\infty)$ such that for any $\bar x\in\rr^d$
$$
\lim_{R\to +\infty}\frac{1}{R^d}H^T(\omega)(M, RQ_1(\bar x))=\phi_M^T(\omega).
$$
Set
\begin{equation}\label{Htilde}
\tilde H^T(\omega)(M, A):=\inf\Big\{ \int_{B_T}\int_A f(\omega)(x,\xi, v(x+\xi)-v(x))\, dx\, d\xi \,\big|\, v\in\mathcal{D}^{T,M}(A)\Big\}.
\end{equation}
Since
$$
|\tilde H^T(\omega)(M,RQ_1(\bar x))-H^T(\omega)(M,RQ_1(\bar x))|\leq 2d c_1(\omega)(|M|^p+1)TR^{d-1},
$$
it suffices to show that
\begin{equation}\label{limphi}
\lim_{R\to +\infty}\frac{1}{R^d}\tilde H^T(\omega)(M,RQ_1(\bar x))=\phi^T(\omega).
\end{equation}
For any $A\in \mathcal V$ denote $Q^A = \bigcup_{j\in A} Q_1(j)$  and $\Psi^T(M,A) (\omega)=\tilde H^T(\omega)(M, Q^A)$. Note that  $\Psi^T(M,A)\in L^1(X,\mathbb{P})$ for any $A\in \mathcal V$ by conditions \eqref{growth-cond-stocha} and \eqref{growth-cond-stocha-int}, and that by Definition \eqref{Htilde}, $\Psi^T$ is subadditive, according to  Definition \ref{sub-process-def} (ii). Moreover, since
$f$ is statistically homogeneous, $\Psi^T$ is stationary, according to  Definition \ref{sub-process-def} (i).
Condition (iii) of Definition \ref{sub-process-def} is trivially satisfied, since $f$ takes values in $[0,+\infty)$.
Hence, by appying Theorem \ref{KP-thm} to the process  $\Psi^T$ we deduce that there exists $\phi^T_M\in L^1(X,\mathbb{P})$  such that for every
$N>0$
$$
\lim_{R\to +\infty}\sup \Big\{\Big|\frac{\tilde H^T(\omega)(M, R Q_1(x))}{R^d}-\phi_M^T(\omega)\Big|:\ |x|\leq N\Big\}=0,
$$
which in particular yields \eqref{limphi}.
This on the one hand implies that $f_\omega^T(x_0,M)$ does not depend on the first variable and, on the other hand, that it does not depend on the subsequence $\{\e_j\}$.
Hence, the whole family $F_\e^T(\omega)$ $\Gamma$-converges to $F^T(\omega)$.
Arguing as in the proof of Theorem \ref{hom-thm} we get the same result for $F_\e(\omega).$
If $\tau$ is ergodic, then $f^T(M)$ and $F^T$ are deterministic, so is $f(M)$ and \eqref{ergodic} holds.
\end{proof}

We complete this section by providing a typical example of a random density $\mathtt{f}=\mathtt{f}(\omega,\xi,z)$.

\begin{example}\rm
Let $\mathcal{Y}$ be a Poisson point process in $\mathbb R^d$ of intensity $1$; that is, $\mathcal{Y}$ is a point process such that for any bounded Borel set $A$ in $\mathbb R^d$ the number of points of $\mathcal{Y}$ in $A$ has a Poisson distribution with
parameter $|A|$, and for any $N$ and any disjoint bounded Borel sets $A_1,\ldots, A_N$ the random variables
$\#(A_1\cap \mathcal{Y}),\ldots, \#(A_N\cap \mathcal{Y})$ are independent.

Let $\varphi$ be a $C_0^\infty(\mathbb R^d)$ function such that $\varphi\geq 0$.  We set
$$
f(\omega)(x,\xi,z)=\mathbf{1}_{|\xi|\leq 1}(1+|z|)^p+\Big(1+\sum\limits_j\varphi(x-Y_j)\Big)^{-1}
\Big(\sum\limits_j\varphi(x-Y_j)\Big) h(\xi)(1+|z|)^p,
$$
where $\mathcal{Y}(\omega)=\bigcup_{j=1}^\infty Y_j(\omega)$,   $p>1$, and $h(\cdot)$
is a non-negative function that satisfies the following upper bound:
$$
h(\xi)\leq\frac C{(1+|\xi|)^{d+p+\delta}}
$$
for some $\delta>0$. In this example   $\mathtt{f}(\omega,\xi,z)=f(\omega)(0,\xi,z)$.
\end{example}

\section{Application to convex gradient flows}\label{dynamic}
So far, we have studied the $\Gamma$-convergence of families of functionals $\{F_\e\}$ by dealing with static problems.
Here, by taking advantage of the $\Gamma$-convergence results proved, we analyze some dynamical aspects by considering gradient flows for a convex family $\{F_\e\}$, and prove the stability of the gradient flows with respect to $\Gamma$-convergence by applying the minimizing movements scheme along this family of functionals (see \cite[Chapter 8]{bra4}).

We first recall some definitions and relevant results to our end.
Let $F:L^2(\Omega;\rr^m)\to[0,+\infty]$ be a lower semicontinuous, convex and proper ($F\not\equiv +\infty$) functional.
We introduce a time-scale parameter $\tau>0$ and we solve the sequence of minimum problems starting from the initial value $u_0\in L^2(\Omega;\rr^m)$; that is,
\begin{equation}\label{MM-scheme1}\begin{cases}\displaystyle
u_n^\tau \in \argmin_{u\in L^2(\Omega;\rr^m)} \Big\{F(u)+\frac{1}{2\tau} \big\|u-u_{n-1}^\tau\big\|^2_{L^2(\Omega;\rr^m)}\Big\}, & n\ge1\\
u_0^\tau \equiv u_0\,.
\end{cases}
\end{equation}
By applying the direct method of Calculus of Variations we get that the solutions $u_n^\tau$ exist for any $n\in\NN$. Moreover, by the convexity of $F$, the functional $u\mapsto F(u)+c\|u-v\|_{L^2(\Omega;\rr^m)}^2$ is strictly convex for every fixed $c>0$ and $v\in L^2(\Omega;\rr^m)$; hence, the solutions $u_n^\tau$ are also unique for any $n\in\NN$.
The sequence $\{u_n^\tau\}_{n\ge0}$ is called a \emph{discrete solution} of the scheme \eqref{MM-scheme1}.
A discrete solution extends to an interpolation curve $u^\tau:[0,+\infty)\to L^2(\Omega;\rr^m)$ defined by
\begin{equation}\label{disc-sol}
u^\tau(t) := u_n^\tau, \quad t\in[(n-1)\tau,n\tau).
\end{equation}

\begin{definition}[Minimizing movements]
A curve $u:[0,+\infty)\to L^2(\Omega;\rr^m)$ is called a \emph{minimizing movement for} $F$ \emph{from $u_0$} if 
$u^\tau$, defined as in \eqref{MM-scheme1} and \eqref{disc-sol},
up to subsequences, converge to $u$ as $\tau\to0$ uniformly on compact sets of $[0, +\infty)$.
\end{definition}

By  \cite[Theorem 11.1]{bra4} the convexity of $F$ ensures that there exists a unique minimizing movement in  $C^{1/2}([0,+\infty);L^2(\Omega;\rr^m))$.

\smallskip
If $F(u)<+\infty$ then a subgradient of $F$ at $u$ is a function $\varphi\in L^2(\Omega;\rr^m)$ satisfying the following inequality
\begin{equation}\label{subdiff}
F(v) \ge F(u) + \int_\Omega \langle\varphi(x),v(x)-u(x)\rangle dx \,,
\end{equation}
for every $ v\in L^2(\Omega;\rr^m) $.
For each $u\in L^2(\Omega;\rr^m)$ we denote by $\partial F(u) $ the set of all subgradients of $F$ at $u$.
The \emph{subdifferential} of $F$ is the multivalued mapping $\partial F$ which assigns the set $\partial F(u) $ to each $u$.
The domain of $\partial F$ is given by the set ${\rm dom }\, \partial F =\{ u \in L^2(\Omega;\rr^m) \,|\, \partial F(u)\not= \emptyset \}$.
If $u\in {\rm dom }\, \partial F$ then there exists a unique element of $\partial F(u)$ having minimal norm that is denoted by
\begin{equation}\label{upp-gra}
\partial^0 F(u) := \argmin_{\varphi\in\partial F(u)} \|\varphi\|_{L^2(\Omega;\rr^m)}\,,
\end{equation}
see for istance \cite[Section 1.4]{AGS2005}.

\begin{definition}[{\cite[Definition 1.3.2, Remark 1.3.3]{AGS2005}}]\label{defAC}
A locally absolutely continuous map $u: [0,+\infty)\to L^2(\Omega;\rr^m)$ is called a \emph{curve of maximal slope for} $F$ if it satisfies the energy identity
\begin{equation}\label{EDP}
F(u(t_1))-F(u(t_2)) = \frac{1}{2}\int_{t_1}^{t_2} \|u'(s)\|^2_{L^2(\Omega;\rr^m)}\,ds + \frac{1}{2}\int_{t_1}^{t_2}\,\|\partial^0 F\|_{L^2(\Omega;\rr^m)}^2(u(s))\,ds\,,
\end{equation}
for any interval $[t_1, t_2]\subset [0, +\infty)$.
\end{definition}

Note that $u$, as in Definition \ref{defAC}, satisfies \eqref{EDP} if and only if $u\in W^{1,2}_{\rm loc}([0,+\infty); L^2(\Omega;\rr^m))$ and it is solution to the \emph{gradient flow} equation
\begin{equation}\label{GF-eq}
u'(t) = - \partial^0 F(u(t)), \quad \text{for almost every } t>0,
\end{equation}
see for instance \cite[Corollary 1.4.2]{AGS2005}.

\begin{lemma}\label{MM-GF-lem}
Let $F:L^2(\Omega;\rr^m)\to[0,+\infty]$ be a proper, convex and lower-semicontinuous functional.
Then, for every $u_0\in L^2(\Omega;\rr^m)$, with $F(u_0)<+\infty$, there exists a unique minimizing movement $u\in W^{1,2}_{\rm loc}([0,+\infty); L^2(\Omega;\rr^m))$ for $F$ from $u_0$ which is the unique solution to the gradient flow
\eqref{GF-eq} with initial condition $u(0)=u_0$.
\end{lemma}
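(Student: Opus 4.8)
The plan is to combine the classical Brezis--Komura theory of gradient flows for maximal monotone operators with the convergence of the implicit Euler (minimizing movements) scheme \eqref{MM-scheme1}, identifying the limit curve as the gradient flow through the energy-dissipation equality \eqref{EDP}.

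\emph{Step 1: the gradient flow.} Since $F$ is proper, convex and lower semicontinuous on the Hilbert space $L^2(\Omega;\rr^m)$, the subdifferential $\partial F$ defined in \eqref{subdiff} is a maximal monotone operator whose domain $\mathrm{dom}\,\partial F$ is dense in $\overline{\mathrm{dom}\,F}$. By the Brezis--Komura theorem (see \cite[Theorem~1.4.2, Corollary~1.4.2]{AGS2005}), for every $u_0\in\overline{\mathrm{dom}\,F}$ there exists a unique locally absolutely continuous curve $u:[0,+\infty)\to L^2(\Omega;\rr^m)$ with $u(0)=u_0$, $u(t)\in\mathrm{dom}\,\partial F$ for every $t>0$, solving \eqref{GF-eq}; uniqueness follows directly from the monotonicity of $\partial F$, since the $L^2$-distance between two solutions is nonincreasing. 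If in addition $F(u_0)<+\infty$, testing \eqref{GF-eq} against $u'$ and using the chain rule for convex functionals gives $\frac12\int_0^T\|u'(s)\|_{L^2(\Omega;\rr^m)}^2\,ds\le F(u_0)-\inf F\le F(u_0)$ for every $T>0$, hence $u\in W^{1,2}_{\rm loc}([0,+\infty);L^2(\Omega;\rr^m))$.

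\emph{Step 2: a priori estimates and compactness for the scheme.} From the minimality of $u_n^\tau$ in \eqref{MM-scheme1} one gets the discrete energy inequality $F(u_n^\tau)+\frac1{2\tau}\|u_n^\tau-u_{n-1}^\tau\|_{L^2(\Omega;\rr^m)}^2\le F(u_{n-1}^\tau)$; summing over $n$ and using $F\ge0$ yields
\begin{equation*}
F(u_N^\tau)+\sum_{n=1}^{N}\tau\,\Big\|\frac{u_n^\tau-u_{n-1}^\tau}{\tau}\Big\|_{L^2(\Omega;\rr^m)}^2\le F(u_0)
\end{equation*}
uniformly in $\tau$. By the Cauchy--Schwarz inequality this implies $\|u^\tau(t)-u^\tau(s)\|_{L^2(\Omega;\rr^m)}\le\sqrt{2F(u_0)}\,\sqrt{|t-s|+\tau}$ for the piecewise-constant interpolants in \eqref{disc-sol}, so the family $\{u^\tau\}$ is equi-H\"older continuous of exponent $1/2$; combined with the contraction property of the resolvents of $\partial F$ this provides the compactness needed to extract a limit. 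This is precisely the content of \cite[Theorem~11.1]{bra4}, which yields a unique minimizing movement $u\in C^{1/2}([0,+\infty);L^2(\Omega;\rr^m))$.

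\emph{Step 3: identification and conclusion.} It remains to show that the minimizing movement $u$ of Step~2 is the gradient flow solution of Step~1. Using the De Giorgi variational interpolant of the discrete solution and the lower semicontinuity of $F$ and of the metric slope $u\mapsto\|\partial^0F(u)\|_{L^2(\Omega;\rr^m)}$ (which holds because $F$ is convex), one passes to the limit $\tau\to0$ in the discrete energy inequality and obtains, for all $0\le t_1\le t_2$,
\begin{equation*}
F(u(t_2))+\frac12\int_{t_1}^{t_2}\|u'(s)\|_{L^2(\Omega;\rr^m)}^2\,ds+\frac12\int_{t_1}^{t_2}\|\partial^0F(u(s))\|_{L^2(\Omega;\rr^m)}^2\,ds\le F(u(t_1)).
\end{equation*}
The reverse inequality is obtained from the chain rule $\frac{d}{dt}F(u(t))\ge-\|u'(t)\|_{L^2(\Omega;\rr^m)}\,\|\partial^0F(u(t))\|_{L^2(\Omega;\rr^m)}$, valid for convex $F$ along absolutely continuous curves, together with Young's inequality. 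Hence \eqref{EDP} holds and $u$ is a curve of maximal slope for $F$ in the sense of Definition~\ref{defAC}; by the equivalence recalled after that definition (see \cite[Corollary~1.4.2]{AGS2005}), $u$ solves \eqref{GF-eq} with $u(0)=u_0$ and belongs to $W^{1,2}_{\rm loc}([0,+\infty);L^2(\Omega;\rr^m))$ because $F(u_0)<+\infty$. The uniqueness in Step~1 then forces the whole family $\{u^\tau\}$ to converge to this $u$, which is therefore the unique minimizing movement, completing the proof. The only genuinely delicate point is the limit passage in the energy inequality above, namely the lower-semicontinuity of the dissipation term $\int\|\partial^0F(u(s))\|^2\,ds$ along $\{u^\tau\}$; this is handled by the De Giorgi variational-interpolation estimate exactly as in \cite[Chapter~2]{AGS2005} and \cite[Chapter~8]{bra4}.
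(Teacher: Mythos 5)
Your proposal is correct and follows essentially the same route as the paper: existence and uniqueness of the minimizing movement for a convex, lower semicontinuous $F$ via \cite[Theorem 11.1]{bra4}, and identification of that curve with the solution of \eqref{GF-eq} through the curve-of-maximal-slope characterization of \cite{AGS2005}. The only difference is presentational: the paper simply cites \cite[Theorem 2.3.3]{AGS2005} for the identification step, whereas you unpack that argument (De Giorgi interpolants, lower semicontinuity of the slope, chain rule) and add the Brezis--Komura existence statement, which the paper does not need separately.
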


\begin{proof}
We already observed that for such functional $F$ there exists a unique minimizing movement $u\in C^{1/2}([0,+\infty);L^2(\Omega;\rr^m))$. By \cite[Theorem 2.3.3]{AGS2005}, we have that the  minimizing movement is a curve of maximal slope, which concludes the proof.
\end{proof}

Instead of a single functional, we now consider a family of functionals  $F_\e:L^2(\Omega;\rr^m)\to[0,+\infty]$ for $\e>0$, that are proper, lower semicontinuous and convex, and $u_0^\e\in L^2(\Omega;\rr^m)$ a given family of initial data. We apply the minimizing movements scheme \eqref{MM-scheme1}
with $F_\e$ in place of $F$ and, similarly,
 we get that there exists a unique discrete solution $\{u_n^{\tau,\e}\}_{n\ge0}$ and an interpolation curve  $u^{\tau,\e}:[0,+\infty)\to L^2(\Omega;\rr^m)$  as in \eqref{MM-scheme1} and \eqref{disc-sol}, respectively, depending on the parameter $\e$.

\begin{definition}[Minimizing movements along families of functionals]
Consider $u_0^\e\to u_0$ in $L^2(\Omega;\rr^m)$ and let $\{\tau_\e\}_{\e>0}$ be a family of positive parameters such that $\tau_\e\to0$ as $\e\to0$.
A curve $u:[0,+\infty)\to L^2(\Omega;\rr^m)$ is called a \emph{minimizing movement along} $\{F_\e\}_{\e>0}$ \emph{from} $u^\e_0$ \emph{at rate} $\tau_\e$ if, up to subsequences, $u^{\tau_\e,\e}$ converges to $u$, as $\e\to0$, on compact subsets of $[0, +\infty)$.
\end{definition}

Note that, in general, the minimizing movements $u$ may depend on the rate $\tau_\e$
(see e.g. \cite[Example 8.2]{bra4} and \cite{ABZ2019}).
This does not occur for convex families of functionals, for which gradient flows are stable with respect to $\Gamma$-convergence (see e.g. \cite[Theorem 11.2]{bra4} ). In the following result we assume $p\ge 2$ for technical reasons. 



\begin{theorem}\label{stability-thm}
Let $p\ge 2$ and let $F_\e$ be defined as in \eqref{functionals} with $f_\e$ convex in the last variable.
Assume that {\rm(H0)--(H2)} hold and that $F_\e$ $\Gamma(L^p)$-converge to $F:L^p(\Omega;\rr^m)\to[0,+\infty]$, as $\e\to0$.
Let $\{u_0^\e\}\subset L^p(\Omega;\rr^m)$ be a given family of initial data such that
\begin{equation}\label{bound-initial}
\sup_{\e>0}F_\e(u_0^\e)<+\infty, \quad u_0^\e\to u_0 \text{ in } L^2(\Omega;\rr^m)
\end{equation}
as $\e\to0$. Let $\bar{u}$ and $u^\e$ be the minimizing movements for $F$ from $u_0$ and for $F_\e$ from $u_0^\e$, respectively.
Then, for every $\tau_\e\to 0$ as $\e\to0$, we have that $\bar{u}: [0,+\infty)\to W^{1,p}(\Omega;\rr^m)$ is the unique minimizing movement along $\{F_\e\}_{\e>0}$ from $u_0^\e$ at rate $\tau_\e$ and satisfies
\begin{equation}\label{reg-MM}
\bar{u}\in  C^0([0,+\infty);L^p(\Omega;\rr^m)) \cap W_{\rm loc}^{1,2}([0,+\infty);L^2(\Omega;\rr^m))\,.
\end{equation}
Moreover, we have that $u^\e$ are solutions to the gradient flows for $F_\e$\ie
$$
\begin{cases}
(u^\e)'(t)=-\partial^0 F_\e(u^\e(t))  & \text{for almost every } t>0 \\
u^\e(0)=u_0^\e,
\end{cases}
$$
$\bar{u}$ is solution to the gradient flow for $F$\ie
$$
\begin{cases}
u'(t)=- \partial^0 F(u(t))  & \text{for almost every } t>0 \\
u(0)=u_0\,
\end{cases}
$$
and $\{u^\e\}$ converges to $\bar{u}$ as follows
\begin{equation}\label{pointwise-conv-MM}
\lim_{\e\to0} u^\e(t) = \bar{u}(t), \quad \text{in } L^p(\Omega;\rr^m) \text{ for every } t>0,
\end{equation}
\begin{equation}\label{holder-conv-MM}
\lim_{\e\to0} u^\e = \bar{u}, \quad \text{weakly in } W_{\rm loc}^{1,2}([0,+\infty);L^2(\Omega;\rr^m)).
\end{equation}
\end{theorem}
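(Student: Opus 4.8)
The plan is to apply the abstract stability theory for gradient flows of convex functionals along a $\Gamma$-converging family (as in \cite[Chapter 11]{bra4}), so the main work is to check that all the structural hypotheses of that theory are met in our setting and to transfer the convergence from $L^2$ to $L^p$. First I would record that, by the growth conditions (H0)--(H2), Proposition \ref{growthcondremk} and Theorem \ref{reprthm}, the $\Gamma$-limit $F$ is finite exactly on $W^{1,p}(\Omega;\rr^m)$ with the standard $p$-growth $c(\|Du\|_{L^p}^p-|\Omega|)\le F(u)\le C(\|Du\|_{L^p}^p+|\Omega|)$; in particular $F$ is proper. Since $p\ge 2$ and $\Omega$ is bounded, $W^{1,p}(\Omega;\rr^m)\hookrightarrow L^2(\Omega;\rr^m)$ continuously, hence $F$ (extended by $+\infty$ off its domain) is also proper, convex and lower semicontinuous as a functional on $L^2(\Omega;\rr^m)$: convexity is inherited from the convexity of $f_\e$ in the last variable through $\Gamma$-convergence (e.g. \cite[Theorem 11.10]{dal}), and lower semicontinuity on $L^2$ follows from $L^p$-lower semicontinuity of $\Gamma$-limits together with the coercivity bound, which forces $L^2$-convergent sequences with bounded energy to be bounded and hence weakly compact in $W^{1,p}$. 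The same discussion applies to each $F_\e$, so Lemma \ref{MM-GF-lem} gives, for every $\e>0$, a unique minimizing movement $u^\e$ for $F_\e$ from $u_0^\e$, which is the unique solution of the gradient flow $(u^\e)'=-\partial^0F_\e(u^\e)$, $u^\e(0)=u_0^\e$; likewise a unique minimizing movement $\bar u$ for $F$ from $u_0$ solving $\bar u'=-\partial^0F(\bar u)$, $\bar u(0)=u_0$.

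Next I would run the minimizing-movements-along-a-family argument. Fix $\tau_\e\to0$ and let $u^{\tau_\e,\e}$ be the discrete interpolants defined by \eqref{MM-scheme1}--\eqref{disc-sol} with $F_\e$ in place of $F$. From the minimality in \eqref{MM-scheme1} and the standard discrete energy estimate one gets, for each $T>0$, a uniform bound
\[
\sup_{\e}\Big(\sup_{t\in[0,T]}F_\e(u^{\tau_\e,\e}(t))+\int_0^T\|(u^{\tau_\e,\e})'\|_{L^2}^2\,dt\Big)<+\infty,
\]
using \eqref{bound-initial} and that $F_\e\ge0$; here the piecewise-constant interpolant is paired with the De Giorgi variational interpolant as usual. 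The energy bound plus the coercivity of $F_\e$ (via the lower bound in \eqref{growth-cond}, i.e. $G_\e^{r_0}(u^{\tau_\e,\e}(t),\Omega)\le C(F_\e(u^{\tau_\e,\e}(t),\Omega)+|\Omega|)$) and Theorem \ref{kolcom} give, for each fixed $t$, precompactness of $\{u^{\tau_\e,\e}(t)\}_\e$ in $L^p(\Omega;\rr^m)$ with all cluster points in $W^{1,p}$; combined with the $W^{1,2}$-in-time equicontinuity this yields, via an Ascoli-type argument, a subsequence along which $u^{\tau_\e,\e}\to u$ uniformly on compact time-intervals in $L^2$ and, using the $L^p$ precompactness at each time together with domination, also in $L^p$ for every $t$. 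By the general convergence theorem for minimizing movements of convex functionals under $\Gamma$-convergence (\cite[Theorem 11.2]{bra4}), the limit $u$ is the minimizing movement for $F$ from $u_0$, i.e. $u=\bar u$; since this limit is independent of the subsequence, the whole family converges and $\bar u$ does not depend on the rate $\tau_\e$. Identifying $\bar u$ with $u^\e$ in the limit: since $u^\e$ is itself a minimizing movement for $F_\e$ (Lemma \ref{MM-GF-lem}), it is obtained as a limit of the same discrete scheme as $\tau\to0$ with $\e$ fixed, and a diagonal argument together with the stability just proved gives \eqref{pointwise-conv-MM}. The regularity \eqref{reg-MM} follows because $\bar u$ is a curve of maximal slope for $F$ (Lemma \ref{MM-GF-lem}, so $\bar u\in W^{1,2}_{\rm loc}([0,+\infty);L^2)$), and $\bar u(t)\in\mathrm{dom}\,F=W^{1,p}(\Omega;\rr^m)$ for all $t$ with $t\mapsto F(\bar u(t))$ non-increasing, which upgrades time-continuity in $L^2$ to time-continuity in $L^p$ (bounded orbit in $W^{1,p}$, continuity in $L^2$, interpolation). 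Finally \eqref{holder-conv-MM} is the weak-$W^{1,2}_{\rm loc}$ compactness extracted from the uniform bound $\int_0^T\|(u^\e)'\|_{L^2}^2\,dt=\int_0^T\|\partial^0F_\e(u^\e)\|_{L^2}^2\,dt\le 2F_\e(u_0^\e)\le C$ coming from the energy identity \eqref{EDP}, with the limit identified as $\bar u$ by \eqref{pointwise-conv-MM}.

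The main obstacle I anticipate is the passage from $L^2$- to $L^p$-convergence (and the $L^p$ time-continuity of $\bar u$): the abstract gradient-flow theory of \cite{AGS2005,bra4} is naturally phrased in the Hilbert space $L^2(\Omega;\rr^m)$, whereas our $\Gamma$-convergence and compactness machinery (Theorem \ref{kolcom}, Proposition \ref{growthcondremk}) lives in $L^p$. Bridging the two requires carefully exploiting that bounded energy forces boundedness in $W^{1,p}\Subset\!\!\!\to L^p$, so that weak-$L^2$ or $L^2$-strong convergence of a sequence with equibounded energy automatically improves to strong $L^p$ convergence; one must check this is uniform enough in time to preserve the uniform-on-compacta convergence. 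A secondary technical point is verifying the lower semicontinuity and convexity of $F$ and $F_\e$ as functionals on $L^2$ (rather than on $L^p$), for which the embedding $W^{1,p}\hookrightarrow L^2$ valid for $p\ge2$ is exactly what is used — this is the reason for the standing assumption $p\ge2$. Once these compatibility issues are settled, the remainder is a direct application of the cited convexity-based stability results, with no further hard estimates.
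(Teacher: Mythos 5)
Your plan is correct and follows essentially the same route as the paper: extend $F_\e$ and $F$ to $L^2(\Omega;\rr^m)$, obtain $L^2$-equicoercivity and $\Gamma(L^2)$-convergence from the compactness machinery, invoke the convex stability result \cite[Theorem 11.2]{bra4} (the paper uses Remark 11.2 there to dispense with coercivity of each fixed $F_\e$) together with Lemma \ref{MM-GF-lem}, and then upgrade to $L^p$ via the $p$-growth bound \eqref{growth-cond-below} and compactness. The only detail worth adding is that the $L^p$ bound required by Theorem \ref{kolcom} is deduced from the $L^2$ bound through the Poincar\'e--Wirtinger inequality of Proposition \ref{pwineq}, which is how the paper closes that step.
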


\begin{proof} We extend the functionals $F_\e$ and $F$ to $L^2(\Omega;\rr^m)$ by setting $F_\e(u)=+\infty$ and $F(u)=+\infty$ for every $u\in L^2(\Omega;\rr^m)\setminus L^p(\Omega;\rr^m)$.
Notice that, by the assumptions on $f_\e$ we have that the functionals $F_\e$ are convex and lower semicontinuous in $L^2(\Omega;\rr^m)$ and the same holds for $F$ because $\Gamma$-limit.
By assumption (H0), Proposition \ref{pwineq} and Theorem \ref{kolcom} every $\{u_\e\}_{\e>0}\subset L^2(\Omega;\rr^m)$ satisfying
\begin{equation}\label{equi-c}
\sup_{\e>0}( \|u_\e\|_{L^2(\Omega;\rr^m)}+F_\e(u_\e) ) <+\infty
\end{equation}
is precompact in $L^p(\Omega;\rr^m)$ and therefore in $L^2(\Omega;\rr^m)$.
Hence, we get that $\{F_\e\}$ is $L^2$-equicoercive and $\Gamma (L^2)$-converges to $F$, as $\e\to 0$.
Because of \cite[Remark 11.2]{bra4}, we can apply \cite[Theorem 11.2]{bra4} even though the functionals $F_\e$ are not $L^2$-coercive for every fixed $\e>0$.
Thus, there exists a unique minimizing movement along $\{F_\e\}$ from $u_0^\e$ at rate $\tau_\e$ which coincides with $\bar{u}$, and $\{u^\e\}$ converge uniformly to $\bar{u}$ on compact subset of $[0,+\infty)$.
By Lemma \ref{MM-GF-lem} we have that $\bar{u}\in W^{1,2}_{\rm loc}(\Omega;L^2(\Omega,\rr^m))$ and $\bar{u}(t)\in W^{1,p}(\Omega;\rr^m)$ for every $t\in [0,+\infty)$.
Moreover, the decreasing behavior of $F_\e$ along $u^\e$, \eqref{EDP} and \eqref{bound-initial} give \eqref{holder-conv-MM}.
It remains to prove \eqref{pointwise-conv-MM} and the continuity of $\bar{u}$ with respect to the $L^p$-topology.
By the monotonicity of $F(\bar{u}(t))$ and \eqref{growth-cond-below}, we infer that $\|D\bar{u}(t)\|_{L^p(\Omega;\rr^m)} \le C(F(u_0)+1)$ for every $t\ge0$.
Therefore, by  the strong $L^2$-continuity of $\bar{u}$, we get that $\bar{u}(s)\to \bar{u}(t)$ weakly in $W^{1,p}(\Omega;\rr^m)$ as $s\to t$, which, in particular, gives \eqref{reg-MM}.
Finally, since $\{u^\e(t)\}$ satisfies \eqref{equi-c} for every $t>0$, then \eqref{pointwise-conv-MM} is implied by \eqref{holder-conv-MM}.
\end{proof}

\subsection{Homogenized flows for convex energies}

In this section we apply the results obtained in Theorem \ref{stability-thm} to the periodic-homogenization case.
We describe the homogenized gradient flow in \eqref{HomGF-N} and \eqref{HomGF-D} under Neumann and Dirichlet boundary conditions, respectively.

\begin{theorem}\label{stability-hom-N}
Let $p\ge 2$, let $F_\e$ be defined as in \eqref{functionals} with $f_\e$ convex and $C^2$ in the last variable and let \eqref{homdef}--\eqref{growthpsi} hold.
Let $\{u_0^\e\}\subset L^p(\Omega;\rr^m)$ be a given family of initial data satisfying  \eqref{bound-initial}.
Then, 
\begin{equation}\label{gradient-hom}
\partial^0 F_\e(u)(x) = \frac{1}{\e^{d+1}} \int_\Omega \Bigl(\partial_z f\Big(\frac{y}{\e},\frac{x-y}{\e},\frac{u(x)-u(y)}{\e}\Big)-\partial_z f\Big(\frac{x}{\e},\frac{y-x}{\e},\frac{u(y)-u(x)}{\e}\Big)\Bigr) dy\,,
\end{equation}
Theorem \ref{stability-thm} holds and the family of solutions $\{u_\e\}$ to the gradient flows
\begin{equation}\label{GF-hom-eps}
\begin{cases}
\partial_t u^\e = - \partial^0 F_\e(u), & \text{in } (0,+\infty)\times\Omega, \\
u^\e(0)=u_0^\e,
\end{cases}
\end{equation}
converges weakly in $W^{1,2}_{\rm loc}([0,+\infty);L^2(\Omega;\rr^m))$ to the solution to the gradient flow
\begin{equation}\label {HomGF-N}
\begin{cases}\displaystyle
\partial_t u = \divv(Df_{\rm hom}(Du)), & \text{in } (0,+\infty)\times\Omega, \\ \displaystyle
Df_{\rm hom}(Du)\,\nu = 0, & \text{on } \partial\Omega,\\
u(0)=u_0,
\end{cases}
\end{equation}
with $f_{\rm hom}$ defined by \eqref{cellform}.
\end{theorem}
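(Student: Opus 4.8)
The plan is to obtain Theorem~\ref{stability-hom-N} as a specialization of the abstract stability result of Theorem~\ref{stability-thm}, supplemented by two explicit computations of subdifferentials. First I would check that all hypotheses of Theorem~\ref{stability-thm} are in force: $p\ge 2$ and the convexity of $f_\e$ in the last variable are assumed; conditions {\rm(H0)--(H2)} are guaranteed by \eqref{homgrocon1}--\eqref{growthpsi} (see Section~\ref{homogenization}); and, by Theorem~\ref{hom-thm} together with Theorem~\ref{cell-form-thm}, the whole family $F_\e$ $\Gamma(L^p)$-converges to $F(u)=\int_\Omega f_{\rm hom}(Du)\,dx$ on $W^{1,p}(\Omega;\rr^m)$ (and $+\infty$ elsewhere), with $f_{\rm hom}$ given by the cell formula \eqref{cellform}. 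Theorem~\ref{stability-thm} then yields directly the existence and uniqueness of the minimizing movement $\bar u$ along $\{F_\e\}$ at any rate $\tau_\e\to0$, the fact that $u^\e$ and $\bar u$ are the curves of maximal slope for $F_\e$ and $F$ with the prescribed initial data, and the convergence $u^\e\rightharpoonup\bar u$ weakly in $W^{1,2}_{\rm loc}([0,+\infty);L^2(\Omega;\rr^m))$. What remains is to make the two gradient flows explicit.

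For the energies $F_\e$: since $f(y,\xi,\cdot)$ is convex and $C^2$, the functional $F_\e$ is convex and its subdifferential is single-valued at every $u$ for which the right-hand side of \eqref{gradient-hom} belongs to $L^2(\Omega;\rr^m)$, coinciding there with $\partial^0F_\e(u)$. I would compute it by differentiating $t\mapsto F_\e(u+t\varphi)$ at $t=0$ on the representation $F_\e(u)=\e^{-d}\int_\Omega\int_\Omega f\big(x/\e,(y-x)/\e,(u(y)-u(x))/\e\big)\,dx\,dy$; splitting the increment $\varphi(y)-\varphi(x)$ and renaming $x\leftrightarrow y$ in the part carrying $\varphi(y)$ produces exactly \eqref{gradient-hom}, while membership in $L^2$ on ${\rm dom}\,\partial F_\e$ follows from the polynomial bound on $\partial_z f$ implied by convexity and \eqref{homgrocon2}, combined with Corollary~\ref{boundlemma-lip}. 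Hence the gradient flow $(u^\e)'=-\partial^0F_\e(u^\e)$ is precisely \eqref{GF-hom-eps}.

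For the homogenized energy: I would first note that $f_{\rm hom}$ is convex, since in \eqref{cellform} competitors may be written as $v(x)=Mx+w(x)$ with $w$ being $Q_1$-periodic, and then $f_{\rm hom}(M)$ is the infimum over such $w$ of a functional jointly convex in $(M,w)$ by convexity of $f(y,\xi,\cdot)$. Together with the growth \eqref{growthfhom} and the $C^2$-regularity of $f$, the standard argument for homogenized convex integrands (envelope theorem for the cell problem) shows that $f_{\rm hom}$ is differentiable, hence of class $C^1$. Then $F(u)=\int_\Omega f_{\rm hom}(Du)\,dx$ is convex, proper and $L^2$-lower semicontinuous, and by the classical characterization of the subdifferential of an integral functional, for $u\in W^{1,p}(\Omega;\rr^m)$ one has $\xi\in\partial F(u)\cap L^2$ if and only if $\int_\Omega Df_{\rm hom}(Du):D\varphi\,dx=\int_\Omega \xi\cdot\varphi\,dx$ for every $\varphi\in W^{1,p}(\Omega;\rr^m)$; testing with $\varphi\in C^\infty_c$ gives $\xi=-\divv\big(Df_{\rm hom}(Du)\big)$ in $\mathcal D'(\Omega)$, and the generalized Green formula then forces the conormal trace $Df_{\rm hom}(Du)\,\nu$ to vanish on $\partial\Omega$. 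Thus $\partial F(u)$ is single-valued with $\partial^0F(u)=-\divv(Df_{\rm hom}(Du))$, so by Lemma~\ref{MM-GF-lem} the minimizing movement $\bar u$ solves exactly \eqref{HomGF-N} with $\bar u(0)=u_0$; combined with the convergence from Theorem~\ref{stability-thm} this finishes the proof.

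The step I expect to be the main obstacle is the $C^1$-regularity of $f_{\rm hom}$: writing the limit flow in divergence form $\partial_t u=\divv(Df_{\rm hom}(Du))$ rather than as a differential inclusion governed by $\partial f_{\rm hom}$ requires differentiability of the homogenized density, which is not automatic for a merely convex integrand and must be extracted from the $C^2$-assumption through the structure of the (convolution-type) cell problem. A secondary technical point is the rigorous verification that the minimal section of $\partial F$ carries no boundary contribution, that is, that the natural (Neumann) condition $Df_{\rm hom}(Du)\,\nu=0$ is precisely what makes $-\divv(Df_{\rm hom}(Du))\in L^2$ a subgradient.
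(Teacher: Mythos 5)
Your proposal is correct and follows essentially the same route as the paper: identify the $\Gamma$-limit via Theorem \ref{cell-form-thm}, compute the first variation of $F_\e$ and obtain \eqref{gradient-hom} by exchanging the roles of $x$ and $y$, characterize $\partial^0 F(u)=-\divv(Df_{\rm hom}(Du))$ with the natural (Neumann) boundary condition on ${\rm dom}\,\partial F$, and conclude by Theorem \ref{stability-thm}. The paper's proof is terser and simply asserts the subdifferential characterizations, whereas you spell out the supporting details (the $L^2$-bound on the subgradient via Corollary \ref{boundlemma-lip}, the differentiability of $f_{\rm hom}$, and the trace argument), which the paper leaves implicit.
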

\begin{proof}
By Theorem \ref{cell-form-thm} we have that
$$
\Gamma(L^p)\text{-}\lim_{\e\to0}F_\e(u) = F(u) :=
\begin{cases}\displaystyle
\int_\Omega f_{\rm hom}(Du(x))dx & u\in W^{1,p}(\Omega;\rr^m) \\
+\infty & \text{otherwise}\,.
\end{cases}
$$
For every $u\in L^p(\Omega;\rr^m)$ and $v\in L^{p'}(\Omega;\rr^m)$ the first variation of $F_\e$ is given by
$$
\int_\Omega \langle \frac{\delta F_\e}{\delta u}(x), v(x)\rangle \, dx = \frac{1}{\e^d}\int_\Omega\int_\Omega \langle\partial_z f\Big(\frac{x}{\e},\frac{y-x}{\e},\frac{u(y)-u(x)}{\e}\Big),\frac{v(y)-v(x)}{\e}\rangle\,dy\,dx.
$$
Thus \eqref{gradient-hom} follows by the symmetric roles of $x$ and $y$.
For the limit functional $F$ we have that
$$
\text{dom }\partial F = \Big\{ u \in W^{1,p}(\Omega;\rr^m) \,\Big|\, \divv(Df_{\rm hom}(Du))\in L^2(\Omega;\rr^m), Df_{\rm hom}(Du)\nu=0 \text{ on } \partial\Omega\Big\}
$$
and, for every $u\in\text{dom }\partial F$, $\partial F(u)$ is single-valued and there holds
$$
\partial^0 F(u) = -\divv(Df_{\rm hom}(Du))\,.
$$
Hence the thesis follows by applying Theorem \ref{stability-thm}.
\end{proof}

Reasoning as in the proof of Theorem \ref{stability-hom-N}, by Proposition \ref{DBC-Gamma-conv} we obtain an analogue result for the homogenized gradient flow with Dirichlet boundary conditions.
Note that, the $L^p$-equicoerciveness of the family $\{F_\e^{r,g}\}_{\e>0}$, defined in \eqref{DBCseq} and satisfying (H0), has been already shown in the proof of Proposition \ref{DBCminpbs}.

\begin{theorem}\label{stability-hom-cor}
For fixed $p\ge2$, $g\in W^{1,p}_{\rm loc}(\rr^d;\rr^m)$, and $r>0$, let  $\mathcal{D}^{r\e,g}(\Omega)$ be defined by \eqref{set-boudary-data}.
Let $F_\e$ and $\{u_0^\e\}\subset\mathcal{D}^{r\e,g}(\Omega)$ satisfy the hypotheses of Theorem \ref{stability-hom-N} and let $F_\e^{r,g}$ be defined by \eqref{DBCseq}.
Then, Theorem \ref{stability-thm} holds and the family of solutions $\{u_\e\}$ to the gradient flows
$$
\begin{cases}
\partial_t u^\e = -\partial^0 F_\e(u^\e), & \text{in } (0,+\infty)\times\Omega,\\
u^\e=g, & \text{in } \{x\in\Omega \,|\, \dist(x,\Omega^c)<r\e\}, \\
u^\e(0)=u_0^\e,
\end{cases}
$$
with $\partial^0 F_\e$ defined by \eqref{gradient-hom}, converges weakly in $W^{1,2}_{\rm loc}([0,+\infty);L^2(\Omega;\rr^m))$ to the solution to the gradient flow
\begin{equation}\label {HomGF-D}
\begin{cases}\displaystyle
\partial_t u = \divv(Df_{\rm hom}(Du)), & \text{in } (0,+\infty)\times\Omega, \\
u=g, & \text{on } \partial\Omega, \\
u(0)=u_0,
\end{cases}
\end{equation}
where $f_{\rm hom}$ is defined by \eqref{cellform}.
\end{theorem}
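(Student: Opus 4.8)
The plan is to transcribe the proof of Theorem \ref{stability-hom-N} to the constrained setting, replacing the free energies $F_\e$ by the functionals $F_\e^{r,g}(\cdot,\Omega)$ of \eqref{DBCseq} and the homogenized energy by its Dirichlet version $F^g$ of \eqref{DBC}. First I would pin down the $\Gamma$-limit: by Theorem \ref{hom-thm} the whole family $F_\e$ $\Gamma(L^p)$-converges to $u\mapsto\int_\Omega f_{\rm hom}(Du(x))\,dx$, with $f_{\rm hom}$ given in the present convex case by the cell formula \eqref{cellform} of Theorem \ref{cell-form-thm}; then Proposition \ref{DBC-Gamma-conv} upgrades this to
\[
\Gamma(L^p)\text{-}\lim_{\e\to0}F_\e^{r,g}(u,\Omega)=F^g(u):=\begin{cases}\int_\Omega f_{\rm hom}(Du(x))\,dx & \text{if }u-g\in W^{1,p}_0(\Omega;\rr^m),\\ +\infty & \text{otherwise.}\end{cases}
\]
Since $f_\e(x,\xi,\cdot)$ is convex and the boundary datum fixes an affine (hence convex, $L^p$-closed) constraint, each $F_\e^{r,g}(\cdot,\Omega)$ — extended by $+\infty$ to $L^2(\Omega;\rr^m)\setminus L^p(\Omega;\rr^m)$ — is proper, convex and lower semicontinuous on $L^2(\Omega;\rr^m)$, and so is $F^g$ as a $\Gamma$-limit; moreover $\sup_\e F_\e^{r,g}(u_0^\e,\Omega)=\sup_\e F_\e(u_0^\e)<+\infty$ by hypothesis.

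Second, I would establish the $L^2$-equicoercivity of $\{F_\e^{r,g}(\cdot,\Omega)\}$, which is the only point where the Dirichlet case genuinely differs from the Neumann one. Arguing exactly as in the proof of Proposition \ref{DBCminpbs}: for $u\in\mathcal D^{\e r,g}(\Omega)$ assumption (H0) gives $G_\e^{r'}(u,\Omega)\le C(F_\e(u,\Omega)+|\Omega|)$ for every $r'\le\min\{r_0,r/2\}$, and the Poincar\'e inequality of Proposition \ref{poincare} applied to $u-g$, which vanishes on the boundary layer $\{\dist(x,\Omega^c)<\e r\}$, yields $\|u-g\|_{L^p(\Omega;\rr^m)}^p\le C\big(G_\e^{r'}(u,\Omega)+\|Dg\|_{L^p(\Omega)}^p\big)$. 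Together with Theorem \ref{kolcom} this shows that any $\{u_\e\}$ with $\sup_\e\big(\|u_\e\|_{L^2}+F_\e^{r,g}(u_\e,\Omega)\big)<+\infty$ is precompact in $L^p$, hence in $L^2$; thus $\{F_\e^{r,g}(\cdot,\Omega)\}$ is $L^2$-equicoercive and $\Gamma(L^2)$-converges to $F^g$. (As in Theorem \ref{stability-thm}, the plain $F_\e^{r,g}$ need not be $L^2$-coercive for fixed $\e$, so one invokes \cite[Theorem 11.2]{bra4} together with the remark there on non-coercive functionals.)

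Third, I would identify the driving vector fields. For the free energy, differentiating under the integral sign and using the symmetric roles of $x$ and $y$ gives formula \eqref{gradient-hom} for $\partial^0 F_\e$ (here $f$ is $C^2$ in $z$, so $F_\e$ is Gateaux differentiable and $\partial F_\e$ is single-valued); for $F_\e^{r,g}$ the subdifferential is $\partial F_\e+N$, with $N$ the normal cone of the affine constraint, i.e. the functions supported on the layer $\{\dist(x,\Omega^c)<\e r\}$, so $\partial^0 F_\e^{r,g}(u)$ coincides with $\partial^0 F_\e(u)$ off the layer and is $0$ on it — which is exactly why the associated gradient flow keeps $u^\e\equiv g$ on the layer and solves $\partial_t u^\e=-\partial^0F_\e(u^\e)$ in the interior, namely the system in the statement. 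For the limit, a standard integration-by-parts computation, using the quasiconvexity and $p$-growth of $f_{\rm hom}$, gives $\mathrm{dom}\,\partial F^g=\{u\in W^{1,p}(\Omega;\rr^m):u-g\in W^{1,p}_0,\ \divv(Df_{\rm hom}(Du))\in L^2(\Omega;\rr^m)\}$, on which $\partial^0 F^g(u)=-\divv(Df_{\rm hom}(Du))$, the condition $u=g$ on $\partial\Omega$ being encoded in the constraint.

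Finally, with convexity, lower semicontinuity, $L^2$-equicoercivity, $\Gamma(L^2)$-convergence and the growth bound \eqref{growth-cond-below} for the limit all in hand, one re-runs the proof of Theorem \ref{stability-thm} with $F_\e^{r,g}$ in place of $F_\e$ and $F^g$ in place of $F$: via Lemma \ref{MM-GF-lem} the minimizing movements are curves of maximal slope, hence the unique solutions of the respective gradient flows with the prescribed initial data, and \cite[Theorem 11.2]{bra4} gives uniqueness of the minimizing movement $\bar u$ along $\{F_\e^{r,g}\}$ at rate $\tau_\e$ together with uniform convergence $u^\e\to\bar u$ on compact time intervals; combined with the uniform bound on $\|D\bar u(t)\|_{L^p}$ coming from monotonicity of $F^g(\bar u(t))$ and \eqref{growth-cond-below}, and with the energy identity \eqref{EDP}, this upgrades to weak $W^{1,2}_{\rm loc}([0,+\infty);L^2(\Omega;\rr^m))$-convergence and to convergence in $L^p(\Omega;\rr^m)$ for every $t>0$, yielding the stated result and the regularity of $\bar u$. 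I expect the sole nontrivial step to be the $L^2$-equicoercivity through the boundary-layer Poincar\'e inequality of Proposition \ref{poincare}; the rest is a routine adaptation of the proofs of Theorems \ref{stability-thm} and \ref{stability-hom-N}.
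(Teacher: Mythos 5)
Your proposal is correct and follows essentially the same route as the paper, which simply re-runs the proof of Theorem \ref{stability-hom-N} with $F_\e^{r,g}$ in place of $F_\e$, using Proposition \ref{DBC-Gamma-conv} for the constrained $\Gamma$-limit and noting that the required equicoercivity was already established (via Proposition \ref{poincare} and Theorem \ref{kolcom}) in the proof of Proposition \ref{DBCminpbs}. Your extra details — the normal-cone description of $\partial F_\e^{r,g}$ and the explicit domain of $\partial F^g$ — are just a fleshed-out version of what the paper leaves implicit.
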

\smallskip

\begin{example}\label {EX}
Let $a\in C^\infty_c(B_1)$ be a non-negative function.
We consider functionals $F_\e$ as in the statement of Theorem \ref{stability-hom-N} with $f(y,\xi,z)= a(\xi)|z|^p/p$.
%
For every $u\in\text{dom }\partial F_\e$, formula \eqref{gradient-hom} reads as
$$
\partial^0 F_\e(u) = -\frac{1}{\e^{d+p}} \int_\Omega \Big(a\Big(\frac{x-y}{\e}\Big)+a\Big(\frac{y-x}{\e}\Big)\Big) |u(y)-u(x)|^{p-2}(u(y)-u(x))\, dy
$$
and, by Theorem \ref{stability-hom-N}, the family of solutions $\{u_\e\}$ to the gradient flows for $F_\e$ converges, as in \eqref{holder-conv-MM}, to the solution ${\bar u}$ to the gradient flow for the functional
$$
F(u):=
\begin{cases}\displaystyle
\frac{1}{p}\int_{B_1}a(\xi)\int_\Omega|Du(x)\xi|^p dx\,d\xi & u\in W^{1,p}(\Omega;\rr^m) \\
+\infty & \text{otherwise,}
\end{cases}
$$
that is given by the following system of equations
\begin{equation}\label{p-laplace}
\begin{cases}\displaystyle
\partial_t u = \divv\Bigl(\int_{B_1}a(\xi)\,|Du\,\xi|^{p-2}\, (Du\, \xi \otimes\xi) \,d\xi\Bigr)  & \text{in } (0,+\infty)\times\Omega \\ \displaystyle
\int_{B_1} a(\xi)\,|Du\,\xi|^{p-2}\, (Du\, \xi \otimes\xi) \,\nu \,d\xi = 0  & \text{on } \partial\Omega\,   \\
u(0)=u_0\,.\\
\end{cases}
\end{equation}
We now consider the case  $p=2$ and $m=1$. By Example \ref{quadratic-form-ex}, the $\Gamma$-limit is given by the quadratic form
$$
F(u):=
\begin{cases}\displaystyle
\frac{1}{2}\int_\Omega \langle A_{\rm hom} Du(x), D u(x)\rangle dx & u\in W^{1,2}(\Omega) \\
+\infty & \text{otherwise}
\end{cases}
$$
where $A_{\rm hom}$ satisfies
$$
(A_{\rm hom})_{i,j} = \int_{B_1} a(\xi)\xi_i\xi_j d\xi, \quad \text{for every } 1\le i,j\le d\,.
$$
Hence, the homogenized gradient flow takes the form
$$
\begin{cases}\displaystyle
\partial_t u = \div\big(A_{\rm hom} Du\big) & \text{in } (0,+\infty)\times\Omega \\ \displaystyle
\langle A_{\rm hom} D u,\nu\rangle = 0 & \text{on } \partial\Omega \\ 
u(0)=u_0\,.
\end{cases}
$$
\end{example}

\begin{remark}[Approximation of $p$-Laplacian evolution equation] Let $m=1$.
Note that, if we assume the function $a$ to be also radially symmetric then formula \eqref{p-laplace}
reduces to
$$
\begin{cases}\displaystyle
\partial_t u =c_p \, \Delta_p u  & \text{in } (0,+\infty)\times\Omega  \\ \displaystyle
\frac{\partial u}{\partial \nu} = 0  & \text{on } \partial\Omega  \\ 
u(0)=u_0\,,
\end{cases}
$$
where
$$
 c_p:=\int_{B_1} a(\xi)|\xi_1|^p d\xi\,.
$$
\end{remark}

\subsection*{Acknowledgments}
AB, AP and AT acknowledge the MIUR Excellence Department Project awarded to the Department of Mathematics, University of Rome Tor Vergata, CUP E83C18000100006. RA received support from the INdAM-GNAMPA 2020 Project `Analisi variazionale di materiali elastici: statica, dinamica e ottimizzazione'.


\begin{thebibliography}{99}

\bibitem{albbel}
	\newblock G. Alberti and G. Bellettini,
	\newblock A non-local anisotropic model for phase transitions: asymptotic behaviour of rescaled energies,
	\newblock \emph{European J. Appl. Math.}, \textbf{9} (1998), 261--284.

\bibitem{alicic}
	\newblock R. Alicandro and M. Cicalese,
	\newblock A general integral representation result for continuum limits of discrete energies with superlinear growth,
	\newblock \emph{SIAM J. Math. Anal.}, \textbf{36} (2004), 1--37.
	
	
\bibitem{ACG} 
	\newblock R. Alicandro, M. Cicalese and A. Gloria,
	\newblock  Integral representation results for energies defined on stochastic lattices and application to nonlinear elasticity.
		\newblock 
 {\em Arch. Ration. Mech. Anal.}  {\bf 200} (2011), 881--943.

\bibitem{AGS2005}
	\newblock L. Ambrosio, N. Gigli and G. Savar\'e,
	\newblock \emph{Gradient Flows in Metric Spaces and in the Space of Probability Measures}, second edition,
	\newblock Lectures in Mathematics. ETH Z\"urich. Birkh\"auser, Basel, 2008.

\bibitem{ABZ2019}
	\newblock N. Ansini, A. Braides and J. Zimmer,
	\newblock Minimising movements for oscillating energies: the critical regime,
	\newblock \emph{Proc. Roy. Soc. Edinburgh Sect. A}, \textbf{149} (2019), 719--737.
	
\bibitem{BLL1}	
\newblock  X. Blanc, C. Le Bris and P.L.Lions
	\newblock The energy of some microscopic stochastic lattices. 
	\newblock {\em Arch. Ration. Mech. Anal.} {\bf 184} (2007),  303--339.

\bibitem{BLL2}	
\newblock  X. Blanc, C. Le Bris and P.L.Lions
	\newblock Stochastic homogenization and random lattices. 
	\newblock {\em 	J. Math. Pures Appl.} {\bf 88} (2007), 34--63.
	
\bibitem{boubremir}
	\newblock J. Bourgain, H. Brezis and P. Mironescu,
	\newblock Another look at Sobolev spaces,
	\newblock in \emph{Optimal Control and Partial Differential Equations}, IOS Press (2001), 439--455.

\bibitem{bra1}
	\newblock A. Braides,
	\newblock \emph{Approximation of Free-discontinuity Problems},
	\newblock Lecture Notes in Math. \textbf{1694}, Springer Verlag, Berlin, 1998.

\bibitem{bra2}
	\newblock A. Braides,
	\newblock \emph{$\Gamma$-convergence for Beginners},
	\newblock Oxford Lecture Ser. Math. Appl. \textbf{22}, Oxford Univ. Press, Oxford, 2002.
	
\bibitem{bra3}
	\newblock A. Braides,
	\newblock Discrete-to-continuum variational methods for lattice systems,
	\newblock in \emph{Proceedings of the International Congress of Mathematicians--Seoul}, \textbf{IV} (2014), 997--1015.
	
\bibitem{bra4}
	\newblock A. Braides,
	\newblock \emph{Local Minimization, Variational Evolution and $\Gamma$-convergence},
	\newblock Lecture Notes in Math. {\bf 2094}, Springer Verlag, Berlin, 2014.
	
	\bibitem{BCD}
	\newblock A. Braides, V. Chiad\`o Piat and L. D'Elia,
	\newblock An extension theorem from connected sets and homogenization of non-local functionals,
	\newblock preprint 2020.	

\bibitem{BCGS2016}
	\newblock A. Braides, M. Colombo, M. Gobbino and M. Solci,
	\newblock Minimizing movements along a sequence of functionals
and curves of maximal slope,
	\newblock \emph{C. R. Acad. Sci. Paris}, \textbf{354} (2016), 685--689.
	
\bibitem{bradef}
	\newblock A. Braides and A. Defranceschi,
	\newblock \emph{Homogenization of Multiple Integrals},
	\newblock Oxford Univ. Press, Oxford, 1998.
	
	\bibitem{brakre}
		\newblock A. Braides and L.Kreutz,
	\newblock An integral-representation result for continuum limits of discrete energies with multi-body interactions.
	    \newblock {\em SIAM J. Math. Anal.} {\bf 50} (2018), 1485--1520

	
	\bibitem{2014BP}
	\newblock A. Braides and A. Piatnitski,
	\newblock Homogenization of surface and length energies for spin systems.
	\newblock {\em J. Funct. Anal.} {\bf 264} (2013), 1296--1328

	
\bibitem{2018BP}
	\newblock A. Braides and A. Piatnitski,
	\newblock Homogenization of convolution energies
in periodically perforated domains,
	\newblock {\em Adv. Calc. Var.}, to appear.

\bibitem{2019BP}
	\newblock A. Braides and A. Piatnitski,
	\newblock Homogenization of random convolution energies
in heterogeneous and perforated domains,
	\newblock preprint, https://arxiv.org/abs/1909.06832.
	

\bibitem{bucval}
	\newblock C. Bucur and E. Valdinoci,
	 	\newblock {\em Nonlocal diffusion and applications},
	 	\newblock Lect. Notes Unione Mat. Ital., \textbf{20}, Springer, Cham., 2016.
		
\bibitem{carchasle}		
	\newblock N. Caroccia,  A. Chambolle  and  D. Slep\v cev,
	\newblock Mumford-Shah functionals on graphs and their asymptotics,
	\newblock {\em Nonlinearity}, to appear.
		
\bibitem{chanovpag}
	\newblock A. Chambolle, M. Novaga and V. Pagliari,
	\newblock On the convergence rate of some nonlocal energies,
	\newblock {\em Nonlinear Anal.}, to appear.
		
\bibitem{thorpe-et-al}
	\newblock O. M. Crook, T. Hurst, C.-B. Sch\"onlieb, M. Thorpe, and K. C. Zygalakis,
	\newblock PDE-inspired algorithms for semi-supervised learning on point clouds,
	\newblock preprint, https://arxiv.org/abs/1909.10221
		
\bibitem{dal}
	\newblock G. Dal Maso,
	\newblock \emph{An Introduction to $\Gamma$-convergence}
	\newblock Birkh\"auser, Boston, 1993.
	
\bibitem{DM1986}
	\newblock G. Dal Maso and L. Modica,
	\newblock Integral functionals determined by their minima,
	\newblock \emph{Rend. Semin. Mat. Univ. Padova}, \textbf{76} (1986), 255--267.
		
\bibitem{dinpalval}
	\newblock E. Di Nezza, G. Palatucci and E. Valdinoci,
	\newblock Hitchhiker's guide to the fractional Sobolev spaces,
	\newblock \emph{Bull. Sci. Math.}, \textbf{136} (2012), 521--573.
	
\bibitem{FKK2012}
	\newblock D. Finkelshtein, Y. Kondratiev and O. Kutoviy,
	\newblock Semigroup approach to birth-and-death stochastic dynamics in continuum,
	\newblock \emph{J. Funct. Anal.}, \textbf{262} (2012), 1274--1308.
	
\bibitem{GS}	
	\newblock N. Garc\'ia Trillos and D. Slep\v cev,
	\newblock On the rate of convergence of empirical measures in $\infty$-transportation distance,
	\newblock \emph{Canad. J. Math.}, \textbf{67} (2015), 1358--1383.
	
\bibitem{garsle}
	\newblock N. Garc\'ia Trillos and D. Slep\v cev,
	\newblock Continuum limit of total variation on point clouds,
	\newblock \emph{Arch. Ration. Mech. Anal.}, \textbf{220} (2016), 193--241.

\bibitem{gob}
	\newblock M. Gobbino,
	\newblock Finite difference approximation of the Mumford-Shah functional,
	\newblock \emph{Comm. Pure Appl. Math.}, \textbf{51} (1998), 197--228.
	
\bibitem{gobmor}
	\newblock M. Gobbino and M. G. Mora,
	\newblock Finite-difference approximation of free-discontinuity problems,
	\newblock in \emph{Proc. Roy. Soc. Edinburgh Sect. A}, \textbf{131} (2001), 567--595.

\bibitem{Leoni}
	\newblock G. Leoni,
	\newblock \emph {A First Course in Sobolev Spaces: Second Edition},
	\newblock Grad. Stud. Math. \textbf{181}, Amer. Math. Soc., Providence, 2017.
	
\bibitem{KKP2008}
	\newblock Y. Kondratiev, O. Kutoviy and S. Pirogov,
	\newblock Correlation functions and invariant measures
in continuous contact model,
	\newblock \emph{Infin. Dimens. Anal. Quantum Prob. Relat. Top.}, \textbf{11} (2008), 231--258.

\bibitem{KP1987}
	\newblock U. Krengel and R. Pyke,
	\newblock Uniform pointwise ergodic theorems for classes of averaging sets and multiparameter subadditive processes,
	\newblock \emph{Stochastic Process. Appl.}, \textbf{26} (1987), 289--296.
	
\bibitem{piarem}
	\newblock A. Piatnitski and E. Remy,
	\newblock Homogenization of elliptic difference operators,
	\newblock \emph{SIAM J. Math. Anal.}, \textbf{33} (2001), 53--83.

\bibitem{PiZh_par}	
    \newblock A. Piatnitski and E. Zhizhina,
	\newblock Homogenization of biased convolution type operators,
	\newblock \emph{Asymptotic Analysis}, \textbf{115} (2019), 241--262.

\bibitem{pon}
	\newblock A. C. Ponce,
	\newblock A new approach to Sobolev spaces and connections to $\Gamma$-convergence,
	\newblock \emph{Calc. Var. Partial Differential Equations}, \textbf{19} (2004), 229--255.
	

\end{thebibliography}
\end{document}